\documentclass[12pt]{amsart}
\usepackage[T1]{fontenc}
\usepackage{mathptmx}

\usepackage{comment}
\usepackage[colorlinks=true,linkcolor=magenta,citecolor=blue]{hyperref}     
\usepackage{mathtools}
\usepackage{amsmath, amssymb,amsthm}
\numberwithin{equation}{section}
\usepackage[capitalise]{cleveref}   
\usepackage{array}
\usepackage{graphicx}
\usepackage{float}
\usepackage{caption,subcaption}
\usepackage{tikz}
\usepackage{tikz-cd}
\usetikzlibrary{matrix,arrows}
\usepackage[mathscr]{euscript}
\usepackage{xcolor}

\def\Abb{\mathbb{A}} 
\def\Cbb{\mathbb{C}} \def\Dbb{\mathbb{D}}
 
\def\Gbb{\mathbb{G}}

 \def\Nbb{\mathbb{N}}
 \def\Pbb{\mathbb{P}}
\def\Qbb{\mathbb{Q}} \def\Rbb{\mathbb{R}}
 \def\Tbb{\mathbb{T}}

 \def\Zbb{\mathbb{Z}}

\def\Acal{\mathcal{A}} \def\Bcal{\mathcal{B}}
\def\Ccal{\mathcal{C}}  \def\Dcal{\mathcal{D}}
\def\Ecal{\mathcal{E}} \def\Fcal{\mathcal{F}}
 \def\Hcal{\mathcal{H}}
\def\Ical{\mathcal{I}} 
 \def\Lcal{\mathcal{L}}
\def\Mcal{\mathcal{M}} 
\def\Ocal{\mathcal{O}} \def\Pcal{\mathcal{P}}
\def\Qcal{\mathcal{Q}} 
\def\Scal{\mathcal{S}} \def\Tcal{\mathcal{T}}
\def\Ucal{\mathcal{U}} 
\def\Wcal{\mathcal{W}} 
 \def\Zcal{\mathcal{Z}}

\def\Iscr{\mathscr{I}}

 \def\Zscr{\mathscr{Z}}

\def\Sfrak{\mathfrak{S}}

\newtheorem{thm}{Theorem}[section]
\newtheorem{lem}[thm]{Lemma}
\newtheorem{ex}[thm]{Example}
\newtheorem{cor}[thm]{Corollary}
\newtheorem{prop}[thm]{Proposition}

\theoremstyle{definition}
\newtheorem{defi}[thm]{Definition}

\theoremstyle{remark}
\newtheorem{rem}[thm]{Remark}

\theoremstyle{plain}

\newcommand\supp{\mathrm{supp}}

\DeclareMathOperator{\ddc}{dd^c}

\DeclareMathOperator{\Poly}{Poly}
\DeclareMathOperator{\APoly}{APoly}
\DeclareMathOperator{\End}{End}
\DeclareMathOperator{\id}{id}
\DeclareMathOperator{\Sa}{\Sigma^{\mathrm{aff}}}
\DeclareMathOperator{\Prep}{Prep}

\DeclareMathOperator{\Fix}{Fix}

\DeclareMathOperator{\Spec}{Spec}
\DeclareMathOperator{\PGL}{PGL}
\DeclareMathOperator{\Aut}{Aut}
\DeclareMathOperator{\Aff}{Aff}
\newcommand\hhat{\hat{h}}
\DeclareMathOperator{\dist}{dist}

\begin{document}
\author{Yugang Zhang}
\address{Yugang Zhang, Université Bourgogne Europe, CNRS, IMB UMR 5584, 21000 Dijon, France}
\email{yugang.zhang@ube.fr}
\urladdr{https://sites.google.com/view/yugangzhang}

\thanks{The author acknowledges support from the ATRACT programme of the
R\'egion Bourgogne--Franche--Comt\'e through the project ADYAUS
(RECH-ATRAC-000012), and from the French National Research Agency through the
project DynAtrois (ANR-24-CE40-1163).}
\title{Measure rigidity for regular polynomial endomorphisms and applications}

\begin{abstract}
We prove measure-rigidity results for regular polynomial endomorphisms of~$\Cbb^k$. For maps of the same degree whose leading homogeneous parts differ
by an invertible linear map on the target, equality of equilibrium measures is
equivalent to postcomposition by an affine symmetry of the Julia set. We then show that, for $f$ in a nonempty Zariski open subset of the parameter
space, $\mu_g=\mu_f$ implies $g=f$, with no hypothesis relating the leading
terms; in dimension two this holds across all degrees, with $g$ an iterate
of~$f$.
We also establish finiteness results for maps with a prescribed equilibrium
measure under essentially necessary hypotheses.

Our approach rests on the stable manifold structure near the
hyperplane at infinity. Equality of equilibrium measures yields a
Zariski-dense family of common local stable-manifold germs, whose intrinsic
first-order jets lead to a divisibility incidence problem on a Grassmannian.
We resolve this problem by analyzing the power-map fiber and applying a
generic stabilizer argument. 

In dimension two, applications include a characterization in
terms of preperiodic sets, a Tits-type alternative, results on iterated
centralizers, and an arithmetic non-density theorem.
\end{abstract}

\maketitle

\setcounter{tocdepth}{1}
\tableofcontents

\section{Introduction}

\subsection{Motivation}

The equilibrium measure is one of the central objects in complex dynamics.
It encodes the chaotic part of a dynamical system and serves as a bridge from
holomorphic dynamics to ergodic theory and pluripotential theory, and,
through equidistribution and canonical heights, to arithmetic dynamics.

For a surjective endomorphism $f:\Pbb^k\to\Pbb^k$ of degree $d\geq2$, the
\emph{equilibrium measure} $\mu_f$ is the unique invariant probability measure of
maximal entropy, and its support is the \emph{Julia set} $J_f$.  In dimension one,
its construction goes back to Brolin for polynomials~\cite{Brolin1965}, and
to Lyubich~\cite{Lyubich1983} and Freire--Lopes--Ma\~n\'e for rational maps
\cite{FreireLopesMane1983}.  In higher dimensions, we refer to
Sibony's survey~\cite{sibonybook} and the lectures of Dinh--Sibony~\cite{DSbook}; see
also Briend--Duval~\cite{briendduval2}.

The inverse problem considered in this paper asks how much of the dynamics
can be recovered from the equilibrium measure.  For one-variable
polynomials, the model answer is Beardon's theorem.

\begin{thm}[Beardon {\cite{Beardon1990,Beardon1992}}]\label{thm: Beardon}
Let $f$ and $g$ be polynomials of degree $d\geq2$.  Then 
\[
\mu_f=\mu_g \Longleftrightarrow g=\sigma\circ f
\]
for an affine map $\sigma$ satisfying $\sigma(J_f)=J_f$.

Moreover, the group of affine symmetries $\sigma$ of $J_f$ is
finite cyclic unless $f$ is affinely conjugate to $z\mapsto z^d$.
\end{thm}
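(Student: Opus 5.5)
We prove Beardon's theorem in one variable using potential theory. The direction $\Leftarrow$ is quick. If $g=\sigma\circ f$ with $\sigma$ affine and $\sigma(J_f)=J_f$, then $\sigma$ preserves the filled Julia set $K_f$. Indeed, $K_f$ is the complement of the unbounded component of $\Cbb\setminus J_f$, and $\sigma$ is a homeomorphism of $\Cbb$ fixing $\infty$. Hence $\sigma$ preserves the Green function: $G_f\circ\sigma$ is harmonic off $K_f$, vanishes on $K_f$, and grows like $\log|z|+O(1)$. Since $|\sigma'|=1$ (a symmetry of a compact set of positive capacity must preserve capacity), uniqueness of the Green function gives $G_f\circ\sigma=G_f$. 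Then
\[
G_f\circ g = G_f\circ\sigma\circ f = G_f\circ f = d\,G_f .
\]
Together with $G_f\ge 0$, $G_f=\log|z|+O(1)$, this characterizes $G_g$, so $G_g=G_f$. Therefore $\mu_g=\Delta G_g/2\pi=\mu_f$.

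For $\Rightarrow$, we start from $\mu_f=\mu_g$. Both are the harmonic measure of the common support $J$, so $K_f=K_g$ (polynomial hull of $J$) and $G_f=G_g=:G$. Consider the Böttcher coordinates $\phi_f,\phi_g$ near $\infty$. We have $\log|\phi_f|=G=\log|\phi_g|$, so $\phi_g=\lambda\phi_f$ with $|\lambda|=1$. Write $\phi:=\phi_f$. Then $\phi\circ f=\phi^d$ and $\phi\circ g=\lambda^{1-d}\phi^d$ near $\infty$ (after the normalization $\phi_g\circ g=\phi_g^d$). Set $\sigma:=g\circ f^{-1}$, defined locally near $\infty$ on a branch. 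From the two conjugacies, $\phi\circ\sigma=c\,\phi$ with $|c|=1$. Hence $\sigma=\phi^{-1}(c\phi)$ extends to a conformal automorphism of the basin of $\infty$, and it preserves $G$.

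The key step is to show that $\sigma$ is a single-valued affine map on all of $\Cbb$. The first worry is that $g$ and $f$ might not factor through each other. To handle this, we use the equation $\phi\circ g=c\,\phi\circ f$ directly: the map $\tau(w)=\phi^{-1}(c\,\phi(w))$ is a well-defined injective holomorphic map near $\infty$ satisfying $g=\tau\circ f$ near $\infty$. Expanding at $\infty$, $\tau(w)=c' w+O(1)$, and $\tau$ is holomorphic at $\infty$ with a simple pole there. We then show that $\tau$ is a polynomial. Since $g$ and $f$ are polynomials, comparing Laurent expansions in $g=\tau\circ f$ shows that $\tau$ has no negative-power terms. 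Any term $w^{-m}$ in $\tau$ would produce $f^{-m}$, which is a non-polynomial Laurent tail that cannot cancel. Thus $\tau(w)=aw+b$, and $g=\sigma\circ f$ with $\sigma=\tau$ affine. Finally, $\sigma$ preserves $G$, hence $K_f$, hence $J_f$. The main obstacle is making this Laurent comparison rigorous, i.e. showing $\tau$ extends to a rational function. An alternative is to note that $\tau$ extends along the basin and, being $G$-preserving, is a covering automorphism. I would argue via the expansion, inducting on the most negative exponent.

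For the finiteness statement, the affine symmetries of $J_f$ preserve $G_f$, so they fix the centroid of $\mu_f$ (its barycenter). They are therefore rotations $z\mapsto \zeta z$ about this center. Conjugate so that the center is $0$ and $f$ is monic centered. A rotation preserving $J$ preserves $G$, and hence $\phi$ up to a constant. Writing $f(z)=z^d+a_{d-2}z^{d-2}+\cdots$, invariance of the symmetry group forces $\zeta$ to lie in the group of roots of unity $\zeta$ with $f(\zeta z)=\zeta^{j}f(z)$ in the normal form, via Beardon's coefficient relations $a_i\zeta^{i}=\zeta^{d}a_i$. Indeed, $\sigma$ commutes with $f$ up to another symmetry, and the symmetry group is closed and consists of rotations. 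If it is infinite, it is the full circle $S^1$, and then $J$ is a circle and $f=z^d$ after conjugation. Otherwise it is a finite subgroup of $S^1$, hence cyclic. The remaining point is closedness: the symmetry group is a closed subgroup of the compact group of $G$-preserving rotations, so it is either finite cyclic or all of $S^1$. In the latter case every coefficient $a_i$ with $i<d$ vanishes, which gives $f=z^d$.
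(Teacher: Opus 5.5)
Your proof is correct. The paper does not prove this statement; it is quoted from Beardon. So the meaningful comparison is with the paper's higher-dimensional analogues.

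\textbf{Equivalence.} Your $\Leftarrow$ direction is exactly Proposition~\ref{prop:easy-direction}. Both your argument and the paper's (Theorem~\ref{thm: main1}) begin $\Rightarrow$ by reducing to $G_f=G_g$, as in Corollary~\ref{cor:mu-implies-green}. From there the routes diverge.
\begin{itemize}
\item You use the B\"ottcher coordinate to write $g=\tau\circ f$, with $\tau$ a germ at $\infty$.
\item The paper uses no B\"ottcher coordinate. It compares local stable manifolds over a Zariski-dense set of directions in $\Pi$. It then converts the fact that $f$ and $g$ send these common disks into target stable manifolds into the divisibility of Lemma~\ref{lem:stable-manifold-divisibility}.
\end{itemize}
That mechanism only detects tangential displacement along $\Pi$, so it needs $\dim\Pi\ge1$ and says nothing when $k=1$. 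Note also that the hypothesis $g_h=Af_h$ is automatic in one variable, so your statement is formally the $k=1$ case of Theorem~\ref{thm: main1}. Your argument is much shorter, but it relies on the one-variable uniformization of a neighbourhood of $\infty$.

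\textbf{Symmetry group.} Here your argument is the paper's kernel argument specialized to $k=1$. There $\Pi$ is a point, so every affine symmetry acts trivially on $\Pi$. Your barycenter of $\mu_f$ plays the role of the Haar-averaged fixed point $p$ in the proof of Theorem~\ref{thm: main2}. The identity $f(\zeta z)=\zeta^d f(z)$ for all $\zeta\in S^1$ is the identity $f_\lambda=f$ of Lemma~\ref{lem: charcterizationHomogeneous}. Cyclicity is special to $k=1$: for $k\ge2$ the action on $\Pi$ can be nontrivial, which is how the paper's examples realize $V_4$ and $D_4$.

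\textbf{Three points to tighten.}
\begin{enumerate}
\item The step you call the main obstacle is not one. Write $\tau(w)=aw+b+\sum_{m\ge1}c_mw^{-m}$ near $\infty$. Then the polynomial $g-af-b$ equals $\sum_{m\ge1}c_mf^{-m}$ for $|z|$ large, and this tends to $0$, so $g=af+b$. No rational extension of $\tau$ and no induction on exponents is needed; an infinite tail has no ``most negative'' exponent anyway.
\item The identity $|\phi\circ\tau|=|\phi|$ only gives $G\circ\sigma=G$ near $\infty$. To get it on all of $\Cbb$, use $G\circ\sigma\circ f=G\circ g=dG=G\circ f$ together with the surjectivity of $f$, as in Step~3 of the proof of Theorem~\ref{thm: main1}.
\item Make the relation $f(\zeta z)=\zeta^d f(z)$ explicit. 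Since $G_f\circ f\circ\sigma=dG_f$, one has $\mu_{f\sigma}=\mu_f$. The first part then gives $f\sigma=\sigma'\circ f$ with $\sigma'(z)=\zeta'z$, and comparing leading coefficients forces $\zeta'=\zeta^d$. Any $a_i\neq0$ with $i<d$ then forces $\zeta^{d-i}=1$ for every symmetry, which gives finiteness directly without the closed-subgroup dichotomy.
\end{enumerate}
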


Closely related measure rigidity results were obtained by
Baker--Eremenko~\cite{BakerEremenko1987}, Schmidt--Steinmetz~\cite{SchmidtSteinmetz1995}, and Atela--Hu
\cite{AtelaHu1996}.  The situation for
rational maps of $\Pbb^1$ is
subtler: examples of Ritt~\cite{Ritt1923} and  Ye~\cite{Ye2015} show
that equality of measures need not come from M\"obius
postcomposition.  Although a comparably simple classification is therefore
unavailable, Levin and Przytycki obtained algebraic characterizations in
terms of functional relations between suitable iterates~\cite{LevinSymmetries90,LevinPrzytycki97}. See also related results of Dinh~\cite{DinhSameJulia2000}.

\medskip

The one-dimensional results above are now several decades old, but recent
developments in arithmetic dynamics have renewed interest in rigidity.
Arithmetic equidistribution~\cite{BakerRumely2006,Bilu1997,ChambertLoir2006,
ChambertLoirThuillier2009,FavreRiveraLetelier2006,
GauthierGoodHeights2026,KuhneEquidistribution2021,
MavrakiYe2023,Yuan2008,YZquasi} often yields proportionality between dynamically
defined canonical measures, and one wants to convert this analytic relation into
an algebraic relation. This strategy goes back to the equidistribution theorem of
Szpiro--Ullmo--Zhang~\cite{SzpiroUllmoZhang1997} and its use in the proofs
of the Bogomolov conjecture by Ullmo~\cite{Ullmo1998} and
Zhang~\cite{Zhang1998}.

We focus here on global rigidity. Local forms of measure rigidity were
studied by Dujardin--Favre--Gauthier
\cite{DujardinFavreGauthier2023} and Ji--Xie
\cite{JiXie_LocalRigidity}. This is a key input in Ji--Xie's proof of
the dynamical Andr\'e--Oort conjecture for curves
\cite{ji2023daocurves}.

In this paper, we generalize to higher dimensions several measure-rigidity
results that play an important role in arithmetic dynamics. We highlight two
of them. The first is the relation between the equilibrium measure and the
set of preperiodic points, which is one of the key ingredients in
DeMarco--Mavraki's work on uniform bounds for common preperiodic
points~\cite{DMCompositio}. The second consists of Pakovich's generic
measure-rigidity results
\cite{pakovich-deg4,pakovich2026periodiccurvesgeneralendomorphisms}, which
provide an important ingredient in Ji--Xie's generic injectivity theorem for
the multiplier-spectrum morphism~\cite{JiXieMultiplierSpectrum}.

Some further arithmetic applications of measure rigidity can be found in
\cite{abboud2026uniformboundcommonperiodic,DKY2,DujardinFavre2017,
dujardin2023dynamicalmaninmumfordconjectureplane,FavreGauthier2022,
GauthierTaflinVigny2026,gong2026nonarchimedeanrigidityuniformitycommon,
MSDuke,zhang2024arithmeticpropertiesfamiliesplane}. For group-theoretic
applications, that we will also generalize to higher dimensions, see 
\cite{beaumont2025centralizersendomorphismsprojectiveline,BHPTjems24}.

\medskip 

The theory described above is essentially one-dimensional, and its proofs do
not seem to extend directly to several variables.  Our first goal is to
establish a higher-dimensional counterpart for regular polynomial
endomorphisms.  The central difficulty is to extract algebraic information
from the analytic identity $\mu_f=\mu_g$.  Our approach combines
pluripotential theory, laminar currents, and algebraic geometry; in
particular, it exploits a specifically higher-dimensional feature: the
variation of local stable manifolds along the hyperplane at infinity, which
has positive dimension, together with the dynamics induced on
that hyperplane.

\subsection{Main results}
We present here our main results. The main ideas of the proofs are outlined in
Section~\ref{sect:strategy}.

We fix the dimension $k\geq2$ and the degree $d\geq2$.
A polynomial endomorphism
$f:\Cbb^k\to\Cbb^k$ of degree $d$ is called \emph{regular} if it extends
holomorphically to an endomorphism of $\Pbb^k$. 
We denote the space of such maps by $\Poly^k_d$.  Writing $N_{d,k}:=k\binom{d+k}{k}$,
this is the nonempty Zariski open subset of $\Abb^{N_{d,k}}$ defined by
the nonvanishing of the resultant of the leading homogeneous coordinate
functions; see also~\cite[\S~1.1, p.~169]{DSbook}.
Write $f_h$ for the
degree-$d$ homogeneous part of $f$, and $f_\Pi:\Pi\simeq\Pbb^{k-1}\to\Pi$ for the induced endomorphism at infinity, where $\Pi:=\Pbb^k\setminus\Cbb^k$.
 Equality of equilibrium measures
on $\Cbb^k$ implies equality of the induced equilibrium measures on $\Pi$. Hence, the examples of Ritt and Ye
show that a literal extension of Beardon's theorem is impossible.

Set
\[
   \Sa_f
   :=\{\sigma\in\Aff(\Cbb^k)\mid \sigma(J_f)=J_f\}.
\]

\begin{thm}\label{thm: main1}
Let $f,g\in\Poly^k_d$.
Suppose that $g_h=A\circ f_h$ for an invertible linear map
$A:\Cbb^k\to\Cbb^k$.  Then
\[
   \mu_f=\mu_g
   \quad\Longleftrightarrow\quad
   g=\sigma\circ f
\]
for some $\sigma\in\Sa_f$ whose linear part is $A$.
\end{thm}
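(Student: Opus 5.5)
The plan is to prove both implications through the Green function $G_f(z)=\lim_n d^{-n}\log^+|f^n(z)|$, using three standard facts. First, $\mu_f=(\ddc G_f)^k$. Second, $G_f$ is the Siciak extremal function of the filled Julia set $K_f=\{G_f=0\}$. Third, $K_f$ is the polynomial hull of $J_f=\supp\mu_f$. Together these show that $\mu_f$ determines $J_f$, hence $K_f$, hence $G_f$.

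\emph{($\Leftarrow$).} Assume $g=\sigma\circ f$ with $\sigma\in\Sa_f$.
\begin{enumerate}
\item Since $\sigma$ is affine and $\sigma(J_f)=J_f$, it preserves the polynomial hull, so $\sigma(K_f)=K_f$.
\item Affine maps transport extremal functions, so $G_f\circ\sigma=G_f$.
\item It follows that $G_f\circ g=G_f\circ f=d\,G_f$.
\item The function $G_f$ is psh, and $G_f-\log^+|z|$ is bounded. Iterating the functional equation gives $G_f=d^{-n}G_f\circ g^n\to G_g$.
\item Hence $G_f=G_g$, and therefore $\mu_g=(\ddc G_g)^k=\mu_f$.
\end{enumerate}
The hypothesis $g_h=A f_h$ ensures that $g$ is regular of degree $d$, so $G_g$ is defined. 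In this direction the linear part of $\sigma$ is automatically $A$.

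\emph{($\Rightarrow$).} Assume $\mu_f=\mu_g$.
\begin{enumerate}
\item By the facts above, $G:=G_f=G_g$ and $J_f=J_g$.
\item Consequently $G\circ g=G\circ f=dG$.
\item It suffices to show that $g-A\circ f$ is a constant $b$. Then $\sigma(w)=Aw+b$ satisfies $g=\sigma\circ f$.
\item Such a $\sigma$ lies in $\Sa_f$. Indeed, $G\circ\sigma\circ f=G\circ g=dG=G\circ f$ and $f$ is surjective, so $G\circ\sigma=G$. Thus $\sigma$ preserves $K_f$ and $(\ddc G)^k$, hence preserves $J_f$.
\end{enumerate}

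To prove that $g-Af$ is constant, I would work near the hyperplane at infinity $\Pi$. Consider the multivalued map $\varphi=g\circ f^{-1}$, defined on a neighbourhood of a point of $\Pi$ where $f$ is unramified. The goal is to show that $\varphi$ is single-valued and extends to an entire map of linear growth preserving $G$. Such a map is affine, and its linear part must be $A$ since $g_h=Af_h$.

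Single-valuedness amounts to showing that $g(z)=g(z')$ whenever $f(z)=f(z')$. Near infinity this is true at leading order, because $f_h(z)=f_h(z')$ forces $Af_h(z)=Af_h(z')$; the lower-order terms are the problem. To control them I would use the extra structure coming from $G$:
\begin{itemize}
\item On $\Cbb^k\setminus K_f$, the current $\ddc G$ has rank $k-1$. Its kernel defines the lamination by local stable manifolds through points of $\Pi$, and this lamination depends only on $G$, so it is common to $f$ and $g$.
\item Both $f$ and $g$ map leaves to leaves. They induce $f_\Pi$ and $g_\Pi=[A]\circ f_\Pi$ on the leaf space $\Pi$.
\item Along each leaf, $G$ restricts to $\log|t|+O(1)$ in a Böttcher-type coordinate $t$, and both maps act on it by $t\mapsto t^d$ up to a unit.
\end{itemize}
Each branch of $\varphi$ therefore preserves $G$ and the lamination. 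It also covers a single-valued map of the leaf space, namely the appropriate branch of $[A]$. Branches of $f^{-1}$ differ by deck transformations of $f$, which over a leaf are determined by the leaf-space action together with the $t^d$-rotation along leaves. The plan is to check that any two branches of $\varphi$ induce the same map on leaves and on the $G$-level coordinates, hence coincide. Uniqueness of holomorphic maps preserving $G$ with prescribed action at infinity would then give single-valuedness near $\Pi$. Analytic continuation along $\Cbb^k$ minus the critical values of $f$, together with Hartogs extension across the codimension-one branch locus (using local boundedness), would then give a global polynomial map $\varphi$ with $\varphi\circ f=g$.

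I expect this single-valuedness step, i.e.\ ruling out the lower-order discrepancy between branches of $g\circ f^{-1}$, to be the main obstacle. If the foliation argument turns out to be too weak, my fallback would be a direct comparison of Taylor expansions of $G\circ g$ and $G\circ f$ along local stable manifolds, order by order in $1/|z|$.
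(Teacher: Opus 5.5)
Your ($\Leftarrow$) direction and the last step of ($\Rightarrow$) are correct: once $g-Af\equiv b$, the map $\sigma(w)=Aw+b$ lies in $\Sa_f$ because $f$ is surjective. The core of the theorem, however, is that $g-Af$ is constant, and this is not established.

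\textbf{First gap: the common stable lamination.} You treat it as a standard fact that the stable lamination near $\Pi$ is the ``kernel'' of $\ddc G$, and hence is common to $f$ and $g$. This is not a valid justification, for several reasons:
\begin{itemize}
\item $G$ is only continuous, so $\ddc G$ has no pointwise kernel.
\item Stable disks exist only for $\mu_\Pi$-almost every direction.
\item A positive closed current generally admits many different decompositions into holomorphic disks.
\end{itemize}
That $f$ and $g$ have the same local stable germ at $\mu_\Pi$-a.e.\ point is precisely the paper's main analytic input (Proposition~\ref{prop:zariski-dense-common-stable-manifolds}). It is proved by combining Bedford--Jonsson's representation $T_f^{k-1}|_{\Omega_f}=\int[\Wcal_f(a)]\,d\mu_\Pi(a)$ with the local moderateness of $\|T_f^{k-1}\|_{\mathrm{FS}}$. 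This makes the areas of the forward $f$-images of the disks of \emph{any} such representation summable, in particular $g$'s, so those disks are $f$-stable. In dimension two one can instead use $T\wedge T=0$ and geometric intersection of laminar currents.

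\textbf{Second gap: single-valuedness of $g\circ f^{-1}$.} Your argument fails as sketched, and you leave it open.
\begin{itemize}
\item If $f(z)=f(z')$ near $\Pi$, then $g(z)$ and $g(z')$ lie on the same leaf and the same level set of $G$. But $G$ records only $|t|$, not its phase.
\item The uniqueness principle you invoke is false. For homogeneous $f$, the map $z\mapsto\lambda z$ with $|\lambda|=1$ preserves $G$ and acts trivially on $\Pi$.
\item $f$ has no deck group in general, only local cyclic ones transverse to $\Pi$.
\item A leafwise B\"ottcher argument would give agreement of branches only on a laminated set over part of $J_\Pi$. That would still have to be upgraded to an open set.
\item Your fallback does not help: $G\circ g=G\circ f=dG$ holds identically, so its expansions carry no new information.
\end{itemize}

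\textbf{The missing idea.} Compare \emph{positions in the leaf space}, not values of $G$. The $f$-image (resp.\ $g$-image) of the common stable disk through $x$ lies in the stable disk over $f_\Pi(x)$ (resp.\ $g_\Pi(x)=[A]f_\Pi(x)$). That disk is a graph over the transverse coordinate, and the image has transverse coordinate $O(t^d)$. Hence the tangential coordinates of $f$ and $g$ along the disk are constant up to $O(t^d)$.

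The paper then proceeds as follows.
\begin{enumerate}
\item Diagonalize $A$, which is possible because $G_{f_h}\circ A=G_{f_h}$.
\item The polynomial minors $\alpha_k\Phi_{g,j}\Phi_{f,k}-\alpha_j\Phi_{f,j}\Phi_{g,k}$ vanish to order $d$ along these disks over a Zariski-dense set of base points.
\item By Lemma~\ref{lem:stable-manifold-divisibility}, $T^d$ divides their homogenizations.
\item Write $\widehat g_i-\alpha_i\widehat f_i=\sum_{m\geq1}T^mH_{i,m}$. For $m<d$, the coefficient of $T^m$ forces $(\alpha_i^{-1}H_{i,m})_i$ to be a polynomial multiple of $f_h$, since the components of $f_h$ have no common factor.
\item This is impossible by degree unless $H_{i,m}=0$. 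Thus $g=Af+c$, with no single-valuedness or extension argument needed.
\end{enumerate}
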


For $L_f:=\langle(f_h)_1,\ldots,(f_h)_k\rangle
   \subset H^0(\Pbb^{k-1},\Ocal(d))$,
the existence of $A\in\operatorname{GL}_k(\Cbb)$ such that
$g_h=A\circ f_h$ is
equivalent to $L_g=L_f$.

The key point is to show that equality of equilibrium measures implies equality of local stable manifold germs.

\medskip

Next we study the size of the group $\Sa_f$. If $f$ is homogeneous, then $f$ and $\lambda f$ have the same equilibrium
measure for every $\lambda\in S^1$.  We prove the useful intrinsic
characterization that $f$ is homogeneous if and only if $J_f$ is
$S^1$-invariant; see Lemma~\ref{lem: charcterizationHomogeneous}.  Moreover,
when $k=2$ and $f_\Pi$ is conjugate to $z\mapsto z^{\pm d}$, symmetric
products produce infinite families of distinct maps with the same equilibrium
measure; see Example~\ref{example: necessary condition}.

Outside these examples we have the following finiteness results.
For $e\geq2$, set
\begin{align*}
\Sigma_f^{e\mathrm{poly}}
   &:=\{g\in\Poly^k_e\mid \mu_g=\mu_f\},
   \qquad
   \Sigma_f^{\mathrm{poly}}
   :=\bigcup_{e\geq2}\Sigma_f^{e\mathrm{poly}}\\
   \Sigma_{f_\Pi}^{d\mathrm{poly}}&:=\{g\in\Sigma_f^{d\mathrm{poly}}\mid g_\Pi=f_\Pi\}.
\end{align*}

\begin{thm}\label{thm: main2}
Let $f\in\Poly^k_d$ be a map that is not affinely conjugate to a
homogeneous polynomial map.  Then $\Sigma_{f_\Pi}^{d\mathrm{poly}}$
is a finite set.

Moreover, if $k=2$ and $f_\Pi$ is not conjugate to
$z\mapsto z^{\pm d}$ on $\Pi$, then $\Sa_f$ and
$\Sigma_f^{e\mathrm{poly}}$ are finite for every $e\geq2$.
\end{thm}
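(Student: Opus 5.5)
Since $f_\Pi=g_\Pi$ forces $g_h=\lambda f_h$ for a scalar $\lambda\in\Cbb^*$, Theorem~\ref{thm: main1} applies with $A=\lambda\,\mathrm{id}$. So every $g\in\Sigma_{f_\Pi}^{d\mathrm{poly}}$ has the form $g=\sigma\circ f$, where $\sigma\in\Sa_f$ has linear part $\lambda\,\mathrm{id}$. The first claim therefore reduces to showing that the group $G:=\{\sigma\in\Sa_f \mid \sigma \text{ has scalar linear part}\}$ is finite.

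I will first show that $G$ is compact. Indeed $J_f$ is compact and not contained in an affine hyperplane: it is the support of a measure with continuous potential, which puts no mass on pluripolar sets. Hence any $\sigma$ preserving $J_f$ has bounded linear and translation parts. Next, $\mu_f$ is the unique measure of maximal entropy, $\sigma\circ f$ has the same equilibrium measure $\mu_f$, and $\sigma$ is an automorphism of $J_f$. From this I want to deduce $\sigma_*\mu_f=\mu_f$, via the Green function: $G_{\sigma\circ f}=G_f$ and $G_f\circ\sigma=G_f+c$. Since the linear part is scalar, $\sigma=\lambda z+b$ acts on $\Pi$ trivially, so the lifts of $G$ to $\Pbb^k$ form a compact Lie group fixing $\Pi$ pointwise.

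If $G$ is infinite, its identity component is a torus or contains a one-parameter subgroup. A compact group of such affine maps fixes a point $p$ (the barycenter of $J_f$). After translating $p$ to $0$, $G$ is contained in $\{z\mapsto\lambda z\}$ with $|\lambda|=1$. Infinitude then gives $S^1\subset G$, because a closed infinite subgroup of $S^1$ is $S^1$ itself. Hence $J_f$, after this affine conjugation, is $S^1$-invariant, and Lemma~\ref{lem: charcterizationHomogeneous} says the conjugated $f$ is homogeneous. This contradicts the hypothesis, so $G$ is finite and $\Sigma_{f_\Pi}^{d\mathrm{poly}}$ is finite.

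For $k=2$, I would first show that $\Sa_f$ is finite. Any $\sigma\in\Sa_f$ preserves $\mu_f$ and so induces on $\Pi\simeq\Pbb^1$ a Möbius map $\bar\sigma$ preserving $\mu_{f_\Pi}$. By the one-dimensional theory (Levin's results), the group of Möbius symmetries of $\mu_{f_\Pi}$ is finite unless $f_\Pi$ is exceptional. Here $f_\Pi$ has degree $d$, and ruling out $z^{\pm d}$, with Lattès and Chebyshev maps not occurring at infinity for regular maps of $\Cbb^2$ up to what must be checked, leaves finitely many $\bar\sigma$. The kernel of $\sigma\mapsto\bar\sigma$ consists of scalar-linear-part symmetries, which is exactly the group $G$ shown finite above. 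Note that $f$ is not homogeneous up to conjugacy, as otherwise $f_\Pi\sim z^{\pm d}$ would fail differently; this needs a check. For $\Sigma_f^{e\mathrm{poly}}$, I would use $\mu_{g_\Pi}=\mu_{f_\Pi}$ together with the Levin--Przytycki finiteness: there are finitely many degree-$e$ rational maps sharing a non-exceptional measure. Then for each possible $g_\Pi$, the set of admissible $g$ with that $g_\Pi$ is, after one choice $g_0$, of the form $\Sigma_{g_{0,\Pi}}^{e\mathrm{poly}}$ for $g_0$, which is finite by part one.

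The main obstacle is the $\Pbb^1$ input: the finiteness of rational maps of degree $e$ with a given measure. It is only known in the non-exceptional case, and the exceptional cases other than $z^{\pm d}$ need to be excluded or handled separately. I would expect to need an argument that Lattès maps cannot equal $f_\Pi$ with the extra structure at hand, or otherwise to treat them via their flat orbifold.
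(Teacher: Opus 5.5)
\textbf{First assertion.} Your argument is correct and is essentially the paper's. From $g_\Pi=f_\Pi$ you get $g_h=\lambda f_h$. Theorem~\ref{thm: main1} then gives $g=\sigma f$ with $\sigma\in\Sa_f$ of scalar linear part. The group of such $\sigma$ is compact and has a common fixed point. The paper finds this point by Haar averaging; your barycenter of $\mu_f$ works equally well, since $\sigma(K_f)=K_f$ gives $G_f\circ\sigma=G_f$, so the constant $c$ in your formula is $0$ and $\sigma^*\mu_f=\mu_f$. An infinite such group would make a translate of $J_f$ $S^1$-invariant, contradicting Lemma~\ref{lem: charcterizationHomogeneous}.

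\textbf{Second assertion.} This is not proved as written, and the obstacle you name rests on a misconception. The input you need is Levin's theorem (Theorem~\ref{thm: Levin}). It says that for \emph{every} rational map $r$ of degree $d$ not conjugate to $z\mapsto z^{\pm d}$, only finitely many rational maps of a given degree $e$ have equilibrium measure $\mu_r$. Chebyshev and Latt\`es maps are not exceptions. Indeed, although their Julia sets have infinitely many M\"obius symmetries, their measures do not, so the only exceptional case is exactly the one the hypothesis removes.

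Your proposed workaround is also false. Latt\`es and Chebyshev maps do occur at infinity: any degree-$d$ rational map $[P:Q]$ equals $f_\Pi$ for $f=(P,Q)$ plus arbitrary lower-order terms.

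With Levin's theorem in hand, $\rho(\Sigma_f^{e\mathrm{poly}})$ is finite and your fiber argument goes through, provided you fix two further points.
\begin{enumerate}
\item The assumption that $f$ is not affinely conjugate to a homogeneous map remains a standing hypothesis in the ``Moreover'' part. It cannot be derived from $f_\Pi\not\sim z^{\pm d}$: the map $f(x,y)=(x^2+y^2,y^2)$ has $f_\Pi(w)=w^2+1$, yet $\Sa_f$ contains all rotations $z\mapsto\lambda z$ with $|\lambda|=1$.
\item To apply the first assertion to the base point $g_0$ of each fiber, you must check that $g_0$ is not affinely conjugate to a homogeneous map. This follows from $J_{g_0}=J_f$ together with Lemma~\ref{lem: charcterizationHomogeneous}.
\end{enumerate}

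Your route to finiteness of $\Sa_f$ then also works: the kernel is finite, and the image is finite by Levin applied to the maps $\bar\sigma f_\Pi=(\sigma f)_\Pi$, which share $\mu_{f_\Pi}$ by Corollary~\ref{cor:mu-implies-green}. The paper gets this more directly from the injection $\sigma\mapsto\sigma f$ of $\Sa_f$ into $\Sigma_f^{d\mathrm{poly}}$.
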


Unlike the
one-variable polynomial case, a finite group $\Sa_f$ need
not be cyclic: Examples~\ref{ex:klein-four-centralizer} and
\ref{ex:dihedral-centralizer} realize groups containing the Klein four group
and the dihedral group $D_4$, respectively.

For a nondegenerate polynomial skew product
$f(x,y)=(p(x),q(x,y))$ on $\Cbb^2$, Theorems~\ref{thm: main1}
and~\ref{thm: main2} were proved by Ueno; see
\cite[Lemma~4.2 and Theorem~4.5]{Ueno2010}.  Such a map may be viewed
informally as a dynamical system of dimension $1.5$. 

\medskip

Our next main result shows that the assumption $L_f=L_g$ in
Theorem~\ref{thm: main1} is generic in the following sense.

\begin{thm}\label{thm:generic-measure-rigidity}
There exists a nonempty Zariski open subset
$\Ucal_{d,k}\subset\Poly^k_d$ such that, for every
$f\in\Ucal_{d,k}$, $\Sigma_f^{d\mathrm{poly}}=\{f\}$.

If $k=2$, then $\Sigma_f^{\mathrm{poly}}= \{f^n \mid n\geq 1 \}$.
\end{thm}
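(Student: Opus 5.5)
The plan is to reduce to Theorem~\ref{thm: main1} by showing that, for generic $f$, the identity $\mu_g=\mu_f$ with $g\in\Poly^k_d$ forces $L_g=L_f$. Once $L_g=L_f$, Theorem~\ref{thm: main1} gives $g=\sigma\circ f$ with $\sigma\in\Sa_f$. It then remains to show that $\Sa_f=\{\id\}$ for generic $f$. For this I would use that any $\sigma\in\Sa_f$ with $\sigma\circ f\in\Poly^k_d$ normalizes the dynamics at infinity: the linear part $A$ of $\sigma$ maps $\mu_{f_\Pi}$ to itself, and $A\circ f_h$ has the same equilibrium measure as $f_h$. Moreover, by Theorem~\ref{thm: main2}, $\Sa_f$ is finite once $f$ is not conjugate to a homogeneous map. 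A finite group of affine symmetries of $J_f$ with $\sigma\circ f$ having the same measure should, for a generic $f$, commute with $f$ up to iterate. A generic-stabilizer argument then finishes this step: the set of $f$ admitting a nontrivial affine $\sigma$ with $\sigma\circ f$ and $f$ sharing the measure is a constructible set of dimension less than $N_{d,k}$. This is because each such $\sigma$ imposes algebraic conditions, for instance that $\sigma$ permutes the fixed points of $f$ and their multipliers. So its complement contains a Zariski open set.

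The heart of the argument is the implication $\mu_g=\mu_f\Rightarrow L_g=L_f$. I would use the stable manifold structure near $\Pi$. The Green function $G_f$ determines $\mu_f$, and conversely $\mu_f=\mu_g$ forces $G_f=G_g$, by uniqueness of the pluricomplex Green function with logarithmic growth whose Monge--Amp\`ere measure is the given one. Hence the level sets and the laminations $\{G_f=c\}$ near infinity coincide. The local stable manifolds through points of $\Pi$ near repelling periodic points of $f_\Pi$ (which is Zariski-dense in $\Pi$) then become common germs for $f$ and $g$. The first-order jet of the stable manifold at $p\in\Pi$ is determined by the kernel structure of $D f_h$ at the lift of $p$. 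Equality of these jets at a Zariski-dense set of points yields an incidence condition: the linear system $L_g$ must satisfy a divisibility relation against $L_f$ on the Grassmannian $G(k,H^0(\Pbb^{k-1},\Ocal(d)))$. I would phrase it as requiring each element of $L_g$ to be expressible via compositions with the power-map fiber over $f_\Pi$.

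The main obstacle is solving this incidence problem generically. I would first analyze the fiber of $L\mapsto$ (induced endomorphism at infinity) and show that for $L$ in an open set the only solution is $L_g=L_f$, up to finitely many power-map twists $L_f\circ(\text{roots of unity})$. These twists are then excluded by the generic-stabilizer argument, since a generic $f_h$ has trivial stabilizer in the relevant finite group.

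For $k=2$ and $g\in\Poly^2_e$ with arbitrary $e$, I would argue as follows. Equality of measures implies that $f_\Pi$ and $g_\Pi$ share the measure on $\Pi\simeq\Pbb^1$. By Levin--Przytycki, generic $f_\Pi$ (no symmetries, not exceptional) then gives $g_\Pi^{a}=f_\Pi^{b}$ for suitable iterates, and in fact $g_\Pi=f_\Pi^n$ with $e=d^n$. I would then apply the degree-$d^n$ part of the theorem to $f^n$ rather than $f$, which requires the open set to be stable under iteration. I would handle this by noting that the obstructions come from symmetries at infinity, which are already excluded for $f$, and would therefore persist for $f^n$.
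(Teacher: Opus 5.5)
Your skeleton (common stable germs, then a constraint on $L_{g_h}$, then Theorem~\ref{thm: main1}, then removal of $\sigma$) matches the paper's. However, the central step is not supplied, and the one concrete claim you make about it is wrong. The first jet of the stable manifold at $x=[\widetilde x]\in\Pi$ is not determined by $Df_h$. Write $F=f_h$, $B=f_{d-1}$, and take the stable graph $[\widetilde x+t\xi+O(t^2):t]$. Its image lies in the stable manifold of $f_\Pi(x)$, while the new transverse coordinate is $O(t^d)$ with $d\ge2$, so the image has no first-order tangential displacement. This means $DF(\widetilde x)\xi+B(\widetilde x)\in\Cbb F(\widetilde x)$, and the jet is the class of $-DF(\widetilde x)^{-1}B(\widetilde x)$ modulo $\Cbb\widetilde x$. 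For $B=0$ this vanishes and constrains nothing, so genericity must be imposed on the pair $(F,B)$. Running the same computation for $g$ along the common jet and eliminating the radial scalar gives the actual incidence condition: $\det DF$ divides $(P\nabla Q-Q\nabla P)\operatorname{adj}(DF)B$ for all $P,Q\in L_{g_h}$. Your phrases ``compositions with the power-map fiber'' and ``twists by roots of unity'' do not describe a usable condition, and nothing in the proposal shows that $L_F$ is generically the only solution. Proving that requires three things: a proper incidence scheme over the space of pairs $(F,B)$; one pair whose fiber is provably a reduced point; and upper semicontinuity of fiber length, together with the fact that $L_F$ always lies in its own fiber. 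For the special pair, the paper takes $F=(Z_1^d,\dots,Z_k^d)$ with the cyclically shifted $B=(Z_2^{d-1},\dots,Z_k^{d-1},Z_1^{d-1})$, and combines a classification of monomial solutions, a torus degeneration, and a vanishing Zariski tangent space. This needs $k\ge3$: for $k=2$ the planes $\langle Z_1^aZ_2^{d-a},Z_1^{d-a}Z_2^a\rangle$ also solve the power-map equations, so even the same-degree case in dimension two cannot come from this route. Also, $G_f=G_g$ does not by itself identify stable manifolds. Common germs, on a set of full $\mu_\Pi$-measure, come from comparing the two Bedford--Jonsson disk decompositions of $T_f^{k-1}$ near $\Pi$, and showing via the moderate decay of its trace measure that $f$ contracts the $g$-disks.

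The removal of $\sigma$ also fails as proposed. Theorem~\ref{thm: main2} gives finiteness of $\Sa_f$ only for $k=2$. Nothing forces elements of $\Sa_f$ to commute with an iterate of $f$ or to permute its fixed points: in one variable, $\sigma(z)=-z$ preserves the Julia set of $f(z)=z^2+c$, yet $f^n\sigma=f^n\ne\sigma f^n$. Moreover, ``$\sigma f$ has the same measure as $f$'' is not an algebraic condition, so there is no constructible locus to count. What works is feeding $\sigma(z)=Az+b\in\Sa_f$ back into the uniqueness step. Since $f\sigma$ has Green function $G_f$, we get $L_{F\circ A}=L_F$, i.e.\ $[A]\in H_F:=\operatorname{Stab}_{\PGL_k}(L_F)$. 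Writing $f\sigma=\tau f$ and $F\circ A=\widehat AF$ and repeating gives $[\widehat A]\in H_F$. Generic triviality of $H_F$ (Guralnick--Lawther) then gives $A=\lambda I$. The exception is nets of conics, $(d,k)=(2,3)$, which need the weaker condition that $[A]\in H_F$ and $[\widehat A]\in H_F$ together force $[A]=1$. Finally, the degree-$(d-1)$ parts give $DF(\cdot)b=(\lambda-1)B$, which forces $\lambda=1$ and $b=0$ when $B$ is not of the form $DF(\cdot)u$.

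For $k=2$, Levin--Przytycki only yields $RR=RS$ and $SR=SS$ for iterates; the conclusion $g_\Pi=f_\Pi^n$ is Pakovich's theorem for general rational maps. It is false for $d=2$: $s=\iota_rr^n$, where $r\iota_r=r$, always satisfies $\mu_s=\mu_r$. So one must also exclude, for generic $f\in\Poly^2_2$, an affine involution $\sigma$ with $\sigma_\Pi\ne\id$ and $f\sigma=f$. Nor can you apply the degree-$d^n$ statement to $f^n$, since iterates are not general in $\Poly^2_{d^n}$. Instead, once $g_\Pi=(f^n)_\Pi$, Theorem~\ref{thm: main1} applies to $(f^n,g)$ without any genericity. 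It gives $g=\sigma f^n$ with $\sigma\in\ker\rho^{\mathrm{aff}}_{f^n}=\ker\rho^{\mathrm{aff}}_f$, because $J_{f^n}=J_f$. Then $\ker\rho^{\mathrm{aff}}_f$ is trivial for generic $f$: a nontrivial element forces the degree-$(d-1)$ part of $f$ to vanish after conjugating by the translation moving that element's fixed point to the origin, which is a condition of codimension at least $2d-2$.
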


Here $f^n$ denotes the $n$-th iterate of $f$. The main new ingredient is to
study the local stable manifolds to first order and identify their intrinsic
first jets with the rational section $\theta_{f_h,f_{d-1}}$ of
$T\Pi\otimes\Ocal_\Pi(-1)$ represented by
$-\operatorname{adj}(Df_h)f_{d-1}/\det Df_h$; see
Sections~\ref{sect:generic-first-jet}--\ref{sect:generic-source-stabilizer}.

\subsection{The image--fiber principle and sharper results in dimension two}

Put
\[
\Poly^k:=\bigcup_{d\geq2}\Poly^k_d,\qquad \APoly^k:=\Poly^k\cup \Aff(\Cbb^k).
\]
Consider the restriction map
\[
   \rho:\APoly^k\longrightarrow\End(\Pi),
   \qquad h\longmapsto h_\Pi,
   \qquad \Pi\simeq\Pbb^{k-1}.
\]
For a dynamically defined family in $\APoly^k$, one can first control
its \emph{image} under $\rho$ using the dynamics at infinity, and then control
each \emph{fiber} using our main theorems.  We refer to this two-step mechanism
as the \emph{image--fiber principle}:
\[
   \begin{gathered}
      \text{lower-dimensional rigidity at infinity}
      +\ \text{higher-dimensional rigidity in the fibers}\\
      \Longrightarrow\ \text{global rigidity on }\Cbb^k.
   \end{gathered}
\]

For the two-dimensional assertion of Theorem~\ref{thm: main2}, the image is
finite by Levin's one-dimensional theorem (Theorem~\ref{thm: Levin}), while
the first assertion of Theorem~\ref{thm: main2} makes every fiber finite.  The
second assertion of Theorem~\ref{thm:generic-measure-rigidity} combines
Pakovich's results~\cite{pakovich-deg4,
pakovich2026periodiccurvesgeneralendomorphisms} with the generic kernel
control in Lemma~\ref{lem:generic-trivial-affine-kernel}.

\medskip

The same principle gives a two-dimensional analogue of the
Levin--Przytycki functional characterization of equal equilibrium measures.
Recall that a rational map is called \emph{special} if it is conjugate to $z\mapsto z^{\pm d}$, to a Chebyshev map $\pm T_d$, or is a
Latt\`es map; otherwise it is called \emph{nonspecial}.

\begin{thm}\label{thm: LP-C2}
Let $f\in\Poly^2_d$ and $g\in\Poly^2_e$ for some $d,e\geq2$.
Assume that $f$ is not affinely conjugate to a homogeneous polynomial map and
that $f_\Pi$ and $g_\Pi$ are nonspecial rational maps.  Then the following are
equivalent:
\begin{enumerate}
   \item $\mu_f=\mu_g$;
   \item there exist integers $a,b,p,q\geq1$ such that $d^a=e^b$, and,
   putting $u:=f^a$ and $v:=g^b$, one has
   \[
      u^q\circ u^p=u^q\circ v^p
      \qquad\text{and}\qquad
      v^q\circ u^p=v^q\circ v^p.
   \]
\end{enumerate}
\end{thm}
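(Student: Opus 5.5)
We prove the two implications separately; the substance is in (1)$\Rightarrow$(2), and we follow the image--fiber principle.

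\emph{(2)$\Rightarrow$(1).} Put $u=f^a$, $v=g^b$; both have degree $D:=d^a=e^b$, and $\mu_u=\mu_f$, $\mu_v=\mu_g$. Pushforwards preserve equilibrium measures: $(h_1\circ h_2)_*\mu=(h_1)_*(h_2)_*\mu$, and for regular $h$ of degree $D$ one has $h_*\mu_h=\mu_h$ and $h^*\mu_h=D^k\mu_h$. The relation $u^q\circ u^p=u^q\circ v^p$ shows that $u^{p+q}$ and $u^q\circ v^p$ have equal Green functions up to the usual normalization, since both are regular of degree $D^{p+q}$ and $G_{h_1\circ h_2}$ is determined by compositions of homogeneous parts. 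We then argue as in Levin--Przytycki. From $u^q\circ u^p=u^q\circ v^p$ we get $(u^q)^*\mu_u$ controls $(v^p)_*$; more concretely, the Green function satisfies $G_u\circ u^q\circ v^p=D^{p+q}G_u$, so $G_u\circ v^p=D^pG_u$ on the set where $u^q$ is locally injective, hence everywhere by continuity and pluriharmonicity away from $K_u$. The uniqueness of a continuous function $G$ with $G\circ v^p=D^pG$ and $G-\log^+\|z\|$ bounded then gives $G_u=G_{v}$. Taking $(dd^cG)^k$ yields $\mu_f=\mu_u=\mu_v=\mu_g$. The second relation is only needed for the symmetric argument and is not required in this direction.

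\emph{(1)$\Rightarrow$(2).} Assume $\mu_f=\mu_g$.

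First we pass to infinity. Equality of equilibrium measures on $\Cbb^2$ gives $\mu_{f_\Pi}=\mu_{g_\Pi}$ on $\Pi\simeq\Pbb^1$. Because $f_\Pi$ and $g_\Pi$ are nonspecial, the Levin--Przytycki theorem (Theorem~\ref{thm: Levin} and its companion) supplies $a,b$ with $d^a=e^b$ and $p,q$ such that the stated relations hold for $u_\Pi:=f_\Pi^a$ and $v_\Pi:=g_\Pi^b$ on $\Pi$.

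Next we lift to $\Cbb^2$. The maps $U:=u^q\circ u^p$ and $V:=u^q\circ v^p$ lie in $\Poly^2_{D^{p+q}}$, have $\mu_U=\mu_V=\mu_f$, and satisfy $U_\Pi=V_\Pi$. Equality at infinity gives $V_h=\lambda U_h$ for a scalar $\lambda$, so the leading homogeneous parts differ by the invertible linear map $\lambda\,\mathrm{id}$. Theorem~\ref{thm: main1} then gives $V=\sigma\circ U$ with $\sigma\in\Sa_f$. By Theorem~\ref{thm: main2}, $\Sa_f$ is finite: $f$ is not affinely conjugate to a homogeneous map and $f_\Pi$ is nonspecial, hence not $z^{\pm d}$.

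The main obstacle is to remove $\sigma$. We enlarge $q$. Replace $q$ by $q+m$ and observe that $u^m\circ\sigma$ and $u^m$ again have the same Green function, so Theorem~\ref{thm: main1} gives $u^m\circ\sigma=\tau_m\circ u^m$ with $\tau_m\in\Sa_f$. Finiteness of $\Sa_f$ lets us choose $m$ with $\tau_m=\mathrm{id}$, provided the map $\sigma\mapsto\tau$ is compatible with composition. To get this, we use that $\sigma\mapsto\tau$, defined via $u\circ\sigma=\tau\circ u$, is a map from the finite set $\Sa_f$ to itself. Uniqueness of $\tau$ follows because $u$ is surjective. We then compose with a suitable power to reach the identity, or absorb $\sigma$ into a larger $q$ using the pigeonhole principle on the finite orbit, in the spirit of Levin's argument for polynomials.

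The same procedure applied to $v$ gives the second relation, after taking common values of $p$ and $q$ by replacing each with a multiple.
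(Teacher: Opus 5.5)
Your direction (2)$\Rightarrow$(1) is essentially the paper's argument. The appeal to local injectivity of $u^q$ is unneeded, since $G_u\circ u^q=D^qG_u$ holds on all of $\Cbb^2$. Hence $D^q\,G_u\circ v^p=G_u\circ u^qv^p=G_u\circ u^{q+p}=D^{q+p}G_u$ directly.

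The genuine gap is the removal of $\sigma$ in (1)$\Rightarrow$(2). Define $\kappa\colon\sigma\mapsto\tau$ by $u\sigma=\tau u$. It is a group endomorphism of the finite group $\ker\rho_f^{\mathrm{aff}}$; this, not all of $\Sa_f$, is where it is defined, because Theorem~\ref{thm: main1} needs the linear part of $\sigma$ to be scalar. Enlarging $q$ by $m$ replaces the error $\sigma$ by $\kappa^m(\sigma)$.

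Finiteness does not force $\kappa^m(\sigma)=\id$ for some $m$. If $\kappa$ is injective, or simply if $\sigma$ commutes with $u$ (so $\kappa(\sigma)=\sigma$), then $\kappa^m(\sigma)\neq\id$ for every $m$ unless $\sigma=\id$. This happens under the hypotheses of the theorem:
\begin{itemize}
\item The map $f$ of Example~\ref{ex:klein-four-centralizer} is odd, so $-\id\in\Sa_f$. Thus $g:=-f$ satisfies $\mu_g=\mu_f$ and $g_\Pi=f_\Pi$, which is nonspecial.
\item The data $a=b=1$, $p=1$ are legitimately supplied at infinity, since $f_\Pi=g_\Pi$.
\item Then $u=f$, $v=-f$, and $u^qv^p=(-1)^pu^{q+p}$ for every $q$.
\end{itemize}
So no enlargement of $q$ helps; $p$ itself must change. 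Your fallback, pigeonholing on the $\kappa$-orbit of $\sigma$, still only moves $q$ and cannot repair this.

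The missing idea is to vary the exponent of $v$ and pigeonhole on the resulting errors. Take $r_0=u_\Pi$, $s_0=v_\Pi$ with $r_0s_0=r_0r_0$ from Theorem~\ref{thm: Levin-P}. By induction $r_0s_0^n=r_0^{n+1}$ for all $n\geq1$. Theorem~\ref{thm: main1} then yields $uv^n=\alpha_nu^{n+1}$, and likewise $vu^n=\beta_nv^{n+1}$, with $\alpha_n,\beta_n\in\ker\rho_f^{\mathrm{aff}}$. This group is finite by Theorem~\ref{thm: main2} and \eqref{eq:kernel-fiber-bijection}.

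Choose $m<n$ with $(\alpha_m,\beta_m)=(\alpha_n,\beta_n)$. Right-compose $uv^m=\alpha_mu^{m+1}$ with $v^{n-m}$ and compare with $uv^n=\alpha_nu^{n+1}$. Cancelling the automorphism $\alpha_m=\alpha_n$ gives $u^{m+1}v^{n-m}=u^{n+1}$, which is (2) with $q=m+1$ and $p=n-m$. The $v$-relation follows with the same $m,n$.

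Your final step of synchronizing $p$ and $q$ is sound: $u^qu^p=u^qv^p$ persists when $q$ is enlarged or $p$ is replaced by a multiple. But it presupposes relations you had not yet obtained.
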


A point is \emph{preperiodic} if its $f$-orbit is finite. Write $\Prep(f)$ for the set of preperiodic points of $f$.
Yuan--Zhang~\cite{YZhodge17} (see also
Baker--DeMarco~\cite{BakerDeMarco2011} in dimension one) proved that equality
of preperiodic sets implies equality of equilibrium measures.  Our method
gives the converse.

\begin{thm}\label{thm:same-preper}
Let $f\in\Poly^2_d$ and $g\in\Poly^2_e$ for some $d,e\geq2$.
Suppose that $f$ is not affinely conjugate to a homogeneous polynomial and
that $f_\Pi$ is not conjugate to $z\mapsto z^{\pm d}$.  Then
\[
   \mu_f=\mu_g
   \quad\Longleftrightarrow\quad
   \Prep(f)=\Prep(g).
\]
\end{thm}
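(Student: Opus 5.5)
The implication $\Prep(f)=\Prep(g)\Rightarrow\mu_f=\mu_g$ is the result of Yuan--Zhang~\cite{YZhodge17}, applied to the extensions of $f$ and $g$ to $\Pbb^2$. Since $\mu_f$ and $\mu_g$ are supported in $\Cbb^2$, equality of the measures on $\Pbb^2$ gives equality on $\Cbb^2$. So I will only prove $\mu_f=\mu_g\Rightarrow\Prep(f)=\Prep(g)$.

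\emph{Step 1: reduction to a finite group.} Assume $\mu_f=\mu_g$. Then $\mu_{g^n}=\mu_g=\mu_f$ for every $n\geq1$, so every iterate $g^n$ lies in $\Sigma_f^{\mathrm{poly}}$. The aim is to produce integers $a,b\geq1$ such that $u:=f^a$ and $v:=g^b$ have the same degree $D$ and satisfy $v=\sigma\circ u$ with $\sigma\in\Sa_f$. By Theorem~\ref{thm: main2}, $\Sa_f$ is finite under our hypotheses, and $\Sigma_u^{D\mathrm{poly}}$ is finite as well. Note that $u$ is still not affinely conjugate to a homogeneous map, by Lemma~\ref{lem: charcterizationHomogeneous} together with $J_u=J_f$, and $u_\Pi=f_\Pi^a$ is not conjugate to a power map.

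To get an equation $v=\sigma\circ u$, I apply the image--fiber principle. At infinity, $\mu_{f_\Pi}=\mu_{g_\Pi}$. By Levin's theorem (Theorem~\ref{thm: Levin}) the relevant family of maps at infinity is finite. A pigeonhole argument on the iterates $g^{n}$ of degree $e^{n}$, compared with $f^{m}$ of degree $d^m$, should then give $d^a=e^b$ and $g^b_\Pi=\tau\circ f^a_\Pi$ with $\tau$ a symmetry of $\mu_{f_\Pi}$. Equivalently $L_{g^b}=L_{f^a}$ up to the linear lift, and Theorem~\ref{thm: main1} then yields $\sigma$.

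I expect this step to be the main obstacle. Showing that $d$ and $e$ are multiplicatively dependent needs care. If the entropies $\log d$ and $\log e$ of the common measure are compared as measure-theoretic entropies, the relation is not immediate. A cleaner route is Levin's finiteness: the set $\{h_\Pi\}$ of degree $e^n$ maps sharing the measure is finite and must contain $f_\Pi$-compatible elements. The exact mechanism will probably mirror the derivation of Theorem~\ref{thm: LP-C2}, and I would try to extract it from condition (2) there when the maps at infinity are nonspecial. The Chebyshev and Latt\`es cases at infinity would need separate handling through the finiteness of Theorem~\ref{thm: main2}.

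\emph{Step 2: preperiodic points.} Suppose $v=\sigma\circ u$ with $\sigma\in G:=\Sa_f$ finite. Every element of $G$ commutes with $u$ up to $G$: since $\mu_{\tau\circ u\circ\tau^{-1}}=\mu_u$ and the leading part is $L$-compatible, Theorem~\ref{thm: main1} gives $\tau\circ u=\tau'\circ u\circ\tau$ for some $\tau'\in G$. Hence $v^n=\sigma_n\circ u^n$ with $\sigma_n\in G$. Now let $x$ be $u$-preperiodic. The set $G\cdot\{u^n(x)\}$ is finite, and it is invariant under both $u$ and $v$ because $u\circ G\subset G\circ u$. Hence $x$ is $v$-preperiodic. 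The converse follows symmetrically, using $u=\sigma^{-1}\circ v$ and the analogous normality for $v$, which holds since $\Sa_v=\Sa_f$. Finally, $\Prep(f)=\Prep(f^a)$ and $\Prep(g)=\Prep(g^b)$, so $\Prep(f)=\Prep(g)$.
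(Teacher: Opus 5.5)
\paragraph{The gap is Step 1.} It cannot be closed as stated, because the relation $g^b=\sigma\circ f^a$ with $\sigma$ affine is false in general under the hypotheses of the theorem. Only power maps are excluded at infinity, so $f_\Pi$ may be Chebyshev or Latt\`es, and then $d$ and $e$ need not be multiplicatively dependent.

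For instance, take $f=\operatorname{Sym}^2(T_2)$ and $g=\operatorname{Sym}^2(T_3)$ as in Example~\ref{example: necessary condition}, with $T_2(z)=z^2-2$ and $T_3(z)=z^3-3z$. In the coordinates $(s,t)=(x+y,xy)$ one has $f(s,t)=(s^2-2t-4,\ t^2-2s^2+4t+4)$. This map is regular, and $f_\Pi(w)=w^2-2$ in $w=t/s$. It is not affinely conjugate to a homogeneous map, because $Df$ never vanishes. Moreover $\mu_f=(\eta_2)_*(\mu_{T_2}^{\otimes2})=(\eta_2)_*(\mu_{T_3}^{\otimes2})=\mu_g$, yet $2^a\neq 3^b$ for all $a,b\geq1$. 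So no pigeonhole argument can produce $v=\sigma\circ u$.

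Even when $f_\Pi$ is nonspecial, your mechanism does not give a M\"obius postcomposition. Levin's finiteness only counts maps of a fixed degree sharing the measure. Levin--Przytycki only yields $r_0r_0=r_0s_0$ (as in the proof of Theorem~\ref{thm: LP-C2}), which does not by itself give $s_0=\tau\circ r_0$.

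Step 2 also has an unjustified point. The leading part of $\tau\circ u\circ\tau^{-1}$ is $Au_hA^{-1}$. This has the form $Bu_h$ required by Theorem~\ref{thm: main1} only if $u_\Pi\circ\tau_\Pi^{-1}=\nu\circ u_\Pi$ for some M\"obius $\nu$, which is a Beardon-type relation at infinity that you have not established.

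\paragraph{How the paper avoids this.} The paper never seeks an explicit relation between iterates. Put $\Scal=\langle f,g\rangle$. All elements of $\rho(\Scal)$ share the measure $\mu_{f_\Pi}$. Since $f_\Pi$ is not a power map, they then share the same preperiodic set on $\Pi$, so $\rho(\Scal)$ has polynomially bounded growth by Proposition~\ref{prop: BHPT4.10}. Two elements of $\Scal$ with the same restriction to $\Pi$ have leading parts differing by a scalar. So Theorem~\ref{thm: main1} applies with $A$ scalar, and together with Theorem~\ref{thm: main2} it bounds every fiber of $\rho|_{\Scal}$ by $\#\ker\rho_f^{\mathrm{aff}}$. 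Hence $\Scal$ has polynomially bounded growth. Theorem~\ref{thm: Beaumont-tits} then shows that $\Prep(f)\neq\Prep(g)$ would make $\langle f,g\rangle$ free, a contradiction. This treats the Chebyshev and Latt\`es cases uniformly.

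\paragraph{The nonspecial case.} A functional-equation argument works only when $f_\Pi$ and $g_\Pi$ are nonspecial, and there you do not need $v=\sigma\circ u$. Theorem~\ref{thm: LP-C2} gives $u^qv^p=u^qu^p$, hence $u^qv^{pn}=u^{q+pn}$ by induction. So the $v^p$-orbit of a $u$-preperiodic point $x$ lies in the finite set $u^{-q}(\{u^m(x)\}_{m\geq0})$, and the reverse inclusion follows symmetrically from $v^qu^p=v^qv^p$.
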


\subsection{Strategy of the proofs of Theorems~\ref{thm: main1},
\ref{thm: main2}, and~\ref{thm:generic-measure-rigidity}}\label{sect:strategy}

Our proofs begin by studying the dynamics near the hyperplane at infinity.
The starting point is Bedford--Jonsson's description of the Green current
near infinity in terms of stable disks~\cite{BedfordJonssonAJM00}.
Our use of local stable manifolds is also inspired by the work of
Dujardin--Favre--Ruggiero
\cite{dujardin2023dynamicalmaninmumfordconjectureplane}, in which these
manifolds likewise play an important role.

For Theorem~\ref{thm: main1}, equality of equilibrium measures first implies
equality of Green functions by Corollary~\ref{cor:mu-implies-green}.
We then show that $f$ and $g$ have the same local stable-manifold germs for
$\mu_\Pi$-almost every direction in $\Pi$. This is the key geometric input,
for which we give two proofs. In dimension two, the argument
uses Bedford--Jonsson's laminar decomposition in
Lemma~\ref{lem: BJ6.1}, together with the theory of geometric intersection
introduced by Bedford--Lyubich--Smillie~\cite{BS4} and further developed by
Dujardin~\cite{DujardinPM04}. In arbitrary dimension, we instead combine
Bedford--Jonsson's global finite-area disk representation
\cite[Theorem~6.10]{BedfordJonssonAJM00}, stated here as
Theorem~\ref{thm:BJ6.10}, with the result of Dinh--Nguy\^en--Sibony
\cite{DNS10JDG} that the trace measure $\|T_f^{k-1}\|_{\mathrm{FS}}$ is
locally moderate in the sense of Dinh--Sibony~\cite{DS03PolynomialLike}; see
Lemma~\ref{lem:moderate-near-infinity}. This yields
Proposition~\ref{prop:zariski-dense-common-stable-manifolds}.

The images of these common stable manifolds force projective
minors of homogeneous lifts of $f$ and $g$ to vanish to order $d$ along a
Zariski-dense family.  The divisibility statement in
Lemma~\ref{lem:stable-manifold-divisibility}, followed by an algebraic
argument, yields the affine relation $g=\sigma\circ f$.

\medskip

Theorem~\ref{thm:generic-measure-rigidity} requires a further step since no
relation between $f_h$ and $g_h$ is assumed. The first jet at infinity of a
local stable manifold is intrinsic and is determined by the two leading
terms $F:=f_h$ and $B:=f_{d-1}$; see
Lemmas~\ref{lem:intrinsic-stable-jet} and~\ref{lem:first-stable-jet}. These
jets define a first-order divisibility incidence problem on a Grassmannian. Denote by $\Qcal_{d,k}$ the parameter space of $(F,B)$.

\smallskip
\noindent\emph{Step 1: Fiberwise admissibility and algebrization.}
In Section~\ref{sect:generic-first-jet}, we call a $k$-dimensional plane $L\subset H^0(\Pbb^{k-1},\Ocal(d))$ \emph{$(F,B)$-admissible} when it satisfies the divisibility
relations obtained from the first stable-manifold jet; see
Definition~\ref{def:FB-admissible}. This definition is designed to have two
key properties. First, the tautological plane
$L_F=\langle F_1,\ldots,F_k\rangle$ is always $(F,B)$-admissible by
Lemma~\ref{lem:tautological-solution}. Second, if
$g\in\Poly^k_d$ satisfies $\mu_g=\mu_f$, then the plane
$L_{g_h}$ is also $(F,B)$-admissible by
Proposition~\ref{prop:measure-implies-admissible}. This step converts the
analytic question into an algebraic one.

\smallskip
\noindent\emph{Step 2: The power-map fiber and spreading out.}
In Section~\ref{sect:generic-admissible-uniqueness}, we globalize this
condition by constructing a projective incidence morphism
$\pi:\Iscr\to\Qcal_{d,k}$. For every complex point $(F,B)\in\Qcal_{d,k}$, the complex points of the
fiber $\Acal(F,B)$ are precisely the $(F,B)$-admissible $k$-planes. We then study the fiber over a carefully
chosen pair $q_{\mathrm{pow}}$ whose leading term is the power map. An
explicit calculation shows that this fiber is the reduced point
$L_{\mathrm{pow}}$; see Lemma~\ref{lem:power-fiber}. Properness, together
with upper semicontinuity of the fiber length, spreads this property to a
nonempty Zariski open subset of $\Qcal_{d,k}$. Thus, for every pair
$(F,B)$ in this open set, $\Acal(F,B)$ is the reduced point $L_F$; see
Proposition~\ref{prop:generic-admissible-uniqueness}. Consequently, by Corollary~\ref{cor:leading-system-uniqueness}, if $f$ and $g$ have the same equilibrium measure, $L_{g_h}=L_F.$

\smallskip
\noindent\emph{Step 3: The stabilizer argument.}
The equality $L_{g_h}=L_F$ gives an invertible linear map $A$ such that
$g_h=A\circ F$. Theorem~\ref{thm: main1} then gives an affine symmetry
$\sigma(z)=Az+b$ of $J_f$ such that $g=\sigma\circ f$. It remains to show that
$\sigma=\id$ for a general $f$.

Let $H_F=\operatorname{Stab}_{\PGL_k}(L_F)$. If
$(d,k)\neq(2,3)$, then $H_F$ is trivial on a nonempty Zariski open subset
by Lemma~\ref{lem:nonexceptional-stabilizer}. This is due to the generic stabilizer theorem of Guralnick--Lawther~\cite{GuralnickLawther2024}.
The pair $(d,k)=(2,3)$ is exceptional for this generic-stabilizer argument.
In that case, we replace triviality by the weaker condition~\eqref{eq:source-rigidity}. Proposition~\ref{prop:uniform-source-rigidity}
shows that this condition holds on a nonempty Zariski open subset in every
case. It forces $A$ to
be eventually trivial.

In outline:
\[
\mu_f=\mu_g\ \Longrightarrow\ L_{g_h}\ \text{is }(F,B)\text{-admissible}\ \Longrightarrow\ L_{g_h}=L_F\ \Longrightarrow\ g=\sigma\circ f\ \Longrightarrow\ \sigma=\id.
\]

The preceding three steps apply when $k\geq3$; see Remark~\ref{rem:surface-power-fiber}. When $k=2$, the stronger
cross-degree assertion is proved separately in
Section~\ref{sect: first-consequences}, using
Theorem~\ref{thm: pakovich-measure} and
Lemma~\ref{lem:generic-trivial-affine-kernel}.

\medskip

The first assertion of Theorem~\ref{thm: main2} is itself an instance of the
image--fiber principle. The fiber over $f_\Pi$ is identified with the kernel
of the action of $\Sa_f$ on $\Pi$ by
\eqref{eq:kernel-fiber-bijection}. The $S^1$-invariance criterion
(Lemma~\ref{lem: charcterizationHomogeneous}) then shows that this kernel is
infinite exactly when $f$ is affinely conjugate to a homogeneous map. This
criterion may be of independent interest. In dimension two, Levin's
finiteness theorem controls the image and yields the second assertion of
Theorem~\ref{thm: main2}.

\subsection{Further applications of the image--fiber principle}

The same principle applies to semigroups,
centralizers, and arithmetic orbit problems.

The applications below are stated only for $\Cbb^2$, because the required
results controlling the image are currently available only on $\Pbb^1$.
Once analogous results become available on $\Pbb^{k-1}$, the same
image--fiber arguments would immediately yield corresponding statements on $\Cbb^k$.

\subsubsection{Tits alternative}

Bell--Huang--Peng--Tucker~\cite[Question~5.2]{BHPTjems24} asked whether their
Tits-type alternative for semigroups of endomorphisms of $\Pbb^1$ extends to
higher dimension.  We answer the question for regular polynomial
endomorphisms of $\Cbb^2$.  For a semigroup
$\Scal\subset\APoly^2$, put
$\Scal^+:=\{h\in\Scal\mid \deg h\geq2\}$.

\begin{thm}\label{thm: Tits1}
Let $\Scal$ be a finitely generated semigroup in $\APoly^2$.  Then:
\begin{enumerate}
   \item either $\Scal$ has polynomially bounded growth or $\Scal$ contains
   a nonabelian free semigroup;
   \item if there exists $f\in\Scal^+$ such that $f_\Pi$ is nonspecial, then
   either $\Scal$ has linear growth or $\Scal$ contains a nonabelian free
   semigroup.
\end{enumerate}
\end{thm}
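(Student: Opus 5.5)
We use the image--fiber principle applied to the restriction homomorphism $\rho:\APoly^2\to\End(\Pi)$, $h\mapsto h_\Pi$, with $\Pi\simeq\Pbb^1$. The image $\rho(\Scal)$ is a finitely generated semigroup of rational maps of $\Pbb^1$: polynomial-degree-one elements go to Möbius maps, and elements of $\Scal^+$ go to rational maps of the same degree. Our plan is to apply the Tits-type alternative of Bell--Huang--Peng--Tucker~\cite{BHPTjems24} to $\rho(\Scal)$, and then lift each conclusion back to $\Scal$ by controlling the fibers of $\rho$ with Theorems~\ref{thm: main1} and~\ref{thm: main2}.

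\textbf{Free semigroup case.} If $\rho(\Scal)$ contains a nonabelian free semigroup generated by $\rho(a),\rho(b)$, then $a,b$ generate a free semigroup in $\Scal$. Indeed, $\rho$ is a homomorphism, so any relation between words in $a,b$ would descend to $\rho(\Scal)$.

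\textbf{Bounded growth case.} Otherwise, by \cite{BHPTjems24}, $\rho(\Scal)$ has polynomially bounded growth. In the nonspecial case it moreover has linear growth, and the structural description there says that suitable iterates of the nonexceptional generators share a common iterate (equivalently, have equal measures) up to a finite set of Möbius symmetries. We must then bound the number of elements of $\Scal$ lying over a given element of $\rho(\Scal)$ among words of length $\le n$.

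The key lemma will be: if $u,v\in\Scal^+$ have $u_\Pi=v_\Pi$ (same degree), and if the growth structure forces $\mu_u=\mu_v$, then $v=\sigma\circ u$ with $\sigma\in\Sa_u$ acting trivially on $\Pi$, by Theorem~\ref{thm: main1}. By Theorem~\ref{thm: main2} and \eqref{eq:kernel-fiber-bijection}, this fiber is finite unless $u$ is affinely conjugate to a homogeneous map. In the homogeneous case, the kernel is the circle of scalar rotations, a commutative group. Words then differ by scalars, which adds at most polynomial growth: the scalar factors form an abelian subgroup and are generated by finitely many elements.

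For part (1), we combine the polynomial growth of the image with at most polynomial growth along fibers, organized through the abelian (or finite) kernel, to conclude that $\Scal$ has polynomially bounded growth. For part (2), with $f_\Pi$ nonspecial, the image has linear growth. We use Theorem~\ref{thm: main2} together with Levin's theorem: for nonspecial $f_\Pi$, $\Sa_f$ is finite. This holds even if $f$ is homogeneous, because the case $f_\Pi\sim z^{\pm d}$ is excluded, though we must double check that the homogeneous case does not create infinitely many fiber elements. The fibers are then uniformly finite, so $\Scal$ has linear growth.

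\textbf{Main obstacle.} The hard step is turning "same image under $\rho$" into "same equilibrium measure" for elements of $\Scal$. Equality $u_\Pi=v_\Pi$ alone does not give $\mu_u=\mu_v$ on $\Cbb^2$. The cure we would try first is to exploit the non-free hypothesis directly on $\Scal$. If $\Scal$ contains no free semigroup, then for $u,v\in\Scal^+$ neither $\langle u,v\rangle$ nor its conjugates are free. By a ping-pong argument on $\Cbb^2$ near $J$, run in the spirit of \cite{BHPTjems24} but with Green functions $G_u,G_v$, this should force $\mu_u=\mu_v$. Theorems~\ref{thm: LP-C2} and~\ref{thm: main1} then give the algebraic relation needed for the fiber counting.

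Affine elements of $\Scal$ need separate treatment. They form a subsemigroup of $\Aff(\Cbb^2)$, where we would apply the classical Tits alternative for linear groups: virtually solvable semigroups have polynomial growth, or else contain a free subsemigroup. When $\Scal^+\neq\emptyset$, affine elements must preserve a common measure $\mu_f$ up to the relations above, which makes them lie in the finite or compact group $\Sa_f$.
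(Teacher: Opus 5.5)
There is a genuine gap, at the step you flag yourself and again in the homogeneous case. Your reduction to the image and the lifting of free subsemigroups are fine.

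\textbf{Getting a common measure.} A ping-pong with $G_u,G_v$ that ``should force'' $\mu_u=\mu_v$ is not an argument, and none is needed. The projective extensions of elements of $\Scal^+$ are endomorphisms of $\Pbb^2$ polarized by the same bundle $\Ocal_{\Pbb^2}(1)$, so Theorem~\ref{thm: BHPT1.3} applies on $\Pbb^2$ directly. If $\Prep(u)\neq\Prep(v)$ for some $u,v\in\Scal^+$, then $\langle u^j,v^j\rangle\subset\Scal$ is free for large $j$. Hence, if $\Scal$ has no free subsemigroup, all of $\Scal^+$ shares one preperiodic set, and by Yuan--Zhang~\cite{YZhodge17} one equilibrium measure and Julia set. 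Intersecting with the totally invariant line $\Pi$, the same holds for $\rho(\Scal)^+$, and Proposition~\ref{prop: BHPT4.10} gives the growth of the image.

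\textbf{Bounding the fibers.} Equality of measures is too weak to bound the fibers when $\Scal^+$ is affinely conjugate to homogeneous maps. Your claim that $\Sa_f$ is finite for nonspecial $f_\Pi$ ``even if $f$ is homogeneous'' is false. For homogeneous $f$, every rotation $z\mapsto\lambda z$ with $|\lambda|=1$ lies in $\ker\rho_f^{\mathrm{aff}}$, whatever $f_\Pi$ is; Example~\ref{ex:homogeneous-centralizer-counterexample} has $f_\Pi(w)=w^2+1$. Nor do the scalars add only polynomial growth. Take $f$ homogeneous of degree $d$ and $\lambda\in S^1$ not a root of unity. Then $\langle f,\lambda f\rangle$ has a common measure and cyclic image $\langle f_\Pi\rangle$. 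Yet its $2^n$ words of length $n$ give pairwise distinct maps $\lambda^{e}f^n$, where $e=\sum_{i=1}^n\epsilon_i d^{\,n-i}$ and $\epsilon_i\in\{0,1\}$. So ``common measure plus polynomial image growth implies polynomial growth'' is false; the non-freeness must be used again in the fiber step.

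The paper uses the stronger output $\Prep(u)=\Prep(v)$. After one common affine conjugacy making $\Scal^+$ homogeneous (Lemma~\ref{lem:common-conjugacy-to-homogeneous}), two elements of a fiber differ by a scalar $\lambda$. Restricting iterates to the invariant line over a periodic point at infinity and applying Lemma~\ref{lem:azd-same-prep} shows $\lambda$ is a root of unity. Since $\Scal$ is defined over a finitely generated field, which has finitely many roots of unity, the fibers are uniformly bounded (Lemma~\ref{lem:fibers-finite-homogeneous}).

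In the non-homogeneous case, your use of Theorems~\ref{thm: main1} and~\ref{thm: main2} matches the paper, with uniformity coming from the common Julia set. Degree-one elements are handled by the injection $h\mapsto h\circ f_0$ into a fiber over $\Scal^+$, not by a separate Tits alternative. That alternative is also misstated: virtual solvability does not give polynomial growth, since $z\mapsto 2z$ and $z\mapsto 2z+1$ generate a free semigroup. For $\Scal^+=\varnothing$ one needs the Okni\'nski--Salwa alternative~\cite{zbMATH00825311}.
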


The proof compares the growth of $\Scal$ with that of $\rho(\Scal)$. The
one-dimensional results recalled in Theorems~\ref{thm: BHPT1.3}
and~\ref{thm: Beaumont-tits} and Proposition~\ref{prop: BHPT4.10} control the
image, while Proposition~\ref{prop: finite_fiber}, based on measure rigidity,
gives a uniform bound for the fibers of $\rho|_\Scal$.

\subsubsection{Centralizers}

For $f\in\APoly^2$, define
\[
   C(f^n):=\{g\in\APoly^2\mid g\circ f^n=f^n\circ g\},
   \qquad
   C(f^\infty):=\bigcup_{n\geq1}C(f^n).
\]

The relevant one-dimensional finiteness results were established by
Pakovich~\cite{Pak20,Pak21} and recently strengthened by
Beaumont~\cite{beaumont2025centralizersendomorphismsprojectiveline}. Combining this theorem with the fiber control in
Lemma~\ref{lem:restriction-to-infinity-centralizer} yields the following.

\begin{thm}\label{thm: centralizer}
Let $f\in\Poly^2_d$ for some $d\geq2$.  Assume that $f$ is not
affinely conjugate to a homogeneous polynomial and that $f_\Pi$ is not
conjugate to a power map.
\begin{enumerate}
   \item There exists $N\geq1$ such that
   \[
      C(f^\infty)=C(f^N).
   \]
   \item If, moreover, $f_\Pi$ is nonspecial, then there exists a finite set
   $F\subset C(f^\infty)$ such that
   \[
      C(f^\infty)=F\langle f\rangle
        =\{q\circ f^m\mid q\in F,\ m\geq0\}.
   \]
\end{enumerate}
\end{thm}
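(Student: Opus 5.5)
The plan is to follow the image--fiber principle: control the image of $C(f^\infty)$ under $\rho$ by the one-dimensional results of Pakovich and Beaumont, and control each fiber of $\rho$ by measure rigidity.

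\emph{Step 1: passing to the centralizer at infinity.} If $g\circ f^n=f^n\circ g$ with $g\in\APoly^2$, then restricting to $\Pi$ gives $g_\Pi\circ f_\Pi^n=f_\Pi^n\circ g_\Pi$. Hence $\rho(C(f^\infty))$ lies in the centralizer $C(f_\Pi^\infty)$ of $f_\Pi$ on $\Pbb^1$. Since $f_\Pi$ is not conjugate to a power map, Beaumont's strengthening of Pakovich's results applies. It gives an $N_0$ with $C(f_\Pi^\infty)=C(f_\Pi^{N_0})$. In the nonspecial case it gives more: $C(f_\Pi^\infty)$ is a finite union of cosets $\{\phi\circ f_\Pi^m\}$ for $\phi$ in a finite set $\Phi$ of rational maps.

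\emph{Step 2: the fibers.} This is the content of Lemma~\ref{lem:restriction-to-infinity-centralizer}. Suppose $g,g'\in C(f^\infty)$ have the same degree and $g_\Pi=g'_\Pi$. If $g$ commutes with $f^n$ and $\deg g\ge2$, then $\mu_g=\mu_{f^n}=\mu_f$. This is the standard fact for commuting regular maps, since both equal the measure of maximal entropy of the semigroup; I would derive it from uniqueness of the Green function, which is $g$-equivariant. Consequently $g,g'\in\Sigma_f^{e\mathrm{poly}}$, and $g_h$ and $g'_h$ induce the same map on $\Pi$. So $g'_h=\lambda\, g_h$ after checking homogeneous lifts, and in particular $L_{g'}=L_g$. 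Theorem~\ref{thm: main1}, applied with $g$ in place of $f$, gives $g'=\sigma\circ g$ with $\sigma\in\Sa_g=\Sa_f$ acting trivially on $\Pi$. By \eqref{eq:kernel-fiber-bijection} and Lemma~\ref{lem: charcterizationHomogeneous}, this kernel is finite because $f$ is not affinely conjugate to a homogeneous map. When $g$ is affine, $g$ permutes $J_f$ (since $g$ commutes with $f^n$, so $g$ preserves $\mu_f$), hence $g\in\Sa_f$, which is finite by Theorem~\ref{thm: main2}. Thus every fiber of $\rho|_{C(f^\infty)}$ has cardinality at most some uniform $K=|\ker(\Sa_f\to\Aut\Pi)|$.

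\emph{Step 3: conclusion of (1).} $C(f^n)$ increases along divisibility chains, so it suffices to bound it. Each $g\in C(f^\infty)$ satisfies $g_\Pi\in C(f_\Pi^{N_0})$. The elements of the fiber over $g_\Pi$ are $\sigma\circ g$ with $\sigma$ in the finite kernel group $G$, and $f$ normalizes $G$. Let $M$ be the order of the finite permutation action of conjugation by $f^{N_0}$ on $G$, together with the finitely many commutation defects. Then $N=N_0\cdot M\cdot|G|!$ works. Concretely, I would show that $f^{N_0}$ conjugates $g$ to an element $\tau\circ g$ of the same fiber. Iterating, $f^{N_0 j}$ conjugates $g$ to $\tau_j\circ g$, where $j\mapsto\tau_j$ is a cocycle into the finite group $G$, so it vanishes at $j=|G|!$ uniformly. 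Making this cocycle argument rigorous uniformly over all $g$ is the main obstacle. The key fact needed is that $f$ normalizes $G$, which holds because $f$ maps $J_f$ to $J_f$ and a $\sigma\in G$ satisfies $f\circ\sigma=\sigma'\circ f$ by Theorem~\ref{thm: main1}.

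\emph{Step 4: part (2).} In the nonspecial case, every $g\in C(f^\infty)$ has $g_\Pi=\phi\circ f_\Pi^m$ with $\phi\in\Phi$. I would choose lifts $q_\phi\in C(f^\infty)$ whenever a lift exists. Then $g$ and $q_\phi\circ f^m$ lie in the same fiber, so $g=\sigma\circ q_\phi\circ f^m$ with $\sigma\in G$. Taking $F=\{\sigma\circ q_\phi\}\cap C(f^\infty)$, which is finite, gives $C(f^\infty)=F\langle f\rangle$.
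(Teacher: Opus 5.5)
Your architecture is the paper's: Beaumont controls the image at infinity, and the fibers of $\rho$ are controlled by Theorem~\ref{thm: main1} and the finite kernel $G=\ker\rho_f^{\mathrm{aff}}$. Step~2 is essentially Lemma~\ref{lem:restriction-to-infinity-centralizer}(1).

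\textbf{Step~3.} You leave this step open, and the reason you give does not work as stated. Write $f^{jN_0}g=\tau_j\,gf^{jN_0}$, and let $\psi\colon G\to G$ be defined by $f\sigma=\psi(\sigma)f$. Then $\tau_{j+1}=\psi^{N_0}(\tau_j)\,\tau$.
\begin{itemize}
\item A cocycle into a finite group need not return to $\id$. If $\psi^{N_0}$ were trivial, $\tau_j=\tau$ for every $j\ge1$.
\item $\psi$ is a homomorphism but not a permutation in general. It kills any nontrivial $\sigma\in G$ with $f\sigma=f$.
\end{itemize}
The missing input is the hypothesis that $g$ itself commutes with some $f^{n_g}$.

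The paper uses it through right cancellation, Lemma~\ref{lem:restriction-to-infinity-centralizer}(2). Suppose $gf^L=\sigma_g f^Lg$, and $gf^L$, $f^Lg$ both commute with $f^{n_g}$. Then $f^{n_g}\sigma_g(f^Lg)=\sigma_g f^{n_g}(f^Lg)$. Cancelling the surjective map $f^Lg$ gives $\sigma_g\in C(f^{n_g})$. So every defect lies in the finite group $\Gamma_f=\Sa_f\cap C(f^\infty)$. Take $L$ a common multiple of $N_0$ and of commuting exponents of the elements of $\Gamma_f$. Then $\sigma_g$ commutes with $f^L$, and $gf^{EL}=\sigma_g^Ef^{EL}g=f^{EL}g$ with $E=\#\Gamma_f$.

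Your cocycle can be rescued with the same input. We have $\tau_j=T^j(\id)$ for the self-map $T(x)=\psi^{N_0}(x)\tau$ of the finite set $G$. Since $g\in C(f^{n_g})$, we get $T^{n_g}(\id)=\id$. So $\id$ is $T$-periodic with period at most $\#G$, hence $\tau_{(\#G)!}=\id$, and $N=N_0\,(\#G)!$ works uniformly in $g$.

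\textbf{Step~4.} There are two further holes of the same kind.
\begin{itemize}
\item You apply Beaumont's theorem to all of $C(f_\Pi^\infty)$. The resulting set $\Phi$ need not consist of restrictions of elements of $C(f^\infty)$. If $g_\Pi=\phi f_\Pi^m$ and $\phi$ has no lift, ``choose $q_\phi$ whenever a lift exists'' says nothing about $g$. The paper applies Beaumont's theorem to the subsemigroup $\rho(C(f^\infty))$, which contains $f_\Pi$. Each $u_i$ then has a lift in $C(f^\infty)$ by construction.
\item From $g=\sigma q_\phi f^m$ you must show $\sigma q_\phi\in C(f^\infty)$; intersecting your candidate set with $C(f^\infty)$ does not supply this. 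It again follows by right cancellation. The maps $g$ and $q_\phi f^m$ commute with a common iterate $f^n$, so $\sigma$ does, and hence so does $\sigma q_\phi$.
\end{itemize}
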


In a complementary direction,  Dinh~\cite{DinhCommuting}, Dinh--Sibony~\cite{DinhSibony2002} and Kaufmann~\cite{Kaufmann2018}  studied various commuting pairs of endomorphisms of $\Pbb^k$ that do not share a common iterate, extending to higher dimensions earlier results on $\Pbb^1$ by Fatou~\cite{Fatou1923}, Julia~\cite{Julia1922}, Ritt~\cite{Ritt1923}, and Eremenko~\cite{Eremenko1990}.

\subsubsection{Non-density of orbit intersections}

Hsia--Tucker~\cite{HsiaTucker} asked whether simultaneous orbit
intersections for two compositionally independent maps must lie in a proper
Zariski closed subset.  This question is rooted in earlier work of Bugeaud--Corvaja--Zannier~\cite{BCZ03},
who proved an upper bound for the common divisor of $a^n-1$ and
$b^n-1$ under the assumption that $a$ and $b$ are multiplicatively
independent integers at least $2$. We refer the reader to Yang--Zhong~\cite{yang2026dynamicalgcdproblemsvariant} for more background and recent developments.
We obtain a positive answer in the present setting.

\begin{thm}\label{thm: arith}
Let $f,g\in\Poly^2$ have degree at least $2$ and be compositionally
independent, equivalently, suppose that $\langle f,g\rangle$ is a free
semigroup.  Let $c$ be an endomorphism of $\Cbb^2$ or $\Pbb^2$ that is not a
compositional power of $f$ or $g$.  Then the set of $z\in\Abb^2(\Cbb)$ for
which
\[
   f^m(z)=g^n(z)=c(z)
\]
for some positive integers $m,n$ is not Zariski dense.
\end{thm}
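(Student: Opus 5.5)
We argue by contradiction, using the image--fiber principle together with an equidistribution input. Suppose the set
\[
Z:=\{z\in\Abb^2(\Cbb)\mid f^m(z)=g^n(z)=c(z)\ \text{for some } m,n\geq1\}
\]
is Zariski dense. We may assume the points are defined over a finitely generated field, after a specialization argument; the statement is geometric, so this reduction is routine. For each pair $(m,n)$, the locus $Z_{m,n}=\{f^m=g^n=c\}$ is Zariski closed. Density of $Z$ therefore forces infinitely many distinct pairs $(m,n)$ with $Z_{m,n}$ nonempty, and in fact forces $Z_{m,n}$ to contain points escaping every proper subvariety.

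\textbf{Step 1: heights.} Consider the canonical heights $\hhat_f$ and $\hhat_g$ attached to the regular extensions to $\Pbb^2$. For $z\in Z_{m,n}$ we have
\[
\hhat_f(c(z))=\hhat_f(f^m(z))=d^m\hhat_f(z)
\]
and similarly $\hhat_g(c(z))=e^n\hhat_g(z)$. On the other hand, functoriality of Weil heights gives $h(c(z))\leq (\deg c)\,h(z)+O(1)$. Since $\hhat_f$ and $\hhat_g$ are both comparable to the Weil height, once $d^m$ and $e^n$ exceed a constant depending on $\deg c$, the points of $Z_{m,n}$ have bounded height. More precisely, $\hhat_f(z)$ and $\hhat_g(z)$ become small, tending to $0$ as $m,n\to\infty$. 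When $\deg c$ is large, one uses instead that $c=f^m$ on $Z_{m,n}$ with $m$ unbounded, and $c\neq f^m$ identically.

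\textbf{Step 2: equidistribution.} We thus obtain a generic (Zariski dense) sequence of points with $\hhat_f\to0$ and $\hhat_g\to0$. Arithmetic equidistribution (Yuan) applied at an archimedean place to both adelic metrized line bundles then forces $\mu_f=\mu_g$.

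\textbf{Step 3: measure rigidity.} By Theorem~\ref{thm: LP-C2}, or by Theorem~\ref{thm:same-preper} in the cases where $f_\Pi$ is not a power map, equality of the measures yields a functional relation between iterates $u=f^a$ and $v=g^b$, namely $u^q\circ u^p=u^q\circ v^p$. We then pass to infinity: the maps $u_\Pi$ and $v_\Pi$ satisfy the same relation on $\Pbb^1$, and the one-dimensional results recalled for the Tits alternative (Theorems~\ref{thm: BHPT1.3} and~\ref{thm: Beaumont-tits}) show that $\rho(\langle f,g\rangle)$ is not free. The finite-fiber control of Proposition~\ref{prop: finite_fiber} lifts this conclusion to $\langle f,g\rangle$ itself. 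This contradicts compositional independence, i.e.\ the assumption that $\langle f,g\rangle$ is free.

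\textbf{Remaining cases and main obstacle.} The excluded cases must be handled separately: $f$ affinely conjugate to a homogeneous map, or $f_\Pi$ special. For these we would use the $S^1$-invariance criterion (Lemma~\ref{lem: charcterizationHomogeneous}) and the explicit power/Chebyshev structure, following Theorem~\ref{thm: Tits1}, which still produces a non-free relation. The main obstacle is Step 1 when $\deg c$ is comparable to $d^m$, and the role of the hypothesis that $c$ is not a power of $f$ or $g$. In that regime we would first show that only finitely many $m$ can have $\deg c\geq d^m/2$. For the remaining $m$, the inequality $d^m\hhat_f(z)\leq(\deg c)\hhat_f(z)+O(1)$ gives the required smallness of heights.
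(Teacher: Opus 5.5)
Your proposal has a genuine gap in Step~2 and in the deferred ``remaining cases''. By reducing the arithmetic information to the single archimedean identity $\mu_f=\mu_g$, you discard exactly what is needed when $f$ is affinely conjugate to a homogeneous map or $f_\Pi$ is conjugate to a power map. In those cases equal measures do \emph{not} force $\langle f,g\rangle$ to be non-free. Take $f(x,y)=(x^2+y^2,y^2)$ and $g=\lambda f$ with $\lambda\in S^1$ not a root of unity (Example~\ref{ex:homogeneous-counterexample-prep}). Then $\mu_f=\mu_g$ but $\Prep(f)\neq\Prep(g)$, so $\langle f,g\rangle$ is free by Theorem~\ref{thm: Beaumont-tits}. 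Example~\ref{ex:power-at-infinity-counterexample-prep} does the same with a power map at infinity.

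So no argument via the $S^1$-invariance criterion or the power/Chebyshev structure can extract a relation from $\mu_f=\mu_g$ in these cases, and your claim that it ``still produces a non-free relation'' is false. In the example $|\lambda|=1$ at the archimedean place, and the difference between $f$ and $g$ only shows up at other places or polarizations, e.g.\ at a place $v$ with $|\lambda|_v\neq1$ when $\lambda$ is algebraic.

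The paper avoids measures altogether. Over a finitely generated field $K$, your Step~1 estimate works for the Moriwaki height $\hhat_f^{\overline B}$ of \emph{every} arithmetic polarization $\overline B$, so the generic sequence is small for all of them at once. The fundamental inequality and the arithmetic Hodge index theorem (Theorem~\ref{thm:YZ-common-small}) then give $\hhat_f^{\overline B}=\hhat_g^{\overline B}$ for all $\overline B$. Moriwaki's Theorem~\ref{thm: moriwaki} gives $\Prep(f)=\Prep(g)$, and Corollary~\ref{cor: prep-free} gives non-freeness. That corollary rests on Proposition~\ref{prop: finite_fiber}, which already treats the homogeneous case via roots of unity in a finitely generated field (Lemma~\ref{lem:fibers-finite-homogeneous}). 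So no case distinction and no use of Theorem~\ref{thm: LP-C2} is needed. Note also that Yuan's equidistribution theorem is a number-field statement, whereas your $K$ is only finitely generated.

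Two smaller corrections.

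In Step~3, once Theorem~\ref{thm: LP-C2} gives $u^qu^p=u^qv^p$ with $p\geq1$, this already equates the distinct words $f^{a(p+q)}$ and $f^{aq}g^{bp}$, so $\langle f,g\rangle$ is not free. Your detour through $\rho$ is therefore unnecessary, and as written it is invalid. Theorems~\ref{thm: BHPT1.3} and~\ref{thm: Beaumont-tits} go from \emph{distinct} preperiodic sets to freeness, not the other way. Moreover, non-freeness of $\rho(\langle f,g\rangle)$ plus bounded fibers does not by itself give non-freeness upstairs; one needs the growth comparison used for Theorem~\ref{thm: Tits1}.

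In Step~1, the real issue is not $\deg c$ versus $d^m$. What you must show is that $m_i\to\infty$ and $n_i\to\infty$ \emph{separately} along a generic sequence $(p_i)$; ``infinitely many pairs $(m,n)$'' still allows $m$ to stay bounded. This is where the hypothesis on $c$ enters. For fixed $m$, the relevant points lie in $\{f^m=c\}$, a proper closed subset since $c\neq f^m$, so a generic sequence meets it only finitely often. Then $\bigl((\deg f)^{m_i}-\deg c\bigr)\hhat_f(p_i)\leq C$ gives $\hhat_f(p_i)\to0$, and likewise for $g$.
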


For regular polynomial skew products over a number field with $\deg f = \deg g$, this was proved by
Noytaptim--Zhong~\cite{noytaptim2024commonzerositeratedmorphisms}. Our proof is
modeled on theirs.

\subsection*{Conventions and organization of the paper}

The composition $f\circ g$ will usually be written multiplicatively as $fg$
when no confusion is possible. A property holds for a \emph{general} point
$x$ of an algebraic variety $X$ if it holds on a dense Zariski open subset of
$X$.

\medskip

Section~\ref{sect: basic} recalls the basic facts from regular polynomial
dynamics and establishes the two stable-manifold inputs,
Propositions~\ref{Prop: StableManifoldsSame} and
\ref{prop:zariski-dense-common-stable-manifolds}.

Section~\ref{sect: proof main} proves Theorems~\ref{thm: main1}
and~\ref{thm: main2}.
Section~\ref{sect: first-consequences} is devoted to further results in
dimension two. It proves the functional criterion in
Theorem~\ref{thm: LP-C2}, the second assertion of
Theorem~\ref{thm:generic-measure-rigidity}, and the characterization by
preperiodic sets in Theorem~\ref{thm:same-preper}.

Sections~\ref{sect:generic-first-jet}--\ref{sect:generic-source-stabilizer}
develop the first-jet incidence method, analyze the power-map fiber, establish
the uniform source-stabilizer condition, and complete the proof of the case $k\ge 3$ of Theorem~\ref{thm:generic-measure-rigidity}.

Finally, Section~\ref{sect:further-applications} develops three further
applications of the image--fiber principle. Subsection~\ref{sect: tits}
proves the Tits alternative in Theorem~\ref{thm: Tits1},
Subsection~\ref{sect: cent} proves the centralizer theorem,
Theorem~\ref{thm: centralizer}, and Subsection~\ref{sect: arith} proves the
arithmetic application, Theorem~\ref{thm: arith}.

\subsection*{Acknowledgements}

The first breakthrough in this work arose from a conversation with Romain
Dujardin, and I am grateful to him for generously taking the time to
discuss mathematics with me. I also thank Marc Abboud, Alonso Beaumont, Zhuchao Ji, Johan
Taflin, and Gabriel Vigny for helpful discussions and for sharing their
insights on various aspects of this work.

\section{Regular polynomial endomorphisms and local stable manifolds}\label{sect: basic}
We first recall basic properties of regular polynomial endomorphisms of
$\Cbb^k$, including Green functions, equilibrium measures, Julia sets, and
local stable manifolds at infinity. All statements in this section are valid
in arbitrary dimension $k\geq2$, except
Proposition~\ref{Prop: StableManifoldsSame}, whose geometric-intersection
proof is specific to dimension two. The current-theoretic substitute in
arbitrary dimension is Lemma~\ref{lem:stable-manifolds-same-k}, which yields
Proposition~\ref{prop:zariski-dense-common-stable-manifolds}.

\subsection{Green functions, equilibrium measures and Julia sets}\label{section: Basic green}
Fix $k\geq2$ and let $f\in\Poly^k_d$, where $d\geq2$. We use the
homogeneous decomposition
\begin{equation}
   f=\sum_{m=0}^d f_m,
   \qquad
   f_m=(f_{m,1},\ldots,f_{m,k}),
   \qquad
   f_h:=f_d,
   \label{eq:homogeneous-decomposition}
\end{equation}
where each $f_m:\Cbb^k\to\Cbb^k$ is homogeneous of degree $m$. Thus the
subscript $m$ always denotes homogeneous degree, while the second subscript
denotes a coordinate component.

Using homogeneous coordinates $[Z:T]=[Z_1:\cdots:Z_k:T]$ on $\Pbb^k$, the
projective extension of $f$ is
\begin{equation*}
   \widehat f[Z:T]
   =\left[\sum_{m=0}^d T^{d-m}f_m(Z):T^d\right].
\end{equation*}
Regularity is equivalent to the coordinate polynomials of $f_h=f_d$ having
no common zero on $\Pbb^{k-1}$. Consequently, the hyperplane at infinity
$\Pi=\{T=0\}$ is totally invariant, and the induced map is the holomorphic
endomorphism
\[
   f_\Pi:\Pi\simeq\Pbb^{k-1}\longrightarrow\Pi\simeq\Pbb^{k-1},
   \qquad
   [Z]\longmapsto[f_h(Z)].
\]

The Green function of $f$ is defined by
\begin{equation*}
   G_f(z):=\lim_{n\to\infty}\frac{1}{d^n}\log^+\|f^n(z)\|.
\end{equation*}
It is a nonnegative continuous plurisubharmonic function on $\Cbb^k$.
The (logarithmically homogeneous) Green function of $f_h$ is defined by
\begin{equation*}
   G_{f_h}(z):=\lim_{n\to\infty}\frac{1}{d^n}\log\|f_h^n(z)\|,
   \qquad z\in\Cbb^k\setminus\{0\}.
\end{equation*}
We collect some standard properties; see, for instance,
Bedford--Jonsson~\cite{BedfordJonssonAJM00}.

\begin{prop}\label{Prop: basic-green}
The Green functions $G_f$ and $G_{f_h}$ satisfy the following properties.
\begin{enumerate}
\item \emph{(Invariance)}
      $G_f\circ f=dG_f$ and $G_{f_h}\circ f_h=dG_{f_h}$.
\item \emph{(Logarithmic growth)} There is a continuous Robin function
      $\rho_f:\Pi\to\Rbb$ such that
      \[
         G_{f_h}(z)=\log\|z\|+\rho_f([z]),
         \qquad
         G_f(z)=\log\|z\|+\rho_f([z])+o(1)
      \]
      as $\|z\|\to\infty$.
\item \emph{(Uniqueness)} If $u$ is a continuous nonnegative
      plurisubharmonic function on $\Cbb^k$ with logarithmic growth and
      $u\circ f=du$, then $u=G_f$.
\end{enumerate}
\end{prop}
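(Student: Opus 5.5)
The three properties are standard. I will prove them in order by the usual telescoping/uniform convergence argument, taking care that the constants are uniform on $\Cbb^k$.

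\emph{Homogeneous part.} Since $f_h$ is nondegenerate, compactness of the unit sphere gives constants $0<c\le C$ with $c\|z\|^d\le\|f_h(z)\|\le C\|z\|^d$. Hence
\[
   \Bigl|\tfrac1d\log\|f_h(z)\|-\log\|z\|\Bigr|\le M
\]
for all $z\neq0$. Telescoping $\frac1{d^{n+1}}\log\|f_h^{n+1}(z)\|-\frac1{d^n}\log\|f_h^n(z)\|$ gives a series dominated by $\sum M d^{-n}$. So the limit $G_{f_h}$ exists, it is locally uniform on $\Cbb^k\setminus\{0\}$, and $G_{f_h}-\log\|z\|$ is bounded and continuous. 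Each approximant minus $\log\|z\|$ is invariant under $z\mapsto\lambda z$, because $f_h^n(\lambda z)=\lambda^{d^n}f_h^n(z)$. Therefore $G_{f_h}(z)-\log\|z\|$ descends to a continuous function $\rho_f$ on $\Pi$. The invariance $G_{f_h}\circ f_h=dG_{f_h}$ follows by shifting the index in the limit.

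\emph{The map $f$.} Write $f=f_h+O(\|z\|^{d-1})$. There is $R$ such that $\|f(z)\|\ge c'\|z\|^d\ge 2\|z\|$ on $\{\|z\|\ge R\}$. On that region $\frac1d\log\|f(z)\|-\log\|z\|$ is bounded. Globally one has $\log^+\|f(z)\|\le d\log^+\|z\|+M'$. Combining these with the fact that $\{\|z\|\ge R\}$ is forward invariant, the same telescoping gives uniform convergence of $d^{-n}\log^+\|f^n\|$ on compacts. So $G_f$ is continuous, nonnegative and psh (a uniform limit of psh functions), and $G_f\circ f=dG_f$.

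For the asymptotic in (2), compare $f^n$ with $f_h^n$ when $\|z\|$ is large. For $\|z\|\ge R$ one has
\[
   \log\|f(z)\|=\log\|f_h(z)\|+O(\|z\|^{-1}).
\]
Iterating, the errors at step $j$ are $O(\|f^j(z)\|^{-1})$, and $\|f^j(z)\|\ge 2^j\|z\|$ grows superexponentially. After dividing by $d^{j}$ they sum to $O(\|z\|^{-1})$. One also needs $f^j(z)$ to stay comparable to $f_h^j(z)$ in direction. The simplest route is to estimate
\[
   \Bigl|\tfrac{1}{d^n}\log\|f^n(z)\|-\tfrac1{d^n}\log\|f_h^n(z)\|\Bigr|
\]
inductively by writing $f^{n}=f\circ f^{n-1}$ and using $G_{f_h}$ directly. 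Concretely, $G_f(z)-G_{f_h}(z)=\sum_j d^{-j-1}\bigl(G_{f_h}(f(w_j))-dG_{f_h}(w_j)\bigr)$ with $w_j=f^j(z)$. Each term is $O(\|w_j\|^{-1})$, since $G_{f_h}$ is Lipschitz in $\log$ on the relevant cone region. This gives $G_f-G_{f_h}=o(1)$. This comparison is the main technical step: $G_{f_h}(f(w))-G_{f_h}(f_h(w))$ must be $O(\|w\|^{-1})$. That holds because $f(w)=f_h(w)(1+O(\|w\|^{-1}))$ in norm and direction, and $\rho_f$ is continuous. Strictly, continuity only gives $o(1)$, which is enough, since we only need $o(1)$ in total and $\sum d^{-j}$ converges.

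\emph{Uniqueness.} Let $u$ be continuous, psh, nonnegative, with $u\le\log^+\|z\|+C$ and $u\circ f=du$. Then $u-G_f$ is bounded on $\Cbb^k$, because both equal $\log\|z\|+O(1)$ near infinity and both are continuous; here logarithmic growth is taken to mean $u=\log\|z\|+O(1)$ at infinity. Also $(u-G_f)\circ f^n=d^n(u-G_f)$. Boundedness forces $u-G_f\equiv0$.
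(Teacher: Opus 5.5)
Your argument is correct. The paper gives no proof of its own here; it only cites Bedford--Jonsson, and your uniform telescoping estimates for $d^{-n}\log\|f_h^n\|$, for $d^{-n}\log^+\|f^n\|$, and for $G_f-G_{f_h}=\sum_j d^{-j-1}\bigl(G_{f_h}(f(w_j))-G_{f_h}(f_h(w_j))\bigr)$ are the standard proof behind that reference.

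Two minor remarks. First, $\rho_f$ is in general only H\"older, not Lipschitz: for $f_h(x,y)=(x^2-2y^2,y^2)$ it involves the Green function of $z^2-2$, which behaves like $\sqrt{|z-2|}$ near $z=2$. So your fallback to uniform continuity of $\rho_f$, which gives $o(1)$ rather than $O(\|z\|^{-1})$, is the correct version of that step. Second, your two-sided reading of ``logarithmic growth'' in (3) is genuinely necessary, since $u\equiv0$ satisfies all the other hypotheses.
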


The zero set $K_f:=\{G_f=0\}$ is the \emph{filled Julia set}; it is precisely
the set of points with bounded
forward orbit. Set
\[
   T_f:=\ddc G_f,
   \qquad
   \mu_f:=T_f^k=(\ddc G_f)^k,
   \qquad
   J_f:=\supp(\mu_f).
\]
The Green current extends canonically to $\Pbb^k$, and its restriction to
$\Pi$ is the Green current $T_{f_\Pi}$ of $f_\Pi$. In particular,
\[
   \mu_{f_\Pi}=T_{f_\Pi}^{k-1}=(T_f|_\Pi)^{k-1}.
\]

There is a strong relation between $K_f$ and $J_f$. We briefly recall some notions from pluripotential theory; see, for example, the books of Klimek~\cite{Klimek} and Dinh and Sibony~\cite{sibonybook,DSbook}.

Let $K\subset\Cbb^k$ be compact. Its \emph{Shilov boundary} $\partial_SK$ is
the unique smallest closed subset $E\subset K$ such that
$\max_E|P|=\max_K|P|$ for every polynomial $P$. Its \emph{polynomial hull} is
\[
   \widehat K:=\{z\in\Cbb^k\mid |P(z)|\leq\sup_K|P|
      \text{ for every polynomial }P\}.
\]
The compact set $K$ is \emph{polynomially convex} if $\widehat K=K$. In that
case $\widehat{\partial_SK}=K$.

Finally, the \emph{pluricomplex Green function} of $K$ is defined by
\[
   G_K(z):=\sup\{u(z)\mid u\text{ is psh on }\Cbb^k,\ u\leq0\text{ on }K,
      \ u(z)\leq\log\|z\|+O(1)\}.
\]
The compact set $K$ is \emph{regular} if $G_K$ is continuous. In this case,
$\supp(\ddc G_K)^k=\partial_SK$; see
Bedford--Taylor~\cite{BTJFA87}.

Returning to dynamics, the Green function $G_f$ is precisely the pluricomplex Green function of $K_f$. Hence the Julia set $J_f$ is the Shilov boundary of the polynomially convex compact set $K_f$, and therefore $\widehat{J_f}=K_f$.

We shall use the following uniqueness theorem of Bedford--Taylor for plurisubharmonic functions of logarithmic growth~\cite{BT89}.

\begin{prop}[Bedford--Taylor]\label{prop:BT-uniqueness}
Let $u,v$ be continuous nonnegative plurisubharmonic functions on $\Cbb^k$
with logarithmic growth at infinity. If
$(\ddc u)^k=(\ddc v)^k$, then $u-v$ is constant. If, moreover, both functions
vanish somewhere, then $u=v$.
\end{prop}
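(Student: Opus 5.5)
The plan is to compactify the problem to $\Pbb^k$ and apply the uniqueness theorem for complex Monge--Amp\`ere equations with bounded potentials on a compact Kähler manifold. Throughout, "logarithmic growth" is read as $u(z)=\log\|z\|+O(1)$ as $\|z\|\to\infty$, i.e.\ membership in the Lelong class $\mathcal L^+$, which is the situation for Green functions by Proposition~\ref{Prop: basic-green}.

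\emph{Compactification.} Let $\omega$ be the Fubini--Study form on $\Pbb^k$, with potential $\tfrac12\log(1+\|z\|^2)$ on $\Cbb^k$. Put
\[
\varphi:=u-\tfrac12\log(1+\|z\|^2),\qquad \psi:=v-\tfrac12\log(1+\|z\|^2).
\]
These are continuous on $\Cbb^k$ and $\omega$-plurisubharmonic there. Since $u,v\in\mathcal L^+$, they are bounded near $\Pi$. A bounded $\omega$-psh function extends across the pluripolar hypersurface $\Pi$ as a bounded $\omega$-psh function on $\Pbb^k$, by taking the upper semicontinuous regularization of the $\limsup$.

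\emph{Transfer of the hypothesis.} Bedford--Taylor theory shows that $(\omega+\ddc\varphi)^k$ is well defined. It puts no mass on the pluripolar set $\Pi$, and on $\Cbb^k$ it equals $(\ddc u)^k$. Its total mass is $\int_{\Pbb^k}\omega^k=1$. The hypothesis $(\ddc u)^k=(\ddc v)^k$ therefore gives
\[
\omega_\varphi^k=\omega_\psi^k\quad\text{on }\Pbb^k,\qquad\text{where }\omega_\varphi:=\omega+\ddc\varphi .
\]

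\emph{Uniqueness (Błocki's argument).} Set $w:=\varphi-\psi$, which is bounded. Write
\[
0=\omega_\varphi^k-\omega_\psi^k=\ddc w\wedge S,\qquad S:=\sum_{j=0}^{k-1}\omega_\varphi^j\wedge\omega_\psi^{k-1-j}.
\]
Integrating by parts, which is legitimate for bounded quasi-psh functions on a compact manifold, gives
\[
0=\int w\,\ddc w\wedge S=-\int dw\wedge d^cw\wedge S .
\]
Each summand $\int dw\wedge d^cw\wedge\omega_\varphi^j\wedge\omega_\psi^{k-1-j}$ is nonnegative, so each one vanishes.

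From here, Błocki's inductive trick replaces the $\omega_\varphi,\omega_\psi$ factors one at a time by $\omega$. It uses $\omega_\varphi+\omega_\psi\ge\omega$ together with Cauchy--Schwarz for the mixed terms $dw\wedge d^c w$. The conclusion is
\[
\int dw\wedge d^cw\wedge\omega^{k-1}=0 .
\]
This forces $w$ to be constant on $\Pbb^k$, because $w$ lies in the Sobolev space $W^{1,2}$ and $\Pbb^k$ is connected. Restricting to $\Cbb^k$ shows that $u-v$ is constant.

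I expect this induction, and the justification of integration by parts for merely bounded potentials across $\Pi$, to be the main technical point. I would cite Błocki's and Dinh's uniqueness theorem for bounded $\omega$-psh solutions rather than reprove it.

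\emph{Second assertion.} Write $u=v+c$. If $u(z_0)=0$, then $0\le v(z_0)=-c$, so $c\le0$. If $v(z_1)=0$, then $0\le u(z_1)=c$, so $c\ge 0$. Hence $c=0$ and $u=v$.
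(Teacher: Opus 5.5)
Your proof is correct, but it is not the paper's route: the paper gives no argument of its own here and simply invokes Bedford--Taylor \cite{BT89}, whose theorem is formulated directly for functions of logarithmic growth on $\Cbb^k$. You instead use the dictionary between that class and bounded $\omega$-plurisubharmonic functions on $\Pbb^k$. Its ingredients are the extension across $\Pi$, and the fact that Monge--Amp\`ere measures of bounded potentials do not charge the pluripolar set $\Pi$, so the equation holds on all of $\Pbb^k$. You then apply uniqueness of bounded solutions on a compact K\"ahler manifold, which for bounded potentials is B{\l}ocki's theorem (Dinew later extended it to the energy class $\mathcal E$).

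Your route makes transparent that the real hypothesis is boundedness of $u-\tfrac12\log(1+\|z\|^2)$ on $\Cbb^k$, i.e.\ boundedness of the $\omega$-potential on $\Pbb^k$. The price is importing the compact theory, which postdates \cite{BT89}; citing Bedford--Taylor keeps everything on $\Cbb^k$. Your reading of ``logarithmic growth'' as $\log\|z\|+O(1)$ is also the necessary one. With only an upper bound the statement fails, e.g.\ for $\log^+|z_1|$ and $\log^+|z_2|$, both of which have vanishing Monge--Amp\`ere measure when $k\ge2$. It is the reading that the paper's Green functions satisfy.

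There is one inaccuracy in your sketch of the inductive step. The inequality $\omega_\varphi+\omega_\psi\ge\omega$ is false in general: $\omega_\varphi+\omega_\psi=2\omega_{(\varphi+\psi)/2}$ is merely positive. To see this, take $\varphi=\psi$ smooth with $\omega_\varphi$ degenerate somewhere.

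The standard step works differently. For a mixed product $R=\omega^p\wedge\omega_\varphi^j\wedge\omega_\psi^{k-2-p-j}$, it writes $\omega=\omega_\psi-\ddc\psi$. It then integrates by parts to turn $\int dw\wedge d^cw\wedge\ddc\psi\wedge R$ into $\int dw\wedge d^c\psi\wedge(\omega_\varphi-\omega_\psi)\wedge R$. Finally it kills this term by Cauchy--Schwarz against the positive currents $\omega_\varphi\wedge R$ and $\omega_\psi\wedge R$, using the vanishing already established at the previous stage. Since you cite the theorem rather than reprove it, this does not affect the validity of your argument.
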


Applying Proposition~\ref{prop:BT-uniqueness} to Green functions yields the following.

\begin{cor}\label{cor:mu-implies-green}
Let $f\in\Poly^k_d$ and $g\in\Poly^k_e$, where $d,e\geq2$.
If $\mu_f=\mu_g$, then
\[
   G_f=G_g,
   \qquad
   T_f=T_g,
   \qquad
   G_{f_h}=G_{g_h},
   \qquad
   \mu_{f_\Pi}=\mu_{g_\Pi}.
\]
\end{cor}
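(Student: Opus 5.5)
The plan is to apply Proposition~\ref{prop:BT-uniqueness} to the pair $u=G_f$, $v=G_g$, and then to pass to infinity to obtain the statements about $f_h$, $g_h$ and $\Pi$.

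By Proposition~\ref{Prop: basic-green}, both $G_f$ and $G_g$ are continuous, nonnegative and plurisubharmonic on $\Cbb^k$, with logarithmic growth $\log\|z\|+O(1)$; the $O(1)$ term is uniform because the Robin functions are continuous on the compact set $\Pi$. The hypothesis $\mu_f=\mu_g$ says exactly that $(\ddc G_f)^k=(\ddc G_g)^k$. Both functions vanish somewhere, since $K_f=\{G_f=0\}$ and $K_g=\{G_g=0\}$ are nonempty; for instance, each contains the fixed points of the map in $\Cbb^k$, and a regular polynomial endomorphism has finitely many such points, all in $\Cbb^k$ because $\Pi$ is repelling in the normal direction. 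Proposition~\ref{prop:BT-uniqueness} then gives $G_f=G_g$, and applying $\ddc$ yields $T_f=T_g$. Note that no relation between $d$ and $e$ is needed here.

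For the homogeneous Green functions, I would use the logarithmic growth statement:
\[
G_f(z)=G_{f_h}(z)+o(1)\quad\text{and}\quad G_g(z)=G_{g_h}(z)+o(1)\qquad\text{as }\|z\|\to\infty.
\]
Consequently $\rho_f([z])=\rho_g([z])$, which follows by letting $z=t w$ with $t\to\infty$. By homogeneity, $G_{f_h}(tw)=\log|t|+G_{f_h}(w)$, and the same holds for $g_h$. So for fixed $w\neq0$ the difference $G_{f_h}(tw)-G_{g_h}(tw)$ is independent of $t$ and tends to $0$, hence vanishes. Therefore $G_{f_h}=G_{g_h}$ on $\Cbb^k\setminus\{0\}$.

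Finally, the Green current of $f_\Pi$ is obtained from $G_{f_h}$ by descending $\ddc G_{f_h}$ to $\Pbb^{k-1}$. Equivalently, $T_{f_\Pi}=\omega_{\mathrm{FS}}+\ddc\rho_f$, where $\rho_f$ is $G_{f_h}-\log\|\cdot\|$ viewed on $\Pi$. Hence $T_{f_\Pi}=T_{g_\Pi}$, and taking the $(k-1)$-st power gives $\mu_{f_\Pi}=\mu_{g_\Pi}$.

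The only point needing care is the existence of a zero of each Green function, and this is immediate from the nonemptiness of the filled Julia set. Beyond that, the argument is routine.
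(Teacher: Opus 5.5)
Your proposal is correct and follows the paper's own proof: Bedford--Taylor uniqueness gives $G_f=G_g$, hence $T_f=T_g$, and comparing the logarithmic asymptotics at infinity identifies the Robin functions, which yields $G_{f_h}=G_{g_h}$ and $\mu_{f_\Pi}=\mu_{g_\Pi}$. One correction to your side remark: $\Pi$ is superattracting, not repelling, in the normal direction; a cleaner reason that the Green functions vanish somewhere is $\varnothing\neq J_f=\supp\mu_f\subset K_f$, or alternatively Bezout's theorem, which gives $d^k$ fixed points of $f$ in $\Cbb^k$ counted with multiplicity.
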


\begin{proof}
Bedford--Taylor uniqueness gives $G_f=G_g$, and hence $T_f=T_g$. Comparing
the logarithmic asymptotics at infinity gives equality of the Robin functions,
which yields the last two equalities.
\end{proof}

We also record the converse implication used throughout the paper.

\begin{prop}\label{prop:easy-direction}
Let $f,g\in\Poly^k_d$. If $g=\sigma\circ f$ for an affine
automorphism $\sigma$ satisfying $\sigma(J_f)=J_f$, then $\mu_g=\mu_f$.
\end{prop}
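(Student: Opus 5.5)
The plan is to show that $G_g=G_f$ and then read off $\mu_g=\mu_f$ directly from the definition $\mu=(\ddc G)^k$. To get $G_g=G_f$ we will use the uniqueness statement in Proposition~\ref{Prop: basic-green}(3), applied to $g$ with candidate $u:=G_f$. The function $G_f$ is continuous, nonnegative and plurisubharmonic, and by Proposition~\ref{Prop: basic-green}(2) it has logarithmic growth. So the only thing left to verify is the functional equation $G_f\circ g=d\,G_f$. Since $G_f\circ g=G_f\circ\sigma\circ f$ and $G_f\circ f=dG_f$, this reduces to the key claim
\[
   G_f\circ\sigma=G_f .
\]

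To prove $G_f\circ\sigma=G_f$, we use the pluripotential description recalled above. Since $\sigma$ is an affine automorphism, it maps polynomials to polynomials bijectively, so it commutes with taking polynomial hulls: $\widehat{\sigma(K)}=\sigma(\widehat K)$. Combining this with $\widehat{J_f}=K_f$ and $\sigma(J_f)=J_f$ gives $\sigma(K_f)=K_f$.

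Next, $G_f$ is the pluricomplex Green function $G_{K_f}$. The class of competitors in the definition of $G_K$ is invariant under precomposition by an affine automorphism. Indeed, $u\circ\sigma$ is plurisubharmonic, and $\log\|\sigma(z)\|=\log\|z\|+O(1)$ because $\sigma$ is affine and invertible. Therefore $G_{K}\circ\sigma=G_{\sigma^{-1}(K)}$. Taking $K=K_f$ and using $\sigma^{-1}(K_f)=K_f$ yields $G_f\circ\sigma=G_f$.

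Alternatively, we can avoid hulls: $G_f\circ\sigma$ is continuous, nonnegative and plurisubharmonic with logarithmic growth. Its Monge--Amp\`ere measure is $\sigma^*\mu_f$, which equals $\mu_f$ only if $\sigma$ preserves $\mu_f$, not merely its support. So the hull argument is the one to use.

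With $G_f\circ g=dG_f$ established, Proposition~\ref{Prop: basic-green}(3) gives $G_g=G_f$. Hence $\mu_g=(\ddc G_g)^k=(\ddc G_f)^k=\mu_f$.

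The only step needing care is the identity $\sigma(K_f)=K_f$, i.e.\ the passage from invariance of $J_f$ to invariance of the filled Julia set. We handle it through $\widehat{J_f}=K_f$, as stated earlier. The remaining steps are formal, apart from checking that $\log\|\sigma(z)\|-\log\|z\|$ is bounded near infinity, which holds since $\sigma$ is invertible.
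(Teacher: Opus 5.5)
Your proof is correct and follows the paper's argument essentially verbatim. The chain is $\sigma(K_f)=\sigma(\widehat{J_f})=\widehat{\sigma(J_f)}=K_f$, hence $G_f\circ\sigma=G_f$, hence $G_f\circ g=dG_f$, and the uniqueness in Proposition~\ref{Prop: basic-green}(3) gives $G_g=G_f$. The only difference is that you spell out why $\sigma(K_f)=K_f$ forces $G_f\circ\sigma=G_f$, via $G_K\circ\sigma=G_{\sigma^{-1}(K)}$ for the pluricomplex Green function, a step the paper leaves implicit.
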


\begin{proof}
Since affine automorphisms commute with polynomial hulls,
$\sigma(J_f)=J_f$ implies
\[
   \sigma(K_f)=\sigma(\widehat{J_f})
   =\widehat{\sigma(J_f)}=\widehat{J_f}=K_f.
\]
Therefore $G_f\circ\sigma=G_f$, and $G_f\circ g=G_f\circ\sigma\circ f=dG_f.$
Proposition~\ref{Prop: basic-green}(3) gives $G_g=G_f$, and consequently
$\mu_g=\mu_f$.
\end{proof}

\subsection{Local stable manifolds and the two-dimensional argument}
\label{section: stable manifold}
We write $J_\Pi:=J_{f_\Pi}$ and $\mu_\Pi:=\mu_{f_\Pi}$.
By Pesin theory, for $\mu_\Pi$-almost every $a\in J_\Pi$ there exists
$\delta(a)>0$ such that the \emph{local stable manifold}
\[
   W^s_{f,\mathrm{loc}}(a)
   :=\{x\in\Pbb^k\mid
       \dist(f^j(x),f^j(a))<\delta(a)
       \text{ for every }j\geq0\}
\]
is a one-dimensional embedded complex disk.

Fix $\eta>0$. Set
\[
   \Omega_f:=\Pbb^k\setminus K_f
   =\Pi\cup\{z\in\Cbb^k\mid G_f(z)>0\},
\]
and, for $R>0$, set
\[
   \Omega_{f,R}
   :=\Pi\cup\{z\in\Cbb^k\mid G_f(z)>R\}.
\]
By Pesin theory (see
\cite[pp.~178--180]{BedfordJonssonAJM00} or
\cite[Corollary~5.3]{PS89}), there are a compact set
$E_f=E(f,\eta)\subset J_\Pi$ and $R_0>0$ such that the following properties hold:
\begin{itemize}
\item $\mu_\Pi(E_f)\geq1-\eta$ and $E_f$ has no isolated points;
\item for every $a\in E_f$, the disk
      $W^s_{f,R_0}(a):=W^s_{f,\mathrm{loc}}(a)\cap\Omega_{f,R_0}$ is properly
      embedded in $\Omega_{f,R_0}$;
\item the family $\{W^s_{f,R_0}(a)\mid a\in E_f\}$ forms a Riemann surface
      lamination in $\Omega_{f,R_0}$.
\end{itemize}
Moreover, Bedford--Jonsson proved the following laminar decomposition:
\begin{lem}[Bedford--Jonsson {\cite[Lemma~6.1]{BedfordJonssonAJM00}}]\label{lem: BJ6.1}
We have
\[
   T_f^{k-1}|_{\Omega_{f,R_0}}
   =\int_{E_f}[W^s_{f,R_0}(a)]\,d\mu_\Pi(a)+S_f,
\]
where $S_f$ is a positive closed current of bidimension $(1,1)$ and mass at
most $\eta$.
\end{lem}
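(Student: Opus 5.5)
This is the Bedford--Jonsson laminar decomposition, and I would follow their strategy: describe $T_f^{k-1}$ near $\Pi$ by slicing it along $\Pi$, i.e.\ by pulling back by the dynamics and passing to the limit. The starting point is the convergence
\[
T_f^{k-1}=\lim_{n\to\infty}\frac{1}{d^{n(k-1)}}(f^n)^*\omega^{k-1},
\]
where $\omega$ is a smooth form cohomologous to $[\Pi]$, or equivalently to $\ddc\log^+\|z\|$ on $\Cbb^k$. It is convenient to replace $\omega^{k-1}$ by an average of currents of integration over lines $\ell$ through points $b\in\Pi$ transverse to $\Pi$. The pullback $(f^n)^*[\ell]$ is then supported on the curve $f^{-n}(\ell)$. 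Near $\Pi$, this curve decomposes into $d^{n(k-1)}$ local pieces through the preimages $f_\Pi^{-n}(b)$, counted with multiplicity, because $\Pi$ is totally invariant and $f^n$ restricted to a neighborhood of $\Pi$ is a branched cover of degree $d^{n(k-1)}$ onto its image in the normal direction as well.

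\textbf{Step 1 (good pieces converge to stable disks).} Fix the Pesin set $E_f$ and $R_0$ from the properties recalled above. For $a\in E_f$ and large $n$, the forward orbit of $a$ has uniform hyperbolicity: the normal direction to $\Pi$ is superattracting, and the tangential directions along $f_\Pi$ are expanding on the Pesin set. Take an inverse branch along a backward orbit ending at $a$. The graph transform (Pesin's stable manifold theorem) then shows that the corresponding piece of $f^{-n}(\ell)\cap\Omega_{f,R_0}$ lies within $C e^{-n\chi}$ of $W^s_{f,R_0}(a)$ in the $C^1$ topology. By equidistribution of preimages for $f_\Pi$, namely $d^{-n(k-1)}(f_\Pi^n)^*\delta_b\to\mu_\Pi$ for generic $b$, the normalized sum of the pieces through points of $E_f$ therefore converges to $\int_{E_f}[W^s_{f,R_0}(a)]\,d\mu_\Pi(a)$. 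This needs uniformity of the stable disks over $E_f$, which is exactly the lamination property.

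\textbf{Step 2 (the remainder).} Set $S_f:=T_f^{k-1}|_{\Omega_{f,R_0}}-\int_{E_f}[W^s]\,d\mu_\Pi$. I would show that $S_f$ is a limit of the normalized contributions of the pieces whose endpoints avoid a small neighborhood of $E_f$, so it is positive. It is closed in $\Omega_{f,R_0}$ because both terms are: the laminar integral is closed since the disks are properly embedded and the transverse measure is $\mu_\Pi$. Its mass is controlled by wedging with the Kähler form near $\Pi$. The mass of a normalized piece is comparable to the mass of its trace on $\Pi$, so the mass of $S_f$ is at most the $\mu_\Pi$-measure of the complement of $E_f$, plus an error that vanishes as the neighborhood shrinks. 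This gives mass $\le\eta$ after adjusting constants, which is harmless since $\eta$ was arbitrary.

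\textbf{Main obstacle.} The hardest point is Step 1's uniform control: the inverse branches of $f^n$ near $\Pi$ along $\mu_\Pi$-typical backward orbits must have pieces that are graphs over the normal disk of uniform size. This requires tempered Pesin charts compatible with the superattracting normal direction. I would first try to reduce to the skew structure: in local coordinates $(w,t)$ with $t=T/Z_j$, the map has the form $(w,t)\mapsto(f_\Pi(w)+O(t),\,t^d u(w,t))$. In these coordinates the normal contraction dominates, and the standard graph-transform argument along $\mu_\Pi$-typical orbits applies.
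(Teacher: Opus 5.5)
The paper does not prove this lemma; it quotes it from Bedford--Jonsson. Your sketch therefore has to stand on its own, and its key limiting step fails.

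In Step~1 you sort the pieces of $d^{-n(k-1)}(f^n)^*[\ell]$ by whether their base point $a\in f_\Pi^{-n}(b)$ lies in $E_f$. You then use $d^{-n(k-1)}(f_\Pi^n)^*\delta_b\to\mu_\Pi$ to conclude that the pieces through $E_f$ converge to $\int_{E_f}[W^s_{f,R_0}(a)]\,d\mu_\Pi(a)$. Weak convergence does not pass to restrictions to a compact set unless $\mu_\Pi(\partial E_f)=0$, and a Pesin compactum typically has empty interior, so $\partial E_f=E_f$. Concretely, there are two cases:
\begin{itemize}
\item With your Crofton average, the base points $b=\ell\cap\Pi$ are distributed by a smooth measure $m$ on $\Pi$. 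The total weight of the pieces through $E_f$ is then $d^{-n(k-1)}\bigl((f_\Pi^n)^*m\bigr)(E_f)$, which vanishes for every $n$ whenever $J_\Pi$ is Lebesgue-null.
\item With a single generic $b$, you may shrink $E_f$, keeping $\mu_\Pi(E_f)\ge1-\eta$, so that it misses the countable set $\bigcup_n f_\Pi^{-n}(b)$.
\end{itemize}
In either case the sequence you propose is identically zero. Using pieces through points merely \emph{near} $E_f$ does not rescue the argument. Pesin control at $a\in E_f$ concerns the forward orbit of $a$. The piece through a nearby $a'\in f_\Pi^{-n}(b)$ is governed by the segment $a',f_\Pi(a'),\dots,f_\Pi^n(a')=b$, which separates from the orbit of $a$ exponentially fast, so nothing is known about that piece.

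As a consequence, Step~2 does not establish the positivity of $S_f$. That positivity is the real content of the lemma: the domination $T_f^{k-1}|_{\Omega_{f,R_0}}\ge\int_{E_f}[W^s_{f,R_0}(a)]\,d\mu_\Pi(a)$. Your split into base points in $E_f$ versus base points away from a neighbourhood of $E_f$ is not exhaustive, and its first half has the wrong limit.

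What is missing is a way to select good pieces by a criterion on whole inverse branches that is stable under limits. For example, require that the piece is a graph of bounded slope over the normal disk of $\Omega_{f,R_0}$ and that its forward images stay small up to time $n$. You would then need a Briend--Duval type count showing that such branches carry mass at least $1-\eta$. Base points are the wrong bookkeeping in any case: in the fixed set $\Omega_{f,R_0}$, a component of $f^{-n}(\ell)$ passing near the critical set of $f_\Pi$ may be tangent to $\Pi$ or contain several points of $f_\Pi^{-n}(b)$.

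You would still need a geometric identification step. Because you average over lines, the good parts at stage $n$ are only woven, since pieces coming from different $\ell$ may cross. You must prove that every limit of good pieces through points $a_n\to a$ lies in the local stable manifold of $a$. Disjointness of the stable disks over a Pesin set then makes the limit of the good parts a laminar current whose leaves are stable disks. Its transverse measure, read off from the slice on $\Pi$, is dominated by $\mu_\Pi$ and has mass at least $1-\eta$. Only then do the bad branches supply a positive remainder.
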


From this we can deduce the following.
\begin{prop}\label{Prop: StableManifoldsSame}
    Fix $d\geq 2.$ Let $f,g\in \Poly^2_d$ and
    suppose $\mu=\mu_f=\mu_g$. Then for $\mu_\Pi$ almost every $a\in J_\Pi$
    the local stable manifolds $W^s_{f,\mathrm{loc}}(a)$ and
    $W^s_{g,\mathrm{loc}}(a)$ coincide in a neighborhood of $a$.
\end{prop}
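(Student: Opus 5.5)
We plan to combine the laminar decomposition of Lemma~\ref{lem: BJ6.1} for both maps with the geometric-intersection theory of Bedford--Lyubich--Smillie and Dujardin. First, by Corollary~\ref{cor:mu-implies-green}, $\mu_f=\mu_g$ gives $G_f=G_g$ and $T_f=T_g=:T$; in particular $\Omega_{f,R}=\Omega_{g,R}$ and $\mu_{f_\Pi}=\mu_{g_\Pi}=\mu_\Pi$. Since $k=2$, Lemma~\ref{lem: BJ6.1} writes the single current $T|_{\Omega_{R_0}}$ in two ways: as $\int_{E_f}[W^s_{f,R_0}(a)]\,d\mu_\Pi(a)+S_f$ and as $\int_{E_g}[W^s_{g,R_0}(a)]\,d\mu_\Pi(a)+S_g$, where $R_0$ is taken large enough for both maps and the residual currents have mass at most $\eta$. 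A key observation is that $T$ puts no mass on $\Pi$ in the sense that each disk $W^s_{f,R_0}(a)$ meets $\Pi$ transversally at exactly one point $a$. Hence the laminar part is, locally near $a$, a uniformly laminar current whose transversal measure on $\Pi$ is $\mu_\Pi|_{E_f}$.

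Next, we intersect the two representations. Fix a generic disk $D=W^s_{g,R_0}(b)$ with $b\in E_g$. Since $G_g\equiv G$ is harmonic on stable disks of $g$ (indeed $G\circ g=2G$, and along $W^s_g(b)$ the function $G$ equals $\log\|\cdot\|+$ bounded, and $T\wedge[D]=\ddc(G|_D)$), the slice $T\wedge[D]$ vanishes on $D\setminus\Pi$: the function $G|_D$ is harmonic off $b$. On the other hand, the geometric-intersection theory (Bedford--Lyubich--Smillie~\cite{BS4}, Dujardin~\cite{DujardinPM04}) says that for a uniformly laminar current $\int[W_a]\,d\nu(a)$ and a disk $D$ not contained in a leaf, the wedge product $T\wedge[D]$ dominates the geometric intersection $\int \sum_{W_a\pitchfork D}\delta\,d\nu(a)$, counting transverse intersections and isolated tangencies with multiplicity. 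We will apply this with the $f$-lamination. Off the point $b$ itself, positive-measure families of $f$-leaves meeting $D$ would produce positive mass of $T\wedge[D]$ on $D\setminus\Pi$, which is a contradiction. So for $\mu_\Pi$-a.e.\ $b$, the $g$-disk through $b$ meets $f$-disks $W^s_{f,R_0}(a)$, with $a$ in a set of positive measure near $b$, only on $\Pi$. Because $W^s_{f,R_0}(a)\cap\Pi=\{a\}$ and $D\cap\Pi=\{b\}$, this means they are disjoint unless $a=b$.

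Then we have to conclude equality of germs. The $f$-disks through points $a\in E_f$ close to $b$ foliate a neighborhood of $b$ in a laminar fashion, with transversal $\Pi$. Since $E_f$ has no isolated points and $b$ is a density point of the support of $\mu_\Pi|_{E_f}$, if $D$ were not equal to $W^s_{f}(b)$ near $b$, then $D$, being a graph over a neighborhood of $b$ in the normal direction, would, by continuity of the lamination (Hurwitz/persistence of intersections), cross the nearby leaves $W^s_f(a)$, $a\to b$, at points off $\Pi$. This would give positive geometric intersection off $\Pi$, contradicting the previous paragraph. Hence $W^s_{g,\mathrm{loc}}(b)=W^s_{f,\mathrm{loc}}(b)$ near $b$ for $\mu_\Pi$-a.e.\ $b\in E_f\cap E_g$, and letting $\eta\to0$ gives the full-measure statement.

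The main obstacle is the intersection step. There we must justify that $T\wedge[D]$ dominates the geometric intersection with the laminar part, even though the residual current $S_f$ is not controlled; positivity of $S_f\wedge[D]$ (available since $G$ is continuous, so the slice is well defined) should handle this. We must also ensure that the crossing produced in the last step occurs at positive-measure many leaves. For the latter we would first try the transversal-disk argument: in a chart where $\Pi$ is $\{t=0\}$ and $f$-leaves are graphs $x=\phi_a(t)$, consider $\phi_a-\psi_b$ on a small disk. This function is nonvanishing on the boundary for $a=b$ unless it is identically zero, so by Rouché it has a zero for all $a$ near $b$, and that zero lies off $t=0$ whenever $a\neq b$.
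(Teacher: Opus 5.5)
Your proof is correct and shares the paper's skeleton: the two Bedford--Jonsson laminar decompositions of the common current $T$, a vanishing statement for intersections, and a Rouch\'e argument producing a positive-measure family of intersections. The central step, however, is organized differently.

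The paper never slices by a single disk. From $T\wedge T=0$ on $\Omega_{R_0}$ and positivity it gets $L_f\wedge L_g=0$, and it invokes Bedford--Lyubich--Smillie and Dujardin to write this as a double integral of geometric intersections of leaves. Then, in a flow box of the $g$-lamination, it applies the open mapping theorem to the nonconstant graph $\xi=\varphi(\zeta)$ of the $f$-leaf through $a_0$, obtaining intersections for all pairs in a product $U\times(V\cap E_g)$.

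You instead fix one $g$-disk $D$ through $b\in E_f\cap E_g$. You need only that $T\wedge[D]$ vanishes on $D\setminus\Pi$ and dominates the leafwise intersection with the $f$-laminar part. The shared base point $b$ then lets Rouch\'e, applied to $\phi_a-\psi_b$ in a chart transverse to $\Pi$, put the intersection points with $W^s_{f,R_0}(a)$, $a\neq b$, off $\Pi$.

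What your route buys is independence from the intersection theory of two laminar currents. The domination you flag as the main obstacle is elementary. Since $0\le L_f\le T$ and $T$ has continuous potentials, the local potential $\int\log|x-\phi_a(t)|\,d\mu_\Pi(a)$ of $L_f$ is locally bounded. Hence $T-L_f$ has a plurisubharmonic potential whose restriction to $D$ is subharmonic. Fubini then computes $L_f\wedge[D]$ leafwise, because in the box only the leaf with $a=b$ can coincide with $D$, and $\mu_\Pi$ has no atoms.

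The price is that you need harmonicity of $G$ on $g$-stable disks, and your parenthetical justification does not give it. The facts $G\circ g=dG$ (not $2G$) and $G=\log\|\cdot\|+O(1)$ say nothing about $dd^c(G|_D)$. You can fix this in either of two ways.

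\begin{itemize}
\item Argue dynamically: $G=\lim_n d^{-n}\log|(g^n)_{i(n)}|$ uniformly on $W^s_{g,R_0}(b)\setminus\Pi$, where $i(n)$ is chosen with $|Z_{i(n)}|$ maximal at $g_\Pi^n(b)$. Each term is harmonic there, so the limit is too.
\item Use the paper's input directly: $\int T\wedge[W^s_{g,R_0}(b)]\,d\mu_\Pi(b)=T\wedge L_g\le T\wedge T=0$ on $\Omega_{R_0}\cap\Cbb^2$. This gives the vanishing for $\mu_\Pi$-a.e.\ $b$, which suffices.
\end{itemize}

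As in the paper, you should also replace $E_f$ by the support of $\mu_\Pi|_{E_f}$, so that every neighbourhood of $b$ in $E_f$ has positive measure.
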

\begin{proof}
Fix $\eta>0$ and apply the preceding Pesin construction to both $f$ and $g$.
After increasing the two levels if necessary, use the common set
$\Omega_{R_0}:=\Omega_{f,R_0}=\Omega_{g,R_0}$, and denote the corresponding Pesin compacta by $E_f$ and
$E_g$. Replacing each compactum by the support of the restricted measure does
not change the associated current; thus every relatively open subset of
$E_f$ or $E_g$ has positive $\mu_\Pi$-measure. Moreover,
$\mu_\Pi(E_f\cap E_g)\geq1-2\eta$.
Let $T:=T_f=T_g$ be the common Green current on $\Omega_{R_0}$.
Set
\begin{align*}
L_f:=\int_{E_f}[W^s_{f,R_0}(a)]\,d\mu_\Pi(a),
\qquad
L_g:=\int_{E_g}[W^s_{g,R_0}(b)]\,d\mu_\Pi(b).
\end{align*}
By Lemma~\ref{lem: BJ6.1}, we have
$T|_{\Omega_{R_0}}=L_f+S_f=L_g+S_g$, where $S_f$ and $S_g$ are positive closed
currents on $\Omega_{R_0}$. Since $T\wedge T=0$ on $\Omega_{R_0}$,
positivity implies $L_f\wedge L_g=0.$

By Bedford--Lyubich--Smillie~\cite{BS4} and Dujardin~\cite{DujardinPM04}, this wedge product is geometric:
\begin{align*}
L_f\wedge L_g
=
\int_{E_f\times E_g}
[W^s_{f,R_0}(a)\cap W^s_{g,R_0}(b)]\,d\mu_\Pi(a)\,d\mu_\Pi(b),
\end{align*}
where the intersection is the sum of isolated intersection points counted with multiplicities.
Suppose by contradiction that there exists $a_0\in E_f\cap E_g$ such that
$W^s_{f,R_0}(a_0)\neq W^s_{g,R_0}(a_0)$.
Choose a sufficiently small box $\Omega$ for the $g$-lamination near $a_0$. In holomorphic coordinates $(\zeta,\xi)\in \Dbb\times \tau$ on
$\Omega$, the plaques of the $g$-lamination are
\begin{align*}
W^s_{g,R_0}(b)\cap \Omega=\{\xi=b\},
\qquad b\in \tau\cap E_g.
\end{align*}
After shrinking $\Omega$, the component of $W^s_{f,R_0}(a_0)\cap \Omega$ is the graph of a
holomorphic function $\xi=\varphi(\zeta)$ for $\zeta\in \Dbb.$
Since $W^s_{f,R_0}(a_0)\neq W^s_{g,R_0}(a_0)$, the function $\varphi$ is nonconstant. Hence,
by the open mapping theorem, $\varphi(\Dbb)$ contains a neighborhood $V$ of $a_0$ in $\tau$.

Now, since $\{W^s_{f,R_0}(a)\}_{a\in E_f}$ is a Riemann surface lamination and $E_f$ has no
isolated points, after shrinking $\Omega$ again there exists a relatively open neighborhood
$U$ of $a_0$ in $E_f$ such that for every $a\in U$, the component of
$W^s_{f,R_0}(a)\cap \Omega$ is the graph of a holomorphic function $\xi=\varphi_a(\zeta), \zeta\in \Dbb,$
and $\varphi_a\to\varphi$ uniformly on compact subsets of $\Dbb$ as $a\to a_0$.
Shrinking $U$ and $V$ if necessary, Rouch\'e's theorem implies that
\begin{align*}
W^s_{f,R_0}(a)\cap W^s_{g,R_0}(b)\neq\varnothing
\qquad\text{for every }(a,b)\in U\times (V\cap E_g).
\end{align*}
Since the two relatively open sets have positive $\mu_\Pi$-measure, it follows that
\begin{align*}
\int_{E_f\times E_g}
[W^s_{f,R_0}(a)\cap W^s_{g,R_0}(b)]\,d\mu_\Pi(a)\,d\mu_\Pi(b)>0,
\end{align*}
contradicting $L_f\wedge L_g=0$.
Therefore $W^s_{f,R_0}(a)=W^s_{g,R_0}(a)$ for all $a\in E_f\cap E_g$.
Thus the two local stable manifolds coincide in a neighborhood of each such $a$. Since
$\eta>0$ was arbitrary, this holds for $\mu_\Pi$-almost every $a$.
\end{proof}

\subsection{Stable-disk decompositions in arbitrary dimension}
\label{section:stable-manifold-k}

The preceding proof relies on the geometric intersection theory of laminar
currents. A comparable theorem is not presently available for currents of
arbitrary codimension; see~\cite[Question~1.3]{DujardinICM2022}. We therefore
give an alternative proof valid in every dimension.

For $\mu_\Pi$-almost every $a\in J_\Pi$, Bedford--Jonsson construct an embedded holomorphic disk
$\Wcal_f(a)\subset\Omega_f$, contained in the global stable manifold of $a$,
such that $\Wcal_f(a)\cap\Pi=\{a\}.$
The disk need not be closed, and hence need not be properly embedded, in
$\Omega_f$. 

\begin{thm}[Bedford--Jonsson {\cite[Theorem~6.10]{BedfordJonssonAJM00}}]
\label{thm:BJ6.10}
We have the following geometric representation
\[
   T_f^{k-1}|_{\Omega_f}
   =\int[\Wcal_f(a)]\,d\mu_\Pi(a).
\]
In particular, $\Wcal_f(a)$ has finite area for $\mu_\Pi$-almost every
$a$.
\end{thm}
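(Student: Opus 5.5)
The plan is to obtain the global representation on $\Omega_f$ from the local laminar decomposition of Lemma~\ref{lem: BJ6.1} near $\Pi$, by first removing the error current $S_f$ and then pulling back by iterates of $f$ to fill up $\Omega_f$.

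\emph{Step 1: the decomposition near infinity with no error term.} Fix $R_0$. Choose $\eta_j\downarrow0$ and Pesin compacta $E_j=E(f,\eta_j)$, and arrange them to be increasing in $j$; this is possible because the Pesin sets are nested level sets of the Pesin functions. Lemma~\ref{lem: BJ6.1} gives
\[
T_f^{k-1}|_{\Omega_{f,R_0}}=\int_{E_j}[W^s_{f,R_0}(a)]\,d\mu_\Pi(a)+S_j .
\]
The laminar parts increase with $j$, so the positive closed currents $S_j$ decrease. Their masses tend to $0$, so the limit is the zero current. Since $\mu_\Pi\bigl(\bigcup_j E_j\bigr)=1$, monotone convergence gives
\[
T_f^{k-1}|_{\Omega_{f,R_0}}=\int[W^s_{f,R_0}(a)]\,d\mu_\Pi(a).
\]
Here the local disk $W^s_{f,R_0}(a)$ is defined for $\mu_\Pi$-a.e.\ $a$.

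\emph{Step 2: pulling back to all of $\Omega_f$.} Because $G_f\circ f^n=d^nG_f$, we have $\Omega_f=\bigcup_n f^{-n}(\Omega_{f,R_0})$. Define $\Wcal_f(a)$ as the increasing union over $n$ of the connected component through $a$ of $f^{-n}\bigl(W^s_{f,R_0}(f^n a)\bigr)$. This set lies in the global stable manifold of $a$ and meets $\Pi$ only at $a$. We will use two invariance facts:
\[
(f^n)^*T_f^{k-1}=d^{n(k-1)}T_f^{k-1},\qquad (f_\Pi^n)^*\mu_\Pi=d^{n(k-1)}\mu_\Pi .
\]
Pulling back the identity of Step~1 by $f^n$ should then give
\[
T_f^{k-1}|_{f^{-n}\Omega_{f,R_0}}=\int[\Wcal_f(a)\cap f^{-n}\Omega_{f,R_0}]\,d\mu_\Pi(a).
\]
Letting $n\to\infty$ gives the theorem.

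\emph{Step 3: the pullback identity, which is the main obstacle.} We must show that $(f^n)^*[W^s_{f,R_0}(b)]$ equals $\sum_{f_\Pi^n(a)=b}[\text{component through }a]$, with multiplicity one. Two points need checking.
\begin{itemize}
\item Every component of $f^{-n}(W)$ meets $\Pi$. It must do so at a preimage of $b$, since $\Pi$ is totally invariant and $f$ is proper near $\Pi$.
\item The component through $a$ is an embedded disk on which $f^n$ is injective, and $f^n$ is unramified along it.
\end{itemize}
For the second point, $\mu_\Pi$ does not charge the critical values of $f_\Pi^n$, so for a.e.\ $a$ the map $f^n$ is a local biholomorphism near $a$. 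I would then propagate injectivity along the disk using the contraction of $f$ in the direction transverse to $\Pi$, since the Green function strictly increases along the disk. Some care is needed here, and this is exactly why $\Wcal_f(a)$ is only claimed to be embedded, not properly embedded.

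\emph{Step 4: finite area.} $T_f^{k-1}$ extends to $\Pbb^k$ with finite mass. Its trace mass on $\Omega_f$ is therefore $\int\operatorname{area}(\Wcal_f(a))\,d\mu_\Pi(a)<\infty$. Hence $\Wcal_f(a)$ has finite area for $\mu_\Pi$-a.e.\ $a$.
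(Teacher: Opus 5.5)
The paper quotes this theorem from Bedford--Jonsson without proof, so I have checked your outline against what the statement itself requires. There is a genuine gap in Steps~2--3. You take $\Wcal_f(a)$ to be the connected component through $a$ of $f^{-n}\bigl(W^s_{f,R_0}(f^na)\bigr)$, and you claim $(f^n)^*[W^s_{f,R_0}(b)]=\sum_{f_\Pi^n(a)=b}[\text{component through }a]$. Both fail, because a single component can pass through several points of $f_\Pi^{-n}(b)$.

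Here is an example. Take $f(x,y)=(p(x),y^2)$ with $p(x)=x^2+c$ and $c$ outside the Mandelbrot set. Then $G_f=\max(G_p(x),\log^+|y|)$ and $J_\Pi$ is the circle $|x/y|=1$. Let $a=[e^{i\theta}:1:0]$ and $R\in(G_p(0),2G_p(0))$.
\begin{itemize}
\item The stable disk of $f_\Pi(a)$ at level $R$ is $\{\phi_p(X)=e^{2i\theta}Y\}$, where $\phi_p$ is the B\"ottcher coordinate of $p$. Its preimage in $\Omega_{f,R/2}$ is $C=\{y^2=e^{-2i\theta}\phi_p(p(x)),\ G_p(x)>R/2\}$.
\item Since $R/2<G_p(0)$, the set $\{G_p\le R/2\}$ consists of two disks. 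Each is mapped isomorphically by $p$ onto $\{G_p\le R\}$.
\item The nonvanishing function $\phi_p\circ p$ therefore winds once around each disk, so its square root has monodromy $-1$.
\item Hence $C$, together with its two points at infinity $[\pm e^{i\theta}:1:0]$, is a single connected annulus. The function $G|_C=\log|y|$ has two saddle points on it, lying over $x=0$.
\end{itemize}
Your $\Wcal_f(a)$ thus contains a second point of $\Pi$ and is not a disk, and your sum counts $C$ twice.

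The repair you sketch cannot work. $\widehat f^n$ is never a local biholomorphism at a point of $\Pi$: transversally it is $t\mapsto t^{d^n}\cdot(\text{unit})$, so $\Pi$ lies in its critical set. Consequently $f^n$ has degree at least $d^n$ on any such component, so it is never injective there. Moreover, $G|_C$ can have critical points, as the example shows, so monotonicity of $G$ near $\Pi$ does not help.

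Step~1 has a related flaw. In the Pesin construction $R_0=R_0(\eta)$, and in general $R_0(\eta)\to\infty$ as $\eta\to0$. So you cannot fix $R_0$ and let $\eta_j\to0$. Pushing the disks of the remaining directions down to a fixed level is exactly the pullback problem just described.

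The missing idea is that the disks must be cut out of the pulled-back curves rather than taken as whole components. On a component $C$ of $f^{-n}(W)$, the function $G|_C$ is harmonic with logarithmic poles exactly at the points of $C\cap\Pi$. Near such a point $b$ one has $G=-\log|\zeta_b|$ for a local coordinate $\zeta_b$ on $C$.

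Let $\Wcal_n(b)$ be the set of points of $C$ whose upward gradient line for $G|_C$ ends at $b$. Equivalently, it is the region, star-shaped in $\zeta_b$, obtained by continuing $\zeta_b$ along the radial lines from $b$ until they hit a critical point of $G|_C$. These sets have the following properties:
\begin{itemize}
\item they are disks meeting $\Pi$ only at $b$;
\item they are pairwise disjoint;
\item they exhaust $C$ up to the gradient arcs running into critical points of $G|_C$, which form a $[C]$-null set.
\end{itemize}

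Suppose $W$ passes through a point that is not a critical value of $f_\Pi^n$; this holds for $\mu_\Pi$-almost every base point. Then each preimage $b$ is simple and $\widehat f^n$ has local degree $d^n$ at $b$. Since $f^n|_C$ already has local degree $d^n$ at $b$, the pullback has multiplicity one, and $(f^n)^*[W]=\sum_b[\Wcal_n(b)]$.

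The disks $\Wcal_n(a)$ increase with $n$, and their union is the disk $\Wcal_f(a)$ of the statement. The cut arcs are precisely why $\Wcal_f(a)$ is open in the stable manifold but not closed in $\Omega_f$, as the paper remarks. With this definition, your invariance and finite-area steps are the right skeleton. The passage $\eta\to0$, however, still has to be carried out with the level $R_0(\eta)$ moving, and that needs a separate argument.
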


Let $\omega$ be a Hermitian form on $\Pbb^k$, say the Fubini--Study form. For any positive closed bidimension $(1,1)$-current $T$, we define its \emph{trace measure} by $\|T\|_{\omega}:=T\wedge \omega$.

\begin{lem}\label{lem:moderate-near-infinity}
There exist constants $C,\alpha>0$ such that
$\|T_f^{k-1}\|_{\mathrm{FS}}(\Omega_{f,R})\leq Ce^{-\alpha R}$ for every
sufficiently large $R>0$.
\end{lem}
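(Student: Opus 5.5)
The plan is to use the local moderateness of the trace measure of $T_f^{k-1}$ near $\Pi$, due to Dinh--Nguy\^en--Sibony~\cite{DNS10JDG} (see Dinh--Sibony~\cite{DS03PolynomialLike} for the notion), and to compare $\Omega_{f,R}$ with a sublevel set of a quasi-psh function with logarithmic pole along $\Pi$. In homogeneous coordinates, put
\[
\phi([Z:T]):=\log|T|-\log\|(Z,T)\|.
\]
This is a quasi-psh function on $\Pbb^k$, since $\ddc\phi\geq-\omega_{\mathrm{FS}}$, and it equals $-\infty$ exactly on $\Pi$. Proposition~\ref{Prop: basic-green}(2) gives $G_f(z)=\log\|z\|+O(1)$ on $\Cbb^k$ outside a compact set, and $G_f$ is bounded on compacts. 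Hence there is a constant $c$ such that
\[
G_f>R\ \Longrightarrow\ \log\|z\|>R-c\ \Longrightarrow\ \phi\leq -R+c'.
\]
Therefore $\Omega_{f,R}\subset\{\phi\leq -R+c'\}$ for all large $R$, and it suffices to bound the $\|T_f^{k-1}\|_{\mathrm{FS}}$-measure of $\{\phi<-t\}$ by $Ce^{-\alpha t}$.

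For this I would use the moderateness statement itself. A positive measure $\nu$ is (locally) moderate if, for every compact family $\mathcal F$ of quasi-psh functions normalized by $\ddc u\geq-\omega$ and $\max u=0$, there are $\alpha,C>0$ with
\[
\int e^{-\alpha u}\,d\nu\leq C \quad\text{for all } u\in\mathcal F.
\]
Dinh--Nguy\^en--Sibony show that the trace measure of $T_f^{k-1}$ on $\Pbb^k$ has this property; their result covers currents with H\"older continuous quasi-potentials, and the potential of $T_f$ near $\Pi$ is H\"older. It is enough to apply it to the single function $u=\phi$, after subtracting its maximum. Markov/Chebyshev then gives
\[
\nu(\{\phi<-t\})\leq e^{-\alpha t}\int e^{-\alpha\phi}\,d\nu\leq Ce^{-\alpha t}.
\]
Substituting $t=R-c'$ yields the lemma, after absorbing $e^{\alpha c'}$ into $C$.

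The main obstacle is the first step: justifying that the cited theorem applies to $T_f^{k-1}$ as a current on all of $\Pbb^k$, including across $\Pi$. The Green current $T_f$ extends to $\Pbb^k$ with local potential
\[
\log\max\bigl(\|\text{lift of } f^n\|\bigr)\text{-type limit } \;\; \widetilde G(Z,T)=\lim_n d^{-n}\log\|\widehat f^n(Z,T)\|,
\]
which is H\"older continuous because $\widehat f$ is a holomorphic endomorphism of $\Pbb^k$ (the standard Sibony argument). So $T_f$ is the Green current of $\widehat f$ and $T_f^{k-1}$ is its Green current of bidegree $(k-1,k-1)$. DNS prove moderateness exactly for wedge products of currents with H\"older quasi-potentials, which settles this point.

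One could also try to avoid the citation by integrating directly with Theorem~\ref{thm:BJ6.10}, writing the mass of $\Omega_{f,R}$ as an integral of areas of the parts of the disks $\Wcal_f(a)$ where $G_f>R$. But controlling those areas uniformly in $a$ seems harder. I would therefore commit to the moderate-measure route.
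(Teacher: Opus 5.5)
Your proposal is correct and follows the paper's proof. Both compare $\Omega_{f,R}$ with a sublevel set of a function having a logarithmic pole along $\Pi$ — the paper uses $\log|t|$ in finitely many local charts, you use the global quasi-psh $\phi=\log|T|-\log\|(Z,T)\|$ — and then invoke the local moderateness of $\|T_f^{k-1}\|_{\mathrm{FS}}$ due to Dinh--Nguy\^en--Sibony. Your explicit Chebyshev step, and your justification that $T_f$ extends across $\Pi$ as the Green current of $\widehat f$ with H\"older quasi-potentials, fill in details the paper leaves implicit.
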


\begin{proof}
Cover a neighbourhood of $\Pi$ by finitely many coordinate charts in which $\Pi$ is
locally given by $t=0$. Then $G_f=-\log|t|+O(1)$. By
\cite[Corollary~1.3]{DNS10JDG}, the trace measure
$\|T_f^{k-1}\|_{\mathrm{FS}}$ is locally moderate. This means that, since
$\log|t|$ is plurisubharmonic,
\begin{align*}
    \|T_f^{k-1}\|_{\mathrm{FS}}\bigl(\{\log|t|<-M\}\bigr)\le C e^{-\alpha M}
   \qquad 
\end{align*}
for any $M\ge0$ (sufficiently large so that we are in a small compact neighborhood of $\Pi$).
In other words, $\|T_f^{k-1}\|_{\mathrm{FS}}(\Omega_{f,R})\le C e^{-\alpha R}$ for large $R>0$.
\end{proof}

\begin{lem}\label{lem: finite_sum_mass}
For large $R>0$, we have
$\sum_{n\ge0}\|T_f^{k-1}\|_{(f^n)^*\omega}(\Omega_{f,R})<\infty$.
\end{lem}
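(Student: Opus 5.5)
The plan is to convert each mass $\|T_f^{k-1}\|_{(f^n)^*\omega}(\Omega_{f,R})$ into a Fubini--Study mass of a region deeper at infinity, using invariance, and then sum with Lemma~\ref{lem:moderate-near-infinity}.

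\emph{Step 1: pull-back identity.} Since $T_f$ is invariant, $f^*T_f=d\,T_f$, so $(f^n)^*T_f^{k-1}=d^{n(k-1)}T_f^{k-1}$. On the open set $\Omega_f$, the extension $\widehat f$ is a holomorphic self-map with $\widehat f^{-1}(\Omega_{f,dR})=\Omega_{f,R}$, by $G_f\circ f=dG_f$ and total invariance of $\Pi$. The topological degree of $f$ is $d^k$. We push forward:
\[
(f^n)_*\bigl(T_f^{k-1}\wedge(f^n)^*\omega\bigr)=\bigl((f^n)_*T_f^{k-1}\bigr)\wedge\omega .
\]
Here $(f^n)_*T_f^{k-1}=d^{-n(k-1)}(f^n)_*(f^n)^*T_f^{k-1}=d^{-n(k-1)}d^{nk}T_f^{k-1}=d^{n}T_f^{k-1}$. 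Evaluating on $\Omega_{f,d^nR}$, whose preimage is $\Omega_{f,R}$, gives
\[
\|T_f^{k-1}\|_{(f^n)^*\omega}(\Omega_{f,R})=d^n\,\|T_f^{k-1}\|_{\mathrm{FS}}(\Omega_{f,d^nR}).
\]
The pushforward calculation is justified because $\Omega_{f,R}$ is exactly the full preimage, so masses are preserved under $(f^n)_*$.

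\emph{Step 2: summation.} Lemma~\ref{lem:moderate-near-infinity} gives, for $R$ large, the bound
\[
d^n\|T_f^{k-1}\|_{\mathrm{FS}}(\Omega_{f,d^nR})\le C\,d^n e^{-\alpha d^n R}.
\]
The series $\sum_n d^n e^{-\alpha R d^n}$ converges, since the terms decay doubly exponentially. This proves the lemma.

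\emph{Main obstacle.} The delicate point is the identity $(f^n)_*(f^n)^*S=d^{nk}S$ for the positive closed current $S=T_f^{k-1}$ on $\Omega_f$, used near $\Pi$ where $f$ is ramified along critical sets. I would first handle it via continuity of the local potentials: write $T_f=\omega_0+\ddc(\text{continuous})$ near $\Pi$, so $S$ is a wedge of currents with continuous potentials, and pull-back and push-forward are defined and continuous under smooth approximation. For smooth forms the identity $(f^n)_*(f^n)^*=d^{nk}$ is standard for finite proper maps of degree $d^{nk}$, and it passes to the limit.

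If this is troublesome, an alternative avoids push-forward entirely. Change variables on the measure $(f^n)^*(T_f^{k-1})\wedge(f^n)^*\omega=(f^n)^*\bigl(T_f^{k-1}\wedge\omega\bigr)$. Its mass on a full preimage equals $d^{nk}$ times the mass of $T_f^{k-1}\wedge\omega$ on the image set. Multiplying by $d^{-n(k-1)}$ gives the same formula.
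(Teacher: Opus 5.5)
Your proposal is correct and follows essentially the paper's route: push forward by $f^n$ using $f_*T_f^{k-1}=d\,T_f^{k-1}$, evaluate on $\Omega_{f,d^nR}$, and sum the bound from Lemma~\ref{lem:moderate-near-infinity}. The only difference is cosmetic. You use the exact full-preimage identity $\widehat f^{-n}(\Omega_{f,d^nR})=\Omega_{f,R}$ to obtain an equality. The paper instead gets an inequality from the support and domination of $(f^n)_*(\mathbf 1_{\Omega_{f,R}}T_f^{k-1})\le d^nT_f^{k-1}$.
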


\begin{proof}
Recall that $f_*T^{k-1}_f=d T_f^{k-1}$ and
$f(\Omega_{f,r})\subset\Omega_{f,dr}$ for any $r>0$.
The positive current $(f^n)_*(\mathbf 1_{\Omega_{f,R}} T^{k-1}_f)$ is supported
on $\Omega_{f,d^nR}$ and is dominated by
$(f^n)_*T^{k-1}_f=d^nT^{k-1}_f$.  Therefore
\[
\begin{aligned}
   \|T_f^{k-1}\|_{(f^n)^*\omega}(\Omega_{f,R})
   &=\left\langle (f^n)_*(\mathbf 1_{\Omega_{f,R}} T^{k-1}_f),\omega\right\rangle  \\
   &\le d^n\|T_f^{k-1}\|_\omega(\Omega_{f,d^nR}).
\end{aligned}
\]
The last term is bounded by $Cd^n e^{-\alpha R d^n}$ by
Lemma~\ref{lem:moderate-near-infinity},
and this is summable in $n$.
\end{proof}

Assume now that the current $T^{k-1}_f|_{\Omega_f}$ has another representation
\[
   T^{k-1}_f|_{\Omega_f}=\int [D_a] d\mu_\Pi(a).
\]
Here the $D_a\subset\Omega_f$ are embedded holomorphic disks satisfying
$a\in D_a$ and $D_a\cap\Pi=\{a\}$.
If an intersection with $\Omega_{f,R}$ is disconnected, we retain the connected
component containing $a$; we use the same convention for
$\Wcal_f(a)\cap\Omega_{f,R}$.

\begin{lem}\label{lem:local-stable-area}
For sufficiently large $R>0$, we have
$\operatorname{Area}_\omega\bigl(f^n(D_a\cap\Omega_{f,R})\bigr)\to0$ for
$\mu_\Pi$-a.e. $a$.
\end{lem}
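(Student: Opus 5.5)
The plan is to use the assumed representation $T^{k-1}_f|_{\Omega_f}=\int[D_a]\,d\mu_\Pi(a)$, integrate the areas of the pushed-forward disks against $\mu_\Pi$, and sum over $n$ using Lemma~\ref{lem: finite_sum_mass}. The key identity is the change of variables
\[
   \operatorname{Area}_\omega\bigl(f^n(D_a\cap\Omega_{f,R})\bigr)
   \le\int_{D_a\cap\Omega_{f,R}}(f^n)^*\omega .
\]
This holds because $f^n$ restricted to the embedded disk is holomorphic. The area of the image, counted with multiplicity, equals the integral of the pulled-back form. Discarding multiplicity only decreases the area, so we get the inequality; equality holds when $f^n|_{D_a}$ is injective.

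Next, integrate in $a$. Fix $R$ large, so that Lemma~\ref{lem: finite_sum_mass} applies and $\Omega_{f,R}$ lies in a small neighbourhood of $\Pi$. Since $D_a\cap\Omega_{f,R}\subset D_a\cap\Omega_{f,R}$ as sets, the representation gives
\[
   \int\Bigl(\int_{D_a\cap\Omega_{f,R}}(f^n)^*\omega\Bigr)d\mu_\Pi(a)
   \le\bigl\langle \mathbf 1_{\Omega_{f,R}}T_f^{k-1},(f^n)^*\omega\bigr\rangle
   =\|T_f^{k-1}\|_{(f^n)^*\omega}(\Omega_{f,R}).
\]
If we keep only the connected component through $a$, the left side only decreases, so the inequality persists. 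Summing over $n\ge0$ and using Lemma~\ref{lem: finite_sum_mass}, the right side is finite. By monotone convergence (Tonelli),
\[
   \int\sum_{n\ge0}\operatorname{Area}_\omega\bigl(f^n(D_a\cap\Omega_{f,R})\bigr)\,d\mu_\Pi(a)<\infty .
\]
Hence for $\mu_\Pi$-a.e.\ $a$ the series $\sum_n\operatorname{Area}_\omega(f^n(D_a\cap\Omega_{f,R}))$ converges. In particular its terms tend to $0$, which is the claim.

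The main technical point is justifying that the currents-level pairing $\langle\mathbf 1_{\Omega_{f,R}}T_f^{k-1},(f^n)^*\omega\rangle$ is computed disk by disk. This requires $(f^n)^*\omega$ to be a smooth form on $\Pbb^k$ that can be tested against the current, and $\mathbf 1_{\Omega_{f,R}}$ to be handled by monotone approximation. The latter is possible because $\Omega_{f,R}$ is open (it is $\{G_f>R\}\cup\Pi$, with $G_f$ continuous near $\Pi$ via $G_f=-\log|t|+O(1)$). We also need the boundary of $\Omega_{f,R}$ to carry no mass, or else we simply use open sets and inequalities throughout.

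Finally, $(f^n)^*\omega$ is smooth on $\Pbb^k$ since $\widehat f$ is holomorphic. So $a\mapsto\int_{D_a\cap\Omega_{f,R}}(f^n)^*\omega$ is measurable, being the integrand in the disintegration. The identity $\int\langle[D_a],\phi\rangle\,d\mu_\Pi=\langle T^{k-1}_f,\phi\rangle$ then extends from compactly supported test forms to $\mathbf 1_U\phi$, with $U$ open, by monotone convergence applied to positive forms. This uses positivity of $(f^n)^*\omega$ as a $(1,1)$-form.
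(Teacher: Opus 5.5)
Your proposal is correct and follows the paper's proof. Both bound $\operatorname{Area}_\omega\bigl(f^n(D_a\cap\Omega_{f,R})\bigr)$ by $\int_{D_a\cap\Omega_{f,R}}(f^n)^*\omega$, integrate against $\mu_\Pi$ using the disk representation to get at most $\|T_f^{k-1}\|_{(f^n)^*\omega}(\Omega_{f,R})$, and then sum over $n$ with Lemma~\ref{lem: finite_sum_mass} and Tonelli. Your remarks on extending the disk representation to $\mathbf 1_{\Omega_{f,R}}(f^n)^*\omega$ by monotone approximation spell out a step the paper leaves implicit.
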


\begin{proof}
We have
\begin{align*}
    \|T_f^{k-1}\|_{(f^n)^*\omega}(\Omega_{f,R})
    &\geq\int \left(\int_{D_a\cap\Omega_{f,R}}(f^n)^*\omega\right)d\mu_\Pi(a)\\
    &\ge \int \operatorname{Area}_\omega\bigl(f^n(D_a\cap\Omega_{f,R})\bigr) d\mu_\Pi(a)
\end{align*}
Tonelli's theorem and Lemma~\ref{lem: finite_sum_mass} imply that
\[
   \int \sum_{n\ge0}\operatorname{Area}_\omega\bigl(f^n(D_a\cap\Omega_{f,R})\bigr) d\mu_\Pi(a)<\infty.
\]
Therefore, for $\mu_\Pi$-almost every $a$,
\[
   \sum_{n\geq0}\operatorname{Area}_\omega
      \bigl(f^n(D_a\cap\Omega_{f,R})\bigr)<\infty.
\]
In particular,
$\operatorname{Area}_\omega\bigl(f^n(D_a\cap\Omega_{f,R})\bigr)\to0$.
\end{proof}

\begin{lem}\label{lem:stable-manifolds-same-k}
For $\mu_\Pi$-almost every $a\in\Pi$, there is a level $R(a)>0$ such that
\[
   \Wcal_f(a)\cap\Omega_{f,R(a)}=D_a\cap\Omega_{f,R(a)}.
\]
\end{lem}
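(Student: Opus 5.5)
Apply Lemma~\ref{lem:local-stable-area} to the arbitrary representation $\{D_a\}$. This shows that, for $\mu_\Pi$-almost every $a$ and $R$ large, the forward images of $D_a\cap\Omega_{f,R}$ have areas tending to $0$; in particular they are summable. The aim is to upgrade this to the statement that every point of $D_a$ close to $a$ has its orbit shadowing the orbit of $a$. That puts $D_a$ near $a$ inside $W^s_{f,\mathrm{loc}}(a)$. The disk $\Wcal_f(a)$ is also contained in the stable manifold of $a$ and passes through $a$, so the two one-dimensional germs must agree.

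\emph{Step 1: from small area to small diameter.} The disk $f^n(D_a\cap\Omega_{f,R})$ is a holomorphic curve through $f^n(a)\in\Pi$, and it lies in $\Omega_{f,d^nR}$, which is a tiny neighbourhood of $\Pi$. To convert small area into small diameter, I would use a Lelong-type monotonicity estimate for holomorphic curves in $\Pbb^k$: a curve passing through $x$ with boundary outside $B(x,r)$ has area at least $c r^2$. The difficulty is that the boundary of $f^n(D_a\cap\Omega_{f,R})$ is $f^n(D_a\cap\partial\Omega_{f,R})$, which lies on $\{G_f=d^nR\}$, and this level set is close to $\Pi$ but not close to $f^n(a)$. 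So a small-area image could still be long and thin along $\Pi$. The estimate has to be run on the subdisk $D'_a\subset D_a$ consisting of points whose first $n$ iterates stay within $\delta$ of the orbit of $a$. An inductive argument should then show that no point of a fixed small disk around $a$ ever escapes. Suppose $z\in D_a$ near $a$ first leaves the $\delta(f^j(a))$-ball at time $n$. Then the connected curve $f^n(D'_a)$ contains $f^n(a)$ and reaches distance about $\delta$, so its area is at least $c\delta^2$. This contradicts the area tending to zero, provided $\delta$ is uniform, for instance by working on a Pesin set where $\delta(a)\ge\delta_0$ and using Poincaré recurrence to return to that set infinitely often. Given the subsequence-only control this yields, I would instead apply the monotonicity at every time $n$ with the fixed radius $\delta_0$. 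Since the area is eventually less than $c\delta_0^2$, from some time $N$ on the image stays within $\delta_0$ of $f^n(a)$. Finitely many earlier times are handled by shrinking the disk, using continuity of $f^j$ for $j<N$.

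\emph{Step 2: identification.} By Step 1, a neighbourhood of $a$ in $D_a$ lies in $W^s_{f,\mathrm{loc}}(a)$, which is an embedded one-dimensional disk for $\mu_\Pi$-a.e.\ $a$. An embedded disk contained in another one-dimensional embedded disk coincides with it near $a$. Running the same argument with $D_a=\Wcal_f(a)$, which is legitimate by Theorem~\ref{thm:BJ6.10}, gives the same conclusion for $\Wcal_f(a)$. Hence the two germs at $a$ agree. Connectedness and the convention of retaining the component containing $a$ then give the identity $\Wcal_f(a)\cap\Omega_{f,R(a)}=D_a\cap\Omega_{f,R(a)}$ for some $R(a)$ large, since near $\Pi$ both sets are the component through $a$ of the same germ cut by $\{G_f>R(a)\}$.

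The main obstacle is Step 1: turning area decay into pointwise shadowing with the correct, uniform Pesin scale $\delta(f^n(a))$. This needs care because the relevant ball radius must be fixed while the area estimate is applied at each time $n$.
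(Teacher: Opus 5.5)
Your overall route is the paper's. It uses the area decay of Lemma~\ref{lem:local-stable-area}, a Lelong-type lower bound to get shadowing of the orbit of $a$, inclusion in $W^s_{f,\mathrm{loc}}(a)$, the identity principle, and a final increase of the level. Running the argument also for $D_a=\Wcal_f(a)$, which Theorem~\ref{thm:BJ6.10} allows, is a clean way to place $\Wcal_f(a)$ in the local stable manifold near $a$.

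The gap is Step~1, which you leave open. You rightly observe that Lelong's bound needs a curve that is \emph{closed} in the ball. You also observe that the boundary of $f^n(D_a\cap\Omega_{f,R})$, lying on $\{G_f=d^nR\}$, may come close to $f^n(a)$. But both remedies you offer reuse exactly the estimate you have just ruled out. Take the claim that ``the connected curve $f^n(D'_a)$ contains $f^n(a)$ and reaches distance about $\delta$, so its area is at least $c\delta^2$.'' It fails for any curve whose boundary enters the ball, and your own long-and-thin picture is a counterexample. Moreover, the boundary of $f^n(D'_a)$ still contains images of the old boundary points. Likewise, ``apply the monotonicity at every time $n$ with the fixed radius $\delta_0$'' is the naive bound again. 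The Pesin set, the uniform $\delta_0$ and Poincar\'e recurrence do not touch this issue. They are also unnecessary, since $W^s_{f,\mathrm{loc}}(a)$ is defined with the single radius $\delta(a)$.

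The missing idea is to work on a fixed compact piece of the \emph{source} disk, and for each $n$ choose a boundary curve whose image is short.
\begin{itemize}
\item Let $\gamma\colon\Dbb\to D_a\cap\Omega_{f,R}$ be a holomorphic embedding with $\gamma(0)=a$. Put $\phi_n:=f^n\circ\gamma$ and $A_n:=\int_{\Dbb}\phi_n^*\omega$. The proof of Lemma~\ref{lem:local-stable-area} gives $A_n\to0$ for a.e.\ $a$; this is mass counted with multiplicity, which is what is needed.
\item Let $\ell_n(\rho)$ be the $\omega$-length of $\phi_n(\{|\zeta|=\rho\})$. Cauchy--Schwarz gives $\int_{1/2}^{3/4}\ell_n(\rho)^2\,d\rho/\rho\le2\pi A_n$. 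So some $\rho_n\in[1/2,3/4]$ has $L_n:=\ell_n(\rho_n)\le C\sqrt{A_n}$.
\item Fix $\epsilon>0$. Suppose some $x\in\phi_n(\{|\zeta|<\rho_n\})$ lies at distance at least $L_n+\epsilon$ from a point of that short curve. Then $B(x,\epsilon)$ misses the curve, so $\phi_n(\{|\zeta|<\rho_n\})\cap B(x,\epsilon)$ is a closed analytic curve in $B(x,\epsilon)$ through $x$. Lelong then gives $A_n\ge c\epsilon^2$, which fails for large $n$.
\end{itemize}
Hence $\operatorname{diam}f^n(\gamma(\{|\zeta|\le1/2\}))\to0$, with no recurrence argument. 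Now shrink once more for the finitely many times before this diameter drops below $\delta(a)$, as in your continuity remark. This puts a neighbourhood of $a$ in $D_a$ inside $W^s_{f,\mathrm{loc}}(a)$, and your Step~2 goes through.

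For the final equality, take a closed sub-disk around $a$ of the common germ. Its boundary circle avoids $\Pi$, so $G_f$ is bounded there. Hence, for $R(a)$ large, the components through $a$ of $D_a\cap\Omega_{f,R(a)}$ and of $\Wcal_f(a)\cap\Omega_{f,R(a)}$ both lie inside that sub-disk (Jordan separation), and so they coincide. This compact-subdisk step is what the paper's one-line appeal to Lelong's bound, followed by ``after increasing $R$'', implicitly relies on.
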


\begin{proof}
By Lemma~\ref{lem:local-stable-area}, for sufficiently large $R>0$,
$\operatorname{Area}_\omega\bigl(f^n(D_a\cap\Omega_{f,R})\bigr)\to0$.
Lelong's mass lower bound then gives $\sup_{z\in D_a\cap\Omega_{f,R}}
   \dist\bigl(f^n(z),f_\Pi^n(a)\bigr)\to 0.$
After increasing $R$, every point of $D_a\cap\Omega_{f,R}$ therefore lies on the
local stable manifold through $a$. Since the disks are one-dimensional
embedded complex submanifolds through $a$, the identity principle shows that
they have the same germ at $a$. Increasing the level
once more yields
$\Wcal_f(a)\cap\Omega_{f,R(a)}=D_a\cap\Omega_{f,R(a)}$.
\end{proof}

\begin{prop}
\label{prop:zariski-dense-common-stable-manifolds}
Let $f,g\in\Poly^k_d$ satisfy $\mu_f=\mu_g$. Then there exists a set
$E_0\subset\Pi$ of full $\mu_\Pi$-measure such that, for every $a\in E_0$,
$f$ and $g$ have the same local stable-manifold germ at $a$.
\end{prop}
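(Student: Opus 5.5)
The plan is to apply Lemma~\ref{lem:stable-manifolds-same-k} twice, once to $f$ and once to $g$, feeding each the Bedford--Jonsson disks of the other map as the alternative representation.

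First I collect the common objects. By Corollary~\ref{cor:mu-implies-green}, $\mu_f=\mu_g$ gives $G_f=G_g$, $T_f=T_g$ and $\mu_{f_\Pi}=\mu_{g_\Pi}=:\mu_\Pi$. Hence $\Omega_f=\Omega_g$ and $\Omega_{f,R}=\Omega_{g,R}$ for every $R$, and
\[
   T_f^{k-1}|_{\Omega_f}=T_g^{k-1}|_{\Omega_g}.
\]
Theorem~\ref{thm:BJ6.10}, applied to $g$, gives
\[
   T_f^{k-1}|_{\Omega_f}=T_g^{k-1}|_{\Omega_g}=\int[\Wcal_g(a)]\,d\mu_\Pi(a).
\]
Here the $\Wcal_g(a)$ are embedded holomorphic disks in $\Omega_g=\Omega_f$ with $\Wcal_g(a)\cap\Pi=\{a\}$, defined for $\mu_\Pi$-a.e.\ $a$. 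So $D_a:=\Wcal_g(a)$ is an admissible second representation of $T_f^{k-1}|_{\Omega_f}$ in the sense required before Lemma~\ref{lem:local-stable-area}. That lemma and Lemma~\ref{lem:stable-manifolds-same-k}, run for the map $f$, then give a full-measure set $E_1$ and levels $R(a)$ with
\[
   \Wcal_f(a)\cap\Omega_{f,R(a)}=\Wcal_g(a)\cap\Omega_{f,R(a)}\qquad (a\in E_1).
\]
So the Bedford--Jonsson disks of $f$ and $g$ have the same germ at $a$.

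Next I identify these disks with the local stable manifolds. The disk $\Wcal_f(a)$ lies in the global stable manifold of $a$ for $f$. The proof of Lemma~\ref{lem:stable-manifolds-same-k}, applied with $D_a=\Wcal_f(a)$ itself, shows that near $a$ it lies in $W^s_{f,\mathrm{loc}}(a)$. For $\mu_\Pi$-a.e.\ $a$, Pesin theory says $W^s_{f,\mathrm{loc}}(a)$ is a one-dimensional embedded disk, so by the identity principle the two germs at $a$ agree. The same argument applied to $g$ gives a full-measure set $E_2$ on which $\Wcal_g(a)$ and $W^s_{g,\mathrm{loc}}(a)$ have the same germ. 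Then $E_0:=E_1\cap E_2$, intersected with the Pesin full-measure sets for $f$ and $g$, has full $\mu_\Pi$-measure, and for $a\in E_0$ the germs of $W^s_{f,\mathrm{loc}}(a)$ and $W^s_{g,\mathrm{loc}}(a)$ coincide. Taking intersections is harmless because $\mu_\Pi$ is the same measure for both maps.

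The main obstacle is verifying the hypotheses of Lemma~\ref{lem:local-stable-area} for the foreign family $\{\Wcal_g(a)\}$. One must check that the integral representation is an equality of currents on all of $\Omega_f$ with the same measure $\mu_\Pi$; this uses $T_f=T_g$ and $\mu_{f_\Pi}=\mu_{g_\Pi}$. One must also handle the convention of keeping only the connected component through $a$ of $D_a\cap\Omega_{f,R}$. For the latter, the lower bound in Lemma~\ref{lem:local-stable-area} only needs the inequality $\int_{D_a\cap\Omega_{f,R}}(f^n)^*\omega\le$ the mass of the corresponding piece of the current, which survives passing to a component. A secondary subtlety is that the area convergence controls $f^n$ of a $g$-disk; the Lelong argument in Lemma~\ref{lem:stable-manifolds-same-k} then places this disk inside $W^s_{f,\mathrm{loc}}(a)$, which is exactly what is needed.
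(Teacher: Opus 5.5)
Your proposal is correct and follows the paper's proof, which consists exactly of applying Lemma~\ref{lem:stable-manifolds-same-k} to $f$ with $D_a=\Wcal_g(a)$, the Bedford--Jonsson representation of $g$ viewed as a second representation of $T_f^{k-1}|_{\Omega_f}$. Your additional step identifies the germs of $\Wcal_f(a)$ and $\Wcal_g(a)$ with the Pesin local stable manifolds of $f$ and $g$, by rerunning the same lemma with $D_a=\Wcal_f(a)$, and for $g$ with $D_a=\Wcal_g(a)$; this only makes explicit what the paper leaves implicit.
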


\begin{proof}
The Bedford--Jonsson disk representation associated to $g$ is 
another representation of the current
$T_f^{k-1}|_{\Omega_f}$.
Applying Lemma~\ref{lem:stable-manifolds-same-k} with $D_a=\Wcal_g(a)$ gives 
the conclusion.
\end{proof}

\section{Measure rigidity and  finiteness}\label{sect: proof main}

\subsection{Proof of Theorem~\ref{thm: main1}}\label{subsec:proof-main1}

We first fix coordinates near the hyperplane at infinity.  For
$1\le i\le k$, let
\[
   \widetilde U_i:=\{[Z:T]\in\Pbb^k \mid Z_i\ne0\},
   \qquad
   U_i:=\widetilde U_i\cap\Pi.
\]
The open set $\widetilde U_i$ is an affine chart, isomorphic to $\Cbb^k$,
with \emph{tangential coordinates} $z_r^{(i)}:=\frac{Z_r}{Z_i}, r\ne i$ and \emph{transverse coordinate} $t_i:=\frac{T}{Z_i}$.

In this section, we use the ambient chart with $i=k$ and abbreviate $w_r:=\frac{Z_r}{Z_k}$ and $t:=\frac{T}{Z_k}$.
Let $\Dcal_f\subset\Pi$ be the full-$\mu_{f_\Pi}$-measure set of
directions at which the local stable manifold of $f$ is defined.  Define
\begin{equation}\label{eq:fixed-chart-good-set}
   \Dcal_f^{(k)}
   :=\Dcal_f\cap f_\Pi^{-1}(\Dcal_f)
     \cap U_k\cap f_\Pi^{-1}(U_k).
\end{equation}
Since $\mu_{f_\Pi}$ is invariant under
$f_\Pi$, both $\Dcal_f$ and $f_\Pi^{-1}(\Dcal_f)$ have full measure.
Moreover, $\Pi\setminus U_k$ and $f_\Pi^{-1}(\Pi\setminus U_k)$ are proper
algebraic subsets of $\Pi$, and hence have zero $\mu_{f_\Pi}$-measure.
Consequently, $\mu_{f_\Pi}(\Dcal_f^{(k)})=1.$

Let $x\in\Dcal_f^{(k)}$. Write $x=[a:1]$, where
$a=(a_1,\ldots,a_{k-1})\in\Cbb^{k-1}$. In the ambient chart
$\widetilde U_k$, the transversality of the local stable manifold to
$\Pi=\{t=0\}$ gives a unique normalized graph representation
\begin{equation}\label{eq:normalized-stable-parametrization}
   \varphi_{f,x}(t)=[a+h_{f,x}(t):1:t],
   \qquad
   h_{f,x}(t)=O(t).
\end{equation}
The normalized graph in \eqref{eq:normalized-stable-parametrization} is unique
once the affine coordinates are fixed. Later in Section~\ref{sect:generic-first-jet}, we need a more precise (the first-order) expansion of $h_{f,x}$. 

We begin with an elementary vanishing lemma along local stable manifolds.

\begin{lem}
\label{lem:stable-manifold-divisibility}
Let $P(w,t)\in\Cbb[w_1,\ldots,w_{k-1},t]$, let $d\ge1$, and let
$E\subset\Cbb^{k-1}$ be Zariski dense.  Suppose that, for every $a\in E$,
there is a holomorphic map $h_a(t)=O(t)$ near $t=0$ such that $P(a+h_a(t),t)=O(t^d).$
Then $t^d$ divides $P(w,t)$.
\end{lem}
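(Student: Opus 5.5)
Expand $P$ in powers of $t$,
\[
   P(w,t)=\sum_{j\ge0}P_j(w)\,t^j,\qquad P_j\in\Cbb[w_1,\ldots,w_{k-1}],
\]
and show by induction on $j$ that $P_0=\cdots=P_{d-1}=0$ identically. This gives $t^d\mid P$.

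For the base case, fix $a\in E$. Since $h_a(t)=O(t)$, we have $h_a(0)=0$. Evaluating the hypothesis at $t=0$ gives $P(a,0)=P_0(a)=0$, because $d\ge1$. Thus the polynomial $P_0$ vanishes on the Zariski dense set $E$, so $P_0\equiv0$.

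For the inductive step, suppose $0\le m<d$ and $P_0=\cdots=P_{m-1}\equiv0$. Then $P=t^m\sum_{j\ge m}P_j(w)t^{j-m}$. For $a\in E$, write $Q(w,t):=\sum_{j\ge m}P_j(w)t^{j-m}$. The hypothesis gives
\[
   t^m\,Q(a+h_a(t),t)=O(t^d),\qquad\text{so}\qquad Q(a+h_a(t),t)=O(t^{d-m}).
\]
Since $d-m\ge1$, setting $t=0$ and using $h_a(0)=0$ gives $P_m(a)=0$. By Zariski density of $E$, $P_m\equiv0$, which completes the induction.

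The argument therefore reduces to the constant Taylor coefficient at each stage. The only point needing care is that the perturbation $h_a$ disappears at $t=0$, so it never affects the leading coefficient. The division by $t^m$ is valid because both sides are convergent power series in $t$ near $0$. The step where Zariski density is used, namely that a polynomial vanishing on a Zariski dense set is zero, is immediate from the definition. I expect no step to be a real obstacle.
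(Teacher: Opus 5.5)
Your proof is correct and follows the paper's argument essentially verbatim. You expand $P$ in powers of $t$, use $h_a(0)=0$ to identify $P_m(a)$ as the leading Taylor coefficient of $P(a+h_a(t),t)$ once $P_0=\cdots=P_{m-1}=0$, and conclude $P_m\equiv0$ for each $m<d$ by Zariski density; your separate base case is simply the $m=0$ instance of the inductive step.
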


\begin{proof}
Write $P(w,t)=\sum_{m=0}^N t^m P_m(w).$
We show inductively that $P_m=0$ for $0\le m<d$.  Suppose that
$P_0=\cdots=P_{m-1}=0$.  Since $h_a(t)=O(t)$, we have
\[
   P(a+h_a(t),t)=t^mP_m(a)+O(t^{m+1}).
\]
For $m<d$, the hypothesis implies $P_m(a)=0$ for every $a\in E$.
The Zariski density of $E$ gives $P_m\equiv0$.  This proves the induction and
hence $t^d\mid P$.
\end{proof}

\begin{lem}
\label{lem:unitarize-leading-relation}
Let $f,g\in\Poly^k_d$ satisfy $\mu_f=\mu_g$. If $g_h=Af_h$
for some $A\in\operatorname{GL}_k(\Cbb)$, then $A$ is conjugate to a unitary
matrix.
\end{lem}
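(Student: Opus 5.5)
The plan is to pass to the homogeneous Green function and show that $A$ preserves it, which forces $A$ to preserve a compact set with nonempty interior. By Corollary~\ref{cor:mu-implies-green}, $\mu_f=\mu_g$ gives $G_{f_h}=G_{g_h}=:G$ on $\Cbb^k\setminus\{0\}$. This function is logarithmically homogeneous, $G(\lambda z)=\log|\lambda|+G(z)$, and by Proposition~\ref{Prop: basic-green}(2) it satisfies
\[
   G(z)=\log\|z\|+O(1).
\]
Invariance gives $G\circ f_h=dG$ and $G\circ g_h=dG$. Hence
\[
   G(Af_h(z))=G(g_h(z))=dG(z)=G(f_h(z))
\]
for every $z\neq0$.

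Since $f_h$ is regular, $f_h:\Cbb^k\setminus\{0\}\to\Cbb^k\setminus\{0\}$ is surjective: it is a finite proper map of $\Cbb^k$ onto $\Cbb^k$ with $f_h^{-1}(0)=\{0\}$. We therefore get $G\circ A=G$ on $\Cbb^k\setminus\{0\}$.

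Consider the compact set $\mathcal K:=\{z\mid G(z)\le 0\}\cup\{0\}$. It is a balanced set, because $G(\lambda z)=G(z)+\log|\lambda|$. It is compact by the growth estimate, and its interior is a bounded neighbourhood of $0$, since $G\to-\infty$ at $0$ and $G$ is continuous. The invariance $G\circ A=G$ gives $A(\mathcal K)=\mathcal K$, and the same holds for $A^{-1}$. So the cyclic group $\{A^n\}_{n\in\Zbb}$ maps the bounded neighbourhood $\mathcal K$ of $0$ to itself.

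Uniform boundedness of $\{A^n\}_{n\in\Zbb}$ in operator norm then follows. If $B_r\subset\mathcal K\subset B_R$, then $\|A^n\|\le R/r$ for every $n\in\Zbb$. The closure of this group in $\operatorname{GL}_k(\Cbb)$ is therefore a compact subgroup. A compact subgroup preserves the averaged Hermitian inner product $\langle u,v\rangle':=\int\langle Mu,Mv\rangle\,dM$, the integral taken over Haar measure. Thus $A$ is unitary for $\langle\cdot,\cdot\rangle'$, that is, conjugate to a unitary matrix. One could also argue directly via Jordan form: boundedness of $A^n$ for $n\in\Zbb$ forces all eigenvalues to have modulus $1$ and excludes nontrivial Jordan blocks.

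The main step is the surjectivity of $f_h$ on $\Cbb^k\setminus\{0\}$, which is what turns $G\circ A\circ f_h=G\circ f_h$ into $G\circ A=G$. I would justify it by properness: the map $f_h$ is proper on $\Cbb^k$ since $\|f_h(z)\|\ge c\|z\|^d$ by regularity. It is also finite and open, so its image is closed and open, hence all of $\Cbb^k$.
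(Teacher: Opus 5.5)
Your proposal is correct and follows essentially the same route as the paper: you use the common homogeneous Green function, the invariance $G\circ A=G$, and uniform boundedness of $\{A^n\}_{n\in\Zbb}$ from $G=\log\|z\|+O(1)$, and conclude from compactness of the closure of the cyclic group. The differences are cosmetic: the paper bounds $\log\|A^nz\|\le G(A^nz)+C=G(z)+C\le 2C$ for unit vectors directly instead of using the sublevel set $\{G\le0\}\cup\{0\}$, and your surjectivity argument for $f_h$ on $\Cbb^k\setminus\{0\}$ makes explicit the cancellation of $f_h$ that the paper leaves implicit.
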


\begin{proof}
Since $G_h\circ (Af_h)
   =G_h\circ g_h
   =dG_h
   =G_h\circ f_h$,
we have $G_h\circ A^n(z)=G_h(z)$ for every $n\in\Zbb$.
Choose $C>0$ such that
$|G_h(z)-\log\|z\||\leq C$ for $z\neq0$. For every unit vector
$z$ and every $n\in\Zbb$,
\[
   \log\|A^nz\|
   \leq G_h(A^nz)+C
   =G_h(z)+C
   \leq2C.
\]
The closure of $\{A^n\}$ is therefore a compact subgroup of
$\operatorname{GL}_k(\Cbb)$. Hence $A$ is conjugate to a unitary matrix.
\end{proof}

\begin{proof}[Proof of Theorem~\ref{thm: main1}]
Assume that $\mu_f=\mu_g$ and that
$g_h=Af_h$.

\smallskip
\textbf{Step 1: Reduction of the linear part.}
By Lemma~\ref{lem:unitarize-leading-relation}, choose
$P\in\operatorname{GL}_k(\Cbb)$ such that $U:=P^{-1}AP$ is unitary, and set $\widetilde f=P^{-1}fP$, $\widetilde g=P^{-1}gP.$
Then $\mu_{\widetilde f}=(P^{-1})_*\mu_f=(P^{-1})_*\mu_g
=\mu_{\widetilde g}$ and
$\widetilde g_h=U\widetilde f_h$. Since a unitary matrix is conjugate to a diagonal matrix,
we may suppose 
that $A=\operatorname{diag}(\alpha_1,\ldots,\alpha_k)$
and that $A$ is unitary.

By Proposition~\ref{prop:zariski-dense-common-stable-manifolds},
$\mu_{f_\Pi}=\mu_{g_\Pi}=: \mu_\Pi$, and there is a full-$\mu_\Pi$-measure
set $E_0\subset\Pi$ on which the local stable manifolds of $f$ and $g$
coincide sufficiently close to $\Pi$. By
\eqref{eq:fixed-chart-good-set}, both $\Dcal_f^{(k)}$ and
$\Dcal_g^{(k)}$ have full $\mu_\Pi$-measure. Hence
\[
   E:=E_0\cap\Dcal_f^{(k)}\cap\Dcal_g^{(k)}
\]
has full $\mu_\Pi$-measure and is therefore Zariski dense in $\Pi$.

\smallskip
\textbf{Step 2: From common local stable manifolds to an affine
relation.}
We now use the first $d-1$ transverse coefficients of the common local stable
manifolds to show that $g-Af$ is constant.

Let
\[
   \widehat f[Z:T]
   =[\widehat f_1(Z,T):\cdots:\widehat f_k(Z,T):T^d],
   \qquad
   \widehat g[Z:T]
   =[\widehat g_1(Z,T):\cdots:\widehat g_k(Z,T):T^d]
\]
be the projective extensions. Choose homogeneous coordinates
$[Z_1:\cdots:Z_k:T]$ and work in the fixed ambient chart
$\widetilde U_k$.
By the definition of $\Dcal_f^{(k)}$ and $\Dcal_g^{(k)}$, the points
$x$, $f_\Pi(x)$, and $g_\Pi(x)$ all belong to
$U_k\subset\widetilde U_k$ for every $x\in E$, and all the relevant local
stable manifolds are defined.  The affine-coordinate
image
\[
   E_{\mathrm{aff}}
   :=\{a\in\Cbb^{k-1}\mid [a:1]\in E\}
\]
is Zariski dense in $\Cbb^{k-1}$.

Write
\[
   \widehat f[w:1:t]
   =[\Phi_{f,1}(w,t):\cdots:\Phi_{f,k}(w,t):t^d],
\]
\[
   \widehat g[w:1:t]
   =[\Phi_{g,1}(w,t):\cdots:\Phi_{g,k}(w,t):t^d].
\]
The homogeneous-part relation gives
\begin{equation}\label{eq:homogeneous-chart-relation-new}
   \Phi_{g,i}(w,0)=\alpha_i\Phi_{f,i}(w,0),
   \qquad 1\le i\le k.
\end{equation}
For $x\in E$, write
\[
   x=[a:1],
   \qquad
   y:=f_\Pi(x)=[b:1],
   \qquad
   y':=g_\Pi(x)=[b':1],
\]
where $a,b,b'\in\Cbb^{k-1}$.  Use the normalized parametrization
\eqref{eq:normalized-stable-parametrization}.  Since the two local stable
manifolds coincide, we may write
\begin{equation*}
   \varphi_x:=\varphi_{f,x}=\varphi_{g,x},
   \qquad
   \varphi_x(t)=[a+h_x(t):1:t],
   \qquad
   h_x(t)=O(t).
\end{equation*}
By \eqref{eq:homogeneous-chart-relation-new},
\begin{equation}\label{eq:target-label-relation-new}
   b'_j=\frac{\alpha_j}{\alpha_k}b_j,
   \qquad 1\le j\le k-1.
\end{equation}

Parametrize the relevant target local stable manifolds by
\[
   \varphi_{f,y}(u)=[b+h_{f,y}(u):1:u],
   \qquad h_{f,y}(u)=O(u),
\]
\[
   \varphi_{g,y'}(u)=[b'+h_{g,y'}(u):1:u],
   \qquad h_{g,y'}(u)=O(u).
\]
The transverse parameters induced by $f$ and $g$ on the common local stable
manifold are
\begin{equation}\label{eq:transverse-parameters-new}
   \widetilde f_x(t)
   =\frac{t^d}{\Phi_{f,k}(a+h_x(t),t)},
   \qquad
   \widetilde g_x(t)
   =\frac{t^d}{\Phi_{g,k}(a+h_x(t),t)}.
\end{equation}
The denominators in \eqref{eq:transverse-parameters-new} are nonzero at $t=0$;
hence $\widetilde f_x(t)=O(t^d)$ and $\widetilde g_x(t)=O(t^d)$.
After shrinking the common local stable manifold at $x$, its images under
$f$ and $g$ are contained in the corresponding target local stable
manifolds. Therefore, for $1\le j\le k-1$ we obtain
\[
   \frac{\Phi_{f,j}(a+h_x(t),t)}
        {\Phi_{f,k}(a+h_x(t),t)}
   =b_j+h_{f,y,j}(\widetilde f_x(t))
   =b_j+O(t^d),
\]
and
\[
   \frac{\Phi_{g,j}(a+h_x(t),t)}
        {\Phi_{g,k}(a+h_x(t),t)}
   =b'_j+h_{g,y',j}(\widetilde g_x(t))
   =b'_j+O(t^d).
\]
Using \eqref{eq:target-label-relation-new}, it follows that
\[
   \alpha_k
   \frac{\Phi_{g,j}(a+h_x(t),t)}
        {\Phi_{g,k}(a+h_x(t),t)}
   -\alpha_j
   \frac{\Phi_{f,j}(a+h_x(t),t)}
        {\Phi_{f,k}(a+h_x(t),t)}
   =O(t^d).
\]
Therefore the polynomial
\begin{equation*}
   M_j(w,t)
   :=\alpha_k\Phi_{g,j}(w,t)\Phi_{f,k}(w,t)
     -\alpha_j\Phi_{f,j}(w,t)\Phi_{g,k}(w,t)
\end{equation*}
satisfies
\[
   M_j(a+h_x(t),t)=O(t^d)
   \qquad (x=[a:1]\in E).
\]
Applying Lemma~\ref{lem:stable-manifold-divisibility} to
$E_{\mathrm{aff}}$ yields
\begin{equation}\label{eq:minor-divisibility-new}
   t^d\mid M_j(w,t),
   \qquad 1\le j\le k-1.
\end{equation}

Since $M_j$ is the dehomogenization of a homogeneous polynomial of degree
$2d$, homogenizing \eqref{eq:minor-divisibility-new} gives
\begin{equation}\label{eq:homogeneous-minor-divisibility-new}
   T^d\mid
   \bigl(\alpha_k\widehat g_j\widehat f_k
         -\alpha_j\widehat f_j\widehat g_k\bigr),
   \qquad 1\le j\le k-1.
\end{equation}
Put $H_i(Z,T):=\widehat g_i(Z,T)-\alpha_i\widehat f_i(Z,T)$.
By the homogeneous-part relation, $H_i(Z,0)=0$, and hence
\begin{equation}\label{eq:H-expansion-new}
   H_i(Z,T)=\sum_{m=1}^d T^mH_{i,m}(Z),
   \qquad \deg H_{i,m}=d-m.
\end{equation}
Equation \eqref{eq:homogeneous-minor-divisibility-new} becomes
\begin{equation}\label{eq:H-minor-divisibility-new}
   T^d\mid
   \bigl(\alpha_kH_j\widehat f_k-\alpha_j\widehat f_jH_k\bigr),
   \qquad 1\le j\le k-1.
\end{equation}

We prove by induction on $m<d$ that $H_{i,m}=0$ for every $i$.  Assume this has
already been proved for all smaller indices.  The coefficient of $T^m$ in
\eqref{eq:H-minor-divisibility-new} is
\begin{equation}\label{eq:coefficient-syzygy-new}
   \alpha_kH_{j,m}(f_h)_k
   -\alpha_j(f_h)_jH_{k,m}=0,
   \qquad 1\le j\le k-1.
\end{equation}
Set $V_{i,m}:=\alpha_i^{-1}H_{i,m}$.  Then
\eqref{eq:coefficient-syzygy-new} says $ V_{j,m}(f_h)_k=(f_h)_jV_{k,m}.$
Consequently, over the fraction field of $\Cbb[Z_1,\ldots,Z_k]$, there is a
rational function $q_m$ such that $(V_{1,m},\ldots,V_{k,m})
   =q_m\bigl((f_h)_1,\ldots,(f_h)_k\bigr).$
Write $q_m=p_m/r_m$ in lowest terms. Since
$q_m(f_h)_i=V_{i,m}$ is a polynomial for every $i$, the denominator $r_m$
divides every $(f_h)_i$. The regularity of $f$ means that
$(f_h)_1,\ldots,(f_h)_k$ have no common zero on $\Pbb^{k-1}$; in particular,
they have no nonconstant common factor. Hence $r_m$ is constant and $q_m$ is
a polynomial. If $q_m\ne0$, then for every $i$ with $(f_h)_i\ne0$ the polynomial
$q_m(f_h)_i=V_{i,m}$ has degree at least $d$, whereas $V_{i,m}$ is homogeneous
of degree $d-m<d$.  This is impossible.  Hence $q_m=0$.  Therefore
$H_{i,m}=0$ for every $i$.  The induction is complete.

It follows from \eqref{eq:H-expansion-new} that $H_i(Z,T)=c_iT^d$ for
constants $c_i\in\Cbb$.  Hence
\[
   \widehat g_i(Z,T)=\alpha_i\widehat f_i(Z,T)+c_iT^d,
   \qquad 1\le i\le k.
\]
On the affine chart $T=1$, this is precisely $g(z)=Af(z)+c$, where
$c=(c_1,\ldots,c_k)$. Now set $   \sigma(z):=Az+c.$
Then $g=\sigma f$.

\smallskip
\textbf{Step 3: The Julia-set symmetry and the converse.}
It remains to show that $\sigma$ preserves the Julia set. By
Corollary~\ref{cor:mu-implies-green}, $G_g=G_f$. Since $g=\sigma f$, we have
$G_f\circ\sigma=G_f$.  Consequently,
\[
   \sigma^*\mu_f
   =\bigl(\ddc(G_f\circ\sigma)\bigr)^k
   =(\ddc G_f)^k
   =\mu_f,
\]
and hence $\sigma(J_f)=J_f$.  The converse implication is
Proposition~\ref{prop:easy-direction}.
\end{proof}

\begin{prop}\label{prop:C2-rigidity-applications}
Let $f,g\in\Poly^2_d$ for some $d\geq 2$. Suppose that one of the
following conditions holds:
\begin{enumerate}
\item[(a)] $f_\Pi$ and $g_\Pi$ are polynomials in the same coordinate;
\item[(b)] $f_\Pi$ is polynomial and is not conjugate to the power map.
\end{enumerate}
Then $\mu_f=\mu_g$ if and only if  $g=\sigma f$
for some $\sigma\in\Sa_f$.
\end{prop}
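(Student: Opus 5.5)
The plan is to reduce to Theorem~\ref{thm: main1}, which needs only $L_g=L_f$, i.e.\ $g_h=A\circ f_h$ for some $A\in\operatorname{GL}_2(\Cbb)$. The reverse implication is Proposition~\ref{prop:easy-direction}. For the forward one, assume $\mu_f=\mu_g$. By Corollary~\ref{cor:mu-implies-green}, $\mu_{f_\Pi}=\mu_{g_\Pi}$ and $G_{f_h}=G_{g_h}$. Here $f_\Pi,g_\Pi$ are rational maps of $\Pbb^1$ of the same degree $d$. The task is to turn these equalities into a linear relation between the leading homogeneous parts.

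First, interpret ``$f_\Pi$ polynomial'' in homogeneous coordinates $[Z_1:Z_2]$ on $\Pi$. After a linear change of coordinates on $\Cbb^2$ (which conjugates $f,g$ and preserves every statement), we may assume the relevant point at $\Pi$ is $[1:0]$. Then $f_h=(F_1,F_2)$ with $F_2=cZ_2^d$, so that $f_\Pi(w)=F_1(w,1)/c$ in the coordinate $w=Z_1/Z_2$ is a polynomial of degree $d$.

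\textbf{Case (a).} Both $f_\Pi$ and $g_\Pi$ are polynomials of degree $d$ in $w$ with the same equilibrium measure. By Beardon's theorem (Theorem~\ref{thm: Beardon}), $g_\Pi=\tau\circ f_\Pi$ with $\tau(w)=\lambda w+\beta$ affine. In homogeneous terms, $g_h$ and $\bigl(\lambda F_1+\beta F_2,\,F_2\bigr)$ define the same map of $\Pbb^1$, hence $g_h=\phi\cdot A'f_h$ with $A'$ linear and $\phi$ a scalar; degrees force $\phi$ to be constant. Writing $g_h=A f_h$ with $A=\phi A'$ invertible, Theorem~\ref{thm: main1} gives $g=\sigma f$ with $\sigma\in\Sa_f$.

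A technical point is why the scalar $\phi$ is constant. Two homogeneous degree-$d$ lifts of the same rational map of $\Pbb^1$ with no common zeros differ by a nonzero constant, since the lifts have coprime components. The normalization of $\phi$ could be further constrained by $G_{f_h}=G_{g_h}$, but this is not needed, since Theorem~\ref{thm: main1} allows any invertible $A$.

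\textbf{Case (b).} Only $f_\Pi$ is known to be polynomial, so the main step is to show that $g_\Pi$ is a polynomial in the same coordinate; once that holds, case (a) applies. The tool is total invariance of the exceptional point. Since $f_\Pi$ is a polynomial not conjugate to $z^d$, its only exceptional point is $\infty_w=[1:0]$. Moreover, $\mu_\Pi$ is the harmonic measure of the filled Julia set $K$, whose complement is connected and contains $\infty_w$.

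One would like to deduce that $g_\Pi$ is polynomial from $\mu_{g_\Pi}=\mu_\Pi$. This is Levin--Przytycki type input, or the classical fact that a rational map sharing the maximal-entropy measure with a nonexceptional polynomial must have $\infty$ totally invariant. This is the hardest step. I would try it via the Green function: $G_{g_h}=G_{f_h}$ gives equal Robin functions. The Robin function of $f_h$ corresponds to the Green function of $K$ with its pole at $\infty_w$, and this function must be $g_\Pi$-quasi-invariant: $\rho\circ g_\Pi=d\rho+\log|\text{Jacobian factor}|$. That forces $g_\Pi^{-1}(\infty_w)=\{\infty_w\}$, because the pole set of the Green function of $K$ on $\Pbb^1$, namely where $\log$ of the homogeneous Green function blows up, is exactly $\{\infty_w\}$.

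Concretely, on the line $\{Z_2=0\}$, $G_{f_h}(Z_1,0)=\log|Z_1|+\text{const}$ is pluriharmonic. Its lack of a Laplacian there, combined with $G_{g_h}\circ g_h=dG_{g_h}$, shows that $g_h$ maps the line $Z_2=0$ into itself with total invariance. Hence $g_\Pi$ is polynomial in $w$. The hypothesis ``not conjugate to the power map'' is used to rule out a second exceptional point, which would allow $g_\Pi$ to swap $0$ and $\infty$, as in $z^{-d}$.
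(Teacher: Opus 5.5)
Your case (a) is correct and is essentially the paper's argument. Beardon's theorem gives $g_\Pi=\tau\circ f_\Pi$ with $\tau$ affine, this lifts to $g_h=Af_h$ with $A$ upper triangular and invertible, and Theorem~\ref{thm: main1} concludes. The paper first centers $f_\Pi$ so that the translation part of $\tau$ vanishes, but your version shows this is unnecessary.

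Case (b) has a genuine gap. Your Green-function sketch does not establish that $g_\Pi$ is a polynomial in $w$. The Robin function $\rho_f$ is continuous on all of $\Pi$, so there is no intrinsic ``pole set''. The logarithmic pole at $\infty_w$ only appears because you restrict to the line $\{Z_2=1\}$.

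Concretely, put $G:=G_{f_h}=G_{g_h}$ and $u(w):=G(w,1)$. Write $g_h(w,1)=(P(w),Q(w))$ with $P,Q$ coprime, so $g_\Pi=P/Q$. Then $G\circ g_h=dG$ reads
\[
   u\circ g_\Pi=d\,u-\log|Q| .
\]
At a finite pole $w_0$ of $g_\Pi$ of order $m$, both sides equal $-m\log|w-w_0|+O(1)$. So the identity is automatically compatible with $g_\Pi$ having finite poles: the ``Jacobian factor'' you mention absorbs them exactly.

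Likewise, pluriharmonicity of $G$ near $\{Z_2=0\}$ does not single out that line. On $\Cbb^2\setminus\{0\}$, $\ddc G$ is the pullback of $\mu_\Pi$. Hence $G$ is pluriharmonic on the cone over the whole Fatou set of $f_\Pi$, in particular over the entire basin of $\infty_w$.

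The inference itself fails on an example. Take $f=(x^d,y^d)$ and $g=(y^d,x^d)$. Both have $G=\log\max(|Z_1|,|Z_2|)$, which is pluriharmonic near $\{Z_2=0\}$ and satisfies $G\circ g_h=dG$. Yet $g_h$ sends $\{Z_2=0\}$ onto $\{Z_1=0\}$. In your sketch the non-power hypothesis enters only after this conclusion is drawn, so it does no work.

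What is missing is a genuine theorem turning $\mu_{g_\Pi}=\mu_{f_\Pi}$ into total invariance of $\infty_w$ under $g_\Pi$. The paper cites Okuyama--Stawiska~\cite{YusukeMagorzata} for exactly this step. With that input, your reduction to case (a) goes through.

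When $f_\Pi$ and $g_\Pi$ are nonspecial, Theorem~\ref{thm: Levin-P} gives another route. It provides iterates $R=f_\Pi^a$ and $S=g_\Pi^b$ with $R\circ R=R\circ S$. A finite pole $z_0$ of $S$ would then give $R^2(z_0)=\infty_w$, which is impossible for a polynomial. So $\infty_w$ is exceptional for $g_\Pi$, hence totally invariant since $g_\Pi$ is not conjugate to $z^{\pm d}$. This route still leaves the Chebyshev case of (b) to be handled separately.
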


\begin{proof}
The reverse implication follows from Proposition~\ref{prop:easy-direction}.
Assume $\mu_f=\mu_g$.

\smallskip
\noindent\textbf{(a)}
Suppose that $f_\Pi$ and $g_\Pi$ are polynomials in the same coordinate on
$\Pi$. Up to the same conjugation on $f$ and $g$, we may assume that $f_\Pi$
is centered and that
\begin{align*}
    f_h(x,y)=a_f\left(y^df_\Pi\!\left(\frac{x}{y}\right),y^d\right),\qquad
    g_h(x,y)=a_g\left(y^dg_\Pi\!\left(\frac{x}{y}\right),y^d\right)
\end{align*}
for some $a_f,a_g\in\Cbb^*$. By Theorem~\ref{thm: Beardon},
$g_\Pi=\phi f_\Pi$ for an affine symmetry
$\phi(z)=\lambda z+b$ of $J_{f_\Pi}$. Since $f_\Pi$ is centered, the standard
centroid argument shows that every affine symmetry of its Julia set fixes the
origin, so $b=0$. Hence
\[
   g_h=Af_h,
   \qquad
   A:=\frac{a_g}{a_f}\operatorname{diag}(\lambda,1).
\]
Since $A$ is invertible, we conclude by
Theorem~\ref{thm: main1}.

\smallskip
\noindent\textbf{(b)}
Suppose that $f_\Pi$ is polynomial and not conjugate to the power map. Since
$\mu_{f_\Pi}=\mu_{g_\Pi}$, it follows from
Okuyama--Stawiska~\cite{YusukeMagorzata} that $g_\Pi$ is also a polynomial with
respect to the same exceptional point at infinity. Thus $f_\Pi$ and $g_\Pi$
are polynomials in the same coordinate, and we are reduced to case~(a).
\end{proof}

\subsection{Proof of Theorem~\ref{thm: main2}}
We first establish a criterion for a regular polynomial endomorphism to be
homogeneous, which may be of independent interest.

\begin{lem}\label{lem: charcterizationHomogeneous}
Let $k\geq 2$ and let $f\in \Poly^k_d$ for some $d\ge 2$. Then its
Julia set $J_f$ is $S^1$-invariant (or equivalently, $\lambda J_f=J_f$ for
infinitely many $\lambda \in S^1$) if and only if $f$ is homogeneous.
\end{lem}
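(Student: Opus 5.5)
The plan is to prove the easy direction by direct computation of the Green function. For the converse, I would feed the circle symmetry into Theorem~\ref{thm: main1} with $A=\id$, which forces $f$ to commute with rotations.

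\emph{Homogeneous $\Rightarrow$ invariant.} If $f=f_h$, then $f^n(\lambda z)=\lambda^{d^n}f^n(z)$ for every $\lambda\in S^1$. Hence $\|f^n(\lambda z)\|=\|f^n(z)\|$ and $G_f(\lambda z)=G_f(z)$. Therefore $\mu_f=(\ddc G_f)^k$ is invariant under $z\mapsto\lambda z$, and so is $J_f=\supp\mu_f$.

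\emph{Reduction to full $S^1$-invariance.} Let $\Gamma:=\{\lambda\in S^1\mid \lambda J_f=J_f\}$. This is a subgroup of $S^1$. It is closed: if $\lambda_n\to\lambda$ with $\lambda_nJ_f=J_f$, compactness of $J_f$ gives $\lambda J_f\subset J_f$, and the same argument for $\lambda_n^{-1}$ gives the reverse inclusion. A closed infinite subgroup of $S^1$ is $S^1$ itself. So the two formulations in the statement are equivalent, and we may assume $\lambda J_f=J_f$ for all $\lambda\in S^1$.

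\emph{Invariant $\Rightarrow$ homogeneous.} Since linear maps commute with polynomial hulls, $\lambda K_f=\lambda\widehat{J_f}=\widehat{\lambda J_f}=K_f$. Because $G_f$ is the pluricomplex Green function of $K_f$, and $z\mapsto\lambda z$ preserves both plurisubharmonicity and logarithmic growth, we get $G_f(\lambda z)=G_f(z)$.

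Fix $\lambda\in S^1$ and put $h(z):=\lambda^{-d}f(\lambda z)$. Then $h\in\Poly^k_d$ with $h_h=f_h$. Using the $S^1$-invariance of $G_f$ together with $|\lambda^{-d}|=1$,
\[
G_f(h(z))=G_f(f(\lambda z))=dG_f(\lambda z)=dG_f(z).
\]
By Proposition~\ref{Prop: basic-green}(3), $G_h=G_f$, and hence $\mu_h=\mu_f$. Theorem~\ref{thm: main1} with $A=\id$ then gives $h=\sigma\circ f$, where $\sigma(z)=z+c_\lambda$ is a translation with $\sigma(J_f)=J_f$.

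A nonzero translation cannot preserve a nonempty compact set, since $\sigma^n(J_f)$ would escape to infinity. Hence $c_\lambda=0$, i.e.\ $f(\lambda z)=\lambda^d f(z)$ for all $\lambda\in S^1$. Comparing homogeneous components yields $\lambda^m f_m=\lambda^d f_m$ for every $\lambda\in S^1$. This forces $f_m=0$ for $m<d$, so $f=f_h$ is homogeneous.

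The only delicate step is the passage from invariance of $J_f$ to invariance of $G_f$, which goes through $K_f=\widehat{J_f}$. Everything else reduces to Theorem~\ref{thm: main1}.
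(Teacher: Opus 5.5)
Your proposal is correct and follows the paper's proof essentially step for step. Like the paper, you pass from $S^1$-invariance of $J_f$ to that of $K_f=\widehat{J_f}$ and then of $G_f$. You then apply Theorem~\ref{thm: main1} to $\lambda^{-d}f(\lambda z)$ to get a translation preserving a compact set, hence trivial, and finish by comparing homogeneous components.
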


\begin{proof}
The stabilizer $\{\lambda\in S^1\mid \lambda J_f=J_f\}$
is a closed subgroup of $S^1$. If it is infinite, it equals $S^1$, which
justifies the equivalence in the statement. The implication from right to left
is immediate. For the converse, assume that $J_f$ is $S^1$-invariant. Since
$K_f=\widehat{J_f}$, the filled Julia set $K_f$ is also $S^1$-invariant. Hence
$G_f(\lambda z)=G_f(z)$ for every $z\in \Cbb^k$ and every $\lambda\in S^1$.

For any $\lambda\in S^1$, define
$f_\lambda(z):=\lambda^{-d}f(\lambda z)$. Then $f_\lambda$ and $f$ have the
same homogeneous part, and
\[
   G_f\circ f_\lambda(z)
   =G_f\bigl(\lambda^{-d}f(\lambda z)\bigr)
   =dG_f(z).
\]
The uniqueness of the Green function gives $G_{f_\lambda}=G_f$, and hence
$\mu_{f_\lambda}=\mu_f$. By the first assertion of
Theorem~\ref{thm: main1}, there exists
$\sigma_\lambda\in \Sa_f$ such that
$f_\lambda=\sigma_\lambda f$.
Since $f_\lambda$ and $f$ have the same homogeneous part, $\sigma_\lambda$ must be a translation. As $\sigma_\lambda\in \Sa_f$, it preserves $K_f$, and since $K_f$ is compact, this translation must be trivial. Therefore $f_\lambda=f$ for every $\lambda\in S^1$.

Use the homogeneous decomposition $f=\sum_{m=0}^d f_m$ from
\eqref{eq:homogeneous-decomposition}.
The identity $f_\lambda=f$ gives
$\sum_{m=0}^{d-1}(\lambda^m-\lambda^d)f_m(z)=0$.
Fix $z\in\Cbb^k$. The left-hand side is a polynomial in $\lambda$ that
vanishes on $S^1$, hence it is identically zero. Thus $f_m(z)=0$ for every
$0\leq m<d$. Since this holds for every $z$, we obtain
$f_0=\cdots=f_{d-1}=0$, so $f=f_h$ is homogeneous.
\end{proof}

For $k\geq 2$, recall the restriction map
$\rho:\APoly^k\to\End(\Pi)$, where $\Pi\simeq\Pbb^{k-1}$. Its
restriction to the affine symmetry group of $J_f$ is the homomorphism
\begin{equation*}
    \rho_f^{\mathrm{aff}}:\Sa_f
    \longrightarrow\Aut(\Pi),
    \qquad \sigma\longmapsto\sigma_\Pi.
\end{equation*}

\begin{proof}[Proof of the finiteness assertion in Theorem~\ref{thm: main2}]
We first observe that the map
\begin{equation}
\begin{aligned}
   \ker\rho_f^{\mathrm{aff}}
   &\longrightarrow \{g\in\Sigma_f^{d\mathrm{poly}}\mid g_\Pi=f_\Pi\},\\
   \sigma&\longmapsto \sigma f
\end{aligned}
\label{eq:kernel-fiber-bijection}
\end{equation}
is a bijection. Indeed, if $\sigma\in\ker\rho_f^{\mathrm{aff}}$, then
Proposition~\ref{prop:easy-direction} gives $\mu_{\sigma f}=\mu_f$, and
$(\sigma f)_\Pi=\sigma_\Pi f_\Pi=f_\Pi$. Conversely, if
$g\in\Sigma_f^{d\mathrm{poly}}$ satisfies $g_\Pi=f_\Pi$, then
$g_h=cf_h$ for some $c\in\Cbb^*$. The first assertion of
Theorem~\ref{thm: main1} gives $g=\sigma f$ for some
$\sigma\in\Sa_f$ whose linear part is $c\id$. Hence
$\sigma_\Pi=\id$. Finally, the map in
\eqref{eq:kernel-fiber-bijection} is injective because a regular polynomial
endomorphism is surjective.

Suppose first that $f$ is affinely conjugate to a homogeneous map.
Thus $h=\phi f\phi^{-1}$ is homogeneous for some
$\phi\in\Aff(\Cbb^k)$. For every $\lambda\in S^1$, the scalar map
$R_\lambda(z):=\lambda z$ preserves $J_h$ by
Lemma~\ref{lem: charcterizationHomogeneous}. Hence
$\sigma_\lambda:=\phi^{-1}R_\lambda\phi$ preserves $J_f$. Its linear part is
$\lambda\id$, so $(\sigma_\lambda)_\Pi=\id$. Therefore
$\sigma_\lambda\in\ker\rho_f^{\mathrm{aff}}$ for every $\lambda\in S^1$,
and the kernel is infinite.

Conversely, suppose that $f$ is not affinely conjugate to a homogeneous
polynomial map. We show that the kernel is finite.  Take
$\sigma\in\ker \rho_f^{\mathrm{aff}}$. Since $\sigma_\Pi=\id$, we
must have $\sigma(z)=\lambda z+a$ for some $\lambda\in\Cbb^*$ and
$a\in\Cbb^k$. Since $\sigma\in\Sa_f$, it preserves $K_f$.
Comparing the diameters of $K_f$ and $\sigma(K_f)$ gives $|\lambda|=1$.
Fixing $z_0\in K_f$, we also have
$a=\sigma(z_0)-\lambda z_0$, so the translation parts are bounded. Thus
$\ker \rho_f^{\mathrm{aff}}$ is bounded and closed, hence compact.

Let $m_{\ker \rho_f^{\mathrm{aff}}}$ be the Haar probability measure on
$\ker \rho_f^{\mathrm{aff}}$, and define
\begin{align*}
p\coloneqq \int_{\ker \rho_f^{\mathrm{aff}}}\sigma(0)\,
dm_{\ker \rho_f^{\mathrm{aff}}}(\sigma)\in \Cbb^k.
\end{align*}
This point is fixed by every $\gamma$. Set $T_p(z)=z-p$. Then for every
$\sigma\in\ker \rho_f^{\mathrm{aff}}$ of the form $\sigma(z)=\lambda z+a$,
we have $T_{p}\, \sigma\, T_{-p}(z)=\lambda z.$
Therefore
\[
   T_p(\ker \rho_f^{\mathrm{aff}})T_{-p}
   :=\{T_p\sigma T_{-p}\mid \sigma\in\ker\rho_f^{\mathrm{aff}}\}
\]
is a compact subgroup of $S^1$.

Suppose that $\ker\rho_f^{\mathrm{aff}}$ is
infinite. Then $T_p(\ker \rho_f^{\mathrm{aff}})T_{-p}$ is equal to $S^1$. Since every
element of $\ker \rho_f^{\mathrm{aff}}$ preserves $J_f$, it follows that
$T_p(J_f)$ is $S^1$-invariant. By
Lemma~\ref{lem: charcterizationHomogeneous}, the map
$T_pfT_{-p}$ is homogeneous, contradicting the assumption.
Therefore $\ker \rho_f^{\mathrm{aff}}$ is finite.
Together with the bijection~\eqref{eq:kernel-fiber-bijection}, this proves the
finiteness assertion.
\end{proof}

We recall the one-dimensional finiteness theorem used to control the image of
the restriction map.

\begin{thm}[Levin {\cite[Theorem~2]{LevinSymmetries90}}]\label{thm: Levin}
Fix $d_1,d_2\geq2$, and let $r$ be a rational map of degree $d_1$.  The set of
rational maps of degree $d_2$ sharing the equilibrium measure of $r$ is finite
unless $r$ is conjugate to $z\mapsto z^{\pm d_1}$.
\end{thm}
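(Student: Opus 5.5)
We would prove Theorem~\ref{thm: Levin} in two stages: first show that the set $S:=\{g\in\operatorname{Rat}_{d_2}\mid \mu_g=\mu_r\}$ is compact in $\operatorname{Rat}_{d_2}$ and real-analytic, then show that an infinite such set forces a continuous family of symmetries of $J_r$. The second stage is where the power-map exception arises.

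\textbf{Compactness.} Use the Fornaess--Sibony lift, which is the one-dimensional analogue of Corollary~\ref{cor:mu-implies-green}. Let $R$ be a homogeneous lift of $r$ to $\Cbb^2$, with homogeneous Green function $G_R$, and set $K_R:=\{G_R\le 0\}$, a compact circled set. The measure $\mu_r$ determines $G_R$ up to an additive constant, by the uniqueness of the potential. Hence for $g\in S$ we may choose a lift $\widetilde G$ with $G_R\circ\widetilde G=d_2\,G_R$, after rescaling $\widetilde G$ by a scalar to kill the constant. Then $\widetilde G$ maps $K_R$ into $K_R$, so its coefficients are uniformly bounded. Moreover, $\widetilde G^{-1}(0)\subset G_R^{-1}(-\infty)=\{0\}$, so any limit of such lifts still satisfies the same functional equation and is therefore nondegenerate. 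Thus the set $\widetilde S$ of normalized lifts is compact and consists of nondegenerate maps. Its image $S$ is compact in $\operatorname{Rat}_{d_2}$, and it is closed because $g\mapsto\mu_g$ is continuous on $\operatorname{Rat}_{d_2}$.

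\textbf{Structure of $S$.} Next we would show that $S$ is a real-analytic subset of $\operatorname{Rat}_{d_2}$. The condition $\mu_g=\mu_r$ is equivalent to the countably many real-analytic equations $\int \phi\circ g\,d\mu_r=\int\phi\,d\mu_r$ on test functions, together with the maximal-entropy property. Alternatively, following Levin, it suffices to show that $S$ has no accumulation points. Suppose $g_n\to g$ in $S$ with $g_n\ne g$. We would first try to extract from $(g_n)$ infinitesimal information: normalize $g_n=g\circ\psi_n$ locally near a repelling periodic point of $r$ in $J_r$, using that $g_n$ and $g$ share $\mu_r$ and hence share inverse-branch structure on $J_r$. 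The maps $\psi_n$ are then local conformal maps preserving $\mu_r$ and tending to the identity. Passing to the derivative $(\psi_n-\id)/\|\psi_n-\id\|$ should yield a nonzero holomorphic vector field $v$, defined near a point of $J_r$, whose flow preserves $\mu_r$ locally. Propagating $v$ along iterates of $r$, which are locally injective away from critical points, extends the flow of $v$ to a neighborhood of $J_r$.

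\textbf{Main obstacle.} The key step is turning this local $\mu_r$-preserving flow into the rigidity statement. A real one-parameter family of local conformal symmetries of $J_r$ preserving $\mu_r$ forces $J_r$ to be locally an analytic arc or to fill an open set. The open-set case is the Lattès case, in which $\mu_r$ is absolutely continuous with smooth density, so no nontrivial conformal flow preserves it. In the arc case we would invoke Zdunik's dichotomy: $\mu_r$ is absolutely continuous with respect to arclength only for the power maps $z^{\pm d_1}$ and for Chebyshev maps. For Chebyshev maps, $\mu_r$ has the arcsine density on a segment, which is not invariant under any nontrivial flow, so those cases are excluded. Only $z^{\pm d_1}$ survives, where rotations preserve $\mu_r$. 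We expect the most delicate point to be the construction of $v$ from $(g_n)$, namely ensuring that the normalization $\psi_n$ is uniform and single-valued near a point of $J_r$ avoiding critical values. We would try to handle this using the inverse branches of $r^m$ on a small disk centered at a typical point of $J_r$.
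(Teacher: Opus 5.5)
The paper gives no proof of this statement: it is quoted from Levin and used only to control the image of $\rho$, so your argument has to stand on its own. Several parts are sound.
\begin{itemize}
\item The compactness step is correct.
\item Given compactness, the reduction to excluding accumulation points is fine. The real-analyticity remark is unnecessary, and its equations only encode $g_*\mu_r=\mu_r$, which is weaker than $\mu_g=\mu_r$.
\item The maps $\psi_n=g^{-1}\circ g_n$ do preserve $J_r$ and $\mu_r$ locally. Indeed, $J_r$ is totally invariant under both $g$ and $g_n$, and both have constant Jacobian $d_2$ for $\mu_r$.
\end{itemize}

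The genuine gap is the step you call the main obstacle, and it is not a technicality. Its case analysis is purely local, and the local claims are false.
\begin{itemize}
\item Suppose the real flow of a holomorphic field $v$ preserves $J_r$ and $\mu_r$ near a point. In a flow-box coordinate $w$ with $v=\partial_w$, you only get $J_r=\{\operatorname{Im}w\in E\}$ and $\mu_r=d(\operatorname{Re}w)\otimes\nu$, for some closed $E\subset\Rbb$ and some measure $\nu$ on $E$. Nothing excludes $E$ being a Cantor set, so nothing forces an arc or an open set.
\item The condition $J_r=\Pbb^1$ does not single out Lattès maps. One needs absolute continuity of $\mu_r$ and Zdunik's theorem.
\item Most importantly, local $\mu_r$-preserving holomorphic flows exist in exactly the exceptional cases you want to discard. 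For a Lattès map, $\mu_r$ is a constant multiple of Lebesgue measure in a local uniformizing coordinate $u$ of the torus, so it is preserved by $u\mapsto u+tc$. For $T_d$, the field $\sqrt{4-z^2}\,\partial_z$ near an interior point of $[-2,2]$ generates translations of $\theta$, where $z=2\cos\theta$, and therefore preserves the arcsine measure.
\end{itemize}
These cases can only be excluded through a global property of $v$.

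That global property is exactly what your propagation step asserts without proof, and as stated it fails. Pushing $v$ forward by $r^m$ along different inverse branches gives different fields at the same point. For $T_d$, the propagated field is defined only up to sign and has branch points at $\pm2$, so you do not get a single-valued field near $J_r$.

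You can obtain a global object by normalizing in parameter space rather than locally. After passing to a subsequence, $(g_n-g)/\|g_n-g\|$ converges to a nonzero $\dot g\in T_g\operatorname{Rat}_{d_2}\simeq H^0(\Pbb^1,g^*T\Pbb^1)$. Then $(\psi_n-\id)/\|g_n-g\|$ converges to $v:=(Dg)^{-1}\dot g$, a nonzero meromorphic vector field on all of $\Pbb^1$ with poles only at critical points of $g$. This also removes the normalization issue you flagged.

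Even then, the rigidity argument itself is still missing. You must show that invariance of $(J_r,\mu_r)$ under the flow of this $v$ forces one of two outcomes: either $\mu_r$ is absolutely continuous on an open set, or $J_r$ is locally an analytic arc with $\mu_r$ absolutely continuous with respect to arclength. One possible route is to conjugate the flow by $r$ near repelling periodic points and rescale in linearizing coordinates. This yields translation invariance in the directions $(\lambda/|\lambda|)^m v(p)$; real multipliers must be handled separately. You would then apply Zdunik's theorem, and finally use the meromorphy of $v$ on all of $\Pbb^1$ to exclude Lattès and Chebyshev maps. Without these steps, the proposal does not prove the theorem.
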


\begin{proof}[Proof of Theorem~\ref{thm: main2} for $k=2$]
If $g\in \Sigma_f^{e\mathrm{poly}}$, then $\mu_{g_\Pi}=\mu_{f_\Pi}$. Thus $\rho(\Sigma_f^{e\mathrm{poly}})$ is finite by Theorem~\ref{thm: Levin}.
Fix $r\in \rho(\Sigma_f^{e\mathrm{poly}})$, and choose
$g_0\in \Sigma_f^{e\mathrm{poly}}$ with $(g_0)_\Pi=r$. Since
$\mu_{g_0}=\mu_f$, we have $J_{g_0}=J_f$. The map $g_0$ is not affinely
conjugate to a homogeneous polynomial map: otherwise, after the same affine
conjugacy, Lemma~\ref{lem: charcterizationHomogeneous} and the equality of the
Julia sets would imply that $f$ is affinely conjugate to a homogeneous map as
well. The finiteness
assertion in Theorem~\ref{thm: main2}, applied to $g_0$, now shows that
\[
   \{g\in\Sigma_f^{e\mathrm{poly}}\mid g_\Pi=r\}
   =\{g\in\Sigma_{g_0}^{e\mathrm{poly}}\mid g_\Pi=(g_0)_\Pi\}
\]
is finite. Thus $\Sigma_f^{e\mathrm{poly}}$ is finite. In particular, for
$e=d$, the injective map $\sigma\mapsto\sigma f$ shows that
$\Sa_f$ is finite as well.
\end{proof}

The following example shows that the assumptions of Theorem~\ref{thm: main2} are essential. 
\begin{ex}\normalfont\label{example: necessary condition}
    
If $f$ is homogeneous, then for any $\lambda\in S^1$, we have $G_{\lambda f}= G_f$. In fact, for any $n\in \Nbb^*$, we have $(\lambda f)^n = \lambda^{1+d+\cdots+ d^{n-1}}f^n$ and thus $G_{\lambda f}=G_f$. 

If $f_\Pi$ is a power map, we have the following construction; see
Gauthier--Hutz--Scott~\cite{GauthierSymmetry23} for further properties. Let
$S_2$ be the symmetric group on two letters and let
$\eta_2:(\Pbb^1)^2\to\Pbb^2\simeq(\Pbb^1)^2/S_2$ be the quotient map. Explicitly,
\[
   \eta_2([x_1:x_2],[y_1:y_2])
   =[x_1y_1:x_1y_2+x_2y_1:x_2y_2].
\]
For any rational map $f$ of $\Pbb^1$, its $2$-symmetric product is the
endomorphism $\operatorname{Sym}^2(f)$ of $\Pbb^2$ satisfying
\[
   \operatorname{Sym}^2(f)\eta_2=\eta_2(f,f).
\]
The pushforward of $\mu_f^{\otimes2}$ by $\eta_2$ is the equilibrium measure
of $\operatorname{Sym}^2(f)$.

Now take $f_\lambda[x:y]=[\lambda x^2:y^2]$ and put
$h_\lambda:=\operatorname{Sym}^2(f_\lambda)$. Then
\[
   h_\lambda[x:y:z]
   =[\lambda^2x^2:\lambda(y^2-2xz):z^2].
\]
In the chart $\{z=1\}$, this becomes
$h_\lambda(x,y)=(\lambda^2x^2,\lambda(y^2-2x))$. These maps are not affinely
conjugate to homogeneous polynomial maps: there is no point at which
$Dh_\lambda$ is the zero linear map. For $\lambda\in S^1$, however, they all
have the same equilibrium measure, so this gives an infinite family.
\end{ex}

We now give examples showing that even when the group $\Sa_f$ is finite, its group structure can be complicated.
\begin{ex}[Finite $\Sa_f$ containing the Klein four group ]\normalfont\label{ex:klein-four-centralizer}
Consider the regular polynomial
\begin{align*}
f(x,y):=\bigl(x^3+xy^2+x,\ y^3+2x^2y+y\bigr).
\end{align*}
In the affine coordinate $u=x/y$ on
$\Pi\simeq \Pbb^1$, we have $f_\Pi(u)=u\frac{u^2+1}{2u^2+1}.$
It admits $0$ as a fixed parabolic point, so $f_\Pi$ is nonspecial. Moreover,
$f$ is not affinely conjugate to a homogeneous polynomial, since no fixed
point of $f$ has zero differential. Theorem~\ref{thm: main2} implies that
$\Sa_f$ is finite. Now define
$\tau_1(x,y)=(-x,y)$ and $\tau_2(x,y)=(x,-y)$.
A direct computation shows that $V_4=\{\id,\tau_1,\tau_2,\tau_1\tau_2\}\subset C(f)$. Thus $\Sa_f$ need not be cyclic. 
\end{ex}

\begin{ex}[Finite $\Sa_f$ containing the dihedral group $D_4$ ]\normalfont\label{ex:dihedral-centralizer}
Consider the regular polynomial
\begin{align*}
f(x,y):=\bigl(x^3+2xy^2+x,\ y^3+2x^2y+y\bigr).
\end{align*}
In the affine coordinate $u=x/y$ on
$\Pi\simeq \Pbb^1$, we have $f_\Pi(u)=u\frac{u^2+2}{2u^2+1}.$
It is not of monomial type since it has four
distinct simple critical values. It is not Latt\`es since it has two attracting fixed points $\pm 1$.
Moreover, $f$ is not affinely conjugate to a homogeneous polynomial, since no
fixed point of $f$ has zero differential. Theorem~\ref{thm: main2} implies
that $\Sa_f$ is finite.
Finally, define $s(x,y)=(-y,x)$ and $t(x,y)=(x,-y).$
A direct computation shows $D_4=\langle s,t\rangle\subset C(f)$.
\end{ex}

\section{Measure rigidity via the image--fiber principle}\label{sect: first-consequences}
\subsection{Iterated functional relations}
We begin with the one-dimensional characterization that controls the image at
infinity.

\begin{thm}[Levin~{\cite[Theorem~3]{LevinSymmetries90}},
Levin--Przytycki~{\cite[Theorem~A]{LevinPrzytycki97}}]\label{thm: Levin-P}
Let $r$ and $s$ be nonspecial rational maps of degrees at least $2$.  Then
$\mu_r=\mu_s$ if and only if there exist iterates $R$ and $S$ of $r$ and $s$,
respectively, such that
\[
   RR=RS
   \qquad\text{and}\qquad
   S R=S S.
\]
\end{thm}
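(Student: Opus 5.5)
The easy implication is a pullback computation combined with the uniqueness of the balanced measure. Suppose $RR=RS$ and $SR=SS$, with $\deg R=D$ and $\deg S=D'$. Comparing degrees in $RR=RS$ gives $D^2=DD'$, so $D=D'$. Pull back $\mu_R$, using $R^*\mu_R=D\mu_R$, to get
\[
   S^*R^*\mu_R=(RS)^*\mu_R=(RR)^*\mu_R=D^2\mu_R,
\]
hence $D\,S^*\mu_R=D^2\mu_R$, i.e.\ $S^*\mu_R=D\mu_R$. By the Lyubich / Freire--Lopes--Ma\~n\'e characterization, $\mu_S$ is the unique probability measure with $S^*\nu=(\deg S)\nu$ that gives no mass to the (at most two-point) exceptional set of $S$. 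The measure $\mu_R$ has no atoms, so $\mu_R=\mu_S$. Since iterates have the same equilibrium measure, $\mu_r=\mu_s$.

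For the converse, assume $\mu:=\mu_r=\mu_s$, with $r,s$ nonspecial.

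\textbf{Step 1 (equal degrees).} Find iterates of equal degree. By Ma\~n\'e--Lyubich, $\mu$ has entropy $\log\deg r$ for $r$ and $\log\deg s$ for $s$. Comparing the Lyapunov exponents through the Manning--Przytycki formula $\chi_\mu(r)=\log\deg r+\int\log|z-c|\dots$ and the same formula for $s$, expressed via the common potential of $\mu$, should force $\log\deg r/\log\deg s$ to be rational. So $R=r^a$ and $S'=s^b$ have a common degree $D$. I am least sure this route to $\chi_\mu(r)/\chi_\mu(s)=\log\deg r/\log\deg s$ works as stated; an alternative is to compare local dimension scalings.

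\textbf{Step 2 (symmetries).} Lift to homogeneous maps $\widetilde R,\widetilde S$ on $\Cbb^2$, normalized so that they share the Green function $G$, which is possible because the measures coincide (Bedford--Taylor, as in Corollary~\ref{cor:mu-implies-green}). Then $G\circ\widetilde R=G\circ\widetilde S=DG$. Consider the family of local holomorphic germs $\gamma=S'^{\,n}\circ R^{-n}_{\mathrm{branch}}$ defined near points of $J$. Each such germ preserves $\mu$ locally, and each is locally isometric for the metric induced by $G$.

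\textbf{Step 3 (main obstacle: Levin's finiteness).} Show that for nonspecial maps these germs form a normal family whose limits are nonconstant local symmetries. Then show that, after fixing the base point and the jet, there are only finitely many such symmetries; this is where nonspeciality enters, since otherwise the symmetry group is infinite, as for $z^d$, Chebyshev or Latt\`es maps. I would try Levin's argument: a nonconstant limit of symmetries, transported along backward orbits, yields a continuum of symmetries of $J$, and hence a flat orbifold structure, which is excluded. Finiteness then gives $p\ne p'$ with the same symmetry. Unwinding this, and using analytic continuation across $J$ (which is not contained in a circle for nonspecial maps), yields a global identity $R^{q}R^{p}=R^qS'^{p}$. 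The symmetric argument with $r$ and $s$ interchanged gives $S'^qR^p=S'^qS'^p$. Replacing $R,S'$ by suitable further iterates, namely taking $p=q$ and then passing to $R^p$ and $S'^p$, produces $RR=RS$ and $SR=SS$ as stated.
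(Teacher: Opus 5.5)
The paper gives no proof of this statement: it is imported from Levin and Levin--Przytycki and used as a black box. Your easy direction is correct; in fact $RR=RS$ alone suffices, since it forces $\deg R=\deg S=D$ and $S^*\mu_R=D\mu_R$, and $\mu_R$ has no atoms. The converse, however, has a genuine gap, and it starts with Step~1, which cannot work along the lines you suggest. The Manning--Przytycki formula, Ma\~n\'e's formula $\dim\mu=\log(\deg r)/\chi_\mu(r)$ and local-dimension comparisons hold for every rational map. They only give the real identity $\chi_\mu(r)/\chi_\mu(s)=\log\deg r/\log\deg s$, which carries no arithmetic information and never uses nonspeciality. This is fatal: $z\mapsto z^2$ and $z\mapsto z^3$ share the equilibrium measure (normalized Lebesgue measure on the unit circle), although $\log2/\log3\notin\Qbb$. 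Multiplicative dependence of the degrees is a genuine consequence of nonspeciality. The natural way to obtain it is from the same rigidity input as Step~3. The local maps $s^m\circ r^{-n}$ multiply $\mu$ by $(\deg s)^m/(\deg r)^n$. If $\log\deg r/\log\deg s$ were irrational, these factors would be dense in $(0,\infty)$, producing pairwise distinct $\mu$-symmetries that accumulate, which is precisely what that rigidity statement must exclude.

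Step~3, which you rightly call the main obstacle, is essentially the entire content of the cited results, and it is asserted rather than proved. Three things are missing. First, you must construct univalent branches $R^{-n}$ on one fixed disk, chosen compatibly along a single backward orbit, even though the postcritical set may be dense in $J$. Second, you must prove that $\gamma_n=S'^{\,n}\circ R^{-n}$ is a normal family with nonconstant limits. Third, you must prove that pairwise distinct $\mu$-symmetries cannot accumulate on a nonconstant limit unless the map is special.

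The third point is exactly where nonspeciality enters, and it has to be measure-theoretic rather than set-theoretic. Your parenthetical claim that $J$ is not contained in a circle for nonspecial maps is false: the Blaschke product $z\mapsto z(z-a)/(1-\bar a z)$ with $0<|a|<1$ is nonspecial and has $J=S^1$. For such maps $J$ admits a continuum of conformal self-maps, so ``a continuum of symmetries of $J$, hence a flat orbifold'' is not a valid inference.

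Once discreteness is established, the remainder is easy and needs no analytic continuation across $J$. If $\gamma_p=\gamma_{p'}$ with $p<p'$, precomposing with $R^{p'}$ on $R^{-p'}(U)$ gives $S'^{\,p}R^{p'-p}=S'^{\,p}S'^{\,p'-p}$ on an open set, hence identically. Your final passage to iterates $R^N$, $S'^{\,N}$, with $N$ a suitable common multiple, is fine.
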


We now lift this statement from the image of $\rho$ to prove
Theorem~\ref{thm: LP-C2}.
\begin{proof}
We first prove $(1)\Rightarrow(2)$. Assume that $\mu_f=\mu_g.$
Set $r:=f_\Pi$ and $s:=g_\Pi$.
Since $r$ and $s$ are nonspecial, Theorem~\ref{thm: Levin-P} gives integers
$a,b\geq1$ such that, with $r_0:=r^a$ and $s_0:=s^b$, one has
$r_0r_0=r_0s_0$ and $s_0r_0=s_0s_0$.
Taking degrees gives $\deg r_0=\deg s_0$. Equivalently,
$d_0:=d^a=e^b$.

Put $u:=f^a$ and $v:=g^b$. By induction,
$r_0s_0^n=r_0^{n+1}$ for every $n\geq1$.
Theorem~\ref{thm: main1}, applied to $u^{n+1}$ and
$uv^n$, gives $\alpha_n\in\ker\rho_f^{\mathrm{aff}}$ such that $uv^n=\alpha_nu^{n+1}.$
Similarly, there is $\beta_n\in\ker\rho_f^{\mathrm{aff}}$ such that
$vu^n=\beta_nv^{n+1}$.

By the finiteness assertion in Theorem~\ref{thm: main2} and the
bijection~\eqref{eq:kernel-fiber-bijection}, the group
$\ker\rho_f^{\mathrm{aff}}$ is finite. Hence the sequence
$(\alpha_n,\beta_n)_{n\geq 1}$
takes only finitely many values. Thus there exist integers $ 1\leq m<n$
such that $\alpha_m=\alpha_n$ and $\beta_m=\beta_n$. Set $p:=n-m$ and $q:=m+1$.
Composing $uv^m=\alpha_mu^{m+1}$ on the right by $v^p$ gives
$uv^n=\alpha_mu^{m+1}v^p$.
Since $\alpha_m=\alpha_n$, cancellation gives
$u^{m+1}v^p=u^{n+1}$, or equivalently $u^qv^p=u^qu^p$.
The equality $v^qu^p=v^qv^p$ follows similarly.

We now prove $(2)\Rightarrow(1)$.
Using $u^qu^p=u^qv^p$, we obtain
\begin{align*}
    d_0^q\,G_u\circ v^p
    =
    G_u\circ u^qv^p
    =
    G_u\circ u^qu^p
    =
    d_0^{p+q}G_u.
\end{align*}
Therefore $G_u\circ v^p=d_0^pG_u$.
By the uniqueness property in Proposition~\ref{Prop: basic-green}(3), applied to the
regular polynomial endomorphism $v^p$, we get $G_u=G_{v^p}=G_v$.
Since taking an iterate does not change the Green function,
$G_f=G_u=G_v=G_g$. Consequently $\mu_f=\mu_g$.
The argument using $v^qu^p=v^qv^p$ is symmetric.
This proves the theorem.
\end{proof}

\subsection{Generic rigidity across prescribed degrees in dimension two}
In this subsection we prove the two-dimensional case of Theorem~\ref{thm:generic-measure-rigidity}.
We first isolate the one-dimensional inputs.

\begin{thm}[Pakovich]\label{thm: pakovich-measure}
 Let $r\in \End_d(\Pbb^1)$ be a general rational map of degree $d\ge 2$, and let $s$ be any rational map of degree at least $2$. When $d=2$, denote by $\iota_r$ the unique nontrivial deck involution satisfying $r\iota_r=r$. Then the following assertions hold.
    \begin{enumerate}
        \item If $d=2,$ then $\mu_s=\mu_r
   \Longrightarrow
   s=r^n$ or $s=\iota_r r^n$ for some $n\geq1$.
        \item If $d\ge 3$, then $\mu_s=\mu_r
   \Longrightarrow
   s=r^n$ for some $n\ge 1$.  
    \end{enumerate}
\end{thm}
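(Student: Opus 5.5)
This theorem is attributed to Pakovich, so the plan is to assemble it from his results on general rational maps \cite{pakovich-deg4,pakovich2026periodiccurvesgeneralendomorphisms} and the Levin--Przytycki characterization (Theorem~\ref{thm: Levin-P}). The proof has three steps.

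\textbf{Step 1: reduce to a functional relation.} A general $r\in\End_d(\Pbb^1)$ is nonspecial. Suppose $\mu_s=\mu_r$. Then $s$ is also nonspecial. Indeed, the equilibrium measure of a special map is either smooth on a real-analytic curve or absolutely continuous, whereas that of a general map is singular with respect to such measures, since its Julia set is not a smooth curve. Theorem~\ref{thm: Levin-P} therefore gives iterates $R=r^a$ and $S=s^b$ with
\[
R\circ R=R\circ S,\qquad S\circ R=S\circ S.
\]
In particular $\deg R=\deg S$. Pakovich's theory of semiconjugacies and decompositions of general maps then applies. For general $r$, every iterate $r^a$ has only the trivial decompositions $r^a=r^{i}\circ r^{a-i}$, up to Möbius insertions, and the Möbius transformations $\beta$ with $r^a\circ\beta=r^a$ form a trivial group when $d\geq3$, and the group $\{\id,\iota_r\}$ when $d=2$.

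\textbf{Step 2: rewrite the relation through a deck transformation.} From $R\circ S=R\circ R$, the map $S$ is a "solution'' of $R\circ X=R^2$ with $\deg X=\deg R$. Using the uniqueness of decompositions of $R^2=r^{2a}$, I would show that $S=\beta\circ R$ for a Möbius $\beta$ with $R\circ\beta=R$. This is Ritt-type theory for general maps: the right factor of degree $d^a$ of $r^{2a}$ is unique up to postcomposition by a Möbius map. Hence $\beta$ lies in the deck group of $r^a$. For general $r$ this group is trivial when $d\ge3$. When $d=2$ I would show that it is generated by $\iota_r$, with $\iota_r$ postcomposing only at the last factor. The reason is that for general quadratic $r$ the critical values of $r^a$ are distinct from those of deeper iterates, so a deck map of $r^a$ must be a deck map of the innermost factor $r$.

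\textbf{Step 3: pass from the iterate $S$ back to $s$.} We have $s^b=r^a$ or $s^b=\iota_r r^a$. In the case $d\ge3$, I would show that $s$ commutes with $r^a$, via $s\circ s^b=s^b\circ s$, and then use Pakovich's description of centralizers of iterates of general maps: they consist of iterates of $r$. This gives $s=r^n$ with $n=a/b$. In the quadratic case, the same argument applied up to $\iota_r$ yields $s=r^n$ or $s=\iota_r r^n$.

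The main obstacle is Steps 2--3. One must know that for a \emph{general} $r$ the Ritt decomposition theory of all iterates $r^a$ is rigid, uniformly in $a$, and that centralizers of iterates are trivial. This is exactly the content of Pakovich's genericity results, and here I would cite them rather than reprove them.
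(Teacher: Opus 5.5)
Your outline is correct in substance and has the same architecture as the paper's proof. Theorem~\ref{thm: Levin-P} gives $r^a\circ r^a=r^a\circ s^b$. Pakovich's rigidity for general maps then reduces this to a deck transformation $\nu$ of $r^a$, trivial for $d\ge3$ and in $\{\id,\iota_r\}$ for $d=2$, and a centralizer statement for $r^a$ finishes.

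The difference is which of Pakovich's results carries the key step. The paper reads $r^a\circ r^a=r^a\circ s^b$ as a semiconjugacy $A^{m}\circ X=X\circ B$ with $A=r$, $m=a$, $X=r^a$, $B=s^b$. It then applies \cite[Theorem~5.8]{pakovich2026periodiccurvesgeneralendomorphisms} to obtain $r^a=r^{\ell}\nu$ and $s^b=\nu^{-1}r^a\nu$. You instead read the same identity as a decomposition of $r^{2a}$ with left factor $r^a$, and invoke rigidity of decompositions of iterates.

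Your reading is the more direct one, but it makes uniform-in-$a$ decomposition rigidity your key citation. The paper packages what it needs into the semiconjugacy theorem together with the deck and automorphism statements (Lemma~3.4, Corollaries~3.7 and~3.10 there). Your Step~3 appeal to ``centralizers of iterates are iterates'' is what the paper obtains by applying Theorem~5.8 a second time, to $r^{nb}\tilde s=\tilde s\, r^{nb}$, and removing the M\"obius factor with Corollary~3.10. Finally, the paper writes out only $d=2$ and cites \cite[Theorem~1.3]{pakovich-deg4} for $d\ge4$, whereas your argument is uniform in $d$.

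Three points need repair.

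\emph{Nonspecial $s$.} Your reason that $s$ is nonspecial does not work. A general $r$ may have $J_r=\widehat{\Cbb}$, since such maps have positive measure in parameter space, and then ``not a smooth curve'' says nothing against a Latt\`es measure. Genericity also does not exclude nonspecial maps whose Julia set is a round circle; expanding Blaschke products are such maps and are Zariski dense. The right input is Zdunik's theorem: $\mu_r$ is absolutely continuous with respect to the Hausdorff measure of dimension $\dim\mu_r$ if and only if $r$ is special. Since this is a property of the measure alone, $\mu_s=\mu_r$ with $r$ nonspecial forces $s$ to be nonspecial. The paper uses this tacitly when applying Theorem~\ref{thm: Levin-P}.

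\emph{Degree-two deck group.} Your heuristic here is not an argument. Replace it by \cite[Lemma~3.4 and Corollary~3.7]{pakovich2026periodiccurvesgeneralendomorphisms}, as the paper does.

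\emph{The quadratic case of Step~3.} Make ``the same argument applied up to $\iota_r$'' explicit. Put $\tilde s:=\iota_r s\iota_r$. Since $r^a\iota_r=r^a$, one has $\tilde s^{\,b}=\iota_r(\iota_r r^a)\iota_r=r^a$, so $\tilde s$ commutes with $r^a$. Hence $\tilde s=r^n$, and $s=\iota_r r^n\iota_r=\iota_r r^n$.
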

\begin{proof}
    The case $d\ge 4$ is proved by Pakovich in \cite[Theorem~1.3]{pakovich-deg4}. He then extended the main ingredients of the proof to $d=2$ and $d=3$ in~\cite{pakovich2026periodiccurvesgeneralendomorphisms}. Since the conclusion for the case $d=2$ is different and is not explicitly stated in~\cite{pakovich2026periodiccurvesgeneralendomorphisms}, we provide here the argument for the convenience of the reader. It is based on Pakovich's new results in~\cite{pakovich2026periodiccurvesgeneralendomorphisms} and is slightly different from the argument for the case $d\ge 4$ in~\cite{pakovich-deg4}. 

    Let $r$ be a general quadratic rational map and let $s$ be a rational map of degree at least $2$ with $\mu_s=\mu_r$.  By Theorem~\ref{thm: Levin-P}, there exist $a,b\geq1$ such that $r^a r^a=r^a s^b$. Apply \cite[Theorem~5.8]{pakovich2026periodiccurvesgeneralendomorphisms} with
\begin{align*}
   A=r,
   \qquad X=r^a,
   \qquad B=s^b,
   \qquad m=a.
\end{align*}
 We obtain $r^a=r^\ell\nu$ and $s^b=\nu^{-1} r^a\nu$.
Comparing degrees in $r^a=r^\ell\nu$ gives $\ell=a$, hence $\nu\in \{\id, \iota_r\}$ by \cite[Lemma~3.4 and Corollary~3.7]{pakovich2026periodiccurvesgeneralendomorphisms}.
Taking degrees in $s^b=\nu^{-1} r^a\nu$ gives $\deg s=2^n$ for some $n\geq1$
and $a=nb$.
Put $\widetilde s:=\nu s\nu^{-1}.$ Applying \cite[Theorem~5.8]{pakovich2026periodiccurvesgeneralendomorphisms} once again to $r^{nb}\widetilde s=\widetilde sr^{nb}$
gives $\widetilde s=r^n\alpha$ and $r^{nb}=\alpha^{-1} r^{nb}\alpha$.
The second identity implies $\alpha=\id$ by \cite[Corollary~3.10]{pakovich2026periodiccurvesgeneralendomorphisms}.
Thus $\widetilde s=r^n$.  Undoing the conjugation gives $s=\nu^{-1} r^n\nu.$
If $\nu=\id$, then $s=r^n$.  If $\nu=\iota_r$, then $s=\iota_rr^n\iota_r=\iota_r r^n .$
\end{proof}

We next establish the generic fiberwise input needed to lift this
one-dimensional result.

\begin{lem}\label{lem:generic-trivial-affine-kernel}
For every $d\geq2$, there exists a nonempty Zariski open subset
$\Ucal_{d,\mathrm{ker}}\subset\Poly^2_d$ such that $\ker\rho_f^{\mathrm{aff}}=\{\id\}$
for every $f\in\Ucal_{d,\mathrm{ker}}$.
\end{lem}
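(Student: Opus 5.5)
By the proof of the finiteness assertion in Theorem~\ref{thm: main2}, every $\sigma\in\ker\rho_f^{\mathrm{aff}}$ has the form $\sigma(z)=\lambda z+a$ with $|\lambda|=1$. The bijection \eqref{eq:kernel-fiber-bijection} identifies $\ker\rho_f^{\mathrm{aff}}$ with $\{g\in\Sigma_f^{d\mathrm{poly}}\mid g_\Pi=f_\Pi\}$. I would not bound this set dynamically. Instead I would exploit an algebraic consequence of membership in the kernel.

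Take $\sigma\in\ker\rho_f^{\mathrm{aff}}$. By Proposition~\ref{prop:easy-direction}, $G_f\circ\sigma=G_f$. The map $\sigma f\sigma^{-1}$ satisfies $G_f\circ(\sigma f\sigma^{-1})=dG_f$, so by Proposition~\ref{Prop: basic-green}(3) it has Green function $G_f$ and hence $\mu_{\sigma f\sigma^{-1}}=\mu_f$. Its homogeneous part is $\lambda^{1-d}f_h$, so Theorem~\ref{thm: main1} gives
\[
\sigma f\sigma^{-1}=\tau f
\]
for some $\tau\in\Sa_f$ with linear part $\lambda^{1-d}\id$. In particular $\tau\in\ker\rho_f^{\mathrm{aff}}$ as well.

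The group $\ker\rho_f^{\mathrm{aff}}$ is finite for $f$ not affinely conjugate to a homogeneous map. After conjugating by the translation $T_p$ from the proof of Theorem~\ref{thm: main2}, it becomes a finite subgroup $\mu_m\subset S^1$ of scalar rotations about the common fixed point $p$. Write $\tilde f:=T_pfT_{-p}$. Then $\tilde\sigma=\lambda\id$ with $\lambda^m=1$, and $\tilde\tau=\lambda'\id$ with $\lambda'\in\mu_m$. Moreover $\lambda'=\lambda^{1-d}$, because the linear parts match. The relation $\sigma f\sigma^{-1}=\tau f$ then reads
\[
\lambda\tilde f(\lambda^{-1}z)=\lambda^{1-d}\tilde f(z).
\]
Comparing homogeneous components of degree $j$ gives $\lambda^{1-j}\tilde f_j=\lambda^{1-d}\tilde f_j$. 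So $\tilde f_j\neq0$ forces $\lambda^{d-j}=1$.

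For a general $f$ I expect that, for every choice of the center $p$, the degree-$(d-1)$ component $\tilde f_{d-1}$ of $T_pfT_{-p}$ is nonzero. If so, then $\lambda^{1}=1$, so $\sigma=\id$.

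This is the step that needs care. The quantity $\tilde f_{d-1}=f_{d-1}+D f_h(\cdot)\,p$, up to harmless terms, depends on $p$, and we need it to be nonzero for all $p\in\Cbb^2$ simultaneously. The condition $\tilde f_{d-1}=0$ is a linear system in $p$. It has $2\cdot d$ scalar equations, one for each coefficient of the two degree-$(d-1)$ forms, and only two unknowns. Its solvability is therefore a proper Zariski closed condition on $(f_h,f_{d-1})$ once $d\ge2$; indeed $2d>2$ always.

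To confirm that the condition is proper, one example with an empty solution set suffices. Take $f_h=(x^d,y^d)$ and $f_{d-1}$ generic. Then $Df_h\,p=(dp_1x^{d-1},dp_2y^{d-1})$. Choosing $f_{d-1}$ with an $x^{d-2}y$ term in its first component makes the system inconsistent. For $d=2$ this term becomes $y$, which is still not of the form $p_1x$.

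The set of such $f$ is constructible, being the projection of an incidence variety in $(f,p)$. It is also closed, because the system is linear in $p$ and consistency is a rank condition on the augmented matrix; this gives closedness directly. Taking $\Ucal_{d,\mathrm{ker}}$ to be its complement, intersected with the open condition of not being affinely conjugate to a homogeneous map (implied anyway by $\tilde f_{d-1}\ne0$ for all $p$), completes the argument.

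The main obstacle I anticipate is the reduction of the kernel to simultaneous scalar rotations about one center, together with the exact bookkeeping of the $\tau$ relation. One could instead simply use $\sigma f=\tau' f$ via Theorem~\ref{thm: main1}, but the conjugation relation above is what yields the degree constraint.
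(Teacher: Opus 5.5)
Your proof is correct and follows essentially the paper's route: you produce a second kernel element $\tau$ via Theorem~\ref{thm: main1} (you use $\sigma f\sigma^{-1}$, the paper uses $f\sigma$), center at the fixed point $p$ to force $(T_pfT_{-p})_{d-1}=f_{d-1}+Df_h(\cdot)p=0$, and observe that $2d>2$ equations in $p$ cut out a non-generic locus. The one step to tighten is closedness of that locus. Your rank criterion works because $p\mapsto Df_h(\cdot)p$ is injective for regular $f$, so the coefficient matrix has constant rank $2$; this needs to be said. Alternatively, you can take the Zariski closure of the incidence projection and count dimensions, as the paper does.
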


\begin{proof}
We use dimension count arguments.
Fix $d\geq2$. We show that the locus of maps
$f\in\Poly^2_d$ for which $\ker\rho_f^{\mathrm{aff}}$ is nontrivial
is contained in a proper algebraic subset.

Suppose that $\ker\rho_f^{\mathrm{aff}}$ contains a nontrivial element
$\sigma(z)=\lambda z+a$. Since $\sigma$ preserves $K_f$, comparison of
diameters gives $|\lambda|=1$. If $\lambda=1$, then $\sigma$ is a translation
preserving the compact set $K_f$, and hence $\sigma=\id$. Therefore
$\lambda\neq1$.

Let $p=a/(1-\lambda)$ be the fixed point of $\sigma$ and set
$T_p(z)=z-p$. Then $T_p\sigma T_{-p}(z)=\lambda z$. Moreover,
$G_f\circ\sigma=G_f$, and hence $G_f\circ(f\sigma)=dG_f$.
The uniqueness property in Proposition~\ref{Prop: basic-green}(3) gives
$G_{f\sigma}=G_f$. Since $(f\sigma)_\Pi=f_\Pi$,
Theorem~\ref{thm: main1} gives $f\sigma=\tau f$
for some $\tau\in\Sa_f$. Passing to infinity yields
$\tau_\Pi f_\Pi=f_\Pi$. Thus
$\tau\in\ker\rho_f^{\mathrm{aff}}$.

In the new coordinates obtained by conjugating maps
by $T_p$, we have that $\sigma(z)=\lambda z$ and $\tau(z)=\eta z+b$
for some $\eta\in\Cbb^*$ and $b\in\Cbb^2$.
Writing
$f=\sum_{m=0}^d f_m$ as in
\eqref{eq:homogeneous-decomposition}, the relation $f\sigma=\tau f$
becomes $f(\lambda z)=\eta f(z)+b$.
Comparison of the degree-$d$ parts gives $\eta=\lambda^d$, while comparison
of the degree-$(d-1)$ parts gives
$\lambda^{d-1}f_{d-1}=\lambda^df_{d-1}$.
Since $\lambda\neq1$, it follows that $f_{d-1}=0$. Thus, in the original
coordinates, we have that $(T_pfT_{-p})_{d-1}=0$.

Consider the incidence subset
\[
   \Zcal_d:=\{(u,p)\in\Poly^2_d\times\Cbb^2
      \mid (T_puT_{-p})_{d-1}=0\}.
\]
For fixed $p$, writing $u=\sum_{m=0}^d u_m$ gives
\[
   (T_puT_{-p})_{d-1}(z)=u_{d-1}(z)+Du_d(z)p.
\]
The vector space of degree-$(d-1)$ homogeneous polynomial maps
$\Cbb^2\to\Cbb^2$ has dimension $2d$. Since $u_{d-1}$ occurs with
coefficient one, the displayed vanishing condition imposes $2d$ independent
affine-linear equations on $u$. Consequently,
\[
   \dim\Zcal_d
   \leq \dim\Poly^2_d+2-2d
   <\dim\Poly^2_d.
\]
If $\Bcal_d$ denotes the Zariski closure of the image of $\Zcal_d$ in
$\Poly^2_d$, then $\Bcal_d$ is therefore a proper algebraic subset.
Set
\[
   \Ucal_{d,\mathrm{ker}}:=\Poly^2_d\setminus\Bcal_d.
\]
The preceding argument shows that
$\ker\rho_f^{\mathrm{aff}}=\{\id\}$ for every
$f\in\Ucal_{d,\mathrm{ker}}$.
\end{proof}

\begin{lem}\label{lem:deck-free-open}
Let $f\in \Poly^2_2$ be general. Then there is no nontrivial affine
involution $\sigma\in\Aff(\Cbb^2)$ with nontrivial action on
$\Pi$ satisfying $f\sigma=f$.
\end{lem}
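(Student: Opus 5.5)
Throughout, an affine automorphism of $\Cbb^2$ that is nonconstant on $\Pi$ will simply be called an affine automorphism. The plan is a dimension count on an incidence variety, in the spirit of the proof of Lemma~\ref{lem:generic-trivial-affine-kernel}.

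\textbf{Step 1: reduce to a homogeneous condition.} Suppose $f\sigma=f$ with $\sigma(z)=Az+b$ and $\sigma^2=\id$, $\sigma\neq\id$. Comparing top-degree parts gives $f_h\circ A=f_h$. Then $A^2=\id$. The hypothesis that $\sigma$ acts nontrivially on $\Pi$ means $A$ is not scalar. Hence $A$ is diagonalizable with eigenvalues $1$ and $-1$, and $A$ is conjugate to $\operatorname{diag}(1,-1)$. Since $\sigma$ is an involution with linear part having eigenvalue $-1$, I will check whether $\sigma$ has a fixed point. It does after adjusting: writing $b=b_++b_-$ in the eigenspaces, $\sigma^2=\id$ forces $b_+=0$, so $\sigma$ fixes $p=b_-/2$. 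Conjugating by a translation, I may assume $\sigma=A$ is linear. It then suffices to show that, for general $f$, no affine conjugate $T_pfT_{-p}$ is invariant under precomposition by a nonscalar linear involution.

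\textbf{Step 2: set up the incidence variety.} Let $\Zcal$ be the set of pairs $(u,p,A)$ with $u\in\Poly^2_2$, $p\in\Cbb^2$, and $A$ a linear involution conjugate to $\operatorname{diag}(1,-1)$, such that $u(Az+2p')=u(z)$ for the corresponding affine involution. Equivalently, $v:=T_puT_{-p}$ satisfies $v\circ A=v$. The involutions $A$ form a $2$-dimensional family, namely $\PGL_2/(\text{torus})$. The parameter $p$ adds $2$ more dimensions.

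For fixed $(A,p)$, I will count conditions in coordinates where $A=\operatorname{diag}(1,-1)$. The condition $v\circ A=v$ means each component of $v$ is even in $z_2$. This kills the monomials $z_1z_2$ and $z_2$ in each of the two components, which is $4$ linear conditions on $v$ in total. Since $u\mapsto v$ is an affine isomorphism for fixed $p$, these are $4$ independent conditions on $u$.

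\textbf{Step 3: compare dimensions.} The count gives $\dim\Zcal\le\dim\Poly^2_2+2+2-4=\dim\Poly^2_2$. This is \emph{not} enough: the count is tight, so the crude estimate fails. This is the main obstacle.

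To fix it, I will refine the count using the translation part. The relation $v(Az)=v(z)$ is needed only for some fixed point $p$, and I expect overcounting there. Precisely, the $(-1)$-eigenline of $A$ through $p$ gives the same affine involution for all points on the $(+1)$-eigenline direction. The affine involution is determined by $A$ together with its fixed line, which is a $1$-parameter choice, not $2$. So the space of affine involutions with nonscalar linear part has dimension $2+1=3$.

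With this refinement, parametrize directly by $\sigma$ rather than $(A,p)$. This gives $\dim\Zcal\le\dim\Poly^2_2+3-4<\dim\Poly^2_2$. Therefore the projection of $\Zcal$ to $\Poly^2_2$ is contained in a proper Zariski-closed subset, and its complement is the desired nonempty Zariski open set.

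\textbf{Step 4: check the linear conditions.} For each fixed $\sigma$, I will verify that the $4$ linear conditions on $u$ stay independent after the affine change of variables. This is clear because translation is a linear automorphism of the coefficient space.

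Finally, I will check that $\Zcal$ meets the regular locus only in a constructible set. This is automatic, so the closure argument applies exactly as in Lemma~\ref{lem:generic-trivial-affine-kernel}.
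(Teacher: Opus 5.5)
Your argument is correct and is essentially the paper's proof. Both reduce to a linear involution conjugate to $\operatorname{diag}(1,-1)$, then count a $3$-dimensional family of affine involutions with nonscalar linear part against an $8$-dimensional invariant subspace to get $11<12$; the paper obtains the $3$ as $6-3$ via the centralizer in $\Aff(\Cbb^2)$, while you obtain it as $2+1$ from the eigenline pair and the fixed line. Your first $(A,p)$ count overcounts because every point of the fixed line can serve as $p$, and you correctly repaired this.
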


\begin{proof}
We use dimension count arguments.
Let 
\[
\Pcal_{\leq 2}:=
   \left\{
      u=(u_1,u_2):\Cbb^2\to\Cbb^2
      \mid
      \deg u_i\leq2
   \right\}.
\]
This vector space of polynomial maps has dimension $12$.
The space $\Poly^2_2$ of regular ones is a nonempty Zariski open subset of
$\Pcal_{\leq 2}$: it is cut out by the condition that the two quadratic homogeneous
parts have no common zero on $\Pbb^1$.

Let $\Bcal\subset \Poly^2_2$ be the locus of maps $f$ for which there exists
a nontrivial affine involution $\sigma$ with nontrivial action on $\Pi$ and satisfying $f\sigma=f$.
We prove that $\Bcal$ is contained in a proper Zariski-closed subset of
$\Poly^2_2$.

An affine involution has a fixed point. After translating it to the origin, we
may suppose that $\sigma$ is linear. Since it acts nontrivially at infinity,
 it has eigenvalues $1$ and $-1$, and we may thus suppose that
$\sigma(x,y)=(x,-y)$.

The vector space $\Pcal_{\leq2}^{\sigma}:=
   \{u\in\Pcal_{\leq2}\mid u\sigma=u\}$
has dimension $8$.
The affine group $\Aff(\Cbb^2)$ has dimension 6.
We claim that the centralizer of $\sigma$ in $\Aff(\Cbb^2)$ has dimension
$3$.  Let
\begin{align*}
   \phi(x,y)=(a_{11}x+a_{12}y+b_1,a_{21}x+a_{22}y+b_2)
\end{align*}
be an affine automorphism.  The equality $\phi\sigma=\sigma\phi$
gives $\phi(x,-y)=\sigma(\phi(x,y))$.
Comparing coordinates,
\begin{align*}
   a_{11}x-a_{12}y+b_1=a_{11}x+a_{12}y+b_1,
   \qquad
   a_{21}x-a_{22}y+b_2=-a_{21}x-a_{22}y-b_2.
\end{align*}
Hence $a_{12}=a_{21}=b_2=0$.
Therefore $\phi(x,y)=(a_{11}x+b_1,a_{22}y)$, where
$a_{11},a_{22}\in\Cbb^*$ and $b_1\in\Cbb$.
Conversely, every such affine map commutes with $\sigma$.  Thus $\dim C_{\Aff(\Cbb^2)}(\sigma)=3.$
Consequently, the affine conjugacy class of $\sigma$ has dimension
\begin{align*}
   \dim \Aff(\Cbb^2)-\dim C_{\Aff(\Cbb^2)}(\sigma)=6-3=3.
\end{align*}

We now form the locally closed incidence subset
\begin{align*}
   \Ical:=
   \left\{
      (u,\tau)\in \Pcal_{\leq2}\times\Aff(\Cbb^2)
      \mid
      \tau^2=\id,\ \tau_\Pi\neq\id,\ u\tau=u
   \right\}
\end{align*}
and let $\pi_1$ and $\pi_2$ be the two projections. Then
$\Bcal=\pi_1(\Ical)\cap\Poly^2_2$.
The projection $\pi_2$ has fibers of dimension $8$. Indeed, if
$\tau=\gamma\sigma\gamma^{-1}$, then $u\tau=u$ is equivalent to
$(u\gamma)\sigma=u\gamma$. Thus
$u\gamma\in\Pcal_{\leq2}^{\sigma}$, and $\dim\Ical=3+8=11$.

Since $\dim \Pcal_{\leq2}=12,$
the Zariski closure $\overline{\pi_1(\Ical)}$ is a proper Zariski closed subset of
$\Pcal_{\leq2}$. 
Define $\Ucal_{2,\mathrm{deck}}
   :=
   \Poly^2_2\setminus \overline{\pi_1(\Ical)}.$
Then $\Ucal_{2,\mathrm{deck}}$ is nonempty Zariski open. By construction, if
$f\in\Ucal_{2,\mathrm{deck}}$, then there is no nontrivial affine involution
$\tau$ with nontrivial action on $\Pi$ such that $f\tau=f$.
This proves the lemma.
\end{proof}

\begin{proof}[Proof of 
Theorem~\ref{thm:generic-measure-rigidity} when $k=2$]
Choose $f\in\Ucal_{d,\mathrm{ker}}$ in the nonempty Zariski open locus on
which Theorem~\ref{thm: pakovich-measure} applies to $f_\Pi$ and, when
$d=2$, in the locus from Lemma~\ref{lem:deck-free-open}. Denote this open subset by $\Ucal_{d,2}$. By
Lemma~\ref{lem:generic-trivial-affine-kernel}, we have
$\ker\rho_f^{\mathrm{aff}}=\{\id\}$. Let $g\in \Poly_e^2$ for some $e\ge 2$ such that $\mu_f=\mu_g$. Put $r=f_\Pi$ and $s=g_\Pi$. Then
$\mu_s=\mu_r$. By Theorem~\ref{thm: pakovich-measure}, if $d=2$, there is
$n\geq1$ such that either $s=r^n$ or $s=\iota_rr^n$; if $d\geq3$,
there is $n\geq1$ such that $s=r^n$.

Suppose first that $s=r^n$. Then $\deg g=\deg f^n$ and
$g_\Pi=(f^n)_\Pi$. Theorem~\ref{thm: main1},
applied to $f^n$ and $g$, gives
$g=\sigma f^n$ for some $\sigma\in\ker\rho_{f^n}^{\mathrm{aff}}$. Since
$J_{f^n}=J_f$, we have
$\ker\rho_{f^n}^{\mathrm{aff}}=\ker\rho_f^{\mathrm{aff}}=\{\id\}$. Thus
$g=f^n$.

Suppose now that $d=2$ and $s=\iota_rr^n$. Choose and scale a linear
lift $A$ of $\iota_r$ so that $g_h=A(f^n)_h.$
Applying Theorem~\ref{thm: main1} directly gives $ g=\sigma f^n$
for some $\sigma\in\Sa_f$ satisfying
$\sigma_\Pi=\iota_r$.

Since $G_f\circ\sigma=G_f$, the maps $f\sigma$ and $f$ have the same
equilibrium measure. At infinity,
\[
   (f\sigma)_\Pi=r\iota_r=r=f_\Pi.
\]
Theorem~\ref{thm: main1} therefore gives
$f\sigma=\eta f$ for some $\eta\in\ker\rho_f^{\mathrm{aff}}$. Thus
$\eta=\id$ and $f\sigma=f$. Also
$\sigma^2\in\ker\rho_f^{\mathrm{aff}}$, so $\sigma^2=\id$. Hence $\sigma$ is a
nontrivial affine involution acting nontrivially on $\Pi$, contrary to
Lemma~\ref{lem:deck-free-open}. The second case is therefore impossible.
\end{proof}

\subsection{Preperiodic sets}
We now prove Theorem~\ref{thm:same-preper}.  The proof uses notation from
$\S$\ref{sect: tits} and one-dimensional results recalled there.
\begin{proof}
The inverse implication
is due to Yuan--Zhang~\cite{YZhodge17}. It remains to prove the direct implication.

Assume $\mu_f=\mu_g.$
Let $\Scal:=\langle f,g\rangle$ be the semigroup generated by $f$ and $g$. 
Since $G_f=G_g=:G$,
every element $h\in \Scal$ has the same Green function $G$ and the same equilibrium measure on $\Pi$.
Thus, since $f_\Pi$ is not conjugate to a power map, every element of
$\rho(\Scal)$ of degree at least $2$ has the same preperiodic set; see
\cite[Remark~2]{LevinPrzytycki97}.
By Proposition~\ref{prop: BHPT4.10}, the semigroup $\rho(\Scal)$ has polynomially bounded growth.

Let $h_1,h_2\in \Scal$ satisfy $h_{1,\Pi}=h_{2,\Pi}.$ Theorem~\ref{thm: main1} yields $h_2=\sigma h_1$
for some $\sigma\in \Sa_{h_1}=\Sa_f$.
Passing to infinity, we obtain $\sigma\in \ker\rho_f^{\mathrm{aff}}$.
By the finiteness assertion in Theorem~\ref{thm: main2} and the
bijection~\eqref{eq:kernel-fiber-bijection},
$\ker\rho_f^{\mathrm{aff}}$ is finite. Hence every fiber of $\rho|_\Scal$ has cardinality
at most $\#\ker\rho_f^{\mathrm{aff}}$. Put $S:=\{f,g\}$ and
$S_\Pi:=\{f_\Pi,g_\Pi\}$. If $d_S(n)$ and $d_{S_\Pi}(n)$ denote the
numbers of elements represented by words of length at most $n$ in these
respective generating sets, then
\begin{align*}
d_{S_\Pi}(n)\le d_S(n)
\le \#\ker\rho_f^{\mathrm{aff}}\cdot d_{S_\Pi}(n).
\end{align*}
Since $\rho(\Scal)$ has polynomially bounded growth, so does $\Scal$.

If $\Prep(f)\neq \Prep(g),$
then Theorem~\ref{thm: Beaumont-tits} implies that the semigroup $\langle f,g\rangle$
is free. In particular, it has exponential growth and we have a contradiction.
Therefore $\Prep(f)=\Prep(g).$

\end{proof}

The following example shows that the assumption that $f$ is not affinely conjugate to a homogeneous polynomial map
is essential in Theorem~\ref{thm:same-preper}.
\begin{ex}\normalfont\label{ex:homogeneous-counterexample-prep}

Consider $f(x,y)=(x^2+y^2, y^2)$ and $g(x,y)=\lambda f(x,y),$
where $\lambda\in S^1$ is not a root of unity.
In the affine coordinate $w=x/y$ on $\Pi\simeq \Pbb^1$ we have $f_\Pi(w)=w^2+1$,
which is not conjugate to a power map. The equality $\mu_g=\mu_f$ follows
directly from the homogeneity of $f$ and $|\lambda|=1$.
However, the preperiodic sets are different. Indeed, let $\omega$ be a root of $X^2-X+1=0$.
Then $p=(\omega,1)$ satisfies $f(p)=(\omega^2+1,1)=p,$
so $p\in \Prep(f)$.

On the other hand, $g^n(p)=\lambda^{2^n-1}p.$
Since $\lambda$ is not a root of unity, $p\notin \Prep(g)$. Therefore $\Prep(f)\neq \Prep(g).$
\end{ex}

The following example shows that the assumption that $f_\Pi$ is not conjugate to a power map is also essential in
Theorem~\ref{thm:same-preper}.
\begin{ex}\normalfont\label{ex:power-at-infinity-counterexample-prep}

For $\lambda\in S^1$, consider
$h_\lambda(x,y)=(\lambda^2x^2,\lambda(y^2-2x))$.
As explained in Example~\ref{example: necessary condition}, these maps have the same
equilibrium measure for all $\lambda\in S^1$.

Note first that $(1,2)$ is a fixed point of $h_1$.
Now let $\lambda\in S^1$ be not a root of unity. We claim that
$(1,2)\notin \Prep(h_\lambda)$.
Indeed, $h_\lambda$ is the $2$-symmetric product of the one-variable map $f_\lambda(z)=\lambda z^2$.
Since $\eta_2(1,1)=(1,2)$, its orbit under
$h_\lambda$ corresponds to the orbit of $1$ under $f_\lambda$. But
$f_\lambda^n(1)=\lambda^{2^n-1}$. 
Since $\lambda$ is not a root of unity, $1$ is not preperiodic for $f_\lambda$. Hence $(1,2)$ is not
preperiodic for $h_\lambda$.
\end{ex}

\section{First stable-manifold jets and admissible homogeneous systems}
\label{sect:generic-first-jet}

The case $k=2$ of Theorem~\ref{thm:generic-measure-rigidity} was proved in
Section~\ref{sect: first-consequences} by means of the image--fiber
principle. We now turn to the remaining case $k\geq3$, whose proof occupies
Sections~\ref{sect:generic-first-jet}--\ref{sect:generic-source-stabilizer}.

The present section extracts a first-order algebraic condition from the local
stable-manifold geometry. Section~\ref{sect:generic-admissible-uniqueness}
globalizes this condition and proves its generic uniqueness, while
Section~\ref{sect:generic-source-stabilizer} eliminates the remaining affine
symmetries and completes the proof.

\subsection{The intrinsic first stable-manifold jet}

Fix $f\in\Poly^k_d$, and write $F:=f_h=f_d$ and $B:=f_{d-1}$ for
its two leading homogeneous terms. Put $V:=\Cbb^k$ and identify
$\Pbb(V\oplus\Cbb)$ with $\Pbb^k$.
Recall the standard affine charts $\widetilde U_i$ and their boundary parts
$U_i$ introduced at the beginning of Section~\ref{sect: proof main}, together
with the tangential coordinates $z_r^{(i)}=Z_r/Z_i$ and the transverse coordinate
$t_i=T/Z_i$.

Let $x\in\Pbb(V)$ be a direction at which the local stable manifold of $f$
is defined, and choose $i$ such that $x\in U_i$.  In $\widetilde U_i$ we
normalize by $Z_i=1$.  If
$\widetilde x^{(i)}\in V$ is the representative of $x$ whose $i$-th
coordinate is $1$, the local stable manifold has a unique normalized
expansion
\[
   [\widetilde x^{(i)}
      +t_i\widetilde\xi_f^{(i)}(x)+O(t_i^2):t_i],
   \qquad
   (\widetilde\xi_f^{(i)}(x))_i=0.
\]
Writing $\ell_x\subset V$ for the line represented by $x$, define
\[
   j_f^{(i)}(x):=[\widetilde\xi_f^{(i)}(x)]\in V/\ell_x.
\]

\begin{lem}
\label{lem:intrinsic-stable-jet}
The class $j_f^{(i)}(x)$ is well-defined and independent of the standard ambient affine chart
$\widetilde U_i$. Thus we denote it simply by $j_f(x)$.
\end{lem}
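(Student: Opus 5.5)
The plan is a direct change-of-chart computation. Well-definedness inside a fixed chart $\widetilde U_i$ is immediate. The normalized expansion is unique, because the local stable manifold is a germ of a complex curve transverse to $\Pi=\{t_i=0\}$, so it is a graph over $t_i$. Hence $\widetilde\xi^{(i)}_f(x)$ is a uniquely determined vector of $V$, and its class in $V/\ell_x$ is defined. The real content is independence of $i$. So fix two indices $i\neq j$ with $x\in U_i\cap U_j$, and compare the two expansions of the same germ.

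Write $\widetilde x^{(i)}=c\,\widetilde x^{(j)}$ with $c:=x_i/x_j\neq0$ (ratio of homogeneous coordinates of $x$), so that $c$ equals the $i$-th coordinate of $\widetilde x^{(j)}$ up to inversion. A point of the germ in the chart $\widetilde U_j$ has homogeneous coordinates
\[
   [\widetilde x^{(j)}+t_j\widetilde\xi^{(j)}+O(t_j^2):t_j].
\]
To pass to $\widetilde U_i$, divide by its $i$-th coordinate, which is $\lambda(t_j):=\widetilde x^{(j)}_i+t_j\widetilde\xi^{(j)}_i+O(t_j^2)$; note $\lambda(0)\neq0$. This yields $t_i=t_j/\lambda(t_j)=t_j/\lambda(0)+O(t_j^2)$. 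The $Z$-part becomes
\[
   \frac{\widetilde x^{(j)}+t_j\widetilde\xi^{(j)}}{\lambda(0)}
   -\frac{t_j\widetilde\xi^{(j)}_i}{\lambda(0)^2}\widetilde x^{(j)}+O(t_j^2).
\]
Since $\widetilde x^{(j)}/\lambda(0)=\widetilde x^{(i)}$, substituting $t_j=\lambda(0)t_i+O(t_i^2)$ gives
\[
   \widetilde x^{(i)}+t_i\Bigl(\widetilde\xi^{(j)}-\widetilde\xi^{(j)}_i\,\widetilde x^{(i)}\Bigr)+O(t_i^2).
\]
The vector in parentheses has vanishing $i$-th coordinate, because $\widetilde x^{(i)}_i=1$. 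By uniqueness of the normalized expansion in $\widetilde U_i$, it therefore equals $\widetilde\xi^{(i)}$. Thus $\widetilde\xi^{(i)}-\widetilde\xi^{(j)}\in\ell_x$, so the two classes coincide in $V/\ell_x$.

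The only delicate point is bookkeeping. Both the scaling of the base vector and the reparametrization $t_j\leftrightarrow t_i$ enter at first order. One must check that the factor $\lambda(0)$ cancels exactly in $t_j\widetilde\xi^{(j)}/\lambda(0)$, and this is where the pairing of $V/\ell_x$ with the transverse coordinate matters. Concretely, the jet is really a section of $T\Pi\otimes\Ocal(-1)$, and the chosen normalization $\widetilde x^{(i)}$ paired with $t_i=T/Z_i$ makes the scalings cancel. I expect no further obstacle; I will also state that the class depends only on the germ, so it is independent of the Pesin radius $\delta(x)$.
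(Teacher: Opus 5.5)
Your proposal is correct and is the same direct change-of-chart computation as the paper's. The paper passes from $\widetilde U_i$ to $\widetilde U_r$ and obtains $\widetilde\xi_f^{(r)}(x)=\widetilde\xi_f^{(i)}(x)-\tfrac{b}{c}\widetilde x^{(i)}$, which is your identity $\widetilde\xi^{(i)}=\widetilde\xi^{(j)}-\widetilde\xi^{(j)}_i\widetilde x^{(i)}$ with the charts relabeled. The only slip is the preliminary relation $\widetilde x^{(i)}=c\,\widetilde x^{(j)}$ with $c=x_i/x_j$, which should read $\widetilde x^{(i)}=c^{-1}\widetilde x^{(j)}$; it is never used, and your computation correctly relies on $\widetilde x^{(j)}/\lambda(0)=\widetilde x^{(i)}$.
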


\begin{proof}
It is enough to check compatibility on the overlaps of these standard
ambient affine charts.  Suppose that $x\in U_i\cap U_r$.  In the
$i$-th normalization, put
\[
   c:=(\widetilde x^{(i)})_r\ne0,
   \qquad
   b:=(\widetilde\xi_f^{(i)}(x))_r.
\]
The $r$-th tangential coordinate along the local stable manifold is
$c+bt_i+O(t_i^2)$.  Passing from the normalization $Z_i=1$ to $Z_r=1$
therefore gives
\[
\begin{aligned}
   t_r
   &=\frac{t_i}{c+bt_i+O(t_i^2)}
     =\frac{t_i}{c}+O(t_i^2),\\
   \widetilde x^{(r)}+t_r\widetilde\xi_f^{(r)}(x)+O(t_r^2)
   &=\frac{\widetilde x^{(i)}
      +t_i\widetilde\xi_f^{(i)}(x)+O(t_i^2)}
        {c+bt_i+O(t_i^2)}.
\end{aligned}
\]
Since $t_i=ct_r+O(t_r^2)$, expansion in the new transverse coordinate yields
\[
   \widetilde x^{(r)}=\frac{\widetilde x^{(i)}}{c},
   \qquad
   \widetilde\xi_f^{(r)}(x)
      =\widetilde\xi_f^{(i)}(x)-\frac{b}{c}\widetilde x^{(i)}.
\]
In particular, $\widetilde x^{(i)}$ and $\widetilde x^{(r)}$ span the same
line $\ell_x$, while the two first coefficients differ by an element of
$\ell_x$.  They consequently define the same class in $V/\ell_x$.
Since the $U_i$ cover $\Pbb(V)$, the local classes glue to a well-defined
class $j_f(x)$.
\end{proof}

We now return to the fixed ambient chart $\widetilde U_k$. Recall that
$\Dcal_f^{(k)}$, defined in
\eqref{eq:fixed-chart-good-set}, is the full-$\mu_{f_\Pi}$-measure set on
which both $x$ and $f_\Pi(x)$ lie in $U_k$ and the relevant local stable
manifolds are defined. We compute the intrinsic first jet there; the
calculation will motivate the notation introduced afterward.

\begin{lem}
\label{lem:first-stable-jet}
Let $x\in\Dcal_f^{(k)}$ and write $x=[a:1]$. With $F=f_h$ and
$B=f_{d-1}$ as above, abbreviate
$\widetilde x:=\widetilde x^{(k)}=(a,1)$ and
$\widetilde\xi_f(x):=\widetilde\xi_f^{(k)}(x)$.
Then there is a scalar $\lambda_f(x)\in\Cbb$ such that
\begin{equation}
   DF(\widetilde x)\widetilde\xi_f(x)+B(\widetilde x)
      =\lambda_f(x)F(\widetilde x).
   \label{eq:first-stable-radial-equation}
\end{equation}
If $\det DF(\widetilde x)\ne0$, then
\begin{equation}\label{eq:jfx}
    j_f(x)
 =\left[-DF(\widetilde x)^{-1}B(\widetilde x)\right]
 =\left[-\frac{\operatorname{adj}(DF(\widetilde x))B(\widetilde x)}
                 {\det DF(\widetilde x)}\right]
 \quad\text{in }V/\Cbb\widetilde x. 
\end{equation}

\end{lem}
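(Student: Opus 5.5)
The plan is to use the invariance of the local stable manifold under $f$: the image of $W^s_{f,\mathrm{loc}}(x)$ lies in $W^s_{f,\mathrm{loc}}(f_\Pi(x))$. We expand both sides to first order in $t$ in homogeneous coordinates, rather than in affine ones, so that the projective ambiguity appears as the scalar $\lambda_f(x)$.

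\textbf{Step 1: first-order expansion of the image.} Lift the normalized stable manifold at $x$ to $V\oplus\Cbb$ as the curve $t\mapsto(\widetilde x+t\widetilde\xi_f(x)+O(t^2),\,t)$. Apply the homogeneous extension $\widehat f(Z,T)=(F(Z)+TB(Z)+O(T^2),\,T^d)$, writing $\widehat f$ for the lift as well as for the projective map. Using the Taylor expansion of $F$, the first $k$ components of the image are
\[
F(\widetilde x)+t\bigl(DF(\widetilde x)\widetilde\xi_f(x)+B(\widetilde x)\bigr)+O(t^2),
\]
and the last component is $t^d$.

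\textbf{Step 2: comparison with the target manifold.} The curve $\widehat f(\varphi_x(t))$ lies on the local stable manifold at $y=f_\Pi(x)$. That manifold is a graph over the transverse coordinate, of the form $[\widetilde y+u\,\widetilde\xi_f(y)+O(u^2):u]$. By the computation in the proof of Theorem~\ref{thm: main1}, the image point has transverse coordinate $u=O(t^d)$ with $d\ge2$. Hence its $V$-part, projectivized, equals $[\widetilde y]+O(t^d)=[\widetilde y]+O(t^2)$. In other words, the direction $[F(\widetilde x)+t(\cdots)+O(t^2)]\in\Pbb(V)$ is constant to first order in $t$.

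We then need a small lemma: if $v_0\ne0$ and $[v_0+tv_1+O(t^2)]=[v_0]+O(t^2)$ in $\Pbb(V)$, then $v_1\in\Cbb v_0$. This follows by working in an affine chart where $(v_0)_k\ne0$ (here $F(\widetilde x)$ has nonzero $k$-th coordinate because $f_\Pi(x)\in U_k$) and expanding the quotients $(v_0+tv_1)_j/(v_0+tv_1)_k$. The coefficient of $t$ is $\bigl((v_1)_j(v_0)_k-(v_0)_j(v_1)_k\bigr)/(v_0)_k^2$. Its vanishing for all $j$ gives $v_1=\lambda v_0$ with $\lambda=(v_1)_k/(v_0)_k$. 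Applied with $v_0=F(\widetilde x)$ and $v_1=DF(\widetilde x)\widetilde\xi_f(x)+B(\widetilde x)$, this yields \eqref{eq:first-stable-radial-equation}.

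\textbf{Step 3: solving for the jet.} When $\det DF(\widetilde x)\ne0$, solve
\[
\widetilde\xi_f(x)=-DF(\widetilde x)^{-1}B(\widetilde x)+\lambda_f(x)DF(\widetilde x)^{-1}F(\widetilde x).
\]
Euler's identity gives $DF(\widetilde x)\widetilde x=dF(\widetilde x)$, so $DF(\widetilde x)^{-1}F(\widetilde x)=\widetilde x/d\in\Cbb\widetilde x$. The second term therefore vanishes in $V/\Cbb\widetilde x$, giving the first expression in \eqref{eq:jfx}. Cramer's rule, $DF^{-1}=\operatorname{adj}(DF)/\det DF$, gives the second. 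By Lemma~\ref{lem:intrinsic-stable-jet}, this class is $j_f(x)$.

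\textbf{Main obstacle.} The delicate point is Step 2: justifying that the image's transverse coordinate is $O(t^d)$ with $d\ge2$, so that the first-order term of the image direction must be radial. This needs the denominator $\Phi_{f,k}$ to be nonvanishing at $t=0$, which holds because $f_\Pi(x)\in U_k$. It also needs the target manifold to be a graph over $u$, which holds by transversality to $\Pi$. Both facts were already used in the proof of Theorem~\ref{thm: main1}, so the argument should go through.
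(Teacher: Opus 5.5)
Your proposal is correct and follows essentially the same route as the paper: you expand the image $[Y(t):t^d]$ of the normalized stable graph, use that the new transverse parameter $t^d/Y_k(t)$ is $O(t^d)$ with $d\ge2$ to force the linear coefficient of $Y_j/Y_k$ to vanish, so that $v_1=\lambda v_0$ with $\lambda=(v_1)_k/(v_0)_k$, and then conclude with Euler's identity and Cramer's rule. The only difference is cosmetic: you state the comparison in $\Pbb(V)$ before passing to the chart, whereas the paper works directly with the target graph $[b+h_{f,y}(\tau):1:\tau]$.
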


\begin{proof}
The projective extension of $f$ has the form 
\[
 \widehat f[Z:T]
 =\bigl[F(Z)+TB(Z)+O(T^2):T^d\bigr].
\]
Using
\eqref{eq:normalized-stable-parametrization}, write the local stable manifold
through $[\widetilde x:0]$ as $[\widetilde x+t\widetilde\xi_f(x)+O(t^2):t].$
Its image is therefore $[Y(t):t^d]$, where
\[
 Y(t)=F(\widetilde x)
 +t\bigl(DF(\widetilde x)\widetilde\xi_f(x)+B(\widetilde x)\bigr)
 +O(t^2).
\]
Since $F_k(\widetilde x)\ne0$, normalization gives
the new transverse parameter
\[
 \tau=\frac{t^d}{Y_k(t)}=u(t)t^d,
 \qquad u(0)\ne0.
\]
Set $v_0:=F(\widetilde x)$ and $v_1:=DF(\widetilde x)\widetilde\xi_f(x)+B(\widetilde x).$
Write $y:=f_\Pi(x)=[b:1]$ in $U_k$.  The image lies in the target local stable
manifold, whose normalized graph has the form
\[
[b+h_{f,y}(\tau):1:\tau] \qquad h_{f,y}(\tau)=O(\tau).
\]
Hence, for
$1\le j<k$,
\[
 \frac{Y_j(t)}{Y_k(t)}
 =\frac{(v_0)_j}{(v_0)_k}+O(\tau)
 =\frac{(v_0)_j}{(v_0)_k}+O(t^d).
\]
Since $d\ge2$, comparison with the first-order expansion
\[
 \frac{Y_j(t)}{Y_k(t)}
 =\frac{(v_0)_j}{(v_0)_k}
 +t\frac{(v_0)_k(v_1)_j-(v_0)_j(v_1)_k}{(v_0)_k^2}+O(t^2)
\]
shows that $v_1=\lambda_f(x)v_0$, where
$\lambda_f(x)=(v_1)_k/(v_0)_k$. Consequently,
\[
   DF(\widetilde x)\widetilde\xi_f(x)+B(\widetilde x)
      =\lambda_f(x)F(\widetilde x).
\]
The scalar on the right records only a change of homogeneous representative
in the target. This proves \eqref{eq:first-stable-radial-equation}, without
any assumption on $\det DF(\widetilde x)$. If $DF(\widetilde x)$ is
invertible, Euler's identity
$DF(\widetilde x)\widetilde x=dF(\widetilde x)$ gives
$\widetilde\xi_f(x)
      =-DF(\widetilde x)^{-1}B(\widetilde x)
        +\frac{\lambda_f(x)}{d}\,\widetilde x.$
Passing to $V/\Cbb\widetilde x$ proves \eqref{eq:jfx}.
\end{proof}

\begin{rem}\label{rem:rational-stable-jet-section}
The pointwise formula in Lemma~\ref{lem:first-stable-jet} has a global
interpretation. The map
\[
   [Z]\longmapsto\left[-DF(Z)^{-1}B(Z)\right]\in V/\Cbb Z
\]
defines a rational section of
$T\Pbb(V)\otimes\Ocal_{\Pbb(V)}(-1)$. Indeed, since $F$ and $B$ are
homogeneous of degrees $d$ and $d-1$, respectively, for every $c\in\Cbb^*$
we have
\[
   DF(cZ)^{-1}B(cZ)=DF(Z)^{-1}B(Z).
\]
Thus $-DF(Z)^{-1}B(Z)$ is a degree-zero rational vector-valued function,
and its class in $V/\Cbb Z$ depends only on $[Z]$.  The Euler sequence
\[
   0\longrightarrow\Ocal_{\Pbb(V)}(-1)
   \longrightarrow V\otimes\Ocal_{\Pbb(V)}
   \longrightarrow T\Pbb(V)\otimes\Ocal_{\Pbb(V)}(-1)
   \longrightarrow0
\]
identifies its quotient fiber at $[Z]$ with $V/\Cbb Z$.
\end{rem}

\subsection{The fiberwise admissibility condition}

We now detach the preceding construction from the particular map $f$ and
regard its two leading terms as independent algebraic data.
For $m\geq0$, let $S_m:=H^0\bigl(\Pbb^{k-1},\Ocal_{\Pbb^{k-1}}(m)\bigr)$
be the space of homogeneous polynomials of degree $m$ on $V$, and put $\mathbf G_{d,k}:=\operatorname{Gr}(k,S_d).$
Let
\[
 \Fcal_{d,k}:=\left\{F=(F_1,\ldots,F_k)\in S_d^{k} \mid
 F_1,\ldots,F_k\text{ have no common zero on }\Pbb^{k-1}\right\}.
\]
This is a nonempty Zariski open subset of $S_d^k$ whose points are ordered
basepoint-free degree-$d$ tuples. We now fix $F\in\Fcal_{d,k}$ and
$B\in S_{d-1}^k$. Set
$L_F:=\langle F_1,\ldots,F_k\rangle\in\mathbf G_{d,k}$.

Remark~\ref{rem:rational-stable-jet-section} identifies the first jet with a
rational quotient class determined by $(F,B)$. For algebraic purposes, we now
choose its adjugate lift. Use homogeneous coordinates
$Z=(Z_1,\ldots,Z_k)$ on $V$, and put
\[
   \Delta_F:=\det DF,
   \qquad
   \eta_{F,B}:=\operatorname{adj}(DF)B,
   \qquad
   D_{d,k}:=k(d-1).
\]
Our convention is that $F$, $B$, and $\eta_{F,B}$ are column
vectors, while $\nabla P=(\partial_1P,\ldots,\partial_kP)$
is a row vector. We have $\Delta_F\in S_{D_{d,k}}$, and
$(\eta_{F,B})_i\in S_{D_{d,k}}.$

On $\{\Delta_F\ne0\}$, define 
\begin{equation*}
 \Theta_{F,B}(Z):=-\frac{\eta_{F,B}(Z)}{\Delta_F(Z)}
 =-\frac{\operatorname{adj}(DF(Z))B(Z)}{\det DF(Z)},
 \qquad
 \theta_{F,B}([Z]):=[\Theta_{F,B}(Z)]\in V/\Cbb Z.
\end{equation*}
We call $\Theta_{F,B}$ the \emph{rational stable-displacement lift} and
$\theta_{F,B}$ the \emph{rational stable-manifold jet field}. Via the Euler-sequence
identification in Remark~\ref{rem:rational-stable-jet-section}, the latter is
the quotient class of the former. 
Lemma~\ref{lem:first-stable-jet} says precisely that, for every regular
polynomial endomorphism $f$ with $f_h=F$ and $f_{d-1}=B$, we have
$j_f(x)=\theta_{F,B}(x)$ for every
$x\in\Dcal_f^{(k)}\cap\{\Delta_F\ne0\}$.

For $P,Q\in S_d$, define the polynomial
\begin{equation}\label{eq:first-order-numerator}
   M^{F,B}_{P,Q}:=(P\nabla Q-Q\nabla P)\eta_{F,B}=\sum_{r=1}^k
 \left(P\frac{\partial Q}{\partial Z_r}
       -Q\frac{\partial P}{\partial Z_r}\right)(\eta_{F,B})_r
       \in S_{D_{d,k}+2d-1}.
\end{equation}
It is bilinear and alternating in $P,Q$.  The rational lift
$\Theta_{F,B}$ defines the directional derivative
\[
 \partial_{F,B}P:=DP(Z)\bigl(\Theta_{F,B}(Z)\bigr)
 =\nabla P(Z)\Theta_{F,B}(Z).
\]
Replacing the lift by a radial vector field does not affect the alternating
expression below, since
$(P\nabla Q-Q\nabla P)Z=dPQ-dQP=0$.  More precisely,
\begin{equation}\label{eq:first-order-ratio-numerator}
 Q\partial_{F,B}P-P\partial_{F,B}Q
 =\frac{M^{F,B}_{P,Q}}{\Delta_F}.
\end{equation}

Thus the quotient in \eqref{eq:first-order-ratio-numerator} depends only on
the jet field. Requiring its possible pole along $\{\Delta_F=0\}$ to cancel
for every pair in a homogeneous system leads to the following definition.

\begin{defi}[$(F,B)$-admissibility]
\label{def:FB-admissible}
A $k$-plane $L\subset S_d$ is \emph{$(F,B)$-admissible} if
\begin{equation}
 \Delta_F\mid M^{F,B}_{P,Q}
   \qquad\text{for every }P,Q\in L.   \label{eq:FB-admissibility}
\end{equation}
\end{defi}

The following lemma and proposition identify the two homogeneous systems that are
forced to satisfy this fiberwise condition: the tautological system $L_F$
and the leading system of any same-degree map with the same equilibrium
measure.

\begin{lem}
\label{lem:tautological-solution}
For every $F\in\Fcal_{d,k}$ and every $B\in S_{d-1}^k$, the plane
$L_F$ is $(F,B)$-admissible.
\end{lem}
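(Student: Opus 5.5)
By bilinearity and the alternating property of $(P,Q)\mapsto M^{F,B}_{P,Q}$, it suffices to prove $\Delta_F\mid M^{F,B}_{F_i,F_j}$ for all $1\le i,j\le k$. The aim is an exact identity expressing $M^{F,B}_{F_i,F_j}$ as $\Delta_F$ times an explicit polynomial.

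The key input is the adjugate identity
\[
   DF\cdot\operatorname{adj}(DF)=\Delta_F\,I_k.
\]
The row $\nabla F_i$ is the $i$-th row of $DF$. Hence
\[
   \nabla F_i\,\eta_{F,B}=\bigl(DF\,\operatorname{adj}(DF)B\bigr)_i=\Delta_F\,B_i .
\]
Substituting into \eqref{eq:first-order-numerator} gives
\[
   M^{F,B}_{F_i,F_j}
   =F_i\,\nabla F_j\,\eta_{F,B}-F_j\,\nabla F_i\,\eta_{F,B}
   =\Delta_F\,(F_iB_j-F_jB_i).
\]
This is divisible by $\Delta_F$ in the polynomial ring, which proves the lemma.

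Equivalently, on $\{\Delta_F\neq0\}$ we have $\partial_{F,B}F_i=-B_i$. This is the algebraic shadow of the radial equation \eqref{eq:first-stable-radial-equation}: the image of the stable jet is radial in the target. The only point to check is the convention that rows of $DF$ are the gradients $\nabla F_i$. With that convention $DF\operatorname{adj}(DF)=\Delta_F I$ holds with no need for invertibility, so the identity is valid even when $\Delta_F\equiv0$. In that degenerate case the divisibility is trivial anyway, since then $\eta$ satisfies $DF\,\eta=0$.

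I do not expect a real obstacle here. The only care needed is to track the row/column conventions fixed before Definition~\ref{def:FB-admissible}.
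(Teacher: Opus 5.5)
Your proposal is correct and follows the same route as the paper's proof. Both apply the adjugate identity $DF\,\operatorname{adj}(DF)=\Delta_F I_k$ row by row to get $\nabla F_j\,\eta_{F,B}=\Delta_F B_j$, deduce $M^{F,B}_{F_i,F_j}=\Delta_F(F_iB_j-F_jB_i)$, and conclude by bilinearity. Your aside on the case $\Delta_F\equiv0$ is harmless but vacuous, since $\Delta_F\not\equiv0$ for every $F\in\Fcal_{d,k}$.
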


\begin{proof}
The adjugate identity and the fact that the $j$-th row of $DF$ is
$\nabla F_j$ give
\[
\begin{aligned}
 DF\,\eta_{F,B}
    &=DF\operatorname{adj}(DF)B=\Delta_FB,\\
 \nabla F_j\,\eta_{F,B}
    &=(\Delta_FB)_j=\Delta_FB_j.
\end{aligned}
\]
Consequently, for all $i,j$,
\begin{align*}
 M^{F,B}_{F_i,F_j}
 =(F_i\nabla F_j-F_j\nabla F_i)\eta_{F,B}
 =F_i\Delta_FB_j-F_j\Delta_FB_i
 =\Delta_F(F_iB_j-F_jB_i).
\end{align*}
Bilinearity therefore gives
$\Delta_F\mid M^{F,B}_{P,Q}$ for all $P,Q\in L_F$.
\end{proof}

\begin{prop}
\label{prop:measure-implies-admissible}
Let $f,g\in\Poly^k_d$, and put $F:=f_h$ and $B:=f_{d-1}$. If
$\mu_f=\mu_g$, then
$L_{g_h}:=\langle(g_h)_1,\ldots,(g_h)_k\rangle$ is
$(F,B)$-admissible.
\end{prop}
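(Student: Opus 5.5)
The plan is to run the first-order computation of Lemma~\ref{lem:first-stable-jet} twice on the same curve germ. The germ is the common local stable manifold of $f$ and $g$ at a $\mu_\Pi$-generic direction. Its first jet is computed from $f$, and then $g$ is required to map it into a local stable manifold of $g$. This yields a pointwise polynomial identity on a Zariski-dense set, which then becomes a global identity of polynomials.

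\textbf{Step 1 (good set).} By Proposition~\ref{prop:zariski-dense-common-stable-manifolds}, $f$ and $g$ have the same local stable-manifold germ at every point of a full $\mu_\Pi$-measure set $E_0$. I take
\[
E:=E_0\cap\Dcal_f^{(k)}\cap\Dcal_g^{(k)}\cap\{\Delta_F\neq0\}.
\]
Since $\mu_\Pi$ charges no proper algebraic subset (it has continuous potentials), $E$ has full measure and is therefore Zariski dense in $\Pi$. The only thing to check here is that $\Delta_F\not\equiv0$. This holds because $F$ is regular, so $f_\Pi$ is a finite map and its Jacobian does not vanish identically.

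\textbf{Step 2 (pointwise identity).} Fix $x=[a:1]\in E$ and put $\widetilde x=(a,1)$. Write the common germ as $[\widetilde x+t\xi+O(t^2):t]$ with $\xi=\widetilde\xi_f(x)$. By Lemma~\ref{lem:first-stable-jet} applied to $f$, $\xi\equiv\Theta_{F,B}(\widetilde x)$ modulo $\Cbb\widetilde x$, because $\Delta_F(\widetilde x)\neq0$.

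Now set $G:=g_h$ and $C:=g_{d-1}$, and repeat the computation in the proof of Lemma~\ref{lem:first-stable-jet} with $g$ in place of $f$. Nothing in that computation used which map produced the germ; it used only that the germ is mapped into the target local stable manifold of the same map, which here is $g$'s. The first-order vanishing therefore gives
\[
DG(\widetilde x)\xi+C(\widetilde x)=\lambda\,G(\widetilde x)\quad\text{for some scalar }\lambda.
\]
Hence, for $P=G_i$ and $Q=G_j$,
\[
Q\,\nabla P\,\xi-P\,\nabla Q\,\xi=P\,C_j-Q\,C_i\quad\text{at }\widetilde x.
\]
Replacing $\xi$ by $\Theta_{F,B}(\widetilde x)$ changes it by a radial vector. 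By the Euler identity $(P\nabla Q-Q\nabla P)Z=0$, this replacement does not affect the left-hand side. Combining with \eqref{eq:first-order-ratio-numerator}, I obtain
\[
M^{F,B}_{G_i,G_j}(\widetilde x)=\pm\,\Delta_F(\widetilde x)\bigl(G_iC_j-G_jC_i\bigr)(\widetilde x),
\]
with the sign fixed by the convention in \eqref{eq:first-order-ratio-numerator}.

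\textbf{Step 3 (globalize).} Both sides of the last identity are homogeneous polynomials of the same degree $D_{d,k}+2d-1$. Their difference vanishes on the affine trace $\{(a,1)\mid [a:1]\in E\}$, which is Zariski dense in $\Cbb^{k-1}$. By homogeneity the difference then vanishes on a Zariski-dense subset of $V$, so it is identically zero. This gives $\Delta_F\mid M^{F,B}_{G_i,G_j}$ for all $i,j$. Since $M^{F,B}_{P,Q}$ is bilinear in $(P,Q)$, the divisibility extends to all $P,Q\in L_{g_h}$, which is exactly \eqref{eq:FB-admissibility}.

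The step I expect to need the most care is Step 2. One must check that the radial-equation derivation in Lemma~\ref{lem:first-stable-jet} really applies when the germ's jet is computed from $f$ but the map applied is $g$. This requires that $g_\Pi(x)\in U_k$ and that the local stable manifold of $g$ at $g_\Pi(x)$ exists, and both are guaranteed by $x\in\Dcal_g^{(k)}$. One must also check that the image of the shrunken common germ under $g$ lies in that target stable manifold.
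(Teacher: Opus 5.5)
Your proposal is correct and follows the paper's own argument. Both use the same Zariski-dense good set $E$, take the first jet from $f$ as $\Theta_{F,B}(\widetilde x)$ modulo $\Cbb\widetilde x$, and apply the radial equation for $g$ on the common germ. Both then eliminate the scalar by alternating, using Euler's identity to discard the radial ambiguity, and globalize by Zariski density. The only cosmetic difference is that you globalize the explicit polynomial identity $M^{F,B}_{G_i,G_j}=\Delta_F(G_iC_j-G_jC_i)$ (the sign is $+$) and then extend by bilinearity, whereas the paper treats arbitrary $P,Q\in L_{g_h}$ at once as an identity of rational functions.
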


\begin{proof}
Put $C:=g_{d-1}$. By
Proposition~\ref{prop:zariski-dense-common-stable-manifolds}, the local stable
manifolds of $f$ and $g$ coincide on a set $E_0\subset\Pi$ of full measure
for their common induced equilibrium measure. The sets
$\Dcal_f^{(k)}$ and $\Dcal_g^{(k)}$ also have full measure.
Since this measure does not charge proper
algebraic subsets and $\Delta_F\not\equiv0$, the set
$E:=E_0\cap\Dcal_f^{(k)}\cap\Dcal_g^{(k)}
\cap\{\Delta_F\ne0\}$ is Zariski dense in
$\Pi$.

Fix $x=[a:1]\in E$ and put
$\widetilde x:=(a,1)\in V$. The two local stable manifolds coincide as germs
and are normalized in the same fixed ambient chart $\widetilde U_k$. Hence
they have a common
normalized graph
\[
   [a+h_x(t):1:t],
   \qquad
   h_x(t)=t\xi(x)+O(t^2).
\]
Set $\widetilde\xi(x):=(\xi(x),0)\in V$. We first compare the radial
equations associated with this common graph. Applying
\eqref{eq:first-stable-radial-equation} to $g$ gives
\begin{equation}
   Dg_h(\widetilde x)\widetilde\xi(x)+C(\widetilde x)
   =\lambda_g(x)g_h(\widetilde x)
   \label{eq:candidate-radial-equation}
\end{equation}
for a scalar $\lambda_g(x)$. The second conclusion of
Lemma~\ref{lem:first-stable-jet}, applied to $f$ on
$\{\Delta_F\ne0\}$, gives a scalar $\nu(x)$ such that
\[
   \widetilde\xi(x)
   =\Theta_{F,B}(\widetilde x)+\nu(x)\widetilde x.
\]
Euler's identity $Dg_h(Z)Z=dg_h(Z)$ changes
\eqref{eq:candidate-radial-equation} into
\[
   Dg_h(\widetilde x)\Theta_{F,B}(\widetilde x)+C(\widetilde x)
   =\Lambda(x)g_h(\widetilde x),
   \qquad \Lambda(x):=\lambda_g(x)-d\nu(x).
\]
Thus, for $P=\sum_i\gamma_i(g_h)_i\in L_{g_h}$, put
$C_P:=\sum_i\gamma_iC_i$. Then
\[
   (\partial_{F,B}P)(\widetilde x)+C_P(\widetilde x)
   =\Lambda(x)P(\widetilde x).
\]
We now eliminate the common scalar $\Lambda(x)$ by alternating. For
$P,Q\in L_{g_h}$, we obtain
\begin{equation*}
   Q\partial_{F,B}P-P\partial_{F,B}Q=PC_Q-QC_P,
\end{equation*}
whose right-hand side is polynomial. By
\eqref{eq:first-order-ratio-numerator}, the left-hand side is
$M^{F,B}_{P,Q}/\Delta_F$. The identity holds on the Zariski-dense
set $E$ and hence holds identically as an identity of rational functions.
Thus $\Delta_F\mid M^{F,B}_{P,Q}$ for all $P,Q\in L_{g_h}$, so
$L_{g_h}$ is $(F,B)$-admissible.
\end{proof}

The conclusions above are fiberwise: no parameter-space structure has yet
been imposed. We now let $(F,B)$ vary and assemble all admissible homogeneous
systems into a global incidence scheme.

\section{The power-map fiber and generic uniqueness of the admissible system}
\label{sect:generic-admissible-uniqueness}

Throughout this section, $d\ge2$ and $k\ge3$. We retain the notation
$V$, $S_m$, $\mathbf G_{d,k}$, $\Fcal_{d,k}$, and $D_{d,k}=k(d-1)$ from the preceding
section. We begin by globalizing the fiberwise admissibility condition of
Definition~\ref{def:FB-admissible}.
Define $\Qcal_{d,k}:=\Fcal_{d,k}\times S_{d-1}^k$. The map
$f\mapsto(f_h,f_{d-1})$ defines a regular surjective morphism
$\operatorname{tr}_{d,k}$ from $\Poly^k_d$ to $\Qcal_{d,k}$. Thus every nonempty
Zariski open subset constructed below pulls back to a nonempty Zariski open
subset of the polynomial parameter space.

\subsection{The admissibility incidence scheme}

Admissibility in \eqref{eq:FB-admissibility} is equivalent to the vanishing
of $[M^{F,B}_{P,Q}]$ in
$S_{D_{d,k}+2d-1}/(\Delta_FS_{2d-1})$. As $(F,B)$ varies, multiplication by
$\Delta_F$ gives a morphism of trivial vector bundles
\begin{equation*}
 \operatorname{mult}_\Delta:S_{2d-1}\otimes\Ocal_{\Qcal_{d,k}}
 \longrightarrow S_{D_{d,k}+2d-1}\otimes\Ocal_{\Qcal_{d,k}},
 \qquad R\longmapsto \Delta_FR.
\end{equation*}
Since $\Delta_F\not\equiv0$ for every $F\in\Fcal_{d,k}$, this morphism is
injective on every fiber and hence has constant rank. Its cokernel is
therefore a vector bundle, denoted by $\Ecal_{\Qcal}$; its fiber at $(F,B)$ is
$(\Ecal_{\Qcal})_{(F,B)}\simeq
S_{D_{d,k}+2d-1}/(\Delta_FS_{2d-1})$.

Let $\Ucal_{\mathbf G}\subset S_d\otimes\Ocal_{\mathbf G_{d,k}}$
be the tautological rank-$k$ subbundle whose fiber at $L$ is
$L$. Put $X:=\Qcal_{d,k}\times\mathbf G_{d,k}$,
write $\operatorname{pr}_{\Qcal}:X\to\Qcal_{d,k}$ and
$\operatorname{pr}_{\mathbf G}:X\to\mathbf G_{d,k}$ for the projections,
and set $\Ucal:=\operatorname{pr}_{\mathbf G}^*\Ucal_{\mathbf G}$
and $\Ecal:=\operatorname{pr}_{\Qcal}^*\Ecal_{\Qcal}$.

Equation~\eqref{eq:first-order-numerator} defines a vector-bundle morphism
\begin{equation*}
 \alpha:\bigwedge\nolimits^2\Ucal
 \longrightarrow\Ecal,
 \qquad
 P\wedge Q\longmapsto[M^{F,B}_{P,Q}].
\end{equation*}
Equivalently, set
$\Hcal:=\Hcal\!om(\bigwedge^2\Ucal,\Ecal)$ and regard
$\alpha$ as a global section of $\Hcal$. Denote by $\Iscr$ the
closed zero scheme of $\alpha$. Its projection
$\pi:\Iscr\to\Qcal_{d,k}$ is proper because $\mathbf G_{d,k}$ is
projective. Locally, the ideal of $\Iscr$ is generated by the coordinate
functions of $\alpha$ in any frame of $\Hcal$.

At a complex point $\zeta=((F,B),L)$, one has
\[
 \alpha(\zeta)=0
 \quad\Longleftrightarrow\quad
 [M^{F,B}_{P,Q}]=0\text{ for all }P,Q\in L
 \quad\Longleftrightarrow\quad
 L\text{ is $(F,B)$-admissible}.
\]
For $(F,B)\in\Qcal_{d,k}(\Cbb)$, denote the scheme-theoretic fiber by
\[
 \Acal(F,B):=
 \Iscr\times_{\Qcal_{d,k},\,(F,B)}
 \Spec\Cbb.
\]
Since zero schemes commute with base change, this is the zero scheme on
$\mathbf G_{d,k}$ of
\[
 \alpha_{F,B}:\bigwedge\nolimits^2\Ucal_{\mathbf G}
 \longrightarrow
 \bigl(S_{D_{d,k}+2d-1}/(\Delta_FS_{2d-1})\bigr)
 \otimes\Ocal_{\mathbf G_{d,k}}.
\]

We will now exhibit one reduced singleton fiber for $\pi$. Finiteness and
upper semicontinuity of the fiber length will then spread this property to a
nonempty Zariski open subset of $\Qcal_{d,k}$.

Set
\[
 F^{\mathrm{pow}}:=(Z_1^d,\ldots,Z_k^d),\qquad
 B^{\mathrm{pow}}:=(Z_2^{d-1},\ldots,Z_k^{d-1},Z_1^{d-1}),
\]
where the components of $B^{\mathrm{pow}}$ are indexed cyclically.
Put
\[
 q_{\mathrm{pow}}:=(F^{\mathrm{pow}},B^{\mathrm{pow}}),\qquad
 L_{\mathrm{pow}}:=\langle Z_1^d,\ldots,Z_k^d\rangle,\qquad
 \Acal_{\mathrm{pow}}:=\Acal(F^{\mathrm{pow}},B^{\mathrm{pow}}).
\]
The cyclic choice of $B^{\mathrm{pow}}$ is designed so that
$Z_i\nmid B_i^{\mathrm{pow}}$ for every $i$. As we shall see, this reduces
admissibility to the coordinatewise divisibilities
$Z_i^{d-1}\mid H_i(P,Q)$. A torus degeneration will show that every
admissible orbit closure contains $L_{\mathrm{pow}}$. We then prove that
$T_{L_{\mathrm{pow}}}\Acal_{\mathrm{pow}}=0$; the resulting isolation of
$L_{\mathrm{pow}}$ allows the degeneration argument to exclude both other
closed points and infinitesimal thickening.

\subsection{The power-map equations and torus degeneration}

For $P,Q\in S_d$, write
\[
 H_i(P,Q):=P\,\partial_iQ-Q\,\partial_iP.
\]

\begin{lem}
\label{lem:power-equations}
Let $F=F^{\mathrm{pow}}$. For any $B\in S_{d-1}^k$ satisfying
$Z_i\nmid B_i$ for every $i$, a complex point
$L\in\mathbf G_{d,k}(\Cbb)$ is $(F,B)$-admissible if and only if
\begin{equation}
 Z_i^{d-1}\mid H_i(P,Q)
 \qquad(P,Q\in L,\ 1\le i\le k),
 \label{eq:power-divisibility}
\end{equation}
\end{lem}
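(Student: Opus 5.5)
The plan is to compute $\Delta_F$ and $\eta_{F,B}$ explicitly for $F=F^{\mathrm{pow}}$, expand $M^{F,B}_{P,Q}$ as a sum over coordinates, and then split the divisibility by $\Delta_F$ into separate divisibilities by each $Z_i^{d-1}$.

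\textbf{Step 1: explicit $\Delta_F$ and $\eta_{F,B}$.} Since $DF=\operatorname{diag}(dZ_1^{d-1},\ldots,dZ_k^{d-1})$, we have
\[
\Delta_F=d^k\prod_{j=1}^kZ_j^{d-1},
\qquad
\operatorname{adj}(DF)=\operatorname{diag}\Bigl(d^{k-1}\prod_{j\neq i}Z_j^{d-1}\Bigr)_{i}.
\]
Hence $(\eta_{F,B})_r=d^{k-1}B_r\prod_{j\ne r}Z_j^{d-1}$. Inserting this into \eqref{eq:first-order-numerator} gives
\[
M^{F,B}_{P,Q}=d^{k-1}\sum_{r=1}^k H_r(P,Q)\,B_r\prod_{j\neq r}Z_j^{d-1}.
\]

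\textbf{Step 2: reduce to one coordinate at a time.} The factors $Z_1^{d-1},\ldots,Z_k^{d-1}$ are pairwise coprime in the UFD $\Cbb[Z]$. Therefore $\Delta_F\mid M^{F,B}_{P,Q}$ holds if and only if $Z_i^{d-1}\mid M^{F,B}_{P,Q}$ for every $i$.

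Fix $i$. Every term with $r\ne i$ contains the factor $Z_i^{d-1}$, so
\[
Z_i^{d-1}\mid M^{F,B}_{P,Q}
\iff
Z_i^{d-1}\mid H_i(P,Q)\,B_i\prod_{j\ne i}Z_j^{d-1}.
\]
The variable $Z_i$ is prime. It does not divide $Z_j$ for $j\ne i$, and it does not divide $B_i$ by hypothesis. Hence the right-hand condition is equivalent to $Z_i^{d-1}\mid H_i(P,Q)$.

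Combining over all $i$ and all pairs $P,Q\in L$ gives exactly the equivalence of \eqref{eq:FB-admissibility} with \eqref{eq:power-divisibility}.

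\textbf{Where care is needed.} There is no real obstacle here. The only points to handle carefully are the following.
\begin{itemize}
\item The prime-factor argument requires $Z_i\nmid B_i$. This is precisely why the cyclic choice of $B^{\mathrm{pow}}$ was made, including when $B_i=0$ is excluded.
\item For the converse direction, one observes that if $Z_r^{d-1}\mid H_r$ for every $r$, then each summand $H_rB_r\prod_{j\ne r}Z_j^{d-1}$ is divisible by $\prod_jZ_j^{d-1}$. Hence the whole sum is divisible by $\Delta_F$, up to the nonzero constant $d^k$.
\end{itemize}
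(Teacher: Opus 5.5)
Your proof is correct and follows essentially the same argument as the paper. You compute $\Delta_F$ and $\eta_{F,B}$ explicitly, expand $M^{F,B}_{P,Q}=d^{k-1}\sum_i H_i(P,Q)B_i\prod_{j\ne i}Z_j^{d-1}$, isolate the $i$-th summand modulo $Z_i^{d-1}$, cancel the factor prime to $Z_i$ using $Z_i\nmid B_i$, and recombine via pairwise coprimality of the $Z_i^{d-1}$. The only difference is that you invoke coprimality before the per-coordinate reduction rather than after.
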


\begin{proof}
For the power-map tuple,
\[
\begin{aligned}
 DF&=d\,\operatorname{diag}(Z_1^{d-1},\ldots,Z_k^{d-1}),\\
 \Delta_F&=d^k\prod_{j=1}^kZ_j^{d-1},\qquad
 (\eta_{F,B})_i=d^{k-1}B_i\prod_{j\ne i}Z_j^{d-1}.
\end{aligned}
\]
Consequently,
\begin{equation}
 M^{F,B}_{P,Q}
 =d^{k-1}\sum_{i=1}^k
 H_i(P,Q)B_i\prod_{j\ne i}Z_j^{d-1}.
 \label{eq:power-numerator}
\end{equation}
Fix $i$.  Modulo $Z_i^{d-1}$, every summand in
\eqref{eq:power-numerator} except the $i$-th vanishes. Since
$Z_i\nmid B_i$, the factor $B_i\prod_{j\ne i}Z_j^{d-1}$
is not divisible by $Z_i$. It may therefore be canceled, and
\[
 Z_i^{d-1}\mid M^{F,B}_{P,Q}
 \quad\Longleftrightarrow\quad
 Z_i^{d-1}\mid H_i(P,Q),
\]
for every $i$. Since the powers
$Z_1^{d-1},\ldots,Z_k^{d-1}$ are pairwise coprime, these simultaneous
divisibilities are equivalent to
$\Delta_F\mid M^{F,B}_{P,Q}$. This proves the lemma.
\end{proof}

\begin{lem}
\label{lem:compatible-monomials}
Assume $k\ge3$. The only $k$-plane in $S_d$ that is
spanned by monomials and satisfies
\eqref{eq:power-divisibility} is
$L_{\mathrm{pow}}=\langle Z_1^d,\ldots,Z_k^d\rangle$.
\end{lem}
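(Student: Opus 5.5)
The plan is to reduce the condition to a combinatorial statement about exponent vectors and then show that any $k\ge3$ admissible monomials must be pure powers. Let $L$ be spanned by $k$ distinct monomials $Z^{\alpha^{(1)}},\ldots,Z^{\alpha^{(k)}}$ of degree $d$.

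\textbf{Reduction to basis pairs.} The expression $H_i(P,Q)$ is bilinear in $(P,Q)$, and divisibility by $Z_i^{d-1}$ is preserved under linear combinations. Hence \eqref{eq:power-divisibility} holds on all of $L$ if and only if it holds for pairs of basis monomials.

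For $P=Z^\alpha$ and $Q=Z^\beta$, a direct computation gives
\[
 H_i(Z^\alpha,Z^\beta)=(\beta_i-\alpha_i)\,Z^{\alpha+\beta-e_i}.
\]
This is divisible by $Z_i^{d-1}$ exactly when $\alpha_i=\beta_i$ or $\alpha_i+\beta_i\ge d$. So the condition becomes: for every pair of distinct exponent vectors $\alpha\ne\beta$ in the basis and every index $i$, either $\alpha_i=\beta_i$ or $\alpha_i+\beta_i\ge d$.

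\textbf{Counting argument for a pair.} Fix $\alpha\ne\beta$ in the basis and let $I:=\{i\mid \alpha_i\ne\beta_i\}$.
\begin{itemize}
\item Since $|\alpha|=|\beta|=d$, the set $I$ cannot have exactly one element, so $|I|\ge2$.
\item By the condition above, $\sum_{i\in I}(\alpha_i+\beta_i)\ge d|I|$.
\item The total sum is bounded by $\sum_{i}(\alpha_i+\beta_i)=2d$.
\end{itemize}
These three facts force $|I|=2$, say $I=\{i,j\}$, with $\alpha_i+\beta_i=\alpha_j+\beta_j=d$. They also force $\alpha_l=\beta_l=0$ for every $l\notin I$.

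Consequently $\alpha$ and $\beta$ are both supported in $\{i,j\}$, with $\alpha_i+\alpha_j=d$. Moreover $\beta$ is the swap of $\alpha$, namely $\beta_i=\alpha_j$ and $\beta_j=\alpha_i$.

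\textbf{Use of $k\ge3$.} Take any basis exponent $\alpha$, and two further distinct basis exponents $\beta$ and $\gamma$; this uses $k\ge3$. Suppose $\alpha$ has two nonzero coordinates $i,j$. Applying the pair analysis to $(\alpha,\beta)$ and to $(\alpha,\gamma)$, both $\beta$ and $\gamma$ must equal the swap of $\alpha$ in $\{i,j\}$. Hence $\beta=\gamma$, a contradiction.

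Therefore every basis monomial is a pure power $Z_i^d$. Conversely, any two distinct pure powers $Z_i^d,Z_j^d$ satisfy the condition, since for each index either both exponents vanish or their sum is $d$. Finally, $k$ distinct pure powers in $k$ variables exhaust $\{Z_1^d,\ldots,Z_k^d\}$, so $L=L_{\mathrm{pow}}$.

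The only step needing care is the counting argument that forces $|I|=2$ together with the swap structure. Everything else is bookkeeping, and the hypothesis $k\ge3$ enters only to supply the third monomial.
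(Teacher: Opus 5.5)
Your proof is correct and follows essentially the same route as the paper. Both arguments compute $H_i(Z^\alpha,Z^\beta)$ on monomial pairs, use the bound $|I|\,d\le 2d$ to force support on two coordinates with swapped exponents, and then use $k\ge3$ to give a non-pure basis monomial two distinct partners, which is a contradiction.
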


\begin{proof}
Let $L\subset S_d$ be such a plane. Since distinct monomials are
linearly independent, $L$ has a basis consisting of $k$ distinct
degree-$d$ monomials. We first analyze two of them, say
$Z^{\mathbf a}$ and $Z^{\mathbf b}$, where
$\mathbf a=(a_1,\ldots,a_k)$ and $\mathbf b=(b_1,\ldots,b_k)$.

For every $i$ such that $a_i\ne b_i$, one has
$H_i(Z^{\mathbf a},Z^{\mathbf b})
 =(b_i-a_i)Z^{\mathbf a+\mathbf b-\mathbf e_i}$, where $\mathbf e_i$ is the
$i$-th standard basis vector. Thus the divisibility
$Z_i^{d-1}\mid H_i(Z^{\mathbf a},Z^{\mathbf b})$ implies $a_i+b_i\ge d.$

Set $I:=\{i\mid a_i\ne b_i\}.$
Because $|\mathbf a|=|\mathbf b|=d$ and $\mathbf a\ne\mathbf b$, the set $I$
cannot have only one element. On the other hand, $|I|d
 \le \sum_{i\in I}(a_i+b_i)
 \le |\mathbf a|+|\mathbf b|
 =2d.$
Consequently, $I=\{p,q\}$. It follows
that both monomials are supported on $\{p,q\}$ and that
$a_p+b_p=a_q+b_q=d$. Since $a_p+a_q=b_p+b_q=d$, we obtain
$(b_p,b_q)=(a_q,a_p)$.

Therefore, a non-pure monomial has at most one distinct monomial with
which it can satisfy \eqref{eq:power-divisibility}. If such a monomial
belonged to the monomial basis of $L$, it would have to be compatible with
the other $k-1\ge2$ distinct basis monomials, a contradiction. Hence every
    basis monomial is a pure power, and therefore $L=L_{\mathrm{pow}}$.

Conversely, the pairs of pure powers satisfy
\eqref{eq:power-divisibility}; by bilinearity of $H_i$, the whole
plane $L_{\mathrm{pow}}$ satisfies these equations.
\end{proof}

We now degenerate an admissible plane to a torus-fixed, hence monomial,
plane. Let $\Tbb=(\Gbb_m)^k$. For $t=(t_1,\ldots,t_k)\in\Tbb$, define
$(t\cdot P)(Z_1,\ldots,Z_k):=P(t_1Z_1,\ldots,t_kZ_k)$. This gives an
algebraic action on $\mathbf G_{d,k}$ by
$t\cdot L:=\{t\cdot P\mid P\in L\}$. For $L\in\mathbf G_{d,k}$, set
$Y_L:=\overline{\Tbb\cdot L}$. It is a projective irreducible
$\Tbb$-stable variety.

\begin{lem}
\label{lem:torus-fixed-point}
For every $L\in\Acal_{\mathrm{pow}}(\Cbb)$, we have $L_{\mathrm{pow}} \in Y_L$.
\end{lem}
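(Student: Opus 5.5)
The plan is to combine three ingredients: torus-invariance of the admissibility condition on the power-map fiber, the Borel fixed point theorem on the orbit closure $Y_L$, and the classification of monomial admissible planes in Lemma~\ref{lem:compatible-monomials}.

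\textbf{Step 1: torus invariance.} Since $Z_i\nmid B^{\mathrm{pow}}_i$ for every $i$, Lemma~\ref{lem:power-equations} identifies the complex points of $\Acal_{\mathrm{pow}}$ with the planes satisfying \eqref{eq:power-divisibility}. For $t\in\Tbb$ and $P,Q\in S_d$, the chain rule gives $\partial_i(t\cdot Q)=t_i\,(t\cdot\partial_iQ)$. Hence
\[
 H_i(t\cdot P,t\cdot Q)=t_i\,\bigl(t\cdot H_i(P,Q)\bigr).
\]
Moreover, $Z_i^{d-1}\mid R$ if and only if $Z_i^{d-1}\mid t\cdot R$, because the substitution $Z_j\mapsto t_jZ_j$ only rescales $Z_i$. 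Therefore $t\cdot L$ satisfies \eqref{eq:power-divisibility} whenever $L$ does, so the set $\Acal_{\mathrm{pow}}(\Cbb)$ is $\Tbb$-stable.

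\textbf{Step 2: the orbit closure stays admissible.} $\Acal_{\mathrm{pow}}$ is a closed subscheme of $\mathbf G_{d,k}$, being the zero scheme of $\alpha_{F,B}$. Its complex points therefore form a Zariski closed subset. This set contains the orbit $\Tbb\cdot L$, so it contains $Y_L$. Thus every point of $Y_L$ is admissible.

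\textbf{Step 3: a fixed point, and why it is monomial.} By Step~2, $Y_L$ is a nonempty projective variety on which the connected solvable group $\Tbb$ acts. Borel's fixed point theorem gives a $\Tbb$-fixed point $L_0\in Y_L$. A $\Tbb$-stable subspace of $S_d$ is a sum of its intersections with the $\Tbb$-weight spaces. Each degree-$d$ monomial $Z^{\mathbf a}$ spans its own weight space, with character $t\mapsto t^{\mathbf a}$, and distinct exponents give distinct characters. Hence $L_0$ is spanned by monomials.

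\textbf{Step 4: conclusion.} $L_0$ is a monomial $k$-plane satisfying \eqref{eq:power-divisibility}, and $k\ge3$. Lemma~\ref{lem:compatible-monomials} then forces $L_0=L_{\mathrm{pow}}$, so $L_{\mathrm{pow}}\in Y_L$.

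I do not expect a serious obstacle. The two points that need care are, first, the torus-equivariance computation in Step~1, in particular that the extra factor $t_i$ does not affect divisibility by $Z_i^{d-1}$. Second, Step~2 uses only the reduced structure of $\Acal_{\mathrm{pow}}$: membership is checked on closed points via Lemma~\ref{lem:power-equations}, so no scheme-theoretic subtlety enters at this stage.
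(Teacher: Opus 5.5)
Your proposal is correct and follows the paper's proof essentially verbatim: the chain-rule identity $H_i(t\cdot P,t\cdot Q)=t_i\bigl(t\cdot H_i(P,Q)\bigr)$ shows that $\Tbb$ preserves admissibility, Borel's fixed point theorem gives a fixed point in $Y_L$, the weight-space argument shows this fixed point is a monomial plane, and Lemma~\ref{lem:compatible-monomials} identifies it with $L_{\mathrm{pow}}$. Your Step~2 observes that $\Acal_{\mathrm{pow}}(\Cbb)$ is Zariski closed, so $Y_L\subset\Acal_{\mathrm{pow}}(\Cbb)$; this makes explicit a point the paper uses only implicitly when it applies Lemma~\ref{lem:compatible-monomials} to the fixed point.
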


\begin{proof}
The chain rule gives
\[
   \partial_i(t\cdot P)=t_i\bigl(t\cdot(\partial_iP)\bigr),
   \qquad
   H_i(t\cdot P,t\cdot Q)=t_i\bigl(t\cdot H_i(P,Q)\bigr).
\]
Thus $Z_i^{d-1}\mid H_i(P,Q)$ implies
$Z_i^{d-1}\mid H_i(t\cdot P,t\cdot Q)$.
Let $L\in\Acal_{\mathrm{pow}}(\Cbb)$.
Lemma~\ref{lem:power-equations} gives
$t\cdot L\in\Acal_{\mathrm{pow}}(\Cbb)$ for every $t\in\Tbb$.

Borel's fixed point theorem
(cf.~\cite[\S21.2, Theorem]{Humphreys1975}) therefore gives a point
$L_*\in Y_L$ fixed by $\Tbb$. The monomials of degree $d$ are the
one-dimensional weight spaces for this torus action, and their characters
are pairwise distinct. Hence every $\Tbb$-invariant subspace of $S_d$ is
spanned by monomials, so $L_*$ is a monomial point.
Lemma~\ref{lem:compatible-monomials} now gives $L_*=L_{\mathrm{pow}}$. Thus
$L_{\mathrm{pow}}\in Y_L=\overline{\Tbb\cdot L}$, as claimed.
\end{proof}

\subsection{From the power-map fiber to generic uniqueness}

\begin{lem}
\label{lem:power-tangent}
The Zariski tangent space
$T_{L_{\mathrm{pow}}}\Acal_{\mathrm{pow}}$ is zero.
\end{lem}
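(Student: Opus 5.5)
The plan is to compute the Zariski tangent space directly, using the torus symmetry to reduce to monomial tangent vectors. We identify $T_{L_{\mathrm{pow}}}\mathbf G_{d,k}$ with $\operatorname{Hom}(L_{\mathrm{pow}},S_d/L_{\mathrm{pow}})$. A tangent vector is then represented by a first-order deformation $P_i^\varepsilon=Z_i^d+\varepsilon\varphi_i$, $\varepsilon^2=0$. Here each $\varphi_i$ is taken in the complement $W\subset S_d$ spanned by the non-pure monomials, and this normalization makes the representation unique.

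\textbf{Step 1: the tangent equations.} Since $\alpha$ is bilinear in $(P,Q)$ and $\alpha$ vanishes on $L_{\mathrm{pow}}$, the vector lies in $T_{L_{\mathrm{pow}}}\Acal_{\mathrm{pow}}$ exactly when the $\varepsilon$-coefficient vanishes. This means
\[
\Delta_F\mid M^{F,B}_{Z_i^d,\varphi_j}+M^{F,B}_{\varphi_i,Z_j^d}\qquad(i\ne j).
\]
The computation in the proof of Lemma~\ref{lem:power-equations} uses only the formula \eqref{eq:power-numerator}, which is linear in $H_l$, and the condition $Z_l\nmid B_l$. It therefore applies verbatim to these sums. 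The condition becomes: for all $i\neq j$ and all $l$,
\[
Z_l^{d-1}\mid H_l(Z_i^d,\varphi_j)+H_l(\varphi_i,Z_j^d).
\]

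\textbf{Step 2: reduction to weight vectors.} The torus $\Tbb$ fixes $q_{\mathrm{pow}}$ up to the equivalence in Lemma~\ref{lem:torus-fixed-point}. Indeed, the equations above are preserved by the $\Tbb$-action, because $H_l(t\cdot P,t\cdot Q)=t_l\,t\cdot H_l(P,Q)$. The action also fixes $L_{\mathrm{pow}}$ and preserves $W$. Hence the tangent space is a $\Tbb$-submodule and is spanned by weight vectors. A weight vector has the form $\varphi_i=c_iZ^{\mathbf m_i}$, where $\mathbf m_i-d\mathbf e_i=\mathbf w$ is independent of $i$ and each $\mathbf m_i$ is non-pure. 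It therefore suffices to show that every such solution has all $c_i=0$.

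\textbf{Step 3: killing a nonzero component.} Suppose $c_i\ne0$ and write $\mathbf m:=\mathbf m_i$. First take $l=i$ and some $j\ne i$. The expression equals
\[
Z_i^d\partial_i\varphi_j-dZ_i^{d-1}\varphi_j-Z_j^d\partial_i\varphi_i ,
\]
and this is congruent to $-Z_j^d\partial_i\varphi_i$ modulo $Z_i^{d-1}$. Divisibility then forces $m_i\in\{0,d\}$, and since $\mathbf m$ is non-pure, $m_i=0$. Because $\mathbf m$ is non-pure with $m_i=0$, there is an index $l\ne i$ with $0<m_l<d$. Now use $k\ge3$ to choose $j\notin\{i,l\}$. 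For this triple the condition reads
\[
Z_l^{d-1}\mid Z_i^d\partial_l\varphi_j-Z_j^d\partial_l\varphi_i .
\]
The second term is a nonzero monomial with $Z_l$-exponent $m_l-1<d-1$ and $Z_i$-exponent $0$. The first term has $Z_i$-exponent at least $d$, so the two terms cannot cancel. This contradicts the divisibility, so $c_i=0$ for all $i$, and the tangent space is zero.

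The main obstacle I anticipate is bookkeeping rather than a real difficulty. One must verify that the reduction of Lemma~\ref{lem:power-equations} is legitimate for the linearized, non-bilinear-pair expressions. One must also check that fixing representatives in $W$ correctly identifies the tangent space without leftover gauge terms, i.e.\ that components of $\varphi_i$ along $L_{\mathrm{pow}}$ are genuinely trivial. Finally, the hypothesis $k\ge3$ enters exactly through the choice of the third index $j$, consistent with Remark~\ref{rem:surface-power-fiber}.
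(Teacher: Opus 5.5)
Your proposal is correct and is essentially the paper's proof. It uses the same graph-chart linearization with corrections in the span of mixed monomials, and the same reduction to the coordinatewise conditions $Z_l^{d-1}\mid H_l(Z_i^d,\varphi_j)+H_l(\varphi_i,Z_j^d)$; extracting the $\varepsilon$-coefficient before applying the argument of Lemma~\ref{lem:power-equations} neatly sidesteps the paper's non-zero-divisor cancellation over $\Cbb[\varepsilon]/(\varepsilon^2)$. It then uses the same two tests, first $l=i$ and then a third index $j\notin\{i,l\}$ requiring $k\ge3$.

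The torus weight-vector reduction in Step~2 is valid but unnecessary. The paper runs the same divisibility argument monomial by monomial on arbitrary $\Phi_i$: a common monomial of $Z_i^d\partial_r\Phi_j$ and $Z_j^d\partial_r\Phi_i$ would have degree at least $2d$, so the two terms cannot cancel.
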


\begin{proof}
Throughout this proof, set $F:=F^{\mathrm{pow}}$ and
$B:=B^{\mathrm{pow}}$.
The proof proceeds in four stages: we use a Grassmannian graph chart,
linearize the incidence section, recover the coordinate divisibilities, and
finally show that the tangent corrections contain no mixed monomials.

We begin with the graph description of a neighborhood of $L_{\mathrm{pow}}$
in the Grassmannian. Let $M_{\mathrm{mix}}\subset S_d$ be the span of the
mixed degree-$d$ monomials. Then
\[
   S_d=L_{\mathrm{pow}}\oplus M_{\mathrm{mix}}.
\]
Let $\operatorname{pr}_{\mathrm{pow}}:S_d\to L_{\mathrm{pow}}$ be the
projection. The planes $L$ for which
$\operatorname{pr}_{\mathrm{pow}}|_L:L\to L_{\mathrm{pow}}$ is an
isomorphism form an affine open neighborhood of $L_{\mathrm{pow}}$. Every
such plane is uniquely the graph of a linear map
$\varphi:L_{\mathrm{pow}}\to M_{\mathrm{mix}}$. Consequently, this affine
neighborhood is naturally identified with
$\operatorname{Hom}(L_{\mathrm{pow}},M_{\mathrm{mix}})$, with
$L_{\mathrm{pow}}$ corresponding to the zero map.

Put $D_\varepsilon:=\Cbb[\varepsilon]/(\varepsilon^2)$
and let $\tau:\Spec D_\varepsilon\to\Acal_{\mathrm{pow}}$
be a tangent vector based at $L_{\mathrm{pow}}$.
Let $\varphi_\tau:L_{\mathrm{pow}}\to M_{\mathrm{mix}}$
be the linear map representing $\tau$. For $1\le i\le k$, put
$\Phi_i:=\varphi_\tau(Z_i^d)$.
The first-order graph $\tau^*\Ucal_{\mathbf G}$ of $\varphi_\tau$ is the
graph of
\[
 \varepsilon\varphi_\tau:
 L_{\mathrm{pow}}\otimes D_\varepsilon
 \longrightarrow M_{\mathrm{mix}}\otimes D_\varepsilon
\]
It is therefore generated by
\begin{equation}
 P_i(\varepsilon)
 =Z_i^d+\varepsilon\varphi_\tau(Z_i^d)
 =Z_i^d+\varepsilon\Phi_i,
 \qquad 1\le i\le k.
 \label{eq:tangent-graph-basis}
\end{equation}
Thus it suffices to show that $\Phi_i=0$ for all $i$.

Lemma~\ref{lem:power-equations} was stated for complex closed points and
cannot by itself detect a tangent vector over $D_\varepsilon$. We therefore
linearize the defining section of the incidence scheme. Write
\[
   \alpha_{\mathrm{pow}}:=\alpha_{F^{\mathrm{pow}},B^{\mathrm{pow}}},
   \qquad
   \Ecal_{\mathrm{pow}}
   :=S_{D_{d,k}+2d-1}/(\Delta_FS_{2d-1}).
\]
By construction,
\begin{equation*}
     \tau^*\alpha_{\mathrm{pow}}:
 \bigwedge\nolimits^2_{D_\varepsilon}
       (\tau^*\Ucal_{\mathbf G})
 \longrightarrow \Ecal_{\mathrm{pow}}\otimes_{\Cbb}D_\varepsilon
\end{equation*}
is the zero $D_\varepsilon$-linear map.
The wedges $P_i(\varepsilon)\wedge P_j(\varepsilon)$, $1\le i<j\le k$,
form a basis of its source. Evaluating $\tau^*\alpha_{\mathrm{pow}}$ on these
basis wedges, we have
\[
   \bigl[M^{F,B}_{P_i(\varepsilon),P_j(\varepsilon)}\bigr]=0
   \quad\text{in }\Ecal_{\mathrm{pow}}\otimes_{\Cbb}D_\varepsilon.
\]
Equivalently, this means that
\begin{equation}\label{eq:tangent-delta-divisibility}
    \Delta_F\mid M^{F,B}_{P_i(\varepsilon),P_j(\varepsilon)}
    \quad\text{in }D_\varepsilon[Z_1,\ldots,Z_k].
\end{equation}

We next recover, over the dual numbers, the coordinate divisibilities from
Lemma~\ref{lem:power-equations}. Setting $C_s:=
 B_s^{\mathrm{pow}}\prod_{\ell\ne s}Z_\ell^{d-1}$,
we have
\[
 M^{F,B}_{P_i(\varepsilon),P_j(\varepsilon)}
 =
 d^{k-1}\sum_{s=1}^k
 H_s\bigl(P_i(\varepsilon),P_j(\varepsilon)\bigr)C_s.
\]
Fix $r$. Since $Z_r^{d-1}\mid\Delta_F$, the class of the left-hand side of
\eqref{eq:tangent-delta-divisibility} vanishes modulo $Z_r^{d-1}$. If
$s\ne r$, then
$C_s$ also contains $Z_r^{d-1}$, so every summand with $s\ne r$
vanishes. We are left with
\begin{equation*}
 H_r(P_i(\varepsilon),P_j(\varepsilon))C_r=0
 \quad\text{in }\;
 D_\varepsilon[Z_1,\ldots,Z_k]/(Z_r^{d-1}).
\end{equation*}
The monomial $C_r$ involves only variables other than $Z_r$, so it remains a
non-zero-divisor in
$D_\varepsilon[Z_1,\ldots,Z_k]/(Z_r^{d-1})$. We may therefore cancel it
and obtain
\begin{equation}
 Z_r^{d-1}\mid
 H_r(P_i(\varepsilon),P_j(\varepsilon))
 \qquad(1\le i,j,r\le k).
 \label{eq:tangent-dual-power}
\end{equation}

We now extract the first-order consequences of these coordinate
divisibilities. Substituting
\eqref{eq:tangent-graph-basis} into $H_r$, and using $\varepsilon^2=0$, gives $H_r(P_i(\varepsilon),P_j(\varepsilon))
 =
 H_r(Z_i^d,Z_j^d)+\varepsilon E_{ijr}$,
where
\[
\begin{aligned}
 E_{ijr}:={}&Z_i^d\partial_r\Phi_j
 +\Phi_i\partial_rZ_j^d-Z_j^d\partial_r\Phi_i
 -\Phi_j\partial_rZ_i^d.
\end{aligned}
\]
We have
 \[
 H_r(Z_i^d,Z_j^d)
 =
 d\delta_{rj}Z_i^dZ_j^{d-1}
 -d\delta_{ri}Z_j^dZ_i^{d-1},
\]
where $\delta_{ab}$ denotes the Kronecker symbol. It is divisible by
$Z_r^{d-1}$. Hence
\eqref{eq:tangent-dual-power} becomes
\begin{equation}
 Z_r^{d-1}\mid E_{ijr}
 \qquad(1\le i,j,r\le k).
 \label{eq:linearized-divisibility}
\end{equation}

We first treat the diagonal derivatives. Fix $r$, choose $i\ne r$, and take
$j=r$. Then
\[
 E_{irr}
 =
 Z_i^d\partial_r\Phi_r
 +d\Phi_iZ_r^{d-1}
 -Z_r^d\partial_r\Phi_i.
\]
The last two terms are divisible by $Z_r^{d-1}$. It follows from
\eqref{eq:linearized-divisibility} that
$Z_r^{d-1}\mid Z_i^d\partial_r\Phi_r$, and therefore
\begin{equation}
 Z_r^{d-1}\mid\partial_r\Phi_r.
 \label{eq:tangent-diagonal}
\end{equation}

We next treat the off-diagonal derivatives. Fix $i\ne r$. Since $k\ge3$,
we may choose
$j\notin\{i,r\}$. Then
\eqref{eq:linearized-divisibility} becomes $Z_r^{d-1}\mid
 Z_i^d\partial_r\Phi_j-Z_j^d\partial_r\Phi_i$.
No monomial from $Z_i^d\partial_r\Phi_j$ can cancel a monomial
from $Z_j^d\partial_r\Phi_i$. Indeed, a monomial
appearing in both would be divisible by $Z_i^dZ_j^d$, and hence would
have degree at least $2d$, whereas both summands are homogeneous of
degree $2d-1$. In particular,
$Z_r^{d-1}\mid Z_j^d\partial_r\Phi_i$.
Since $j\ne r$, $Z_r^{d-1}\mid\partial_r\Phi_i$.
Together with \eqref{eq:tangent-diagonal}, this proves
\begin{equation}
 Z_r^{d-1}\mid\partial_r\Phi_i
 \qquad\text{for every }i,r.
 \label{eq:tangent-all}
\end{equation}
If a monomial $Z^{\mathbf a}$ occurring in $\Phi_i$ has $a_r>0$, then
\eqref{eq:tangent-all} forces $a_r-1\geq d-1$. Since $|\mathbf a|=d$, this
gives $a_r=d$. Thus every monomial occurring in $\Phi_i$ is a pure
power, and hence $\Phi_i\in L_{\mathrm{pow}}$. But
$\Phi_i\in M_{\mathrm{mix}}$ as well, and
$L_{\mathrm{pow}}\cap M_{\mathrm{mix}}=\{0\}$. Therefore
$\Phi_i=0$ for every $i$, so
$T_{L_{\mathrm{pow}}}\Acal_{\mathrm{pow}}=0$.
\end{proof}

\begin{lem}
\label{lem:power-fiber}
The scheme $\Acal_{\mathrm{pow}}$ is the reduced point
$L_{\mathrm{pow}}$.
\end{lem}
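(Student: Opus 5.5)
The plan is to combine the two preceding lemmas: the torus degeneration (Lemma~\ref{lem:torus-fixed-point}) forces every closed point of $\Acal_{\mathrm{pow}}$ to degenerate to $L_{\mathrm{pow}}$, while the vanishing tangent space (Lemma~\ref{lem:power-tangent}) makes $L_{\mathrm{pow}}$ an isolated, reduced point. Together these exclude both other closed points and any nilpotent structure.

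\emph{Step 1: $L_{\mathrm{pow}}$ is an isolated reduced point.} First, $L_{\mathrm{pow}}\in\Acal_{\mathrm{pow}}(\Cbb)$ by Lemma~\ref{lem:compatible-monomials} combined with Lemma~\ref{lem:power-equations}, or directly by Lemma~\ref{lem:tautological-solution}, since $L_{\mathrm{pow}}=L_{F^{\mathrm{pow}}}$. Next, $\Acal_{\mathrm{pow}}$ is a closed subscheme of the projective variety $\mathbf G_{d,k}$, hence of finite type. Lemma~\ref{lem:power-tangent} gives $T_{L_{\mathrm{pow}}}\Acal_{\mathrm{pow}}=0$. Consequently the local ring $\Ocal_{\Acal_{\mathrm{pow}},L_{\mathrm{pow}}}$ has $\mathfrak m/\mathfrak m^2=0$, so $\mathfrak m=0$ by Nakayama. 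Thus this local ring is $\Cbb$. It follows that $L_{\mathrm{pow}}$ is an isolated point of $\Acal_{\mathrm{pow}}$: it has a Zariski open neighborhood $W\subset\mathbf G_{d,k}$ with $W\cap\Acal_{\mathrm{pow}}=\{L_{\mathrm{pow}}\}$ scheme-theoretically, namely the reduced point.

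\emph{Step 2: there are no other closed points.} Let $L\in\Acal_{\mathrm{pow}}(\Cbb)$. As shown in the proof of Lemma~\ref{lem:torus-fixed-point}, the whole orbit $\Tbb\cdot L$ lies in $\Acal_{\mathrm{pow}}$, so the closure $Y_L$ lies in $\Acal_{\mathrm{pow}}$ as a closed set. Moreover $Y_L$ is irreducible and, by Lemma~\ref{lem:torus-fixed-point}, contains $L_{\mathrm{pow}}$. Hence $Y_L\cap W$ is a nonempty open subset of $Y_L$, and it is contained in $W\cap\Acal_{\mathrm{pow}}=\{L_{\mathrm{pow}}\}$. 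Since a nonempty open subset of an irreducible space is dense, $Y_L=\overline{\{L_{\mathrm{pow}}\}}=\{L_{\mathrm{pow}}\}$. Therefore $L=L_{\mathrm{pow}}$.

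\emph{Step 3: conclusion.} The underlying space of $\Acal_{\mathrm{pow}}$ is the single point $L_{\mathrm{pow}}$, and its local ring there is $\Cbb$. Hence $\Acal_{\mathrm{pow}}\cong\Spec\Cbb$ is the reduced point $L_{\mathrm{pow}}$.

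The only delicate step is Step 2. It requires that $\Tbb$-invariance of admissibility hold for closed points, which is exactly the chain-rule computation already recorded in the proof of Lemma~\ref{lem:torus-fixed-point}. It also uses that the closure of a subset of the closed set $\Acal_{\mathrm{pow}}(\Cbb)$ stays inside it, which holds because the set of closed points is Zariski closed.
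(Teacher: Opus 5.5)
Your proposal is correct and follows the paper's proof: Nakayama applied to the vanishing tangent space of Lemma~\ref{lem:power-tangent} makes $L_{\mathrm{pow}}$ a reduced isolated point. Irreducibility of $Y_L=\overline{\Tbb\cdot L}\ni L_{\mathrm{pow}}$ (Lemma~\ref{lem:torus-fixed-point}) then forces $L=L_{\mathrm{pow}}$. You also state explicitly that $Y_L\subset\Acal_{\mathrm{pow}}$, because the orbit consists of admissible planes and $\Acal_{\mathrm{pow}}$ is closed; the paper uses this only implicitly when it says $\{L_{\mathrm{pow}}\}$ is open and closed.
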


\begin{proof}
Let $R=\Ocal_{\Acal_{\mathrm{pow}},L_{\mathrm{pow}}}$, with maximal ideal
$\mathfrak m$. From $T_{L_{\mathrm{pow}}}\Acal_{\mathrm{pow}}
 \simeq\operatorname{Hom}_{\Cbb}
       (\mathfrak m/\mathfrak m^2,\Cbb)$ and Lemma~\ref{lem:power-tangent}, we infer that
$\mathfrak m/\mathfrak m^2=0$.  Nakayama's lemma yields
$\mathfrak m=0$, and hence $R\simeq\Cbb$.  In particular,
$L_{\mathrm{pow}}$ is reduced and isolated.

Let $L$ be any closed point of $\Acal_{\mathrm{pow}}$.  By Lemma
\ref{lem:torus-fixed-point}, $Y_L$ contains $L_{\mathrm{pow}}$. The singleton
$\{L_{\mathrm{pow}}\}$ is
both open and closed, so irreducibility forces
$\overline{\Tbb\cdot L}=\{L_{\mathrm{pow}}\}$, and hence
$L=L_{\mathrm{pow}}$. This proves that
$\Acal_{\mathrm{pow}}$ is the reduced point $L_{\mathrm{pow}}$.
\end{proof}

\begin{rem}
\label{rem:surface-power-fiber}
When $k=2$ and $d\ge3$, the planes
\[
 \left\langle
 Z_1^aZ_2^{d-a},\,Z_1^{d-a}Z_2^a
 \right\rangle,
 \qquad 1\le a<d/2,
\]
also satisfy the power-map equations. Thus the power-map fiber $\Acal_{\mathrm{pow}}$ is no longer a
singleton, which explains the restriction $k\ge3$ in the argument above.
The two-dimensional case of generic measure rigidity is handled instead by
Theorem~\ref{thm:generic-measure-rigidity}.
\end{rem}

\begin{prop}
\label{prop:generic-admissible-uniqueness}
There exists a nonempty Zariski open subset
$\Qcal_{d,k}^{(1)}\subset\Qcal_{d,k}$ such that, for every
complex point $(F,B)\in\Qcal_{d,k}^{(1)}(\Cbb)$, the scheme
$\Acal(F,B)$ is the reduced point $L_F$.
\end{prop}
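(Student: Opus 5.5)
The plan is to spread the reduced-singleton property of the power-map fiber (Lemma~\ref{lem:power-fiber}) to a Zariski neighborhood of $q_{\mathrm{pow}}$ in $\Qcal_{d,k}$, using properness of $\pi:\Iscr\to\Qcal_{d,k}$. Then we combine it with the tautological section $(F,B)\mapsto((F,B),L_F)$, which lands in $\Iscr$ by Lemma~\ref{lem:tautological-solution}.

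\textbf{Step 1: finiteness near $q_{\mathrm{pow}}$.} Since $\pi$ is proper, upper semicontinuity of fiber dimension shows that the locus where $\dim\pi^{-1}(q)\geq1$ is closed. It does not contain $q_{\mathrm{pow}}$, whose fiber is a point by Lemma~\ref{lem:power-fiber}. Let $W\subset\Qcal_{d,k}$ be its complement, an open neighborhood of $q_{\mathrm{pow}}$. Over $W$ the map $\pi$ is proper and quasi-finite, hence finite. Thus $\pi_*\Ocal_{\Iscr}|_W$ is a coherent sheaf, and its fiber at $q$ has length equal to the length of the scheme $\Acal(q)$ (finite morphisms commute with base change on global sections). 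Upper semicontinuity of the fiber rank of a coherent sheaf then gives a nonempty open $W'\subset W$ containing $q_{\mathrm{pow}}$ on which $\dim_\Cbb \Ocal(\Acal(q))\leq 1$. This uses $\Ocal(\Acal_{\mathrm{pow}})=\Cbb$, which is Lemma~\ref{lem:power-fiber}.

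\textbf{Step 2: identify the fiber.} For every complex point $q=(F,B)\in W'$, the fiber $\Acal(q)$ is nonempty, since it contains the closed point $L_F$ by Lemma~\ref{lem:tautological-solution}. A nonempty finite $\Cbb$-scheme of length at most $1$ is a single reduced point. Hence $\Acal(F,B)$ is the reduced point $L_F$, and we set $\Qcal_{d,k}^{(1)}:=W'$. It is nonempty since $q_{\mathrm{pow}}\in W'$; note $q_{\mathrm{pow}}\in\Qcal_{d,k}$ because the power map is basepoint-free. Irreducibility of $\Qcal_{d,k}$, an open subset of affine space, makes $W'$ dense.

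\textbf{Main obstacle.} The delicate point is the semicontinuity of length, since $\Iscr$ need not be flat over $\Qcal_{d,k}$. I would handle it exactly as above, through the coherent sheaf $\pi_*\Ocal_\Iscr$ on the locus where $\pi$ is finite. The rank of $\pi_*\Ocal_\Iscr\otimes\kappa(q)$ is upper semicontinuous by Nakayama's lemma, and it equals the length of $\Acal(q)$ because $\pi$ is affine there. Flatness is not needed: we only need the fiber length to be at most one, and the lower bound comes from the tautological point. One must still make sure the finite locus is open; this follows from properness together with semicontinuity of fiber dimension (Chevalley), or alternatively from Zariski's main theorem applied near the isolated point of $\pi^{-1}(q_{\mathrm{pow}})$.
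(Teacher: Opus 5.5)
Your proposal is correct and follows essentially the same argument as the paper. You restrict $\pi$ to the open set over which it is finite; you describe this set as the complement of the closed locus $\{q : \dim\pi^{-1}(q)\geq 1\}$, while the paper describes it as the complement of $\pi(\Iscr\setminus W)$ with $W$ the quasi-finite locus, and these are the same set. You then obtain upper semicontinuity of the fiber length from the coherent sheaf $\pi_*\Ocal_{\Iscr}$, and conclude using $\ell(q_{\mathrm{pow}})=1$ together with the tautological point $L_F$ from Lemma~\ref{lem:tautological-solution}, exactly as the paper does.
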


\begin{proof}
Let $W\subset\Iscr$ be the quasi-finite locus of
$\pi:\Iscr\to\Qcal_{d,k}$, which is open.
Put
$\Zscr:=\Iscr\setminus W$. By
Lemma~\ref{lem:power-fiber}, $q_{\mathrm{pow}}\notin\pi(\Zscr).$
Over the nonempty open
set $U^{(1)}:=\Qcal_{d,k}\setminus\pi(\Zscr)$,
the restricted morphism
$\pi^{(1)}:\Iscr_{U^{(1)}}\to U^{(1)}$ is
proper and quasi-finite, hence finite. Therefore
$\Mcal:=(\pi^{(1)})_*\Ocal_{\Iscr_{U^{(1)}}}$ is coherent.

For $q\in U^{(1)}$, let
$\Iscr_q:=\Iscr_{U^{(1)}}\times_{U^{(1)}}
\Spec\kappa(q)$, and define
\begin{equation*}
   \ell(q)
   :=\dim_{\kappa(q)}
      \bigl(\Mcal_q\otimes_{\Ocal_{U^{(1)},q}}\kappa(q)\bigr)
   =\operatorname{length}_{\kappa(q)}(\Iscr_q).
\end{equation*}
By \cite[Exercise II.5.8]{Hartshorne1977}, $\ell$ is upper
semicontinuous. Lemma~\ref{lem:power-fiber} gives
$\ell(q_{\mathrm{pow}})=1$. Hence $\Qcal_{d,k}^{(1)}:=\{q\in U^{(1)}\mid \ell(q)\leq1\}$
is a nonempty Zariski open subset of $U^{(1)}$.

By Lemma~\ref{lem:tautological-solution}, every complex fiber contains
$L_F$, so its length is at least one. Thus $\ell(q)=1$ for every
$q=(F,B)\in\Qcal_{d,k}^{(1)}(\Cbb)$. It follows that $\Iscr_q$ is the
reduced point $L_F$.
\end{proof}

\begin{cor}
\label{cor:leading-system-uniqueness}
Let $d\ge2$ and $k\ge3$. Let $f,g\in\Poly^k_d$, put
$F:=f_h$ and $B:=f_{d-1}$, and assume that
$(F,B)\in\Qcal_{d,k}^{(1)}$. If $\mu_f=\mu_g$, then
$L_F=L_{g_h}$.
\end{cor}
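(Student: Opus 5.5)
The corollary follows by chaining the two results that bracket the incidence construction, Proposition~\ref{prop:measure-implies-admissible} and Proposition~\ref{prop:generic-admissible-uniqueness}.

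First, apply Proposition~\ref{prop:measure-implies-admissible} to $f$ and $g$. Both lie in $\Poly^k_d$ and satisfy $\mu_f=\mu_g$, so the leading system $L_{g_h}=\langle (g_h)_1,\ldots,(g_h)_k\rangle$ is $(F,B)$-admissible for $F=f_h$ and $B=f_{d-1}$. Before invoking this, one should check that $L_{g_h}$ really is a point of $\mathbf G_{d,k}$, that is, that it has dimension exactly $k$. Suppose the components $(g_h)_i$ satisfied a nontrivial linear relation $\sum c_i (g_h)_i=0$. Then the image of $g_h:\Cbb^k\to\Cbb^k$ would lie in a hyperplane. But $g_h$ has no zero on $\Pbb^{k-1}$ by regularity, so the induced map $g_\Pi$ on $\Pbb^{k-1}$ is a finite morphism and hence surjective, which is a contradiction. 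So $L_{g_h}\in\mathbf G_{d,k}(\Cbb)$.

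Second, since admissibility of a complex point is exactly the vanishing of $\alpha$ there, $L_{g_h}$ is a closed point of the fiber $\Acal(F,B)$. Because $(F,B)\in\Qcal_{d,k}^{(1)}$, Proposition~\ref{prop:generic-admissible-uniqueness} says this fiber is the reduced point $L_F$. Therefore $L_{g_h}=L_F$.

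All the real work has already been done in the two cited propositions. The only point needing care is that $\dim L_{g_h}=k$, which is handled by the surjectivity argument above; I expect no step here to be a genuine obstacle.
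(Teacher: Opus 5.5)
Your proof is correct and follows the paper's argument: Proposition~\ref{prop:measure-implies-admissible} places $L_{g_h}$ in $\Acal(F,B)(\Cbb)$, and Proposition~\ref{prop:generic-admissible-uniqueness} identifies that fiber with the single point $L_F$. Your extra check that $\dim L_{g_h}=k$ is sound (surjectivity of the finite morphism $g_\Pi$ rules out a linear relation among the $(g_h)_i$), and it fills in a detail the paper leaves implicit.
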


\begin{proof}
By Proposition~\ref{prop:measure-implies-admissible}, the plane $L_{g_h}$ is
$(F,B)$-admissible. Thus $L_{g_h}\in\Acal(F,B)(\Cbb)$. By Proposition
\ref{prop:generic-admissible-uniqueness}, this fiber consists only of $L_F$.
Therefore $L_{g_h}=L_F$.
\end{proof}

\section{The stabilizer argument and generic measure rigidity}
\label{sect:generic-source-stabilizer}

Throughout this section, all algebraic varieties and morphisms are defined
over $\Cbb$. Unless otherwise specified, a point of a variety means a closed
point, equivalently a $\Cbb$-point.

Until the final subsection, we assume that $d\geq2$ and $k\geq3$. 

\subsection{A generic stabilizer condition}
For $A\in\operatorname{GL}(V)$ and
$P\in S_d$, write $(A^*P)(z):=P(Az)$.
If $\alpha=[A]\in\PGL(V)$ and $L\subset S_d$, the plane $A^*L$ is
independent of the choice of the lift $A$; we denote it by $\alpha^*L$.
Define
\[
   H_F:=\operatorname{Stab}_{\PGL(V)}(L_F)
      =\{\alpha\in\PGL(V)\mid \alpha^*L_F=L_F\}.
\]
Let $\alpha\in H_F$ and choose a lift $A\in\operatorname{GL}(V)$.
There is a unique
$\widehat A_F\in\operatorname{GL}(V)$ such that
\begin{equation}
   F(Az)=\widehat A_F F(z).
   \label{eq:kappa-relation}
\end{equation}
The identities
$\widehat{cA}_F=c^d\widehat A_F$ and, for lifts $A_1,A_2$ of elements of
$H_F$, $\widehat{A_1A_2}_F=\widehat{A_1}_F\widehat{A_2}_F$ show that
\eqref{eq:kappa-relation} defines a homomorphism
\[
   \kappa_F\colon H_F\longrightarrow\PGL(V),
   \qquad \alpha=[A]\longmapsto[\widehat A_F],
\]
independent of all choices of lifts.
Put $\widehat H_F:=\kappa_F(H_F)$.

The condition needed in the final proof is
\begin{equation*}
   \kappa_F^{-1}(H_F)=\{1\},
   \tag{$\dagger$}\label{eq:source-rigidity}
\end{equation*}

\begin{lem}
\label{lem:nonexceptional-stabilizer}
Assume $d\geq2$, $k\geq3$, and $(d,k)\neq(2,3)$. There is a nonempty
Zariski open subset $\Fcal_{d,k}^{(2)}\subset\Fcal_{d,k}$
such that $H_F=\{1\}$ for every $F\in\Fcal_{d,k}^{(2)}$.
In particular, \eqref{eq:source-rigidity} holds on this subset.
\end{lem}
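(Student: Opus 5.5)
The plan is to reduce the statement to the generic stabilizer theorem of Guralnick--Lawther for the action of $\PGL(V)$ on the Grassmannian $\mathbf G_{d,k}=\operatorname{Gr}(k,S_d)$. The stabilizer $H_F$ depends only on $L_F\in\mathbf G_{d,k}$. The map $\Fcal_{d,k}\to\mathbf G_{d,k}$, $F\mapsto L_F$, is a surjective morphism onto the open subset $\mathbf G^{\mathrm{bpf}}$ of basepoint-free $k$-planes. Indeed, it is the restriction of the natural map from frames to the Grassmannian, so it is dominant with fibers $\operatorname{GL}_k$-torsors. Consequently, it suffices to find a nonempty Zariski open subset $\Omega\subset\mathbf G_{d,k}$ on which the $\PGL(V)$-stabilizer is trivial, and then put $\Fcal_{d,k}^{(2)}:=\{F\in\Fcal_{d,k}\mid L_F\in\Omega\}$. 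This set is nonempty because $\mathbf G^{\mathrm{bpf}}$ is nonempty and open in the irreducible Grassmannian, so it meets $\Omega$.

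The key input is the main theorem of Guralnick--Lawther~\cite{GuralnickLawther2024}. It classifies the triples $(G,V',k)$ with $G$ simple (or $\operatorname{SL}$) acting irreducibly on $V'$ for which the generic stabilizer of $G$ on $\operatorname{Gr}(k,V')$ is nontrivial, beyond the scalar kernel. We apply it with $G=\operatorname{SL}(V)$, $V'=S_d=\operatorname{Sym}^d(V^*)$, and $1\le k<\dim S_d$. We will check that for $d\ge2$, $k=\dim V\ge3$, the only case in their list is $(d,k)=(2,3)$, where $\dim S_2=6$ and $\operatorname{Gr}(3,6)$ has dimension $9$ while $\dim\PGL_3=8$. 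Modulo that case, the generic stabilizer in $\operatorname{SL}(V)$ is contained in the center $\mu_k$, which acts by scalars on $S_d$ and therefore trivially on $\mathbf G_{d,k}$. Hence the generic stabilizer in $\PGL(V)$ is trivial. The existence of a dense open set $\Omega$ with trivial stabilizer then follows from the definition of generic stabilizer, namely upper semicontinuity of stabilizer dimension together with openness of the locus where the stabilizer is conjugate to the generic one, as recorded by Guralnick--Lawther. As a sanity check, $\dim\mathbf G_{d,k}=k(\binom{d+k-1}{k-1}-k)$ exceeds $k^2-1$ in all remaining cases, which is consistent with a free generic orbit.

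The final sentence of the lemma is formal. If $H_F=\{1\}$, then $\kappa_F^{-1}(H_F)\subset H_F=\{1\}$, so \eqref{eq:source-rigidity} holds.

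The main obstacle is matching the present setting carefully to the hypotheses and the exceptional list in~\cite{GuralnickLawther2024}. We must confirm that $S_d$ as an $\operatorname{SL}_k$-module, with $k\ge3$, is irreducible in characteristic $0$, which it is. We must also check that the small cases such as $(d,k)=(2,4),(3,3)$ do not appear among their exceptions involving $k$-spaces in symmetric powers. Finally, we must verify that their result is stated for the projective group, or that the scalar center is correctly discarded. If some borderline case were listed, the fallback would be to exhibit a single explicit $L$ with trivial stabilizer and use semicontinuity, but I expect the citation to suffice.
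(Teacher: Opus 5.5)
Your proposal follows the paper's proof essentially step by step: you invoke Guralnick--Lawther for $\operatorname{SL}(V)$ acting on $\mathbf G_{d,k}$, check that the quadruple is large (your ``sanity check'' $\dim\mathbf G_{d,k}>k^2-1$ is exactly the largeness hypothesis) and not exceptional, identify the kernel of the action with the scalar center so that the generic $\PGL(V)$-stabilizer is trivial, and pull the open set back along $F\mapsto L_F$; your nonemptiness argument via the basepoint-free locus fills in a detail the paper leaves implicit. The two checks you defer are the ones the paper actually carries out, with the outcome you predicted. First, $S_d=\operatorname{Sym}^d(V^\vee)$ is matched with the table entry $d\omega_1$ by twisting with $g\mapsto (g^{-1})^{\mathsf t}$. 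Second, in Table~1.4 the only row with $m\omega_1$, $m\ge3$, is $A_1$, $3\omega_1$, and rows with $2\omega_1$ occur only for natural modules of dimension $2$ or $3$, so $(d,k)=(2,3)$ is the only possible exception.
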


\begin{proof}
We use the generic stabilizer theorem of Guralnick--Lawther
\cite[Theorem~4 and Table~1.4]{GuralnickLawther2024}.
In their notation, $\operatorname{SL}(V)\simeq\operatorname{SL}_k$ is the
simple group of type $A_{k-1}$, and the representation denoted by
$d\omega_1$ is $\operatorname{Sym}^d(V)$. Our representation is its dual
$S_d=\operatorname{Sym}^d(V^\vee)$. These two representations are exchanged
by twisting the group action by the automorphism
\[
   \theta:\operatorname{SL}(V)\longrightarrow\operatorname{SL}(V),
   \qquad
   \theta(g)=(g^{-1})^{\mathsf t}.
\]
Such a twist does not change whether the generic stabilizer is trivial.
Therefore the table entry for $d\omega_1$ applies equally to $S_d$.

Put $n_{d,k}:=\dim S_d=\binom{k+d-1}{d}.$
Since $d\geq2$ and $k\geq3$, we have $n_{d,k}\geq2k$. Hence
\[
   \dim\mathbf G_{d,k}=k(n_{d,k}-k)
      \geq k^2>k^2-1=\dim\operatorname{SL}(V).
\]
Thus, in the terminology of Guralnick--Lawther, the quadruple
$(A_{k-1},d\omega_1,\infty,k)$ is a large higher quadruple. Here
$p=\infty$ means characteristic zero, and the last entry is $k$ because we
are considering $k$-planes in $S_d$.

By their Theorem~4, a large higher quadruple not appearing in Table~1.4 has
trivial generic stabilizer modulo the kernel of the action. We check that our
quadruple does not occur in that table. If $d\geq3$, the only row involving
$m\omega_1$ with $m\geq3$ is the case $A_1$, $3\omega_1$, for which the
natural module has dimension $2$; it does not occur because $k\geq3$. If
$d=2$, the rows involving $2\omega_1$ occur only when the natural module has
dimension $2$ or $3$. Our assumptions then give $k\geq4$, so these cases do
not occur either.

It follows that there is a nonempty Zariski open subset
$U\subset\mathbf G_{d,k}$ such that the stabilizer of every $L\in U$ in
$\operatorname{SL}(V)$ is the kernel of its action on the Grassmannian. This
kernel is the scalar center $Z(\operatorname{SL}(V))$. Since
$\operatorname{SL}(V)/Z(\operatorname{SL}(V))\simeq\PGL(V)$, we obtain
$\operatorname{Stab}_{\PGL(V)}(L)=\{1\}$ for every $L\in U$.
The required nonempty Zariski open subset is 
\[
   \Fcal_{d,k}^{(2)}:=\{F\in\Fcal_{d,k}\mid L_F\in U\}.
\]
\end{proof}

We now treat the case $(d,k)=(2,3)$.

\begin{lem}
\label{lem:finite-source}
There is a nonempty Zariski open subset $\Fcal_{2,3}^{(3)}\subset\Fcal_{2,3}$
such that $H_F$ is finite for every $F\in\Fcal_{2,3}^{(3)}$.
\end{lem}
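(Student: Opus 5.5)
The plan is to show that the locus in $\Fcal_{2,3}$ where $H_F$ is finite is Zariski open, and then to exhibit one point in it. Here $k=3$ and $S_2$ has dimension $6$. So $\mathbf G_{2,3}=\operatorname{Gr}(3,S_2)$ has dimension $9$, while $\PGL(V)$ has dimension $8$, and a finite stabilizer is dimensionally expected.

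\textbf{Openness.} $H_F$ is a linear algebraic group, so it is finite if and only if its Lie algebra vanishes. That Lie algebra is
\[
   \mathfrak h_L=\{X\in\mathfrak{sl}(V)\mid X\cdot L\subset L\},\qquad L=L_F,
\]
where $X$ acts on $S_2$ by the derivation $(X\cdot P)(z)=\nabla P(z)\,Xz$. Since $\mathfrak{sl}(V)$ is the Lie algebra of $\operatorname{SL}(V)$, whose image in $\PGL(V)$ has finite kernel, $\mathfrak h_L$ is also the Lie algebra of $\operatorname{Stab}_{\PGL(V)}(L)$.

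The condition $\mathfrak h_L=0$ means that the linear map
\[
   \mathfrak{sl}(V)\longrightarrow\operatorname{Hom}(L,S_2/L),\qquad X\longmapsto\bigl(P\mapsto [X\cdot P]\bigr),
\]
is injective, i.e.\ has rank $8$. This map is the differential of the orbit map at $L$. Its matrix entries depend algebraically on $L$ in any local trivialization of the tautological bundle, so the rank-$8$ locus is Zariski open in $\mathbf G_{2,3}$. Pulling back along the morphism $F\mapsto L_F$ and intersecting with $\Fcal_{2,3}$ gives a Zariski open set $\Fcal_{2,3}^{(3)}$. It remains to show it is nonempty.

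\textbf{A test point.} A basis of pure squares is useless, since the diagonal torus stabilizes $\langle Z_1^2,Z_2^2,Z_3^2\rangle$. I would instead take
\[
   F=(Z_1^2+aZ_2Z_3,\ Z_2^2+bZ_1Z_3,\ Z_3^2+cZ_1Z_2)
\]
with constants $a,b,c$, kept generic if the symmetric choice $a=b=c=1$ fails. Two checks are needed.
\begin{itemize}
\item Basepoint-freeness. If one coordinate of a common zero vanishes, say $Z_1=0$, then the second and third equations force $Z_2=Z_3=0$; by the cyclic symmetry of the system the same holds for $Z_2$ or $Z_3$. Otherwise the three equations give $Z_1^3=-aZ_1Z_2Z_3$ and its cyclic analogues. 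Hence $Z_1^3,Z_2^3,Z_3^3$ are proportional to $a,b,c$, and $Z_1^2=-aZ_2Z_3$ etc. This leaves a finite computation, and it is clearly avoided for general $a,b,c$.
\item The torus. The diagonal part of $\mathfrak h_L$ already satisfies $2t_1=t_2+t_3$ and its cyclic analogues, which forces $t_1=t_2=t_3$, i.e.\ a scalar, which is zero in $\mathfrak{sl}$.
\end{itemize}

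\textbf{Main obstacle: the full Lie algebra.} The remaining step is to show $\mathfrak h_L=0$ for all of $\mathfrak{sl}(V)$, not just the torus. Write $X=(x_{rs})$ and impose $X\cdot F_i\in L$ for $i=1,2,3$. Each $X\cdot F_i$ is a quadric, and membership in $L$ amounts to three linear conditions on its coefficients. This gives a linear system of $9$ equations in the $8$ unknowns $x_{rs}$.

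I would organize the computation by the weights of the diagonal torus. The off-diagonal entries $x_{rs}$ shift monomial weights, and the three monomials $Z_i^2$ and $Z_jZ_l$ paired in each $F_i$ have matching weights modulo the relation $Z_1Z_2Z_3$. So the system should split into small blocks, and I expect each block to be nondegenerate for generic $a,b,c$.

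If the cyclic example yields a nonzero solution, I would fall back on the classical fact that a general net of conics has finite automorphism group. The reason is that its automorphisms preserve the discriminant cubic curve $\{\det=0\}$ in $\Pbb(L)$ together with an even theta characteristic. The automorphism group of a smooth plane cubic fixing such data is finite, and $\kappa_F$ relates the two actions with finite kernel.

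Either way, one point with $\mathfrak h_L=0$ shows that $\Fcal_{2,3}^{(3)}$ is nonempty, which proves the lemma.
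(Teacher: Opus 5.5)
Your plan is sound and has the same architecture as the paper: first show that the finite-stabilizer locus is open, then exhibit one point in it. With $a=b=c=1$, your test frame is exactly the paper's $F_Q=(x^2+yz,\,y^2+xz,\,z^2+xy)$. As written, however, the decisive step, $\mathfrak h_{L}=0$ at the test point, is only expected ("I expect each block to be nondegenerate"), not shown, so you must carry it out. It does work.

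A quadric $\alpha x^2+\beta y^2+\gamma z^2+\delta yz+\epsilon xz+\phi xy$ lies in $L_Q$ if and only if $\delta=\alpha$, $\epsilon=\beta$ and $\phi=\gamma$. The nine conditions $X\cdot Q_i\in L_Q$ split according to the weights of $\operatorname{diag}(1,\omega,\omega^2)$, which preserves $L_Q$. One piece is your diagonal block. The other two blocks have the same shape,
\[
   2a+b=c,\qquad 2b+c=a,\qquad 2c+a=b,
\]
with $(a,b,c)=(x_{13},x_{21},x_{32})$, respectively $(x_{12},x_{31},x_{23})$. This system has determinant $14\neq0$: summing the equations gives $a+b+c=0$, and then $a=b=c=0$. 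So the orbit map has injective differential at $L_Q$.

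Basepoint-freeness is also immediate. A common zero with some coordinate zero forces all coordinates zero. A common zero with $xyz\neq0$ would give $(xyz)^3=-(xyz)^3$, which is impossible.

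The real difference from the paper is how finiteness at $F_Q$ is certified. The paper argues globally. Every $\alpha\in H_{F_Q}$ satisfies $F_Q\alpha=\widehat\alpha F_Q$, so it preserves the smooth ramification cubic $R_Q=\{x^3+y^3+z^3-5xyz=0\}$. The group $\Aut(\Pbb^2,R_Q)$ is finite because it permutes the nine flexes.

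Your infinitesimal route is purely mechanical and needs no facts about plane cubics. It also verifies exactly the open condition on which the spreading-out rests, namely maximal rank of the orbit-map differential, whereas the paper simply quotes upper semicontinuity of stabilizer dimension. The paper's route gives instead a conceptual reason and an explicit finite overgroup of $H_{F_Q}$.

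Your fallback is a target-side version of the paper's argument, and it also works. $H_F$ acts on $\Pbb(L)$ through $\kappa_F$ and preserves the discriminant cubic; for $L_Q$ this cubic is again $s^3+t^3+u^3-5stu$ up to a constant, hence smooth. The kernel $\ker\kappa_F$ is finite because its orbits lie in fibers of the finite map $F$. The even theta characteristic is not needed for finiteness.
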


\begin{proof}
We use a spreading-out argument based on the upper semicontinuity of
stabilizer dimension. It is enough to find one 
$F_Q\in\Fcal_{2,3}$ such that $L_{F_Q}$ has finite stabilizer. Indeed, let
$U_{\mathrm{fin}}\subset\mathbf G_{2,3}$ be the intersection of the
basepoint-free locus with the open locus where the stabilizer has dimension
zero. Once such an $F_Q$ is found, $U_{\mathrm{fin}}$ is nonempty, and
$\Fcal_{2,3}^{(3)}:=\{F\in\Fcal_{2,3}\mid L_F\in U_{\mathrm{fin}}\}$
is the required nonempty Zariski open subset. We now construct the desired
frame $F_Q$.

Consider $F_Q:=(Q_1,Q_2,Q_3)$, where
\[
   Q_1=x^2+yz,
   \qquad
   Q_2=y^2+xz,
   \qquad
   Q_3=z^2+xy.
\]
It is clear that
$F_Q\in\Fcal_{2,3}$. Put $L_Q:=\langle Q_1,Q_2,Q_3\rangle$.
By slight abuse of notation, we also write $F_Q=[Q_1:Q_2:Q_3]\colon\Pbb^2\to\Pbb^2$
for the induced morphism. Its Jacobian determinant is
\[
   \det
   \begin{pmatrix}
      2x&z&y\\
      z&2y&x\\
      y&x&2z
   \end{pmatrix}
   =-2(x^3+y^3+z^3-5xyz).
\]
The ramification cubic $R_Q=\{x^3+y^3+z^3-5xyz=0\}$ is an elliptic curve.

Let $\alpha\in H_{F_Q}$ and put
$\widehat\alpha:=\kappa_{F_Q}(\alpha)$. Then $ F_Q\alpha=\widehat\alpha F_Q.$
Differentiating at $p\in\Pbb^2$ gives
\[
   (DF_Q)_{\alpha(p)}(D\alpha)_p
      =(D\widehat\alpha)_{F_Q(p)}
         (DF_Q)_p.
\]
Both $(D\alpha)_p$ and
$(D\widehat\alpha)_{F_Q(p)}$ are invertible. Consequently, $p$ is a ramification point if and only if $\alpha(p)$ is a
ramification point, and therefore
$\alpha(R_Q)=R_Q$. We have proved
\[
   \operatorname{Stab}_{\PGL_3}(L_Q)\subset\Aut(\Pbb^2,R_Q).
\]

The group $\Aut(\Pbb^2,R_Q)$ is finite. Indeed, every projective automorphism
preserving $R_Q$ permutes its $9$ distinct flex points (i.e., a point whose tangent line intersects $R_Q$ with multiplicity $3$). Four of
these flex points can be chosen so that no three are collinear. An 
automorphism of $\Pbb^2$ is uniquely determined by its values on four points with no
three collinear. Since each of these four points must be sent to one of the nine flexes, only
finitely many automorphisms can preserve $R_Q$.
Hence $L_Q$ has finite stabilizer, as required.
\end{proof}

\begin{lem}
\label{lem:trivial-deck}
There is a nonempty Zariski open subset $\Fcal_{2,3}^{(4)}\subset\Fcal_{2,3}$
such that $\ker\kappa_F=\{1\}$ for every
$F\in\Fcal_{2,3}^{(4)}$.
\end{lem}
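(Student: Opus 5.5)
We would handle this by a dimension count on an incidence variety, in the spirit of Lemma~\ref{lem:deck-free-open}. First we identify $\ker\kappa_F$ concretely. By \eqref{eq:kappa-relation}, $\alpha=[A]\in\ker\kappa_F$ means $F(Az)=cF(z)$ for some $c\in\Cbb^*$, that is, $F\circ\alpha=F$ as endomorphisms of $\Pbb^2$. So $\ker\kappa_F$ is the group of linear deck transformations of the finite morphism $F\colon\Pbb^2\to\Pbb^2$ of topological degree $4$.

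A deck group acts faithfully on a general fiber, so it is finite. Hence every $\alpha\in\ker\kappa_F$ has finite order, and any lift $A$ is diagonalizable. After conjugation we may therefore take $A=\operatorname{diag}(a_1,a_2,a_3)$, not scalar, with the $a_i$ roots of unity. Then $F(Az)=cF(z)$ says that each $F_i$ lies in the $c$-eigenspace $S_2^{A,c}$ of $A^*$ acting on $S_2$. The monomials of $S_2$ are eigenvectors with weights $a_ia_j$.

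Next we form the incidence set
\[
   \Ical:=\{(F,\alpha)\in\Fcal_{2,3}\times\PGL_3 \mid \alpha\neq1,\ \alpha\text{ of finite order},\ F\circ\alpha=F\}.
\]
It is a countable union of constructible pieces, indexed by the eigenvalue data of $A$ up to scaling and by $c$. For each piece we bound its dimension by the dimension of the conjugacy class of $\alpha$ plus $3\dim S_2^{A,c}$, and compare with $\dim\Fcal_{2,3}=18$. There are two cases.
\begin{enumerate}
\item[(i)] Two eigenvalues coincide, say $A=\operatorname{diag}(1,1,-1)$ up to scaling; finite order and nonscalar force this. The centralizer has dimension $4$, so the conjugacy class has dimension $4$. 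The eigenspaces on $S_2$ have dimensions $4$ ($x^2,y^2,z^2,xy$) and $2$ ($xz,yz$). Hence the piece has dimension at most $4+12=16<18$.
\item[(ii)] The eigenvalues are pairwise distinct. The centralizer is the maximal torus, so the class has dimension $6$. A short check of possible coincidences among the six weights $a_ia_j$ shows that no eigenspace has dimension more than $3$. For instance, $(1,\omega,\omega^2)$ gives dimension $2$ throughout, and $(1,-1,i)$ gives at most $2$. So the piece has dimension at most $6+9=15<18$.
\end{enumerate}
In both cases the projection of the piece to $\Fcal_{2,3}$ lies in a proper Zariski closed subset.

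The main obstacle is that there are countably many pieces (roots of unity of arbitrary order), so a countable union of proper closed sets need not be closed. To fix this we will bound the order. Every $\alpha\in\ker\kappa_F$ acts faithfully on a general fiber of $F$, which has $4$ points, so $\ker\kappa_F$ embeds in $S_4$ and $\alpha$ has order dividing $2$, $3$ or $4$. Only finitely many eigenvalue types and finitely many $c$ (with $c^{\operatorname{ord}\alpha}=1$ after normalization) then occur. The bad locus is therefore a finite union of images of constructible sets of dimension below $18$, so its closure is proper. We set $\Fcal_{2,3}^{(4)}$ to be its complement in $\Fcal_{2,3}$. If the faithfulness argument proves delicate, a fallback is to intersect with $\Fcal_{2,3}^{(3)}$ from Lemma~\ref{lem:finite-source}, where $H_F\supset\ker\kappa_F$ is finite, and to use Jordan's bound on finite subgroups of $\PGL_3$ to restrict the possible orders to a finite list.
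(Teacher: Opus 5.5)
Your proposal follows the paper's skeleton: finiteness of the deck group $\ker\kappa_F$, control of it through its action on a general fiber of the degree-$4$ map $F$, and a dimension count on an incidence variety over conjugacy classes in $\PGL_3$ against $\dim\Fcal_{2,3}=18$. It is correct up to a few repairable points.

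\textbf{How the paper's middle step is sharper.} The paper first proves finiteness directly: a positive-dimensional identity component would have a positive-dimensional orbit inside a finite fiber of $F$. It then picks $y\notin B_F\cup F(Z)$, with $Z$ the union of the fixed loci of the nonidentity elements. So $\ker\kappa_F$ acts \emph{freely}, not merely faithfully, on the four points of $F^{-1}(y)$. This gives $\#\ker\kappa_F\mid 4$, hence an involution whenever the kernel is nontrivial. Only the single class of $[\operatorname{diag}(1,1,-1)]$ then needs counting ($4+12=16<18$).

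\textbf{Error in your case~(i).} Because you only use faithfulness (an embedding into $S_4$), you must also treat elements of order $3$ and $4$. There your case~(i) is wrong as stated. A nonscalar finite-order element with a repeated eigenvalue need not be $\operatorname{diag}(1,1,-1)$ up to scaling: take $\operatorname{diag}(1,1,\omega)$ with $\omega$ a primitive cube root of unity, or $\operatorname{diag}(1,1,i)$. The error is harmless but must be fixed. For $\operatorname{diag}(1,1,\zeta)$ with $\zeta\neq\pm1$, the class still has dimension $4$. The eigenspaces of $S_2$ are $\langle x^2,xy,y^2\rangle$, $\langle xz,yz\rangle$, $\langle z^2\rangle$, so the bound is at most $4+9=13<18$.

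\textbf{Order of inference for finiteness.} Your step ``faithful on a general fiber, hence finite'' runs backwards. Faithfulness on a general fiber is normally deduced from finiteness, since one needs the fixed loci of the nonidentity elements to form a proper closed set. So finiteness needs its own argument. Use the orbit argument above, or the injection of $\ker\kappa_F$ into $\Aut\bigl(\Cbb(\Pbb^2)/F^*\Cbb(\Pbb^2)\bigr)$, a group of order at most $4$.

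\textbf{Your fallback would fail.} Jordan's theorem bounds the index of an abelian normal subgroup, not the orders of elements, and $\PGL_3$ has cyclic subgroups of every finite order. Fortunately you do not need the fallback.
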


\begin{proof}
We use dimension count arguments.
Let $F=(F_1,F_2,F_3)\in\Fcal_{2,3}$. 
Set $K:=\ker\kappa_F.$
If $\alpha=[A]\in K$, then 
$F\alpha=F$. Thus every $K$-orbit is contained in a fiber of $F$.
We claim that $K$ is finite. Otherwise, its identity component would
have positive dimension. Since $K$ is a subgroup of $\PGL(V)$, its
action on $\Pbb(V)$ is faithful. It would therefore have a
positive-dimensional orbit. This orbit would be contained in a fiber
of the finite morphism $F$, which is impossible.

For each nonidentity element $\alpha\in K$, its fixed-point locus
$\Fix(\alpha)\subset\Pbb(V)$ is a proper closed subset. Since $K$ is
finite, the union $Z:=\bigcup_{\alpha\in K\setminus\{1\}}\Fix(\alpha)$, and thus $F(Z)$, 
is also a proper closed subset.

Let $B_F\subset\Pbb^2$ be the branch locus of $F$. Choose $y\in\Pbb^2\setminus\bigl(B_F\cup F(Z)\bigr).$
Then $F^{-1}(y)$ consists of exactly four distinct points.
The group $K$ acts on this fiber. Moreover, the action is free: if a
nonidentity $\alpha\in K$ fixed a point of the fiber, that point would
belong to $Z$, and hence $y$ would belong to $F(Z)$.

The four points of $F^{-1}(y)$ are therefore partitioned into
free $K$-orbits, each of cardinality $\#K$. Consequently, $\#K\mid4.$
In particular, if $K\neq\{1\}$, then $K$ contains an element of order
two.

It remains to show that a frame admitting such an involution belongs
to a proper closed subset of the parameter space. Let $\Ccal$ be the
set of involutions in $\PGL(V)$. Every element of $\Ccal$ is conjugate
to $\sigma=[\operatorname{diag}(1,1,-1)].$
The centralizer of $\sigma$ in $\PGL(V)$ is
\[
   Z_{\PGL(V)}(\sigma)
      \simeq
      \bigl(\operatorname{GL}_2(\Cbb)
             \times\operatorname{GL}_1(\Cbb)\bigr)/\Cbb^*.
\]
It has dimension $4$. Since $\dim\PGL(V)=8$, the conjugacy class
$\Ccal$ is an irreducible subvariety of dimension $\dim\Ccal=8-4=4.$

The pullback action of $\sigma$ on
$S_2$ has the eigenspace decomposition $S_2=W_+\oplus W_-$, 
where $W_+
      =\langle x^2,xy,y^2,z^2\rangle$, $W_-=\langle xz,yz\rangle.$
Consider the incidence subvariety
\[
   \Ical:=
   \left\{
      (F,\alpha)\in S_2^3\times\Ccal \mid 
      F(Az)=\lambda F(z)
      \text{ for some }\lambda\in\Cbb^*
   \right\},
\]
where $A$ is any lift of $\alpha$. 
Let $\operatorname{pr}_1\colon\Ical\to S^3_2$ and $\operatorname{pr}_2\colon\Ical\to\Ccal$
be the two projections. Then $\operatorname{pr}_2^{-1}(\sigma)
      =W_+^3\cup W_-^3.$
This fiber has dimension
\[
   \max\{3\dim W_+,3\dim W_-\}
      =\max\{12,6\}
      =12.
\]
It follows that $\dim\Ical
      =\dim\Ccal+12
      =16.$
On the other hand, $\dim S_2^3=3\dim S_2=18.$
Therefore $\Bcal:=
   \overline{\operatorname{pr}_1(\Ical)}
   \subset S_2^3$
is a proper closed subset. Define $ \Fcal_{2,3}^{(4)}
      :=\Fcal_{2,3}\setminus\Bcal$, which is nonempty and Zariski open.

Finally, let $F\in\Fcal_{2,3}^{(4)}$. If
$\ker\kappa_F$ were nontrivial, the first part of the proof would produce
an involution $\alpha\in\ker\kappa_F$. Then $(F,\alpha)\in\Ical$, so
$F\in\Bcal$, a contradiction. Hence $\ker\kappa_F=\{1\}$.
\end{proof}

\begin{lem}
\label{lem:relative-position}
Let $A_0,B_0\subset\PGL(V)$ be finite subgroups. There is a nonempty
Zariski open subset $\Omega_{A_0,B_0}\subset\PGL(V)$
such that $A_0\cap TB_0T^{-1}=\{1\}$ for every
$T\in\Omega_{A_0,B_0}$.
\end{lem}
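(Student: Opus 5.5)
Since $A_0$ and $B_0$ are finite, the bad locus is a finite union indexed by pairs of nontrivial elements. Specifically, $A_0\cap TB_0T^{-1}\neq\{1\}$ holds iff there exist $a\in A_0\setminus\{1\}$ and $b\in B_0\setminus\{1\}$ with $TbT^{-1}=a$. So the complement of the desired set is
\[
   \bigcup_{a\in A_0\setminus\{1\}}\ \bigcup_{b\in B_0\setminus\{1\}}
   Z_{a,b},
   \qquad
   Z_{a,b}:=\{T\in\PGL(V)\mid TbT^{-1}=a\}.
\]
Each $Z_{a,b}$ is Zariski closed, being the preimage of the point $a$ under the morphism $T\mapsto TbT^{-1}$. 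The union is finite, hence closed. It remains to show that no $Z_{a,b}$ is all of $\PGL(V)$; irreducibility of $\PGL(V)$ then makes the complement $\Omega_{A_0,B_0}$ nonempty and open.

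To see that $Z_{a,b}\neq\PGL(V)$, note that $Z_{a,b}$ is either empty or a left coset $T_0\,C_{\PGL(V)}(b)$ of the centralizer of $b$. For $b\neq1$ the centralizer is a proper closed subgroup, because the center of $\PGL(V)$ is trivial. Hence $Z_{a,b}$ is a proper closed subset, of dimension $\dim C_{\PGL(V)}(b)<\dim\PGL(V)$ or at least not the whole group. A more elementary alternative is to take $T=1$ and $T=T_0$ with $T_0\in Z_{a,b}$ and observe that both lie in $Z_{a,b}$ only if $b=a$ and $T_0$ centralizes $b$; choosing $T_0$ outside the centralizer then excludes equality.

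The only step needing care is the triviality of the center of $\PGL(V)$, i.e. that a nonscalar $b$ fails to commute with some element. This is standard: some elementary matrix fails to commute with a nonscalar matrix even projectively. Any scalar discrepancy $TbT^{-1}=cb$ is excluded by choosing $T$ generic, since the set of such $T$ for each $c$ is again a coset-type closed subset, and only finitely many $c$ can occur because the eigenvalues must match. I expect no real obstacle; the proof is a finite union of proper closed subsets in an irreducible group.
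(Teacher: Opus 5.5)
Your proof is correct and is essentially the paper's argument: the bad locus is the finite union of the sets $Z_{a,b}$, each of which is empty or a coset of the centralizer of $b\neq1$. That centralizer is proper because $\PGL(V)$ has trivial center, and irreducibility of $\PGL(V)$ finishes. Your closing remark about scalar discrepancies $TbT^{-1}=cb$ is harmless but unnecessary, since the equation $TbT^{-1}=a$ already holds in $\PGL(V)$, where triviality of the center is exactly the input needed.
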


\begin{proof}
For $a\in A_0\setminus\{1\}$ and $b\in B_0\setminus\{1\}$, put
\[
   Z_{a,b}:=\{T\in\PGL(V)\mid a=TbT^{-1}\}.
\]
This set is empty unless $a$ and $b$ are conjugate.  If it is nonempty, it is a coset
of the centralizer of $b$. That centralizer is proper because the center of
$\PGL(V)$ is trivial and $b\neq1$.  Hence every $Z_{a,b}$ is a proper closed subset
of the irreducible variety $\PGL(V)$.  The union of these sets over the finitely many
pairs $(a,b)$ is proper. Its complement
\[
   \Omega_{A_0,B_0}
      :=\PGL(V)\setminus
        \bigcup_{\substack{a\in A_0\setminus\{1\}\\
                            b\in B_0\setminus\{1\}}} Z_{a,b}
\]
is a nonempty Zariski open subset with the required property.
\end{proof}

\begin{prop}
\label{prop:uniform-source-rigidity}
For every $d\geq2$ and $k\geq3$, there is a nonempty Zariski open subset $\Fcal_{d,k}^{(5)}\subset\Fcal_{d,k}$
such that \eqref{eq:source-rigidity} holds for every
$F\in\Fcal_{d,k}^{(5)}$.
\end{prop}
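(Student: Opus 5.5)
Outside the exceptional pair, there is nothing new to prove: for $(d,k)\neq(2,3)$ we take $\Fcal_{d,k}^{(5)}:=\Fcal_{d,k}^{(2)}$ from Lemma~\ref{lem:nonexceptional-stabilizer}. There $H_F=\{1\}$, so $\kappa_F^{-1}(H_F)\subset H_F$ is trivial. The real work is the case $(d,k)=(2,3)$. There we plan to combine Lemmas~\ref{lem:finite-source}, \ref{lem:trivial-deck} and~\ref{lem:relative-position} by a translation trick in the source variable.

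Note first that $\kappa_F^{-1}(H_F)=\{\alpha\in H_F\mid \kappa_F(\alpha)\in H_F\}$, and $\kappa_F$ is injective on $\Fcal_{2,3}^{(4)}$. Condition~\eqref{eq:source-rigidity} thus says that the finite group $H_F$ (finite on $\Fcal_{2,3}^{(3)}$) meets its isomorphic image $\widehat H_F=\kappa_F(H_F)$ only in the identity outside the image of the identity. Precisely, we need: if $\alpha\in H_F$ and $\kappa_F(\alpha)\in H_F$, then $\alpha=1$. It suffices to prove the stronger statement $H_F\cap\widehat H_F=\{1\}$ together with injectivity of $\kappa_F$. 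Indeed, then $\kappa_F(\alpha)\in H_F\cap\widehat H_F$ forces $\kappa_F(\alpha)=1$, and hence $\alpha=1$.

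To get $H_F\cap\widehat H_F=\{1\}$ generically, we use that precomposition by a projective transformation changes the two groups differently. For $T\in\PGL(V)$ with lift $\widetilde T$, set $F^T:=F\circ\widetilde T$, so $L_{F^T}=T^*L_F$. Then $H_{F^T}=T^{-1}H_FT$, while $F^T(T^{-1}AT z)=\widehat A_F F^T(z)$ shows $\widehat H_{F^T}=\widehat H_F$ with $\kappa_{F^T}(T^{-1}\alpha T)=\kappa_F(\alpha)$. Fix one $F_0\in\Fcal_{2,3}^{(3)}\cap\Fcal_{2,3}^{(4)}$, which is nonempty open since $\Fcal_{2,3}$ is irreducible. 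Lemma~\ref{lem:relative-position}, applied to $A_0=\widehat H_{F_0}$ and $B_0=H_{F_0}$ (both finite), gives a nonempty open $\Omega\subset\PGL(V)$ with $\widehat H_{F_0}\cap T^{-1}H_{F_0}T=\{1\}$ for $T\in\Omega$ (replace $T$ by $T^{-1}$). So $F_0^T$ satisfies~\eqref{eq:source-rigidity} for such $T$. We thus obtain at least one $F_1\in\Fcal_{2,3}$, in fact in $\Fcal_{2,3}^{(3)}\cap\Fcal_{2,3}^{(4)}$ since those sets are $\PGL$-invariant, with the property.

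The main obstacle is spreading this from one $F_1$ to a Zariski open set. The property ``$\kappa_F^{-1}(H_F)$ trivial'' is not obviously open, since finite stabilizers can jump up under specialization but not down, and the argument must exploit exactly this. We would set up the closed incidence
\[
\Jcal:=\{(F,\alpha)\in\Fcal_{2,3}\times(\PGL(V)\setminus\{1\})\mid \alpha^*L_F=L_F,\ \kappa_F(\alpha)^*L_F=L_F\},
\]
after checking that $\kappa_F(\alpha)$ depends algebraically on $(F,\alpha)$ on the stabilizer locus; it is solved linearly from $F\circ A=\widehat A F$. Over the open set $\Fcal_{2,3}^{(3)}$, the fibers of $\Jcal$ are finite, and we want its image to be a proper closed subset. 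Since $\{1\}$ may be a limit of nontrivial elements, we instead bound orders. On $\Fcal_{2,3}^{(3)}$ the stabilizers are finite subgroups of $\PGL_3$ acting faithfully, and they embed in $\Aut(\Pbb^2,R_F)$ by the ramification-curve argument of Lemma~\ref{lem:finite-source}, so their orders are uniformly bounded. Nontrivial elements of bounded order $n$ lie in the closed set $\{\alpha^n=1\}$ minus $\{1\}$, and the semisimple classes of order exactly $n$ form a closed union of conjugacy classes away from the identity. Hence $\Jcal$ is closed in $\Fcal_{2,3}^{(3)}\times\PGL(V)$, and its image under the finite projection is closed. It misses $F_1$, so its complement gives the required $\Fcal_{2,3}^{(5)}$.
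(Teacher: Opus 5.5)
Your handling of $(d,k)\neq(2,3)$ is correct. So is your reduction of \eqref{eq:source-rigidity} to $H_F\cap\widehat H_F=\{1\}$ together with injectivity of $\kappa_F$, and so is your construction of one good point $F_1=F_0\circ\widetilde T$ via Lemma~\ref{lem:relative-position}. Your precomposition conjugates $H_F$ and fixes $\widehat H_F$; the paper's postcomposition $\widetilde T F_0$ fixes $H_F$ and conjugates $\widehat H_F$; the two are equivalent.

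\textbf{The gap is in the spreading step.} Your argument needs the image of $\mathcal{J}$ in $\Fcal_{2,3}^{(3)}$ to be closed, and you get this from ``the finite projection''. You have only shown that $\mathcal{J}\to\Fcal_{2,3}^{(3)}$ is quasi-finite. Since $\PGL(V)$ is affine, properness is not automatic, and a quasi-finite morphism need not have closed image. Your heuristic that finite stabilizers ``jump up but not down'' is exactly this properness statement, and it is not a general fact on the finite-stabilizer locus. For binary quartics under $\PGL_2$, the form $xy(x-y)(x-ty)$ has stabilizer $V_4$ for general small $t\neq0$, while its limit $x^2y(x-y)$ has stabilizer of order $2$; the involution $z\mapsto t/z$ leaves every compact subset of $\PGL_2$ as $t\to0$. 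Without closedness, knowing that $F_1$ lies outside the constructible bad locus proves nothing, since a constructible set can be dense and still miss $F_1$. A smaller point: the uniform order bound does not follow from $\Aut(\Pbb^2,R_F)$ unless $R_F$ is a smooth cubic. On all of $\Fcal_{2,3}^{(3)}$ it holds instead because a quasi-finite morphism of finite type has bounded fiber cardinality.

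\textbf{How the paper avoids this.} It never needs properness. The bad locus is constructible by Chevalley, and your orbit trick works at \emph{every} $F_0$, not just one. Let $O\subset W=\Fcal_{2,3}^{(3)}\cap\Fcal_{2,3}^{(4)}$ be a nonempty open set and pick $F_0\in O$. The set of $\widetilde T\in\operatorname{GL}(V)$ with $F_0\circ\widetilde T\in O$ is open and contains $\id$. The set of $\widetilde T$ whose class $T$ satisfies $\widehat H_{F_0}\cap T^{-1}H_{F_0}T=\{1\}$ is nonempty and open. Since $\operatorname{GL}(V)$ is irreducible, the two sets meet. Hence the good locus is a dense constructible subset of the irreducible $W$, so it contains a nonempty open subset.

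\textbf{An alternative repair of your closedness route.} Work over the open set where $R_F$ is a smooth cubic; it contains $F_Q$ and is stable under precomposition. There, each stabilizer element is determined by sending a frame of four flexes, no three collinear, to another such frame. The flexes form a finite \'etale cover of this locus, so the stabilizer scheme is finite over it. That supplies the missing properness, and your argument then goes through.
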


\begin{proof}
For $(d,k)\neq(2,3)$, this is
Lemma~\ref{lem:nonexceptional-stabilizer}; we take
$\Fcal_{d,k}^{(5)}:=\Fcal_{d,k}^{(2)}$.

Suppose now that $(d,k)=(2,3)$. By
Lemmas~\ref{lem:finite-source} and~\ref{lem:trivial-deck}, $W:=\Fcal_{2,3}^{(3)}\cap\Fcal_{2,3}^{(4)}$
is a nonempty Zariski open subset of $\Fcal_{2,3}$ on which $H_F$ is finite
and $\kappa_F$ is injective.
Put $G:=\PGL(V)$. Consider the following locally closed subset of
$W\times G\times G$:
\[
\begin{split}
   \Zcal_{\mathrm{bad}}:=\{(F,\alpha,\gamma)\in W\times G\times G \mid
      {}&\alpha^*L_F=L_F,\quad \gamma^*L_F=L_F,\\
      &F\alpha=\gamma F,\quad \gamma\ne1\}.
\end{split}
\]
Hence, by Chevalley's theorem, the image $\Bcal_{\mathrm{bad}}
      :=\operatorname{pr}_W(\Zcal_{\mathrm{bad}})$
is constructible and satisfies
\[
   F\in\Bcal_{\mathrm{bad}}
   \quad\Longleftrightarrow\quad
   H_F\cap\widehat H_F\text{ contains a nonidentity element}.
\]
Hence, $W_{\mathrm{sep}}:=W\setminus\Bcal_{\mathrm{bad}}$
is precisely the constructible locus on which
$H_F\cap\widehat H_F=\{1\}$.

We claim that $W_{\mathrm{sep}}$ is dense in $W$. Let $O\subset W$ be a
nonempty Zariski open subset, and choose $F_0\in O$. Put
$H_0:=H_{F_0}$ and $\widehat H_0:=\widehat H_{F_0}$. Both groups are
finite.
Let $q$ be the quotient map from $\operatorname{GL}(V)$ to $\PGL(V)$. For $\widetilde T\in\operatorname{GL}(V)$, set
$T:=q(\widetilde T)$ and $F_{\widetilde T}:=\widetilde T F_0$.
Then $H_{F_{\widetilde T}}=H_0$ and $\widehat H_{F_{\widetilde T}}
      =T\widehat H_0T^{-1}.$
In particular, $F_{\widetilde T}\in W$ for every $\widetilde T$.
Consider the algebraic orbit map $\omega_{F_0}\colon\operatorname{GL}(V)\to\Fcal_{2,3}$ defined by $\omega_{F_0}(\widetilde T)= F_{\widetilde T}.$
We have
\[
   \omega_{F_0}^{-1}(W_{\mathrm{sep}})
      =q^{-1}\bigl(\Omega_{H_0,\widehat H_0}\bigr).
\]
Lemma~\ref{lem:relative-position} shows that
$\Omega_{H_0,\widehat H_0}$ is a nonempty Zariski open subset of $G$.
Moreover,
$\omega_{F_0}^{-1}(O)$ is Zariski open since
$\omega_{F_0}(\id_V)=F_0\in O$. As $\operatorname{GL}(V)$ is irreducible,
these two nonempty open subsets intersect. Hence $O\cap W_{\mathrm{sep}}\ne\varnothing$, proving the claim.

We may therefore choose $\Fcal_{2,3}^{(5)}\subset W_{\mathrm{sep}}$ which satisfies~\eqref{eq:source-rigidity}.
\end{proof}

Let $\Qcal_{d,k}^{(5)}$ be the inverse image of $\Fcal_{d,k}^{(5)}$ under
$(F,B)\mapsto F$.

\subsection{Proof of Theorem~\ref{thm:generic-measure-rigidity} when $k\geq3$}

For $F\in\Fcal_{d,k}$, define
\begin{equation}
   \mathfrak t_F\colon V\longrightarrow S_{d-1}^k,
   \qquad
   \mathfrak t_F(u)=DF(\,\cdot\,)u.
   \label{eq:translation-map}
\end{equation}
The map $\mathfrak t_F$ is injective for every $F\in\Fcal_{d,k}$. As $F$ varies, \eqref{eq:translation-map} is an algebraic vector-bundle morphism of
constant rank $k$. Its image is a rank-$k$ vector subbundle of the trivial
bundle with fiber $S_{d-1}^k$, hence is closed in its total space. Its
complement is Zariski open and
is nonempty since $\dim S_{d-1}^k>\dim\operatorname{im}\mathfrak t_F.$ We have thus proved the following.
\begin{lem}
\label{lem:noncentered}
The locus $\{(F,B)\in \Qcal_{d,k} \mid B\notin\operatorname{im}\mathfrak t_F  \}$ is nonempty and Zariski open. We denote it by $\Qcal_{d,k}^{(6)}$.
\end{lem}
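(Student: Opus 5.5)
The argument is short. The paper already sketches it just before the statement, so the plan is to make each step precise.

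\textbf{Injectivity of $\mathfrak t_F$.} Suppose $DF(Z)u=0$ for all $Z$. Then every directional derivative $\partial_uF_i$ vanishes identically, so each $F_i$ is invariant under translation in the direction $u$: $F_i(Z+su)=F_i(Z)$. Combining this with homogeneity gives
\[
F_i(u)=\lim_{s\to\infty}s^{-d}F_i(Z+su)=\lim_{s\to\infty}s^{-d}F_i(Z)=0 .
\]
So if $u\neq0$, then $[u]$ is a common zero of $F_1,\ldots,F_k$ on $\Pbb^{k-1}$. This contradicts $F\in\Fcal_{d,k}$, hence $u=0$. Alternatively, one can use Euler's identity at the point $Z=u$, which gives $0=DF(u)u=dF(u)$.

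\textbf{Closedness of the bad locus.} The map $(F,u)\mapsto DF(\cdot)u$ is bilinear in $(F,u)$. It therefore defines a morphism of trivial bundles
\[
V\otimes\Ocal_{\Fcal_{d,k}}\longrightarrow S_{d-1}^k\otimes\Ocal_{\Fcal_{d,k}} .
\]
By the previous step this morphism is injective on every fiber, so it has constant rank $k$. A bundle morphism of constant rank has as image a subbundle $\mathcal K$, which is locally split. Consequently its total space is Zariski closed in $\Fcal_{d,k}\times S_{d-1}^k=\Qcal_{d,k}$.

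To see this concretely, fix $F_0$ and choose $k$ coordinates of $S_{d-1}^k$ on which $\mathfrak t_{F_0}$ has an invertible $k\times k$ minor. On the open set where that minor is nonzero, the condition $B\in\operatorname{im}\mathfrak t_F$ is equivalent to the vanishing of all $(k+1)\times(k+1)$ minors of the matrix $[\mathfrak t_F\mid B]$. These minors are polynomial in $(F,B)$. Taking the union of the bad sets over this open cover shows that the bad set is closed. In fact the global description works directly: $\{\operatorname{rank}[\mathfrak t_F\mid B]\le k\}$ is closed everywhere, and since $\operatorname{rank}\mathfrak t_F=k$ on all of $\Fcal_{d,k}$, this set is exactly the bad locus. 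This rank formulation is the cleanest way to write the argument.

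\textbf{Nonemptiness.} Since $\Qcal_{d,k}=\Fcal_{d,k}\times S_{d-1}^k$ is irreducible, it suffices to exhibit one point outside the bad locus. Take any $F\in\Fcal_{d,k}$. Then $\dim S_{d-1}^k=k\binom{k+d-2}{d-1}>k$ because $d\ge2$ and $k\ge2$. Any $B$ outside the $k$-dimensional subspace $\operatorname{im}\mathfrak t_F$ then works.

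There is no real obstacle here. The only point requiring care is to invoke regularity, meaning basepoint-freeness, through Euler's identity to obtain constant rank. That is what makes the rank-$\le k$ determinantal locus coincide with the bad locus.
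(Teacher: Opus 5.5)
Your proof is correct and follows essentially the paper's own argument: injectivity of $\mathfrak t_F$ from basepoint-freeness, constant rank $k$ so that the bad locus is the closed set $\{\operatorname{rank}[\mathfrak t_F\mid B]\le k\}$ (the total space of the image subbundle), and nonemptiness from $\dim S_{d-1}^k>k$. You also write out the Euler-identity justification of injectivity, which the paper only asserts.
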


\medskip

We are now in position to prove the generic measure rigidity theorem.

Define $\Qcal_{d,k}^{\circ}
   :=\Qcal_{d,k}^{(1)}
      \cap\Qcal_{d,k}^{(5)}
      \cap\Qcal_{d,k}^{(6)}.$
The truncation morphism
\[
   \operatorname{tr}_{d,k}\colon\Poly^k_d\longrightarrow\Qcal_{d,k},
   \qquad
   f\longmapsto(f_h,f_{d-1}),
\]
is surjective. Thus
$\Ucal_{d,k}:=\operatorname{tr}_{d,k}^{-1}(\Qcal_{d,k}^{\circ})$
is a nonempty Zariski open subset of $\Poly^k_d$.

\begin{proof}[Proof of Theorem~\ref{thm:generic-measure-rigidity}]
Assume $k\geq3$, let $f\in\Ucal_{d,k}$, and let
$g\in\Poly^k_d$ satisfy $\mu_g=\mu_f$.  Write
\[
   F:=f_h,\qquad B:=f_{d-1},
   \qquad
   f=F+B+\sum_{j=0}^{d-2}f_j,
   \qquad
   (F,B)\in\Qcal_{d,k}^{\circ}.
\]
Corollary~\ref{cor:leading-system-uniqueness} gives
$L_{g_h}=L_F$.
Thus there is a unique $A\in\operatorname{GL}(V)$ such that $g_h=AF$.
Theorem~\ref{thm: main1} gives an affine symmetry
$\sigma\in\Sa_f$ and some $b\in V$ such that
\begin{equation}
   g=\sigma f,
   \qquad
   \sigma(z)=Az+b,
   \qquad
   G_f\circ\sigma=G_f
   \label{eq:generic-g-sigma-f}
\end{equation}

Set $u:=f\sigma$. Its leading part is $u_h(z)=F(Az)$. From  \eqref{eq:generic-g-sigma-f}, $G_f=G_u$, and thus $\mu_f=\mu_u$.
Applying Corollary~\ref{cor:leading-system-uniqueness} to $(f,u)$ yields
$A^*L_F=L_F$, so $[A]\in H_F$.
Since $F$ is fixed, write $\widehat A:=\widehat A_F$ for the unique matrix
such that
\begin{equation}
   F(Az)=\widehat A F(z).
   \label{eq:generic-induced-A}
\end{equation}
Since $\mu_u=\mu_f$ and $u_h=\widehat A F$,
Theorem~\ref{thm: main1}, applied to $(f,u)$, gives an affine symmetry
$\tau\in\Sa_f$ of the form $\tau(z)=\widehat A z+c$
for some $c\in V$, such that
\begin{equation}
   u=f\sigma=\tau f,
   \qquad
   G_f\circ\tau=G_f.
   \label{eq:generic-u-tau-f}
\end{equation}

Repeating the above argument for $f\tau$, we get
$\widehat A^*L_F=L_F$, and therefore $[\widehat A]\in H_F$.
The rigidity condition \eqref{eq:source-rigidity}, proved in Proposition~\ref{prop:uniform-source-rigidity}, now gives
$[A]=1$. Thus $A=\lambda I_k$ for some $\lambda\in\Cbb^*$, and
\eqref{eq:generic-induced-A} gives $\widehat A=\lambda^dI_k$.

Comparing the degree-$(d-1)$ homogeneous parts in
\eqref{eq:generic-u-tau-f} gives
\[
   \lambda^{d-1}\bigl(DF(z)b+B(z)\bigr)
   =\lambda^dB(z),
\]
or equivalently $\mathfrak t_F(b)=(\lambda-1)B.$
Since $(F,B)\in\Qcal_{d,k}^{(6)}$, this forces
$\lambda=1$. The injectivity of $\mathfrak t_F$ then gives $b=0$.
Consequently $\sigma=\id$, and \eqref{eq:generic-g-sigma-f} yields
$g=f$.
\end{proof}

\section{Further applications of the image--fiber principle}
\label{sect:further-applications}

\subsection{Tits alternative}\label{sect: tits}
In this subsection we prove Theorem~\ref{thm: Tits1}. 
Let us recall some notions; see \cite[Sect.~2]{BHPTjems24}.
Let $\Scal$ be a finitely generated semigroup, and let $S$ be a finite set of
generators. Denote by $S^{\le n}$ the set of elements of $\Scal$ that can be
expressed as a product of elements of $S$ of length at most $n$. Then define $d_S(n)=\# S^{\leq n}$ to be the cardinality of $S^{\leq n}$. One says that $\Scal$ has \emph{polynomially bounded growth} if $d_S(n)=O(n^r)$ for some $r>0$, and it has \emph{linear growth} if $d_S(n)=O(n)$. This does not depend on the chosen generator set $S$. For any semigroup $\Tcal$ considered below, we denote its subset of nonlinear elements by $\Tcal^+:=\{h\in\Tcal\mid \deg h\ge 2\}$.

We start by showing the following uniform bound on the size of the fibers. For
$k\geq2$, recall that we have the restriction map
$\rho:\APoly^k\to\End(\Pi)$.

\begin{prop}\label{prop: finite_fiber}
Let $k\geq2$, and let $\Scal$ be a finitely generated semigroup of regular
polynomial endomorphisms of $\Cbb^k$ of degree $\ge 1$. Assume that
$\Scal^+\neq\varnothing$ and that all elements of $\Scal^+$ have the
same preperiodic set. Then there
exists $M\ge 1$ such that
\begin{align*}
\#\bigl(\rho^{-1}(u)\cap \Scal\bigr)\le M
\qquad\text{for every }u\in \rho(\Scal).    
\end{align*}
\end{prop}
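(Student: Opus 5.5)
The plan is to reduce to a uniform bound on a group of affine symmetries of a single Julia set, using Theorem~\ref{thm: main1} to describe the fibers. That group is finite under the hypotheses of Theorem~\ref{thm: main2}, but here it may be infinite; the common-preperiodic-set assumption is what has to cut it down.

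\emph{Step 1: a common Green function.} Since all elements of $\Scal^+$ have the same preperiodic set, the theorem of Yuan--Zhang~\cite{YZhodge17} recalled before Theorem~\ref{thm:same-preper} gives equal equilibrium measures. Corollary~\ref{cor:mu-implies-green} then gives a common Green function $G$, Julia set $J$ and filled Julia set $K$ for every $h\in\Scal^+$. Affine elements $a\in\Scal$ should also be symmetries. Take any $f\in\Scal^+$ of degree $e$. Then $fa\in\Scal^+$, so
\[
   eG\circ a=G\circ f\circ a=G\circ(fa)=\deg(fa)\,G=eG ,
\]
hence $G\circ a=G$ and $a\in\Sa:=\Sa_f$.

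\emph{Step 2: fibers as cosets.} Regularity gives $\deg h_\Pi=\deg h$, so two elements $h_1,h_2\in\Scal$ in the same fiber $\rho^{-1}(u)$ have the same degree. Their leading parts then differ by a scalar, $(h_2)_h=c\,(h_1)_h$.
\begin{itemize}
\item In degree at least $2$, Theorem~\ref{thm: main1} gives $h_2=\sigma h_1$ with $\sigma\in\ker\rho_f^{\mathrm{aff}}$.
\item In degree $1$ the same relation holds directly, using Step~1.
\end{itemize}
Thus every fiber injects into $\ker\rho_f^{\mathrm{aff}}$ via $h_2\mapsto h_2h_1^{-1}$. This uses that elements of $\Scal$ are surjective, and that $h_1$ is invertible in the affine case.

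\emph{Step 3: bounding the relevant part of the kernel.} If $f$ is not affinely conjugate to a homogeneous map, $\ker\rho_f^{\mathrm{aff}}$ is finite by the proof of Theorem~\ref{thm: main2}, and we take $M$ to be its order.

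Otherwise, by Lemma~\ref{lem: charcterizationHomogeneous} and the Haar-measure centering argument in the proof of Theorem~\ref{thm: main2}, we may conjugate by a translation so that $J$ is $S^1$-invariant. Then every element of $\Scal^+$ is homogeneous, and the kernel is $\{z\mapsto\lambda z: |\lambda|=1\}$, possibly together with finitely many further elements. It then suffices to show that the scalars $\lambda$ that actually occur as $h_2h_1^{-1}$ with $h_1,h_2\in\Scal^+$ in a common fiber are roots of unity of uniformly bounded order.

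To get this, I will compare $h$ and $\lambda h$ for homogeneous $h\in\Scal^+$ of degree $e$. One has $(\lambda h)^n=\lambda^{1+e+\dots+e^{n-1}}h^n$. As in Example~\ref{ex:homogeneous-counterexample-prep}, a fixed point $p\neq0$ of $h$ (which exists by counting fixed points of $h_\Pi$ and rescaling along the line) is preperiodic for $\lambda h$ only if $\lambda^{(e^n-1)/(e-1)}=\lambda^{(e^m-1)/(e-1)}$ for some $n\neq m$, so $\lambda$ is a root of unity. Equality of preperiodic sets forces this for every such $\lambda$.

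I expect the main obstacle to be making the order bound uniform. The plan is as follows. The generated scalars form a subgroup $\Lambda\subset S^1$, and it is finitely generated: the scalar by which two words differ is a monomial in the finitely many scalars attached to the generators. A finitely generated subgroup of $S^1$ made of roots of unity is finite. Setting $M:=\#\Lambda$ times the order of the finite part of the kernel, the degree-$1$ fibers are handled by the same coset argument, since their discrepancies also lie in $\Lambda$ up to the finite part. This should give the uniform bound $M$.
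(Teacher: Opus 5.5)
\textbf{Where the argument breaks.} Your reduction matches the paper's. Yuan--Zhang gives a common Julia set, and Theorem~\ref{thm: main1} turns each fiber into a coset of $\ker\rho_f^{\mathrm{aff}}$. The non-homogeneous case is closed by finiteness of that kernel. In the homogeneous case, a discrepancy $h_2=\lambda h_1$ forces $\lambda$ to be a root of unity. Your nonzero fixed point of $h_1$ is a valid, slightly more direct substitute for the paper's periodic line and Lemma~\ref{lem:azd-same-prep}, and handling affine elements via Step~1 is also fine.

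The gap is in the last step, which you correctly flagged as the crux. You assert that $\Lambda$ is finitely generated because ``the scalar by which two words differ is a monomial in the finitely many scalars attached to the generators,'' but this is not justified. For general $\rho(\Scal)$ there is no normalization of homogeneous lifts that is compatible with composition. Fix lifts $v_a$ of points $a\in\Pi$ and write $h(v_a)=c_h(a)\,v_{h_\Pi(a)}$. Then $c_{hg}(a)=c_g(a)^{\deg h}\,c_h(g_\Pi(a))$, so the scalar of a word involves the generators' scalars at infinitely many base points. Put differently, the scalars come from the relations of $\rho(\Scal)$. Nothing guarantees that $\rho(\Scal)$ is finitely presented, so there is no finite list of scalars to take monomials in. Without some bound on the orders, you do not even get finiteness of a single fiber, since $\{\lambda h_1\}$ could a priori involve roots of unity of unbounded order.

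\textbf{The missing ingredient.} Choose a finitely generated field $K\subset\Cbb$ containing the coefficients of a finite generating set of $\Scal$. Every element of $\Scal$ is then defined over $K$. Comparing leading homogeneous parts in $h_2=\lambda h_1$ (or in $h_2f=\lambda h_1f$ for affine elements) gives $\lambda\in K^*$; these leading parts are unchanged by your translation conjugation. A finitely generated field contains only finitely many roots of unity, so every discrepancy lies in that finite group. Hence $M$ equal to its order works uniformly. Your ``finite part'' of the kernel is trivial here, because the compact kernel has a common fixed point, which the scalar circle forces to be the origin. This is exactly how the paper closes the argument in Lemma~\ref{lem:fibers-finite-homogeneous}, and it makes the detour through finite generation of $\Lambda$ unnecessary.
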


Let us prepare some lemmas before proceeding with the proof of Proposition~\ref{prop: finite_fiber}.
\begin{lem}\label{lem:azd-same-prep}
Let $d\ge 2$ and $a,b\in \Cbb^*$. If the polynomials
\begin{align*}
z\longmapsto az^d
\qquad\text{and}\qquad
z\longmapsto bz^d
\end{align*}
have the same preperiodic points in $\Abb^1(\Cbb)$, then $b/a$ is a root of unity.
\end{lem}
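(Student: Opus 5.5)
The plan is to compare preperiodic points of the two maps through an explicit description of the preperiodic set of $p_a(z)=az^d$. First, conjugate by a scaling. Fix $\alpha$ with $\alpha^{d-1}=a$ and put $\phi(z)=\alpha z$. Then
\[
\phi^{-1}p_a\phi(z)=\alpha^{-1}a\alpha^dz^d=\alpha^{d-1}\cdot a\,\alpha^{-1}\alpha\, z^d/\alpha^{d-1}\cdot\ldots
\]
We will simply compute directly: $p_a(\alpha w)=a\alpha^dw^d=\alpha(a\alpha^{d-1})w^d$. A cleaner normalization is $\alpha$ with $a\alpha^{d-1}=1$. Then $p_a$ is conjugate to $w\mapsto w^d$ via $z=\alpha w$. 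The preperiodic points of $w\mapsto w^d$ in $\Cbb$ are exactly $0$ and the roots of unity. Hence
\[
\Prep(p_a)=\{0\}\cup\alpha_a\mu_\infty,\qquad \alpha_a^{d-1}=a^{-1},
\]
where $\mu_\infty$ denotes the group of all roots of unity. This set does not depend on the choice of $(d-1)$-th root, since two such choices differ by a root of unity.

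Second, use the hypothesis. Equality $\Prep(p_a)=\Prep(p_b)$ gives $\alpha_a\mu_\infty=\alpha_b\mu_\infty$, so $\alpha_b/\alpha_a$ is a root of unity $\zeta$. Raising to the power $d-1$ gives $a/b=\alpha_b^{d-1}/\alpha_a^{d-1}=\zeta^{d-1}$, which is a root of unity. Hence $b/a$ is a root of unity.

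The only point that needs care is the characterization of the preperiodic points of $w^d$. If $w^{d^m}=w^{d^n}$ with $m>n$ and $w\neq0$, then $w^{d^n(d^{m-n}-1)}=1$, so $w$ is a root of unity. Conversely, a root of unity of order $N$ is preperiodic because the exponents $d^j \bmod N$ are eventually periodic. This step is elementary, so I expect no real obstacle. A possible alternative would be to compare the Julia circles $|z|=|a|^{-1/(d-1)}$, but that only yields $|a|=|b|$, which is not enough. The explicit description of the preperiodic set is what gives the full conclusion.
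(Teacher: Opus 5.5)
Your proof is correct and follows the paper's argument. You conjugate $az^d$ to $w^d$ by the scaling $z=\alpha_a w$ with $\alpha_a^{d-1}=a^{-1}$, read off $\Prep(az^d)\cap\Cbb^*=\alpha_a\mu_\infty$, and conclude from $\alpha_b/\alpha_a\in\mu_\infty$ after raising to the power $d-1$. The only blemish is your first display, which is garbled after the valid equality $\alpha^{-1}a\alpha^dz^d=a\alpha^{d-1}z^d$; delete it, since your subsequent normalization already does the job.
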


\begin{proof}
Choose $\alpha_a,\alpha_b\in\Cbb^*$ such that
$\alpha_a^{d-1}=a^{-1}$ and $\alpha_b^{d-1}=b^{-1}$. Conjugation by
$z\mapsto\alpha_a z$ identifies $az^d$ with $z^d$, and similarly for $b$.
Consequently,
\[
   \Prep(az^d)\cap\Cbb^*=\alpha_a\mu_\infty,
   \qquad
   \Prep(bz^d)\cap\Cbb^*=\alpha_b\mu_\infty,
\]
where $\mu_\infty$ is the group of all roots of unity. Equality of these sets
implies $\alpha_b/\alpha_a\in\mu_\infty$, and hence $b/a$ is a root of unity.
\end{proof}

\begin{lem}\label{lem:common-conjugacy-to-homogeneous}
Under the assumptions of Proposition~\ref{prop: finite_fiber}, suppose moreover that
one element of $\Scal^+$ is affinely conjugate to a homogeneous polynomial map.
Then there exists a single affine map $\sigma$ of $\Cbb^k$ such that
$\sigma g\sigma^{-1}$ is homogeneous for every $g\in \Scal^+$.
\end{lem}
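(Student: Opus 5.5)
Fix $f\in\Scal^+$ and an affine map $\sigma$ with $\sigma f\sigma^{-1}$ homogeneous. After conjugating the whole semigroup by $\sigma$, we may assume that $f$ itself is homogeneous. We must then show that every $g\in\Scal^+$ is homogeneous.

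The first step is to compare Julia sets. By Yuan--Zhang~\cite{YZhodge17}, equality of preperiodic sets implies equality of equilibrium measures, so $\mu_g=\mu_f$ and hence $J_g=J_f$ for every $g\in\Scal^+$. Since $f$ is homogeneous, Lemma~\ref{lem: charcterizationHomogeneous} shows that $J_f$ is $S^1$-invariant. Therefore $J_g=J_f$ is $S^1$-invariant, and the same lemma, applied to $g$, gives that $g$ is homogeneous. Thus the single map $\sigma$ works for all of $\Scal^+$ at once.

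I expect the only delicate point to be the input $\Prep(f)=\Prep(g)\Rightarrow\mu_f=\mu_g$ for maps of possibly different degrees. If one prefers to avoid quoting Yuan--Zhang in that generality, there is a direct route via Green functions. Since $\Prep(f)$ is Zariski dense and contained in $K_f$, with closure containing $J_f$, one can argue that the closures of the common preperiodic set determine $K$; this requires care. A cleaner alternative is to use the semigroup: $fg$ and $gf$ both have preperiodic set $P$. I would nevertheless rely on Yuan--Zhang, as the paper already does in Theorem~\ref{thm:same-preper}.

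Note also that the homogeneity criterion applies to $g$ for every degree $\geq2$, and that conjugation by an affine map preserves the hypotheses. Consequently, no case distinction on degrees is needed.
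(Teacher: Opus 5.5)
Your proof is correct and is essentially the paper's argument. The paper also takes $\sigma$ with $\sigma f\sigma^{-1}$ homogeneous and uses $J_g=J_f$ for all $g\in\Scal^+$, which comes from Yuan--Zhang~\cite{YZhodge17} as in the proof of Proposition~\ref{prop: finite_fiber}; it then applies Lemma~\ref{lem: charcterizationHomogeneous} in both directions to the $S^1$-invariant set $\sigma(J_f)=J_{\sigma g\sigma^{-1}}$, so the speculative alternatives in your third paragraph are not needed.
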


\begin{proof}
Choose $f\in \Scal^+$ and an affine map $\sigma$ such that $\sigma  f\sigma ^{-1}$ is
homogeneous. If $J$ denotes the Julia set of $f$, then $\sigma(J)$ is
$S^1$-invariant.
For any $g\in \Scal^+$, since $J_g=J$, we have $J_{\sigma g\sigma^{-1}}=\sigma(J),$
which is $S^1$-invariant as well. By Lemma~\ref{lem: charcterizationHomogeneous},
$\sigma g\sigma ^{-1}$ is homogeneous.
\end{proof}

\begin{lem}\label{lem:fibers-finite-homogeneous}
Under the assumptions of Proposition~\ref{prop: finite_fiber}, suppose moreover that every element of $\Scal^+$ is homogeneous. Then there exists $M\ge 1$ such that
\begin{align*}
\#\bigl(\rho^{-1}(u)\cap \Scal\bigr)\le M
\qquad\text{for every }u\in \rho(\Scal).
\end{align*}
\end{lem}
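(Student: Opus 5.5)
Two elements of $\Scal$ lying in the same fiber of $\rho$ will be compared through their leading data. Suppose $h_1,h_2\in\Scal$ satisfy $\rho(h_1)=\rho(h_2)=u$.

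\textbf{Step 1: degree-one elements.} If $u$ has degree one, then $h_1,h_2$ are affine maps whose linear parts agree up to a scalar. These scalars must be controlled separately. I would instead count elements by word length. Since $\Scal^+\neq\varnothing$, every product containing a nonlinear generator is nonlinear, so the affine part of $\Scal$ is generated by the affine generators. For this part I expect to need either that it is finite or some extra argument. To keep the plan definite, I concentrate on the nonlinear fibers, which is where the hypotheses enter, and note that affine elements are handled by the same scalar argument once they are composed with a fixed nonlinear element.

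\textbf{Step 2: scalar relation.} For $h_1,h_2\in\Scal^+$ of degree $e\geq2$, both are homogeneous, so equality at infinity gives $h_2=c\,h_1$ for some $c\in\Cbb^*$. All elements of $\Scal^+$ have the same preperiodic set, hence by Yuan--Zhang the same Green function. Then $G\circ(c h_1)=eG=G\circ h_1$, and since $h_1$ is surjective, $G(cz)=G(z)$. Because $G$ has logarithmic growth, this forces $|c|=1$.

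\textbf{Step 3: roots of unity.} Next, pick a point $z_0$ on a line through the origin that is invariant under $h_1$. Such a point exists because $\rho(h_1)$ has a fixed point $[v]$ on $\Pi$, so $h_1(v)=a v^{\otimes}$, that is, $h_1(tv)=a t^e v$. On this line, $h_1$ and $h_2$ restrict to $t\mapsto a t^e$ and $t\mapsto c\,a t^e$. Their preperiodic points on the line are exactly $\Prep\cap\Cbb v$, which coincide by hypothesis. Lemma~\ref{lem:azd-same-prep} then shows that $c$ is a root of unity.

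\textbf{Step 4: uniform bound on the order (main obstacle).} The difficulty is to bound the order of $c$ uniformly over the fiber and over $\Scal$. I would first try to exploit the fixed data from finitely many generators. Each $h\in\Scal^+$ equals $\lambda_h\,w_h$, where $w_h$ is the corresponding word in a chosen normalized set of generators. The roots-of-unity constraint then holds relative to a fixed $f\in\Scal^+$: the group $\{c\in S^1 : cJ=J\}$ is closed. If it is finite, it equals some $\mu_N$, and every ratio $c$ must lie in it, since $c h_1$ and $h_1$ have the same Green function and $c$ preserves $K$. This would give $M=N$.

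If instead this group is all of $S^1$, then the homogeneous elements have $S^1$-invariant Julia sets. In that case I would use Step 3 more carefully. Write every element as a scalar multiple of a monic word. The scalars multiply along words with exponents, so the possible ratios within a fiber form a finitely generated subgroup of $\mu_\infty$. Such a subgroup is finite, and its order gives $M$.
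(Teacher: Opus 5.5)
Your Steps 2 and 3 are sound. Using a fixed point of $\rho(h_1)$ instead of a periodic point is a harmless simplification of the paper's argument, and Step 2 is not actually needed. Handling the degree-one fibers by right-composing with a fixed $F\in\Scal^+$ also works, since $h\mapsto hF$ is injective and maps each fiber into a nonlinear fiber.

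The gap is Step 4, which is where the whole content of a \emph{uniform} bound lies.

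First, the sub-case where $\{c\in S^1\mid cJ=J\}$ is finite never occurs. If $h$ is homogeneous of degree $e$, then $h^n(cz)=c^{e^n}h^n(z)$, so $J$ is automatically $S^1$-invariant.

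Second, in the remaining sub-case you assert that the ratios within a fiber lie in a finitely generated subgroup of $\mu_\infty$, and this is not justified. Writing $h=\lambda_h\bar w_h$ only places $\lambda_h$ in the group generated by the generator scalars. Two distinct elements $h_1,h_2$ of a fiber come from different words, and the normalized words themselves satisfy $\bar w_{h_2}=\gamma\,\bar w_{h_1}$ for a scalar $\gamma$ that your bookkeeping does not see. For example, $A(x,y)=(y,x)$ and $B(x,y)=(x,-y)$ give $AB=-BA$, although neither generator carries a scalar. Step 3 then only tells you that $c\in\mu_\infty$. Restricting to invariant lines cannot bound the order of $c$, because $\Prep(\beta z^D)\setminus\{0\}=\alpha\mu_\infty$ for some $\alpha\in\Cbb^*$, and this set is invariant under every root of unity.

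The missing ingredient is arithmetic. Choose a finitely generated field $K\subset\Cbb$ containing the coefficients of a finite generating set of $\Scal$. Then every element of $\Scal$ is defined over $K$, so the scalar $c$ with $h_2=c\,h_1$ is a quotient of two coefficients and lies in $K^*$. Combined with Step 3, $c$ lies in the group of roots of unity of $K$. That group is finite, because the algebraic closure of $\Qbb$ in $K$ is a number field. Hence every fiber meeting $\Scal^+$ has at most $M:=\#(\mu_\infty\cap K^*)$ elements, and your composition trick transfers the same bound to the degree-one fibers. This is exactly how the paper concludes.
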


\begin{proof}
Choose a finitely generated field $K\subset \Cbb$ over which a finite set of generators
of $\Scal$ is defined. Then every element of $\Scal$ is defined over $K$, and $K$ has
only finitely many roots of unity.

\textbf{Case 1.} Let $u\in \rho(\Scal)^+$.
Choose $f,g\in \rho^{-1}(u)\cap \Scal^+$. Since $f$ and $g$ are homogeneous of the same degree, the equality
$f_\Pi=g_\Pi$ implies $g=\lambda f$
for some $\lambda\in K^*$.

Choose a periodic point $a\in \Pi$ of $u$, say of period $m\ge 1$, and let
$L_a\subset \Cbb^k$ be the complex line corresponding to $a$.
Then $f^m(L_a)=g^m(L_a)=L_a$.
In a linear coordinate $z$ on $L_a$, we can write
\begin{align}\label{eq: restriction-La}
    f^m|_{L_a}(z)=\beta z^D,
\qquad
g^m|_{L_a}(z)=\lambda^{1+d+\cdots+d^{m-1}}\beta z^D
\end{align}
for some $\beta\in \Cbb^*$ and $D=d^m$.

Since $\Prep(f)=\Prep(g)$ by the assumption, we have $\Prep(f^m)=\Prep(g^m),$
and restricting to the invariant line $L_a$ gives equality of the preperiodic sets of the
    two one-variable maps in Eq.~\eqref{eq: restriction-La}. Lemma~\ref{lem:azd-same-prep} therefore shows that $\lambda^{1+d+\cdots+d^{m-1}}$
is a root of unity, hence $\lambda$ itself is a root of unity.
Since $K$ contains only finitely many roots of unity, the fiber
$\rho^{-1}(u)\cap \Scal^+$ is uniformly finite.

\textbf{Case 2.} Now let $u\in \rho(\Scal)$ have degree $1$, and let $\sigma,\tau\in \rho^{-1}(u)\cap \Scal$.
Fix $F\in \Scal^+$. Since every element of $\Scal^+$ is homogeneous, both
$\sigma F$ and $\tau F$ are homogeneous. This forces $\sigma$ and $\tau$ to
be linear. Therefore, there exists $\lambda\in K^*$ such that $\sigma=\lambda\tau.$

Applying the argument of Case 1 to $f=\tau F$ and $g=\sigma F$, we conclude that $\lambda$ is a root of unity.
Thus the degree-one fibers are also uniformly finite. This proves the lemma.
\end{proof}

\begin{proof}[Proof of Proposition~\ref{prop: finite_fiber}]
Suppose first that some $f_0\in\Scal^+$ is not affinely conjugate to a
homogeneous polynomial map. Lemma~\ref{lem:common-conjugacy-to-homogeneous}
then shows that no element of $\Scal^+$ is affinely conjugate to a homogeneous
map. Since all elements of $\Scal^+$ have the same preperiodic set, they have
the same equilibrium measure by Yuan--Zhang~\cite{YZhodge17}, and hence the
same Julia set. In particular, their affine symmetry groups and restriction
kernels agree. By the first assertion of Theorem~\ref{thm: main2} and
\eqref{eq:kernel-fiber-bijection}, this common kernel is finite. Put
\[
   M_0:=\#\ker\rho_{f_0}^{\mathrm{aff}}<\infty,
\]

Let $u\in\rho(\Scal)$ have degree at least $2$, and choose a basepoint
$h_u\in\rho^{-1}(u)\cap\Scal$. For every
$h\in\rho^{-1}(u)\cap\Scal$, the equality $h_\Pi=(h_u)_\Pi$ implies that
the leading homogeneous parts of $h$ and $h_u$ differ by a nonzero scalar.
Theorem~\ref{thm: main1} therefore
gives $h=\sigma_h\circ h_u$ for some
$\sigma_h\in\Sa_{h_u}$. Passing to infinity and using the
surjectivity of $u$, we obtain $(\sigma_h)_\Pi=\id$. Hence
$\sigma_h\in\ker\rho_{h_u}^{\mathrm{aff}}=
\ker\rho_{f_0}^{\mathrm{aff}}$. Moreover, $\sigma_h$ is unique because $h_u$
is dominant. Thus this fiber injects into a set of cardinality $M_0$.

If $u$ has degree one, right composition by $f_0$ defines an injection
\[
   \rho^{-1}(u)\cap\Scal
   \longrightarrow
   \rho^{-1}\bigl(u\circ(f_0)_\Pi\bigr)\cap\Scal^+,
   \qquad h\longmapsto h\circ f_0.
\]
This map is injective because $f_0$ is dominant. Hence the same bound $M_0$
applies to degree-one fibers.

It remains to consider the case in which every element of $\Scal^+$ is
affinely conjugate to a homogeneous polynomial map. By
Lemma~\ref{lem:common-conjugacy-to-homogeneous}, one affine conjugacy makes all
of them homogeneous. Lemma~\ref{lem:fibers-finite-homogeneous} then gives a
uniform fiber bound in this case as well.
\end{proof}

Let us recall the following results of Bell--Huang--Peng--Tucker and Beaumont.
\begin{thm}[{\cite[Theorem 1.3]{BHPTjems24}}]\label{thm: BHPT1.3}
Let $V$ be a projective variety over a field $K$, and let $f, g : V \to V$ be endomorphisms polarized by the same line bundle $\Lcal$. If $\Prep(f) \neq \Prep(g)$, then, for every sufficiently large positive integer $j$, the semigroup $\langle f^j, g^j \rangle$ is a free semigroup on two generators.
\end{thm}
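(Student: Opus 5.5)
The plan is a height-theoretic ping-pong argument, preceded by a reduction to the case where $K$ is a number field (or, more generally, a finitely generated field carrying a Moriwaki-type height theory).

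\textbf{Step 1: Reduction.} Choose a finitely generated subring $R\subset K$ over which $V$, $\Lcal$, $f$, $g$ and the polarizations $f^*\Lcal\simeq\Lcal^{\otimes q}$, $g^*\Lcal\simeq\Lcal^{\otimes q'}$ are all defined. A nontrivial semigroup relation $w_1=w_2$ in $\langle f^j,g^j\rangle$ is an algebraic identity. Hence if freeness fails for infinitely many $j$, the relations specialize to closed points of $\Spec R$, where the residue fields are number fields when $K$ has characteristic zero. The obstruction is to keep $\Prep(f)\neq\Prep(g)$ after specialization. For this I would use that $\Prep(f)=\Prep(g)$ is equivalent to $\hat h_f=\hat h_g$; this is the Yuan--Zhang type statement, which is the converse direction already available in the arithmetic setting. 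I would also use that $\hat h_f\neq\hat h_g$ can be witnessed by a single point $P$, which can be chosen to survive a suitable specialization. If this is too delicate, I would instead work directly with Moriwaki heights over the finitely generated field.

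\textbf{Step 2: Height inequalities.} Both $\hat h_f$ and $\hat h_g$ are Weil heights for $\Lcal$, so $|\hat h_f-\hat h_g|\le C$ on $V(\overline K)$. Combining this with $\hat h_f\circ f=q\hat h_f$ and $\hat h_g\circ g=q'\hat h_g$, I would show the following: for $j$ large, every word $w$ in $f^j,g^j$ whose leftmost letter is $f^j$ (respectively $g^j$) sends a suitable region of points into a region where $\hat h_f-\hat h_g$ has a definite sign, or, better, where the ratio of the two heights is controlled. The natural tool is the normalized difference $\delta(x):=\hat h_f(x)-\hat h_g(x)$. After applying $f^j$ one has $\hat h_f(f^jx)=q^j\hat h_f(x)$ while $\hat h_g(f^jx)=q^j\hat h_f(x)+O(1)$. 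Since this does not separate the two letters by itself, I would instead use the additional degree of freedom of choosing starting points. Since $\Prep(f)\neq\Prep(g)$, pick, say, $P\in\Prep(g)\setminus\Prep(f)$, so that $\hat h_g(P)=0<\hat h_f(P)$. Then compare the heights of $w_1(P)$ and $w_2(P)$ letter by letter from the right: words ending in $g^j$ kill the $f$-height information, while words ending in $f^j$ amplify it by $q^j$.

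\textbf{Step 3: Ping-pong.} Suppose $w_1=w_2$ is a shortest nontrivial relation. After cancelling common right factors (allowed since the maps are surjective), the words end in different letters, say $w_1=u\,f^j$ and $w_2=v\,g^j$. Evaluating at $P$ and at points of $\Prep(g)$ or $\Prep(f)$, and using the height estimates with $j$ large enough that $q^j\varepsilon\gg C$, I would derive incompatible height values. This needs a uniform lower bound on the growth of $\hat h$ under $u$ and $v$ in terms of their lengths. It is the main obstacle, because the error constant $C$ accumulates along long words, so the estimate must be normalized by the degree of the word. I expect this to require replacing $P$ by a whole sequence of points of small $\hat h_g$-height and large $\hat h_f$-height, and comparing degree-normalized heights $\hat h(w(x))/\deg w$, for which the errors are geometrically summable.
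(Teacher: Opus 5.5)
The paper does not prove this statement; it is quoted from Bell--Huang--Peng--Tucker. Your proposal therefore has to stand on its own, and as written it does not. The setup is right: right cancellation to reach $uF=vG$, a test point in one preperiodic set but not the other, and normalization by degree. But the decisive estimate is left open in Step~3, and the remedy you suggest cannot work.

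\textbf{The missing estimate.} You propose to beat the accumulated error $C$ by using points with small $\hat h_g$ and large $\hat h_f$. However, $|\hat h_f-\hat h_g|\le C$ on all of $V(\overline K)$, so no choice of points makes the gap between the two heights exceed $C$. What makes the argument work is the degree of the rightmost letter, and this is exactly where ``$j$ sufficiently large'' enters.

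Put $F=f^j$ and $G=g^j$, of degrees $q^j,q'^j\ge2$.
\begin{enumerate}
\item For a single letter $a$ and any $y$, one has $|\hat h_f(a(y))/\deg a-\hat h_f(y)|\le 2C$, and the left side is $0$ when $a=F$.
\item Telescoping along any word $w$ gives $|\hat h_f(w(y))/\deg w-\hat h_f(y)|\le 4C$. The reason is that the $i$-th error is divided by a degree at least $2^i$.
\item Apply this to $u$ at $y=F(x)$ and to $v$ at $y=G(x)$, and use $\hat h_f(G(x))=q'^j\hat h_g(x)+O(C)$. This yields
\[
\frac{\hat h_f(uF(x))}{\deg(uF)}=\hat h_f(x)+O\bigl(Cq^{-j}\bigr),
\qquad
\frac{\hat h_f(vG(x))}{\deg(vG)}=\hat h_g(x)+O\bigl(Cq'^{-j}\bigr).
\]
\end{enumerate}
Hence a relation $uF=vG$ forces $\sup|\hat h_f-\hat h_g|\le 9C\min(q,q')^{-j}$. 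This is impossible for $j\gg1$ as soon as a single point $P$ has $\hat h_f(P)\neq\hat h_g(P)$. No sequence of points is needed.

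Two smaller points on Steps 2--3. The leftmost-letter ping-pong in Step~2 should be dropped, since left cancellation is unavailable for non-injective maps. Also, after right cancellation neither word can become empty, by comparing degrees.

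\textbf{The reduction in Step~1.} Specializing to closed points with number-field residue fields only covers characteristic zero. In characteristic $p$ the residue fields are finite, so every point becomes preperiodic and the route fails. Even in characteristic zero, you do not show that $\Prep(f)\neq\Prep(g)$ survives specialization.

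The clean route is your fallback, made precise as follows.
\begin{enumerate}
\item For fixed $n,m$, the solutions of $f^{n+m}(x)=f^n(x)$ form a finite set. So all preperiodic points are defined over $\overline{K_0}$ for a finitely generated field of definition $K_0$.
\item Over $K_0$, use heights with the Northcott property: Moriwaki heights for a big polarization in characteristic zero, or heights relative to a projective model over $\mathbb F_p$ in characteristic $p$.
\item Then $\Prep(f)=\{\hat h_f=0\}$, so $\Prep(f)\neq\Prep(g)$ directly gives the point $P$ above.
\end{enumerate}
Only this easy implication is needed. The Yuan--Zhang direction you invoke plays no role.
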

Beaumont strengthened this result.
\begin{thm}[{\cite[Theorem 1.1]{beaumont2025uniformtitsalternativeendomorphisms}}]\label{thm: Beaumont-tits}
    One can take $j=1$ in Theorem~\ref{thm: BHPT1.3}.
\end{thm}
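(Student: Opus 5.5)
This statement is quoted from Beaumont and is not proved in the paper; what follows is a plan for how I would try to prove it, with the hardest step still open. Argue by contraposition: assume $\langle f,g\rangle$ is not free on $f,g$, and deduce $\Prep(f)=\Prep(g)$. The advantage over Theorem~\ref{thm: BHPT1.3} should come from using a shortest relation directly, instead of passing to high iterates to run a ping-pong argument.

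\textbf{Reducing to a normalized relation.} Take two distinct words $W_1\neq W_2$ in $f,g$ with $W_1=W_2$ as maps, of minimal total length. Endomorphisms polarized by $\Lcal$ are finite and surjective, so right cancellation holds: $uh=vh$ implies $u=v$. By minimality, the last letters of $W_1$ and $W_2$ therefore differ, and after relabelling the relation reads $uf=vg$ for words $u,v$. Since $f^*\Lcal\simeq\Lcal^{\otimes q_f}$ and $g^*\Lcal\simeq\Lcal^{\otimes q_g}$, the composite $W:=uf=vg$ is polarized by $\Lcal$. Comparing the polarization degrees of its two factorizations gives a numerical constraint of the form $q_f^{a}q_g^{b}=q_f^{a'}q_g^{b'}$.

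\textbf{Comparing canonical heights.} Spread everything out over a finitely generated field and work with the canonical heights (or Green functions) $\hhat_f,\hhat_g,\hhat_W$ attached to $\Lcal$; when $K$ is transcendental, use Moriwaki-type heights or Yuan--Zhang adelic line bundles. Functoriality gives
\[
   \hhat_W=\tfrac{1}{\deg u}\,\hhat_W\circ u\circ f\text{-type identities},\qquad
   \hhat_h\circ h=q_h\,\hhat_h .
\]
More usefully, the function $\hhat_W\circ f$ is a height for $\Lcal^{\otimes q_f}$ that is also computed through $v$ and $g$. The aim is an inequality
\[
   \bigl|\hhat_f-\hhat_W\bigr|\le C\,\hhat_g \quad\text{and}\quad \bigl|\hhat_g-\hhat_W\bigr|\le C\,\hhat_f,
\]
or at least the inclusion $\Prep(f)\subset\Prep(W)\subset\Prep(g)$, by induction on the word length using the relation. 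With both inclusions, the symmetric argument gives equality of preperiodic sets. Alternatively, one can pass to the $v$-adic Green measures and prove $\mu_f=\mu_g$ at every place, then conclude equality of heights via Yuan--Zhang.

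\textbf{Main obstacle.} The hard step is turning a single relation $uf=vg$ into equality of the heights $\hhat_f=\hhat_g$. Such a relation does not directly make $f$ and $g$ share a height: that requires some form of commutation, for example $\hhat_f\circ g=q_g\hhat_f$. My first attempt would be to iterate the relation and produce, for every $n$, words of length $\asymp n$ that coincide, giving a subexponential count of distinct elements in $\langle f,g\rangle$. I would then combine this with an equidistribution argument: the measures $\frac{1}{\deg w}w^*\omega$ along random words converge to a single stationary measure. This should force the Green currents of $f$ and $g$ to coincide, hence $\hhat_f=\hhat_g$ and $\Prep(f)=\Prep(g)$.
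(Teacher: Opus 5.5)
The paper does not prove this statement: it is imported from Beaumont and used as a black box (e.g.\ in Corollary~\ref{cor: prep-free}). Judged on its own, your proposal has a genuine gap, and that gap is the whole theorem. The reduction to a minimal relation $uf=vg$ with different last letters, via right cancellation for surjective maps, is correct but routine. Everything after it is a list of hoped-for statements.

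\textbf{Height comparison.} This cannot work as set up. The standard telescoping estimate, uniform in the word $w$, is $\hhat_f(w(y))=q_w\hhat_f(y)+O(q_w)$. Inserting the two factorizations of $W=uf=vg$ and dividing by $q_W$ returns only $\hhat_f-\hhat_g=O(1)$, which you already know. The proposed bounds $|\hhat_f-\hhat_W|\le C\hhat_g$ and the inclusions $\Prep(f)\subset\Prep(W)\subset\Prep(g)$ come with no mechanism. (The displayed identity should also read $\hhat_W\circ u\circ f=q_uq_f\,\hhat_W$.)

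\textbf{Growth.} Iterating a relation does not give subexponential growth. The positive braid monoid $\langle a,b\mid aba=bab\rangle$ is not free and is compatible with your degree constraint when $q_a=q_b$. Yet it has exponential growth, since $a^2$ and $b^2$ generate a free submonoid. Polynomial growth of $\langle f,g\rangle$ is available only \emph{after} one knows $\Prep(f)=\Prep(g)$ (Proposition~\ref{prop: BHPT4.10}), so using it here would be circular.

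\textbf{Equidistribution.} This step fails for two independent reasons. First, limiting stationary measures for random compositions of $f$ and $g$ exist whether or not $\langle f,g\rangle$ is free, so they cannot detect a relation. Second, equality of Green currents at the complex place would not give $\hhat_f=\hhat_g$. In Example~\ref{ex:homogeneous-counterexample-prep}, $f$ and $\lambda f$ with $|\lambda|=1$ not a root of unity have the same equilibrium measure but different preperiodic sets. Moreover $\langle f,\lambda f\rangle$ is free: by $f(\lambda z)=\lambda^2 f(z)$, a word of length $n$ equals $\lambda^{e}f^n$, with $e$ the binary number recording the positions of the letter $\lambda f$. You would need equality of local canonical heights at every place of $K$, and for every arithmetic polarization when $K$ is not a number field. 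Nothing in your sketch extracts this from a single relation $uf=vg$.

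That extraction is exactly the content of Beaumont's theorem. Without it, the proposal restates the problem rather than solving it.
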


\begin{prop}[{\cite[Proposition 4.10]{BHPTjems24}}]\label{prop: BHPT4.10}
 Let $\Sfrak$ be a finitely generated semigroup of endomorphisms of $\Pbb^1$ over $\Cbb$. Suppose that for any $f,g \in \Sfrak^+$, we have $\Prep(f) = \Prep(g)$. Then $\Sfrak$ has polynomially bounded growth, and if $\Sfrak^+$ contains at least one nonspecial rational function, then $\Sfrak$ has linear growth.
\end{prop}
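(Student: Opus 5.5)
The natural route is to pass from preperiodic points to equilibrium measures, then count elements of $S^{\le n}$ degree by degree. Let $S$ be a finite generating set of $\Sfrak$.

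\textbf{Step 1: common measure and a common base degree.} Since all elements of $\Sfrak^+$ share the same preperiodic set, they have the same equilibrium measure $\mu$, by the one-dimensional case of the equivalence between equal preperiodic sets and equal measures (Baker--DeMarco, or Levin--Przytycki). If a nonspecial $f\in\Sfrak^+$ exists, Theorem~\ref{thm: Levin-P} applied to $f$ and each nonlinear generator $g$ gives $a,b\ge1$ with $(\deg f)^a=(\deg g)^b$. Hence all generator degrees are powers of a common integer $D_0\ge2$.

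\textbf{Step 2: counting degrees.} Every $h\in S^{\le n}$ then has $\deg h=D_0^m$ with $0\le m\le cn$. The degree-one elements are controlled by composing on the right with a fixed $f\in\Sfrak^+$, which injects them into degree $D_0^{m}$ fibers. This leaves only $O(n)$ possible degrees. Linear growth therefore follows once we have a \emph{uniform} bound $N$ on
\[
\#\{h\in\Sfrak \mid \deg h=e,\ \mu_h=\mu\},
\]
independent of $e$.

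\textbf{Step 3: the uniform bound (main obstacle).} Levin's theorem (Theorem~\ref{thm: Levin}) gives finiteness for each fixed $e$, but not uniformly in $e$. I would first try to show that two maps $h_1,h_2\in\Sfrak$ of the same degree with the same nonspecial measure satisfy $h_2=\sigma h_1$ for a Möbius map $\sigma$ in a fixed finite group. The candidate group is the group of Möbius symmetries of $J$ preserving $\mu$, which is finite in the nonspecial case by Levin. To establish this I would use Theorem~\ref{thm: Levin-P} to produce iterates satisfying $RR=RS$, and then a Ritt-type decomposition argument. Ritt's examples show that same degree plus same measure does not in general force $h_2=\sigma h_1$. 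So the argument must use that $\Sfrak$ is finitely generated, and ideally that all elements are defined over one finitely generated field. Failing that, I would settle for a bound polynomial in $n$ on the size of each degree class, which at least suffices for the first assertion.

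\textbf{Step 4: special case and the polynomial bound.} When every element of $\Sfrak^+$ is special, all of them are conjugate by a single Möbius map to one of three normal forms sharing the given measure: power maps, Chebyshev maps, or Lattès maps for a fixed elliptic curve. In each case the elements are explicit. For the power case they are $\lambda z^{\pm e}$, and the argument of Lemma~\ref{lem:azd-same-prep} shows that $\lambda$ ranges over roots of unity in a fixed finitely generated field, hence over a finite set. For the Lattès case they are quotients of affine maps $z\mapsto\alpha z+\beta$ of the torus, with $\alpha,\beta$ in a finitely generated abelian group. Words of length $\le n$ then correspond to words in a finitely generated virtually abelian structure, which has polynomially bounded growth. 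Without nonspecial elements the degrees need not share a common base, but the number of possible degrees $\prod d_i^{a_i}$ with $\sum a_i\le n$ is $O(n^r)$. Combined with the polynomial per-degree bound, this gives polynomially bounded growth.
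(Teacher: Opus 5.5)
The paper does not prove this statement; it quotes it from Bell--Huang--Peng--Tucker. Your argument therefore has to stand on its own, and it has a genuine gap at its core.

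Steps~1--2 are a correct reduction. All degrees are powers of one $D_0$, $S^{\le n}$ meets only $O(n)$ degrees, and right composition with a fixed $f\in\Sfrak^+$ absorbs the degree-one elements. But Step~3, a bound on $\#\{h\in\Sfrak\mid \deg h=e\}$ that is uniform in $e$, is never proved. Your fallback polynomial bound is not proved either, so in the nonspecial case \emph{neither} assertion is established.

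The route you sketch cannot work. The claim ``$h_2=\sigma h_1$ with $\sigma$ in a fixed finite group'' already fails for $h_1=f$, $h_2=g$ in $\Sfrak=\langle f,g\rangle$ when $(f,g)$ is a Ritt--Ye pair. Finite generation, or a finitely generated field of definition, does nothing to prevent this.

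The collapse that actually yields linear growth comes from functional relations, not symmetries. If $F\circ F=F\circ G$ and $G\circ G=G\circ F$, as in Theorem~\ref{thm: Levin-P}, then every word $a_1\circ\cdots\circ a_n$ in $F,G$ equals $a_1^{\circ n}$. This is because the letter following an $F$ (resp.\ a $G$) can always be rewritten as $F$ (resp.\ $G$).

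What you need is a version of such relations that holds uniformly over all $h\in\Sfrak^+$. Roughly: a fixed $A$ and $N$ such that, for each degree $e$, the composites $A^{\circ N}\circ h$ with $\deg h=e$ lie in a set of size bounded independently of $e$. Such uniformity converts into a uniform count. A solution $X$ of $P\circ X=Q$, with $P,Q$ fixed, is determined by its value at one point $z_0$ with $Q(z_0)$ not a critical value of $P$, so there are at most $\deg P$ solutions. Obtaining this uniformity is where the one-variable functional-equation theory (Levin--Przytycki and its refinements) must enter, and nothing in Steps~1--2 substitutes for it.

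Two smaller points also need fixing.

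\textbf{Nonspecial generators.} In Step~1, before applying Theorem~\ref{thm: Levin-P} to $f$ and a generator $g$, you must know that $g$ is nonspecial. This holds because specialness is determined by the measure of maximal entropy (Zdunik's theorem). The same fact is what justifies splitting into ``all special'' versus ``all nonspecial'' in Step~4.

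\textbf{Normalization in the power case.} In Step~4, the claim that the multipliers $\lambda$ are roots of unity is false before normalizing. For example, $\langle\lambda z^2\rangle$ with $\lambda\in S^1$ not a root of unity contains $\lambda^{2^n-1}z^{2^n}$. You must first conjugate one element to $z^{e_0}$, possibly enlarging the field by a root of its multiplier. After that, the argument of Lemma~\ref{lem:azd-same-prep} does give roots of unity in a finitely generated field.

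You should also justify that a special measure pins down a single normal form; in the Latt\`es case this means a single orbifold quotient $E\to\Pbb^1$. Granting this, your count in the special case is fine.
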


We are now in a position to prove Theorem~\ref{thm: Tits1}.
\begin{proof}[Proof of Theorem~\ref{thm: Tits1}]
If $\Scal^+ =\varnothing$, then this is a result of Okni\'nski--Salwa~\cite[Theorem 1.5]{zbMATH00825311}. So we may assume $\Scal^+\neq \varnothing$.
Let $S$ be a finite generating set of $\Scal$, and let $S_\Pi:=\rho(S)$, which generates
$\rho(\Scal)\subset \End(\Pi)$.
If $\Scal$ contains a nonabelian free semigroup, there is nothing to prove. So assume from
now on that it does not.

By Theorem~\ref{thm: BHPT1.3} or~\ref{thm: Beaumont-tits}, all elements of $\Scal^+$ have the
same preperiodic set in $\Pbb^2$; in particular all elements of $\rho(\Scal)^+$ have the
same preperiodic set on $\Pi$. Therefore $\rho(\Scal)$
has polynomially bounded growth by Proposition~\ref{prop: BHPT4.10}. Moreover, if there exists  $f\in \Scal^+$ such that $f_\Pi$ is nonspecial, then $\rho(\Scal)$
has linear growth. 

By Proposition~\ref{prop: finite_fiber},
for every $n\ge 0$, $d_{S_\Pi}(n)\le d_S(n)\le M\, d_{S_\Pi}(n).$
Thus $\rho(\Scal)$ and $\Scal$ have comparable growth functions. This proves
Theorem~\ref{thm: Tits1}.
\end{proof}

An immediate consequence of the proof of Theorem~\ref{thm: Tits1}, combined
with Theorem~\ref{thm: Beaumont-tits}, is the following.
\begin{cor}\label{cor: prep-free}
    Let $f,g\in\Poly^2$ be of degree $\ge 2$. Then 
    \begin{align*}
        \Prep(f)=\Prep(g) \Longleftrightarrow \langle f,g \rangle\ \text{is not free}.
    \end{align*}
\end{cor}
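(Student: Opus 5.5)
The plan is to prove both implications by reducing to results already established in the proof of Theorem~\ref{thm: Tits1}. Note that $f$ and $g$ are regular polynomial endomorphisms of $\Cbb^2$. Their extensions to $\Pbb^2$ are polarized by the same line bundle $\Ocal(1)$, since $\widehat f^*\Ocal(1)\simeq\Ocal(\deg f)$ and $\widehat g^*\Ocal(1)\simeq\Ocal(\deg g)$. The degrees may differ, so "the same line bundle" should be read in the sense of Theorem~\ref{thm: BHPT1.3}; this is the standard polarized setting there, and I would check that different polarization degrees are allowed. Preperiodic sets are also compared in $\Pbb^2$ (equivalently $\Abb^2$, with the part on $\Pi$ added), consistently with the use of $\Prep$ in the proof of Theorem~\ref{thm: Tits1}.

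\emph{($\Leftarrow$)} Suppose $\Prep(f)\neq\Prep(g)$. Apply Theorem~\ref{thm: BHPT1.3} with $j=1$, which is permitted by Theorem~\ref{thm: Beaumont-tits}, to the extensions $\widehat f,\widehat g$ on $V=\Pbb^2$. This shows that $\langle f,g\rangle$ is free on two generators. Taking the contrapositive gives: if $\langle f,g\rangle$ is not free, then $\Prep(f)=\Prep(g)$. A small point to check is that preperiodic points in $\Pbb^2$ versus $\Abb^2$ do not cause trouble. Here $\Pi$ is totally invariant for both maps, so equality of preperiodic sets in $\Pbb^2$ splits into equality on $\Abb^2$ and equality on $\Pi$.

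\emph{($\Rightarrow$)} Suppose $\Prep(f)=\Prep(g)$ and set $\Scal:=\langle f,g\rangle$. Every element of $\Scal^+=\Scal$ is a composition of $f$'s and $g$'s. By Yuan--Zhang~\cite{YZhodge17}, $\mu_f=\mu_g$, hence $G_f=G_g=:G$. Every word $h$ then satisfies $G\circ h=(\deg h)G$, so by Proposition~\ref{Prop: basic-green}(3) $G_h=G$. Therefore every element of $\Scal^+$ has the same equilibrium measure, and I expect also the same preperiodic set. This is the delicate step, because Yuan--Zhang only gives preperiodic sets $\Rightarrow$ measure. I would try first to avoid needing it. The argument in the proof of Theorem~\ref{thm: Tits1} only uses that the images at infinity share preperiodic sets on $\Pi$ (from equal measures on $\Pbb^1$ via Levin--Przytycki, or directly because $\Pi$-preperiodic sets of $f_\Pi,g_\Pi$ agree and Theorem~\ref{thm: BHPT1.3} on $\Pbb^1$ applies to all pairs in $\rho(\Scal)$). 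It also uses the fiber bound. For the fiber bound, Proposition~\ref{prop: finite_fiber} needs common preperiodic sets. In its proof, however, the non-homogeneous case uses only equal Julia sets, via Theorem~\ref{thm: main1} and the finiteness assertion of Theorem~\ref{thm: main2}. The homogeneous case uses only that $\Prep(f)=\Prep(g)$ for the two elements being compared, which can be arranged by reducing to $f,g$ themselves through a common conjugacy. Failing that, I would invoke Theorem~\ref{thm:same-preper} where its hypotheses hold.

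With those inputs, $\rho(\Scal)$ has polynomially bounded growth by Proposition~\ref{prop: BHPT4.10}. The fibers of $\rho|_\Scal$ are uniformly bounded, so $d_S(n)\le M\,d_{S_\Pi}(n)$ is polynomially bounded. A free semigroup on two generators has exponential growth, so $\langle f,g\rangle$ is not free. The main obstacle is the step just discussed: guaranteeing that all elements of $\Scal^+$, not only $f$ and $g$, satisfy the hypotheses needed for the uniform fiber bound.
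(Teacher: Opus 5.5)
Your route is the paper's. You use Theorem~\ref{thm: Beaumont-tits} (the $j=1$ case of Theorem~\ref{thm: BHPT1.3}) for ($\Leftarrow$), and that direction is fine. For ($\Rightarrow$) you use the growth comparison from the proof of Theorem~\ref{thm: Tits1}. The step you flag there is a genuine gap, and none of your workarounds closes it.

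\emph{Image side.} Levin--Przytycki turns equal measures into equal preperiodic sets only when $f_\Pi$ is not conjugate to a power map: $z^2$ and $\lambda z^2$ with $|\lambda|=1$ share a measure but not $\Prep$. Applying Theorem~\ref{thm: BHPT1.3} to all pairs in $\rho(\Scal)$ is circular. That theorem only turns unequal preperiodic sets into free subsemigroups, so deducing equality for every pair presupposes the non-freeness you are trying to prove.

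\emph{Fiber side.} Lemma~\ref{lem:fibers-finite-homogeneous} compares two arbitrary words $h_1$ and $h_2=\lambda h_1$ in the same fiber. It needs $\Prep(h_1)=\Prep(h_2)$ to force $\lambda$ to be a root of unity. A common conjugacy makes all words homogeneous, but it does not reduce this comparison to $f$ versus $g$. Equal Green functions give only $|\lambda|=1$. Example~\ref{ex:homogeneous-counterexample-prep} shows that no measure-level argument can bound these fibers. There all words have the same equilibrium measure and words of equal length have the same restriction to $\Pi$, yet the semigroup is free. Finally, Theorem~\ref{thm:same-preper} excludes exactly the homogeneous and power-map cases where the above breaks, so it does not help.

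The missing idea is that equality of preperiodic sets propagates to the whole semigroup, by the height argument used at the end of the proof of Theorem~\ref{thm: arith}.
\begin{itemize}
\item Take $K$ finitely generated over $\Qbb$ with $f,g$ defined over $K$.
\item $\Prep(f)=\Prep(g)$ is Zariski dense, since it contains the periodic points. Both canonical heights vanish on it for every arithmetic polarization $\overline{B}$.
\item A generic sequence in this set and Theorem~\ref{thm:YZ-common-small} give $\hhat_f^{\overline B}=\hhat_g^{\overline B}=:\hhat^{\overline B}$.
\item For any word $h$ in $f,g$, one has $\hhat^{\overline B}\circ h=(\deg h)\,\hhat^{\overline B}$ and $\hhat^{\overline B}=h_{\Pbb^2}^{\overline B}+O(1)$. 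Telescoping uniqueness then gives $\hhat^{\overline B}=\hhat_h^{\overline B}$.
\item Theorem~\ref{thm: moriwaki}, applied with a big $\overline B$, yields $\Prep(h)=\Prep(f)$.
\end{itemize}
Hence every element of $\Scal^+=\Scal$ has the same preperiodic set. Propositions~\ref{prop: finite_fiber} and~\ref{prop: BHPT4.10} then apply to $\Scal$ and $\rho(\Scal)$ verbatim, with no case distinction, and your growth comparison finishes the proof. This propagation is the input that makes the paper's deduction of the corollary from the proof of Theorem~\ref{thm: Tits1} immediate.
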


\subsection{Centralizers}\label{sect: cent}
In this subsection we prove Theorem~\ref{thm: centralizer}.

\begin{lem}\label{lem:restriction-to-infinity-centralizer}
Let $f\in\Poly^2_d$ for some $d\ge 2$. We have the following assertions.
\begin{enumerate}
    \item If $g,h\in C(f^\infty)$ satisfy $g_\Pi=h_\Pi$, then there exists
$\sigma\in \Sa_f$ such that
$g=\sigma h$.
    In particular, $\#\bigl(\rho^{-1}(u)\cap C(f^\infty)\bigr)\leq \#\Sa_f$ for every $u\in \rho(C(f^\infty)).$
\item Let $g,h \in C(f^n)$ for some $n\geq 1$. If there exists $\sigma\in \APoly^2$ such that $g=\sigma h$, then $\sigma \in C(f^n)$.
\end{enumerate}
\end{lem}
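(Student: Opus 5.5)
For (1), the plan is to reduce to Theorem~\ref{thm: main1} by composing with a high iterate of $f$. Since $g,h\in C(f^\infty)$, there is a common $n$ with $g,h\in C(f^n)$ (take a common multiple of the two indices). If $g$ and $h$ have degree $\ge2$, the commutation $gf^n=f^ng$ gives $G_f\circ g=(\deg g)\,G_f$: the function $G_f\circ g/\deg g$ is continuous, nonnegative, psh, of logarithmic growth, and invariant under $f^n$. Proposition~\ref{Prop: basic-green}(3) applied to $f^n$ then yields $G_g=G_f$, hence $\mu_g=\mu_f$, and similarly for $h$. From $g_\Pi=h_\Pi$ we get $\deg g=\deg h$ and $g_h=c\,h_h$ for some $c\in\Cbb^*$. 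Theorem~\ref{thm: main1} then gives $g=\sigma h$ with $\sigma\in\Sa_h=\Sa_f$, using $J_h=J_f$.

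If $g,h$ are affine, apply the previous case to $gf^n$ and $hf^n$. Both lie in $C(f^n)$, have degree $\ge2$, and satisfy $(gf^n)_\Pi=(hf^n)_\Pi$. This yields $gf^n=\sigma hf^n$, and surjectivity of $f^n$ gives $g=\sigma h$.

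We must also check that the $\sigma$ obtained in the affine case lies in $\Sa_f$. It comes out of Theorem~\ref{thm: main1} applied to $hf^n$, whose Julia set is $J_f$, so it does. The cardinality bound follows by fixing a basepoint $h_u$ in each fiber. The map $g\mapsto\sigma$ is injective because $h_u$ is surjective, so $g=\sigma h_u$ determines $\sigma$.

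For (2), suppose $g=\sigma h$ with $g,h\in C(f^n)$. Then
\[
\sigma f^n h=\sigma h f^n=gf^n=f^n g=f^n\sigma h.
\]
Since $h$ is surjective, hence right-cancellable, we get $\sigma f^n=f^n\sigma$. Degree comparison shows $\sigma$ is affine, or more generally lies in $\APoly^2$, so $\sigma\in C(f^n)$.

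The main obstacle is the Green function step in (1), namely showing that commuting maps of degree $\ge2$ share $G_f$. This is where the uniqueness in Proposition~\ref{Prop: basic-green}(3) has to be applied carefully to the iterate $f^n$ rather than to $f$. The remaining steps are routine cancellations.
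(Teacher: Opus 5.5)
Your proof is correct and follows the paper's route. For the degree-$\geq2$ case of (1) you use Theorem~\ref{thm: main1}, and you make explicit the Green-function uniqueness argument giving $\mu_g=\mu_h=\mu_f$, which the paper leaves implicit; for (2) you use right cancellation, as the paper does. The only difference is the affine case of (1): the paper simply notes that affine elements of $C(f^\infty)$ already lie in $\Sa_f$ and takes $\sigma=gh^{-1}$, whereas you reduce to the nonlinear case via $gf^n$, $hf^n$ and then cancel $f^n$. Both work, and your uniqueness argument run with $\deg g=1$ would also give $g\in\Sa_f$ directly.
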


\begin{proof}
(1). Let $g,h\in C(f^\infty)$ satisfy $g_\Pi=h_\Pi$. In
particular, $\deg g=\deg h=:e$. If $e=1$, then
$g,h\in\Sa_f$, and we take
$\sigma=gh^{-1}$. If $e\geq2$, the required $\sigma$ is supplied by
Theorem~\ref{thm: main1}.

(2). Let $g,h \in C(f^n)$ for some $n\geq 1$. Then
\begin{align*}
f^n\sigma h=f^ng=gf^n=\sigma h f^n=\sigma f^n h.
\end{align*}
By right cancellation, $f^n\sigma =\sigma f^n$.
\end{proof}

Let us recall Beaumont's result.
\begin{thm}[Beaumont~\cite{beaumont2025centralizersendomorphismsprojectiveline}]\label{thm: BeaumontC}
    Let $f\in\End(\Pbb^1)^+$ be of degree at least $2$. If $f$ is not conjugate to a power map, then there exists $N\ge 1$ such that $C(f^\infty)=C(f^N)$. If moreover $f$ is nonspecial and $S\subset C(f^\infty)$ is a subsemigroup containing $f$, then there exist finitely many elements $u_1,\ldots,u_r\in S$ such that $S=\{u_1,\ldots,u_r\}\langle f\rangle.$
\end{thm}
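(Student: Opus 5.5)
The statement is a purely one-dimensional result that the paper imports from Beaumont, so my plan is to reconstruct it from the classical theory of maps sharing an equilibrium measure. The first step is to replace commutation by equality of measures. If $g\in C(f^\infty)$ has degree at least $2$, then $g$ commutes with some iterate $f^n$, and by the Fatou--Julia--Ritt--Eremenko theory commuting maps have the same measure of maximal entropy, so $\mu_g=\mu_f$. If $g$ is Möbius and commutes with $f^n$, then $g_*\mu_f=\mu_f$ by uniqueness of the measure of maximal entropy. The group of Möbius maps preserving $\mu_f$ is compact, and it is finite unless $J_f$ is a circle or a point, which for $\mu_f$ means $f$ is conjugate to $z\mapsto z^{\pm d}$ (the power-map case we exclude). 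Hence the degree-one part $C(f^\infty)\cap\Aut(\Pbb^1)$ is a finite group $\Gamma_f$.

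For the first assertion, I would bound the degree-$e$ elements of $C(f^\infty)$ for each $e\geq2$. They lie in the set of maps of degree $e$ sharing $\mu_f$, which is finite by Theorem~\ref{thm: Levin}. Each such finite set is contained in some $C(f^{N_e})$, but these iterates depend on $e$, so a uniform $N$ needs more structure. I would use the following Pakovich-type rigidity: if $g\in C(f^n)$ with $\deg g\geq 2$, then some iterate of $g$ equals $\gamma f^m$ for some $\gamma\in\Gamma_f$. Iterating, every element of $C(f^\infty)$ is built from finitely many \emph{primitive} commuting maps together with $\Gamma_f$. Once this is in place, I would take $N$ to be a common multiple of the iterates needed for the generators and for the finite group $\Gamma_f$. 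Conjugation by $f^N$ then acts trivially on $\Gamma_f$ and on the primitive elements, so it fixes everything in $C(f^\infty)$, giving $C(f^\infty)=C(f^N)$.

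For the second assertion, when $f$ is nonspecial, I would invoke Theorem~\ref{thm: Levin-P}: for $g\in S$ of degree at least $2$, suitable iterates $R=f^a$ and $G=g^b$ satisfy $RR=RG$. Combined with commutation, this should force $g^b=\gamma f^a$ with $\gamma$ in the finite group $\Gamma_f\cap C(f^N)$. From this, the degrees of elements of $S$ are powers of a fixed base. Moreover, for each residue of the degree modulo $\deg f$ there are only finitely many elements up to right multiplication by powers of $f$. Choosing $u_1,\ldots,u_r$ as minimal-degree representatives in each class, together with the finitely many degree-one elements, should give $S=\{u_1,\ldots,u_r\}\langle f\rangle$.

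The main obstacle is the uniform structure step: showing that commutation with \emph{some} iterate forces the relation $g^b=\gamma f^a$ with uniformly bounded data. Levin's finiteness holds degree by degree, and passing to uniformity requires Ritt--Pakovich decomposition theory for semiconjugacies. I would first try Pakovich's result that a nonspecial map has only finitely many ``compositional roots'' up to $\Gamma_f$, and handle the special non-power cases (Chebyshev and Lattès) by their explicit models.
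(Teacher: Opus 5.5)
The paper gives no proof of this statement. It imports it from Beaumont, whose work strengthens Pakovich's earlier finiteness results, so your proposal has to stand on its own. Your preliminary steps are fine:
\begin{itemize}
\item commuting maps share $\mu_f$;
\item the degree-one part of $C(f^\infty)$ is a finite group $\Gamma_f$. The correct input here is Levin's theorem that the M\"obius symmetries of $\mu_f$ form a finite group unless $f$ is conjugate to $z^{\pm d}$, not whether $J_f$ is a circle;
\item once $C(f^\infty)=F\langle f\rangle$ with $F$ finite is known, your lcm argument and the passage to a subsemigroup $S\ni f$ both work.
\end{itemize}

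The first gap is your ``Pakovich-type rigidity.'' As stated it is false in part of the range of the first assertion, which excludes only power maps. The map $T_3$ commutes with $T_2$, yet no iterate of $T_3$ equals $\gamma T_2^m$, since $3^b\neq 2^m$. The same happens for the Latt\`es maps induced by $[2]$ and $[3]$ on an elliptic curve. Your mechanism (finitely many primitive elements, then a common multiple of their commuting indices) cannot work there at all. $C(T_2^\infty)$ contains $T_p$ for every prime $p$, and each $T_p$ is indecomposable because its degree is prime. So any generating set must contain an element of degree $p$ for every prime $p$, and $C(T_2^\infty)$ is not finitely generated as a semigroup. Yet the statement asserts $C(T_2^\infty)=C(T_2^N)$. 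The Chebyshev and Latt\`es cases therefore need a genuinely different argument, and deferring them to ``explicit models'' leaves them unproved.

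The second gap is in the nonspecial case. There your relation $g^b=\gamma f^a$ is available. But the next claim is asserted, not proved, and it is the heart of the theorem. That claim is that $C(f^\infty)$ is generated by $\Gamma_f$ and finitely many primitive elements. Its variant in your second-assertion paragraph is that each degree class contains only finitely many elements up to right composition with powers of $f$.

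Levin's finiteness holds one degree at a time. The relation $g^b=f^a$ gives no bound on the degree of an element $g\in C(f^\infty)$ that does not factor as $u\circ f$ with $u\in C(f^\infty)$. Knowing that $f$ and $g$ are both right compositional factors of $f^a=g^b$ does not by itself make $f$ a right factor of $g$ when $\deg g\geq\deg f$. For example, $z^2$ and $z^3$ are both right factors of $z^6$.

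Producing that degree bound is precisely Pakovich's finiteness theorem, which rests on substantial decomposition theory for commuting and semiconjugate rational maps. As written, your proposal reduces both assertions to their hardest step without supplying it.
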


\begin{proof}[Proof of Theorem~\ref{thm: centralizer}]
\textbf{(1).}
By Theorem~\ref{thm: BeaumontC} applied to $f_\Pi$,
there exists $N_0\geq 1$ such that $C(f_\Pi^\infty)=C(f_\Pi^{N_0}).$
Set $\Gamma_f:=\Sa_f\cap C(f^\infty).$
This is a finite group. For each $\tau\in \Gamma_f$, choose $m_\tau\geq 1$ such that $\tau f^{m_\tau}=f^{m_\tau} \tau$.
Set
\begin{align*}
   M:=\operatorname{lcm}\{m_\tau\mid \tau\in \Gamma_f\},
\ \ \
L:=\operatorname{lcm}(N_0,M),
\ \ \ \text{and}
\ \ \
E:= \# \Gamma_f. 
\end{align*}

Let $g\in C(f^\infty)$, and choose $n_g\geq1$ such that
$g\in C(f^{n_g})$. Since $L$ is a multiple of $N_0$, we have
$(g f^L)_\Pi=(f^L g)_\Pi$. Lemma~\ref{lem:restriction-to-infinity-centralizer}(1)
gives $\sigma_g\in\Sa_f$ such that
\begin{align}\label{eq: central1}
g f^L=\sigma_g f^L g.    
\end{align}
Both $g f^L$ and $f^L g$ belong to $C(f^{n_g})$, so part~(2) of the same
lemma gives $\sigma_g\in C(f^{n_g})$. Thus $\sigma_g\in\Gamma_f$.
Since $L$ is a multiple of $M$, every element of $\Gamma_f$ commutes with $f^L$. In particular, $\sigma_g f^L=f^L \sigma_g.$
From~\eqref{eq: central1}, an
immediate induction therefore gives
\[
   g f^{EL}=\sigma_g^E f^{EL}g=f^{EL}g.
\]
Hence $g\in C(f^{EL})$. This proves $C(f^\infty)\subset C(f^{EL})$ and thus $C(f^\infty)=C(f^{EL}).$
\medskip

\textbf{(2).} Since $f_\Pi$ is not special, Theorem~\ref{thm: BeaumontC} implies that there exist
finitely many elements $u_1,\dots,u_r\in \rho(C(f^\infty))$
such that $\rho(C(f^\infty))=\{u_1,\dots,u_r\}\langle f_\Pi\rangle.$
Set
\[
F:=\bigcup_{i=1}^r \left(\rho^{-1}(u_i)\cap C(f^\infty)\right).
\]
This is finite by Lemma~\ref{lem:restriction-to-infinity-centralizer}(1). It suffices to show that $C(f^\infty) \subset F\langle f\rangle$ since the other inclusion is trivial.

Let $g\in C(f^n)$ for some $n\geq1$. There exist $i\in\{1,\dots,r\}$
and $m\geq0$ such that $g_\Pi=u_i f_\Pi^m$. Choose
$h_i\in\rho^{-1}(u_i)\cap C(f^\infty)$. The maps $g$ and
$h_i f^m$ have the same restriction to infinity, so
Lemma~\ref{lem:restriction-to-infinity-centralizer}(1) gives
$\sigma\in\Sa_f$ such that
$g=\sigma h_i f^m$. After choosing a common iterate of $f$ commuting with
$g$ and $h_i$, part~(2) of that lemma gives $\sigma\in\Gamma_f$. Moreover,
equality at infinity and the surjectivity of $u_i f_\Pi^m$ imply
$\sigma_\Pi=\id$. Hence $q:=\sigma h_i$ belongs to $C(f^\infty)$ and satisfies
$q_\Pi=u_i$, so $q\in F$.
Therefore $g=q f^m\in F\langle f\rangle$. Since $g\in C(f^\infty)$ was
arbitrary, it follows that $C(f^\infty)\subset F\langle f\rangle$.
\end{proof}

The assumption that $f$ is not affinely conjugate to a homogeneous polynomial is essential in
Theorem~\ref{thm: centralizer}.

\begin{ex}\normalfont\label{ex:homogeneous-centralizer-counterexample}
Consider $f(x,y)=(x^2+y^2,y^2).$
In the affine coordinate $w=x/y$ on $\Pi\simeq \Pbb^1$ we have $f_\Pi(w)=w^2+1,$ which is nonspecial. 

For every $\zeta\in S^1$, let $\tau_\zeta(x,y):=(\zeta x,\zeta y).$
By induction, for every $n\ge 1$ we have $f^n(\zeta x,\zeta y)=\zeta^{2^n}f^n(x,y).$
Hence $\tau_\zeta f^n=f^n \tau_\zeta$ if and only if $\zeta^{2^n-1}=1.$
Therefore, we obtain infinitely many degree-$1$ elements in $C(f^\infty)$.

Fix any
$N\ge 1$, and let $\zeta$ be a primitive $(2^{N+1}-1)$-st root of unity. Since $\gcd(2^N-1,\,2^{N+1}-1)=1,$
we have $\zeta^{2^N-1}\ne 1$, so $\tau_\zeta\notin C(f^N)$. Thus $C(f^\infty)\ne C(f^N)$ for every $N\ge 1.$ This shows that assertion~(1) fails. Since $f_\Pi$ is nonspecial, assertion~(2) would imply assertion~(1), so assertion~(2) fails as well.
\end{ex}

\subsection{Non-density of orbit intersections}\label{sect: arith}

In this subsection we prove Theorem~\ref{thm: arith}.
We briefly recall the properties of Moriwaki heights introduced by
Moriwaki~\cite{Moriwaki00,Moriwaki01}. See also Yuan--Zhang's
book~\cite{YZquasi} for the adelic formulation.

Let $K$ be a finitely generated field over $\Qbb$. For every arithmetic
polarization $\overline{B}$ of $K$ in the sense of Moriwaki, every projective
variety $X$ over $K$, and every integrable adelic line bundle $\overline{L}$
on $X$, we denote by $h_{\overline{L}}^{\overline{B}}$
the associated Moriwaki height. For $X=\Pbb^N$, we write $h_{\Pbb^N}^{\overline{B}}$ for a Weil height
associated with $\Ocal_{\Pbb^N}(1)$ and the arithmetic polarization
$\overline{B}$.

Let $F:\Pbb^2\to\Pbb^2$ be a polarized endomorphism of degree $d\geq2$.
By~\cite[Theorem~6.1.1]{YZquasi}, there exists a canonical nef adelic line
bundle $\overline{\Ocal_{\Pbb^2}(1)}_F$ with underlying line bundle
$\Ocal_{\Pbb^2}(1)$ such that $ F^*\overline{\Ocal_{\Pbb^2}(1)}_F
      =d\,\overline{\Ocal_{\Pbb^2}(1)}_F.$
For every arithmetic polarization $\overline{B}$ of $K$, define $\hhat_F^{\overline{B}}
      :=h_{\overline{\Ocal_{\Pbb^2}(1)}_F}^{\overline{B}}.$
By the nefness of $\overline{\Ocal_{\Pbb^2}(1)}_F$,
see also~\cite[Proposition~6.1.4]{YZquasi}, it satisfies
\[
\hhat_F^{\overline{B}}\geq0 \qquad
   \hhat_F^{\overline{B}}
      =h_{\Pbb^2}^{\overline{B}}+O_{\overline{B}}(1),
   \qquad
   \hhat_F^{\overline{B}}F
      =d\,\hhat_F^{\overline{B}}.
\]

\begin{thm}[Moriwaki]\label{thm: moriwaki}
For every big arithmetic polarization $\overline{B}$ of $K$, we have
\[
   \hhat_F^{\overline{B}}(x)=0
   \quad\Longleftrightarrow\quad
   x\in\Prep(F)
\]
for every $x\in\Pbb^2(\overline K)$.
\end{thm}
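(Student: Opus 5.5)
We would prove the two implications separately, using only the three listed properties of $\hhat_F^{\overline{B}}$ (nonnegativity, the comparison $\hhat_F^{\overline{B}}=h_{\Pbb^2}^{\overline{B}}+O_{\overline{B}}(1)$, and the functional equation $\hhat_F^{\overline{B}}F=d\,\hhat_F^{\overline{B}}$), together with Moriwaki's Northcott property for big polarizations. Throughout we assume, after enlarging $K$ within the class of finitely generated fields over $\Qbb$, that $F$ is defined over $K$. This is harmless for the intended application, since the maps there are defined over a finitely generated field.

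\emph{Preperiodic points have height zero.} Let $x\in\Prep(F)$. Its forward orbit $\{F^n(x)\}_{n\ge0}$ is finite, so the values $\hhat_F^{\overline{B}}(F^n(x))$ lie in a finite set of real numbers, say bounded by $C$. On the other hand, the functional equation gives $\hhat_F^{\overline{B}}(F^n(x))=d^n\hhat_F^{\overline{B}}(x)$. Hence
\[
0\le\hhat_F^{\overline{B}}(x)\le C d^{-n}
\]
for every $n$. Letting $n\to\infty$ gives $\hhat_F^{\overline{B}}(x)=0$. This direction does not need bigness.

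\emph{Points of height zero are preperiodic.} Let $x\in\Pbb^2(\overline K)$ with $\hhat_F^{\overline{B}}(x)=0$. By the functional equation, every iterate satisfies $\hhat_F^{\overline{B}}(F^n(x))=d^n\cdot 0=0$. The comparison with the Weil height then bounds the whole orbit uniformly:
\[
h_{\Pbb^2}^{\overline{B}}(F^n(x))\le O_{\overline{B}}(1)\qquad\text{for all }n.
\]
Because $F$ is defined over $K$, each $F^n(x)$ lies in $\Pbb^2(K(x))$, so its degree over $K$ is at most $[K(x):K]$. We then invoke Moriwaki's Northcott theorem: for a big arithmetic polarization $\overline{B}$, the set of points of $\Pbb^2(\overline K)$ of bounded degree over $K$ and bounded $h_{\Pbb^2}^{\overline{B}}$ is finite. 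The orbit of $x$ is therefore finite, so $x\in\Prep(F)$.

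\emph{Main obstacle.} The only nontrivial input is the Northcott property. This is exactly where bigness of $\overline{B}$ is used; for merely nef polarizations it fails. We would cite Moriwaki's original result~\cite{Moriwaki00} for it rather than reprove it. If a self-contained argument were required, the route would be to reduce to the case where $\overline{B}$ dominates a multiple of an ample polarization, and then count points over a model of $K$ using arithmetic Hilbert--Samuel estimates. That reduction is the step we would expect to be hardest.
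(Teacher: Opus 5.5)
Your argument is correct: on a finite orbit the identity $\hhat_F^{\overline{B}}(F^n(x))=d^n\hhat_F^{\overline{B}}(x)$ forces $\hhat_F^{\overline{B}}(x)=0$. Conversely, vanishing height gives a uniform bound on $h_{\Pbb^2}^{\overline{B}}$ along an orbit of bounded degree over $K$, and that orbit is finite by Moriwaki's Northcott theorem for big polarizations. The paper gives no proof and simply cites Moriwaki, and your argument is the standard proof behind that citation; the preliminary enlargement of $K$ is unnecessary, since $\hhat_F^{\overline{B}}$ is only defined for $F$ over $K$.
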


We shall also use the following consequence of the fundamental inequality
and the arithmetic Hodge index theorem of Yuan--Zhang; see
\cite[Theorems~5.3.2 and~6.5.8]{YZquasi}. A sequence $(x_i)$ in
$\Pbb^2(\overline K)$ is \emph{generic} if it converges to the generic point
in the Zariski topology.

\begin{thm}[Yuan--Zhang]\label{thm:YZ-common-small}
Let $F,G:\Pbb^2\to\Pbb^2$ be polarized endomorphisms over $K$ of degree at
least $2$. Suppose that there exists a generic sequence $(x_i)$ in
$\Pbb^2(\overline K)$ such that, for every arithmetic polarization
$\overline{B}$ of $K$, $\hhat_F^{\overline{B}}(x_i)\to 0$ and $\hhat_G^{\overline{B}}(x_i)\to 0$.
Then, for every arithmetic polarization $\overline{B}$ of $K$, $\hhat_F^{\overline{B}}=\hhat_G^{\overline{B}}.$
\end{thm}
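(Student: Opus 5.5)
The plan is to compare the two canonical adelic line bundles through their intersection numbers, and then to apply the arithmetic Hodge index theorem to their difference.

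\emph{Setup.} Put $\overline L_1:=\overline{\Ocal_{\Pbb^2}(1)}_F$ and $\overline L_2:=\overline{\Ocal_{\Pbb^2}(1)}_G$. Both are nef integrable adelic line bundles on $X=\Pbb^2_K$ with the same underlying line bundle. Hence $\overline M:=\overline L_1-\overline L_2$ is an integrable adelic line bundle whose underlying bundle is trivial, i.e.\ it is vertical. It suffices to show that $\overline M$ is numerically trivial with respect to every arithmetic polarization $\overline B$, since then $h^{\overline B}_{\overline M}\equiv0$, which gives $\hhat_F^{\overline B}=\hhat_G^{\overline B}$.

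\emph{Step 1: vanishing of mixed intersection numbers.} First, $h_{\overline L_1}(X)=h_{\overline L_2}(X)=0$ for every $\overline B$. Indeed, $F^*\overline L_1=d\overline L_1$ and the projection formula give $d^3h_{\overline L_1}(X)=\deg(F)\,h_{\overline L_1}(X)=d^2h_{\overline L_1}(X)$, so this height vanishes; the same holds for $\overline L_2$.

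Next, for $s,t\geq0$, the bundle $s\overline L_1+t\overline L_2$ is nef. Along the generic sequence $(x_i)$ its height tends to $0$, so its essential minimum is $\le0$. The fundamental inequality \cite[Theorem~5.3.2]{YZquasi} bounds $h_{s\overline L_1+t\overline L_2}(X)/\bigl(3(s+t)^2\bigr)$ by this essential minimum, so $h_{s\overline L_1+t\overline L_2}(X)\le 0$. On the other hand, nefness of all the bundles involved, including $\overline B$, gives $h_{s\overline L_1+t\overline L_2}(X)\ge0$. The height is a homogeneous cubic polynomial in $(s,t)$ that vanishes identically on the quadrant, so each mixed term $\overline L_1^{\,a}\cdot\overline L_2^{\,3-a}\cdot\overline B$ is zero. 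Doing this for every $\overline B$, or more precisely for the multi-polarized intersection numbers that define Moriwaki heights, gives all the vanishings needed.

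\emph{Step 2: Hodge index.} Expanding the cube, we get $\overline M^{\,2}\cdot\overline L_1\cdot(\overline B\text{-terms})=0$. The arithmetic Hodge index theorem \cite[Theorem~6.5.8]{YZquasi}, applied to the vertical bundle $\overline M$ with respect to the nef and big $\overline L_1$, then forces $\overline M$ to be numerically a pullback $\pi^*\overline N$ from the base, i.e.\ from $K$. Plugging $\overline L_2=\overline L_1-\pi^*\overline N$ into $\overline L_2^{\,3}=0$ yields $\deg(L_1)^2\,\overline N\cdot\overline B^{\,\cdot}=0$ for every polarization. Hence $\overline N$ is numerically trivial, and so is $\overline M$.

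\emph{Main obstacle.} The step I expect to be hardest is applying the Hodge index theorem correctly in the Moriwaki (adelic, finitely generated field) setting. One has to check its positivity hypotheses, and one has to make sure that Step 1 produces the vanishing of the specific intersection number $\overline M^{\,2}\cdot\overline L_1\cdots$ that the theorem requires, uniformly in all polarizations $\overline B$. To handle this I would work with the Yuan--Zhang adelic intersection pairing over the base. I would first treat $\overline B$ big and nef, and then pass to general $\overline B$ by linearity.
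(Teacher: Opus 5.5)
The paper does not prove this statement; it only cites Yuan--Zhang (fundamental inequality plus arithmetic Hodge index theorem). Your outline is the standard argument behind that citation: use the fundamental inequality to kill all mixed intersection numbers, apply Hodge index to $\overline M=\overline L_1-\overline L_2$, then kill the pulled-back part. Step~1, where nonnegativity and the fundamental inequality force every $\overline L_1^{\,a}\cdot\overline L_2^{\,3-a}\cdot\overline B_1\cdots\overline B_e$ to vanish, is fine, and so is your last step. There is, however, a genuine gap in Step~2, in the choice of reference bundle.

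\textbf{The gap.} The rigidity (equality) part of the Yuan--Zhang Hodge index theorem says that $\overline M^2\cdot\overline L\equiv0$ implies $\overline M\in\pi^*\widehat{\mathrm{Pic}}(K)$. It is only available when $\overline M$ is $\overline L$-bounded, meaning that $m\overline L+\overline M$ and $m\overline L-\overline M$ are both nef for some $m>0$. Without that hypothesis you only get $\overline M^2\cdot\overline L\cdot\overline B_1\cdots\overline B_e\le0$, which is useless here.

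With $\overline L=\overline L_1$ you would need $(m+1)\overline L_1-\overline L_2$ to be nef, and nothing guarantees this. At an archimedean place it requires the curvature current of $\overline L_1$ (the Green current of $F$) to dominate a positive multiple of that of $\overline L_2$ (the Green current of $G$). This fails, for instance, if the Green current of $G$ charges an open subset of the Fatou set of $F$. You cannot exclude that a priori, since ruling it out is essentially the conclusion you are trying to prove.

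\textbf{The fix.} Take $\overline L:=\overline L_1+\overline L_2$ instead. Then $\overline L+\overline M=2\overline L_1$ and $\overline L-\overline M=2\overline L_2$ are both nef, so $\overline M$ is $\overline L$-bounded. Step~1 already gives the vanishing of every mixed term, so $\overline M^2\cdot\overline L\cdot\overline B_1\cdots\overline B_e=0$ for every polarization, in particular every big one. Equivalently, the Deligne pairing $\langle\overline M,\overline M,\overline L\rangle$ is numerically trivial on the base, which is what the finitely-generated-field version of the theorem requires. With this change the argument goes through.

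Your final step can also be shortened. Once $\overline M=\pi^*\overline N$, the function $h^{\overline B}_{\overline M}$ is the constant $\overline N\cdot\overline B_1\cdots\overline B_e$. This constant equals $\lim_i\bigl(\hhat_F^{\overline B}(x_i)-\hhat_G^{\overline B}(x_i)\bigr)=0$, so you do not need $\overline L_2^{\,3}=0$ or the projection-formula computation.
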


\begin{proof}[Proof of Theorem~\ref{thm: arith}]
Let $K$ be a finitely generated field over $\Qbb$ over which $f$, $g$, and
$c$ are defined. For fixed $m,n\geq1$, the set
\begin{align*}
   X_{m,n}:=\{z\in\Abb^2\mid f^m(z)=g^n(z)=c(z)\}
\end{align*}
is a closed subvariety defined over $K$. If the union of the
$X_{m,n}(\Cbb)$ were Zariski dense in $\Abb^2(\Cbb)$, then the union of the
$X_{m,n}(\overline K)$ would also be Zariski dense. Thus it suffices to prove
non-density over $\overline K$.

Suppose, towards a contradiction, that
\begin{align*}
S:=\left\{
z\in\Abb^2(\overline K)\mid
f^m(z)=g^n(z)=c(z)\ \text{for some }m,n\geq1
\right\}
\end{align*}
is Zariski dense in $\Abb^2$. Choose a generic sequence $(p_i)$ in $S$.
For every $i$, choose $m_i,n_i\geq1$ such that
\[
   f^{m_i}(p_i)=g^{n_i}(p_i)=c(p_i).
\]
For every $m\geq1$, the set
$\{x\in\Abb^2\mid f^m(x)=c(x)\}$ is a proper closed subset of $\Abb^2$,
because $c$ is not a positive iterate of $f$. Since $(p_i)$ is generic, for
every $M\geq1$ only finitely many $i$ satisfy $m_i\leq M$. Hence
$m_i\to\infty$. Similarly, $n_i\to\infty$.

Fix an arbitrary arithmetic polarization $\overline{B}$ of $K$. To simplify
the notation in the following estimates, write $\hhat_f:=\hhat_f^{\overline{B}}$, $\hhat_g:=\hhat_g^{\overline{B}}$, and $h_{\Pbb^2}:=h_{\Pbb^2}^{\overline{B}}.$
Since $\hhat_f=h_{\Pbb^2}+O_{\overline{B}}(1)$ and
\[
   h_{\Pbb^2}(c(x))
      \leq(\deg c)\,h_{\Pbb^2}(x)+O_{\overline{B}}(1)
\]
for every $x\in\Abb^2(\overline K)$, we obtain
\[
   \hhat_f(c(x))
      \leq(\deg c)\,\hhat_f(x)+O_{\overline{B}}(1).
\]
Applying this inequality to $p_i$ and using
$f^{m_i}(p_i)=c(p_i)$ gives
\begin{align*}
   (\deg f)^{m_i}\hhat_f(p_i)
   &=
   \hhat_f(f^{m_i}(p_i))=\hhat_f(c(p_i))\leq
   (\deg c)\,\hhat_f(p_i)+O_{\overline{B}}(1).
\end{align*}
Consequently, there is a constant $C_{\overline{B}}$, independent of $i$,
such that
\[
   \bigl((\deg f)^{m_i}-\deg c\bigr)\hhat_f(p_i)
      \leq C_{\overline{B}}.
\]
Since $\hhat_f\geq0$ and $m_i\to\infty$, 
$\hhat_f(p_i)\to0$. The same argument gives $\hhat_g(p_i)\to0$.

The arithmetic polarization $\overline{B}$ was arbitrary. Therefore, for
every arithmetic polarization $\overline{B}$ of $K$, $\hhat_f^{\overline{B}}(p_i)\to 0$ and $\hhat_g^{\overline{B}}(p_i)\to 0$.
Theorem~\ref{thm:YZ-common-small} now gives
$\hhat_f^{\overline{B}}=\hhat_g^{\overline{B}}$ for every
$\overline{B}$. In particular, equality holds for big polarizations, and Theorem~\ref{thm: moriwaki},
$\Prep(f)=\Prep(g)$. Corollary~\ref{cor: prep-free} then implies that
$\langle f,g\rangle$ is not free, contradicting the hypothesis. Hence $S$
is not Zariski dense.
\end{proof}

\bibliographystyle{plain}
\bibliography{Mybio}

@article{BTJFA87,
 author = {Bedford, Eric and Taylor, B. A.},
 title = {Fine topology, {{\v{S}}ilov} boundary, and {{\((dd^ c)^ n\)}}},
 fjournal = {Journal of Functional Analysis},
 journal = {J. Funct. Anal.},
 issn = {0022-1236},
 volume = {72},
 pages = {225--251},
 year = {1987},
 language = {English},
 doi = {10.1016/0022-1236(87)90087-5},
 zbMATH = {4109104},
 Zbl = {0677.31005}
}

@article{BedfordJonssonAJM00,
 author = {Bedford, Eric and Jonsson, Mattias},
 title = {Dynamics of regular polynomial endomorphisms of {{\(\mathbb{C}^k\)}}},
 fjournal = {American Journal of Mathematics},
 journal = {Am. J. Math.},
 issn = {0002-9327},
 volume = {122},
 number = {1},
 pages = {153--212},
 year = {2000},
 language = {English},
 doi = {10.1353/ajm.2000.0001},
 url = {muse.jhu.edu/journals/american_journal_of_mathematics/toc/ajm122.1.html},
 zbMATH = {1416011},
 Zbl = {0941.37027}
}

@article{DujardinPM04,
 author = {Dujardin, Romain},
 title = {The intersection of laminar currents},
 fjournal = {Publicacions Matem{\`a}tiques},
 journal = {Publ. Mat., Barc.},
 issn = {0214-1493},
 volume = {48},
 number = {1},
 pages = {107--125},
 year = {2004},
 language = {French},
 doi = {10.5565/PUBLMAT_48104_05},
 url = {https://eudml.org/doc/41492},
 zbMATH = {2074332},
 Zbl = {1048.32021}
}

@article{BS4,
 author = {Bedford, Eric and Lyubich, Mikhail and Smillie, John},
 title = {Polynomial diffeomorphisms of {{\(\mathbb{C}^ 2\)}}. {IV}: {The} measure of maximal entropy and laminar currents},
 fjournal = {Inventiones Mathematicae},
 journal = {Invent. Math.},
 issn = {0020-9910},
 volume = {112},
 number = {1},
 pages = {77--125},
 year = {1993},
 language = {English},
 doi = {10.1007/BF01232426},
 url = {https://eudml.org/doc/144096},
 zbMATH = {447094},
 Zbl = {0792.58034}
}

@article{BHPTjems24,
 author = {Bell, Jason P. and Huang, Keping and Peng, Wayne and Tucker, Thomas J.},
 title = {A {Tits} alternative for endomorphisms of the projective line},
 fjournal = {Journal of the European Mathematical Society (JEMS)},
 journal = {J. Eur. Math. Soc. (JEMS)},
 issn = {1435-9855},
 volume = {26},
 number = {12},
 pages = {4903--4922},
 year = {2024},
 language = {English},
 doi = {10.4171/JEMS/1376},
 zbMATH = {7927746},
 Zbl = {1552.20224}
}

@article{LevinSymmetries90,
 author = {Levin, G. M.},
 title = {Symmetries on the {Julia} set},
 fjournal = {Mathematical Notes},
 journal = {Math. Notes},
 issn = {0001-4346},
 volume = {48},
 number = {5},
 pages = {1126--1131},
 year = {1990},
 language = {English},
 doi = {10.1007/BF01236299},
 zbMATH = {4194089},
 Zbl = {0724.30021}
}

@article{LevinPrzytycki97,
 author = {Levin, G. and Przytycki, F.},
 title = {When do two rational functions have the same {Julia} set?},
 fjournal = {Proceedings of the American Mathematical Society},
 journal = {Proc. Am. Math. Soc.},
 issn = {0002-9939},
 volume = {125},
 number = {7},
 pages = {2179--2190},
 year = {1997},
 language = {English},
 doi = {10.1090/S0002-9939-97-03810-0},
 zbMATH = {1017496},
 Zbl = {0870.58085}
}

@misc{beaumont2025centralizersendomorphismsprojectiveline,
      title={On the centralizers of endomorphisms of the projective line}, 
      author={Alonso Beaumont},
      year={2025},
      eprint={2512.13523},
      archivePrefix={arXiv},
      primaryClass={math.DS},
      url={https://arxiv.org/abs/2512.13523}, 
}

@misc{beaumont2025uniformtitsalternativeendomorphisms,
      title={A uniform {Tits} alternative for endomorphisms of the projective line}, 
      author={Alonso Beaumont},
      year={2025},
      eprint={2504.14263},
      archivePrefix={arXiv},
      primaryClass={math.NT},
      url={https://arxiv.org/abs/2504.14263}, 
}

@article{BT89,
 author = {Bedford, Eric and Taylor, B. A.},
 title = {Uniqueness for the complex {Monge}-{Amp{\`e}re} equation for functions of logarithmic growth},
 fjournal = {Indiana University Mathematics Journal},
 journal = {Indiana Univ. Math. J.},
 issn = {0022-2518},
 volume = {38},
 number = {2},
 pages = {455--469},
 year = {1989},
 language = {English},
 doi = {10.1512/iumj.1989.38.38021},
 zbMATH = {4109107},
 Zbl = {0677.32002}
}

@article{Ye2015,
  author  = {Ye, Hexi},
  title   = {Rational functions with identical measure of maximal entropy},
  journal = {Adv. Math.},
  volume  = {268},
  year    = {2015},
  pages   = {373--395},
  doi     = {10.1016/j.aim.2014.10.001}
}

@article{BakerEremenko1987,
  author  = {Baker, I. N. and Eremenko, A.},
  title   = {A problem on {J}ulia sets},
  journal = {Ann. Acad. Sci. Fenn. Ser. A I Math.},
  volume  = {12},
  number  = {2},
  year    = {1987},
  pages   = {229--236},
  doi     = {10.5186/aasfm.1987.1205}
}

@article{PS89,
 author = {Pugh, Charles and Shub, Michael},
 title = {Ergodic attractors},
 fjournal = {Transactions of the American Mathematical Society},
 journal = {Trans. Am. Math. Soc.},
 issn = {0002-9947},
 volume = {312},
 number = {1},
 pages = {1--54},
 year = {1989},
 language = {English},
 doi = {10.2307/2001206},
 zbMATH = {4121056},
 Zbl = {0684.58008}
}

@book{Klimek,
 author = {Klimek, Maciej},
 title = {Pluripotential theory},
 fseries = {London Mathematical Society Monographs. New Series},
 series = {Lond. Math. Soc. Monogr., New Ser.},
 volume = {6},
 isbn = {0-19-853568-6},
 year = {1991},
 publisher = {Oxford etc.: Clarendon Press},
 language = {English},
 zbMATH = {52146},
 Zbl = {0742.31001}
}

@article{Beardon1992,
  author  = {Beardon, A. F.},
  title   = {Polynomials with identical {J}ulia sets},
  journal = {Complex Variables, Theory and Application},
  volume  = {17},
  number  = {3--4},
  pages   = {195--200},
  year    = {1992},
  doi     = {10.1080/17476939208814513}
}

@article{Moriwaki00,
 author = {Moriwaki, Atsushi},
 title = {Arithmetic height functions over finitely generated fields},
 fjournal = {Inventiones Mathematicae},
 journal = {Invent. Math.},
 issn = {0020-9910},
 volume = {140},
 number = {1},
 pages = {101--142},
 year = {2000},
 language = {English},
 doi = {10.1007/s002220050358},
 zbMATH = {1443593},
 Zbl = {1007.11042}
}

@article{Moriwaki01,
 author = {Moriwaki, Atsushi},
 title = {The canonical arithmetic height of subvarieties of an abelian variety over a finitely generated field},
 fjournal = {Journal f{\"u}r die Reine und Angewandte Mathematik},
 journal = {J. Reine Angew. Math.},
 issn = {0075-4102},
 volume = {530},
 pages = {33--54},
 year = {2001},
 language = {English},
 doi = {10.1515/crll.2001.005},
 zbMATH = {1559508},
 Zbl = {1013.11032}
}

@book{YZquasi,
 author = {Yuan, Xinyi and Zhang, Shou-Wu},
 title = {Adelic line bundles on quasi-projective varieties},
 fseries = {Annals of Mathematics Studies},
 series = {Ann. Math. Stud.},
 volume = {221},
 isbn = {978-0-691-27172-9; 978-0-691-27173-6; 978-0-691-27870-4},
 year = {2026},
 publisher = {Princeton, NJ: Princeton University Press},
 language = {English},
 zbMATH = {8063230}
}

@article{YZhodge17,
 author = {Yuan, Xinyi and Zhang, Shou-Wu},
 title = {The arithmetic {Hodge} index theorem for adelic line bundles},
 fjournal = {Mathematische Annalen},
 journal = {Math. Ann.},
 issn = {0025-5831},
 volume = {367},
 number = {3-4},
 pages = {1123--1171},
 year = {2017},
 language = {English},
 doi = {10.1007/s00208-016-1414-1},
 zbMATH = {6706027},
 Zbl = {1372.14017}
}

@article{GauthierSymmetry23,
 author = {Gauthier, Thomas and Hutz, Benjamin and Kaschner, Scott},
 title = {Symmetrization of rational maps: arithmetic properties and families of {Latt{\`e}s} maps of {{\(\mathbb{P}^k\)}}},
 fjournal = {Conformal Geometry and Dynamics},
 journal = {Conform. Geom. Dyn.},
 issn = {1088-4173},
 volume = {27},
 pages = {98--117},
 year = {2023},
 language = {English},
 doi = {10.1090/ecgd/382},
 zbMATH = {7654510},
 Zbl = {1511.37114}
}

@article{FreireLopesMane1983,
  author  = {Freire, Alexandre and Lopes, Artur and Ma\~n\'e, Ricardo},
  title   = {An invariant measure for rational maps},
  journal = {Bol. Soc. Brasil. Mat.},
  volume  = {14},
  number  = {1},
  year    = {1983},
  pages   = {45--62},
  doi     = {10.1007/BF02584744}
}

@article{Lyubich1983,
  author  = {Lyubich, Mikhail Ju.},
  title   = {Entropy properties of rational endomorphisms of the {R}iemann sphere},
  journal = {Ergodic Theory Dynam. Systems},
  volume  = {3},
  number  = {3},
  year    = {1983},
  pages   = {351--385},
  doi     = {10.1017/S0143385700002030}
}

@article{Brolin1965,
  author  = {Brolin, Hans},
  title   = {Invariant sets under iteration of rational functions},
  journal = {Ark. Mat.},
  volume  = {6},
  year    = {1965},
  pages   = {103--144},
  doi     = {10.1007/BF02591353}
}

@article{Beardon1990,
  author  = {Beardon, A. F.},
  title   = {Symmetries of {J}ulia sets},
  journal = {Bull. London Math. Soc.},
  volume  = {22},
  number  = {6},
  year    = {1990},
  pages   = {576--582},
  doi     = {10.1112/blms/22.6.576}
}

@article{YusukeMagorzata,
 author = {Okuyama, Y{\^u}suke and Stawiska, Ma{\l}gorzata},
 title = {Potential theory and a characterization of polynomials in complex dynamics},
 fjournal = {Conformal Geometry and Dynamics},
 journal = {Conform. Geom. Dyn.},
 issn = {1088-4173},
 volume = {15},
 pages = {152--159},
 year = {2011},
 language = {English},
 doi = {10.1090/S1088-4173-2011-00230-X},
 zbMATH = {6040467},
 Zbl = {1252.37036}
}

@article{AtelaHu1996,
  author  = {Atela, Pau and Hu, Jun},
  title   = {Commuting Polynomials and Polynomials with Same {J}ulia Set},
  journal = {International Journal of Bifurcation and Chaos},
  volume  = {6},
  number  = {12A},
  year    = {1996},
  pages   = {2427--2432},
  publisher = {World Scientific Publishing Company}
}

@article{SchmidtSteinmetz1995,
  author  = {Schmidt, W. and Steinmetz, N.},
  title   = {The Polynomials Associated with a {J}ulia Set},
  journal = {Bulletin of the London Mathematical Society},
  volume  = {27},
  number  = {3},
  year    = {1995},
  pages   = {239--241},
  doi     = {10.1112/blms/27.3.239}
}

@article{JiXie_LocalRigidity,
  author  = {Ji, Zhuchao and Xie, Junyi},
  title   = {Local rigidity of {J}ulia sets},
  journal = {American Journal of Mathematics},
  year    = {2023},
  note    = {to appear},
  eprint  = {2302.02562},
  archivePrefix = {arXiv},
  primaryClass = {math.DS},
  url = {https://arxiv.org/abs/2302.02562}
}

@article{DujardinFavreGauthier2023,
  author  = {Dujardin, Romain and Favre, Charles and Gauthier, Thomas},
  title   = {When do two rational functions have locally biholomorphic {J}ulia sets?},
  journal = {Transactions of the American Mathematical Society},
  volume  = {376},
  number  = {3},
  year    = {2023},
  pages   = {1601--1624},
  doi     = {10.1090/tran/8775}
}

@book{FavreGauthier2022,
  author    = {Favre, Charles and Gauthier, Thomas},
  title     = {The Arithmetic of Polynomial Dynamical Pairs},
  series    = {Annals of Mathematics Studies},
  volume    = {214},
  publisher = {Princeton University Press},
  address   = {Princeton, NJ},
  year      = {2022}
}

@misc{ji2023daocurves,
      title={{DAO} for curves}, 
      author={Zhuchao Ji and Junyi Xie},
      year={2023},
      eprint={2302.02583},
      archivePrefix={arXiv},
      primaryClass={math.DS},
      url={https://arxiv.org/abs/2302.02583}, 
}

@misc{JiXieMultiplierSpectrum,
  author = {Ji, Zhuchao and Xie, Junyi},
  title = {The multiplier spectrum morphism is generically injective},
  year = {2025},
  note = {J. Eur. Math. Soc., to appear},
  eprint = {2309.15382},
  archivePrefix = {arXiv},
  primaryClass = {math.DS},
  url = {https://arxiv.org/abs/2309.15382}
}

@article{BakerDeMarco2011,
  author  = {Baker, Matthew and DeMarco, Laura},
  title   = {Preperiodic points and unlikely intersections},
  journal = {Duke Mathematical Journal},
  volume  = {159},
  number  = {1},
  year    = {2011},
  pages   = {1--29},
  doi     = {10.1215/00127094-1384773}
}

@article{DMCompositio,
 author = {DeMarco, Laura and Mavraki, Niki Myrto},
 title = {Dynamics on {{\(\mathbb{P}^1\)}}: preperiodic points and pairwise stability},
 fjournal = {Compositio Mathematica},
 journal = {Compos. Math.},
 issn = {0010-437X},
 volume = {160},
 number = {2},
 pages = {356--387},
 year = {2024},
 language = {English},
 doi = {10.1112/S0010437X23007546},
 zbMATH = {7793888},
 Zbl = {1535.37114}
}

@article{MSDuke,
 author = {Mavraki, Niki Myrto and Schmidt, Harry},
 title = {On the dynamical {Bogomolov} conjecture for families of split rational maps},
 fjournal = {Duke Mathematical Journal},
 journal = {Duke Math. J.},
 issn = {0012-7094},
 volume = {174},
 number = {5},
 pages = {803--856},
 year = {2025},
 language = {English},
 doi = {10.1215/00127094-2024-0041},
 url = {projecteuclid.org/journals/duke-mathematical-journal/volume-174/issue-5/On-the-dynamical-Bogomolov-conjecture-for-families-of-split-rational/10.1215/00127094-2024-0041.full},
 zbMATH = {8050359}
}

@article{Ueno2010,
  author  = {Ueno, Kohei},
  title   = {Symmetries of {Julia} sets of nondegenerate polynomial skew products on {$\mathbb{C}^2$}},
  journal = {Michigan Mathematical Journal},
  volume  = {59},
  number  = {1},
  pages   = {153--168},
  year    = {2010},
  doi     = {10.1307/mmj/1272376030},
}

@article{Ritt1923,
  author  = {Ritt, J. F.},
  title   = {Permutable rational functions},
  journal = {Transactions of the American Mathematical Society},
  volume  = {25},
  number  = {3},
  pages   = {399--448},
  year    = {1923},
  doi     = {10.2307/1989018}
}

@article{Pak20,
 author = {Pakovich, Fedor},
 title = {Finiteness theorems for commuting and semiconjugate rational functions},
 fjournal = {Conformal Geometry and Dynamics},
 journal = {Conform. Geom. Dyn.},
 issn = {1088-4173},
 volume = {24},
 pages = {202--229},
 year = {2020},
 language = {English},
 doi = {10.1090/ecgd/354},
 zbMATH = {7271858},
 Zbl = {1451.30053}
}

@article{Pak21,
 author = {Pakovich, Fedor},
 title = {Commuting rational functions revisited},
 fjournal = {Ergodic Theory and Dynamical Systems},
 journal = {Ergodic Theory Dyn. Syst.},
 issn = {0143-3857},
 volume = {41},
 number = {1},
 pages = {295--320},
 year = {2021},
 language = {English},
 doi = {10.1017/etds.2019.51},
 zbMATH = {7282579},
 Zbl = {1461.30059}
}

@article{HsiaTucker,
 author = {Hsia, Liang-Chung and Tucker, Thomas},
 title = {Greatest common divisors of iterates of polynomials},
 fjournal = {Algebra \& Number Theory},
 journal = {Algebra Number Theory},
 issn = {1937-0652},
 volume = {11},
 number = {6},
 pages = {1437--1459},
 year = {2017},
 language = {English},
 doi = {10.2140/ant.2017.11.1437},
 zbMATH = {6763331},
 Zbl = {1392.37103}
}

@misc{noytaptim2024commonzerositeratedmorphisms,
      title={Towards Common Zeros of Iterated Morphisms}, 
      author={Chatchai Noytaptim and Xiao Zhong},
      year={2024},
      eprint={2412.15141},
      archivePrefix={arXiv},
      primaryClass={math.AG},
      url={https://arxiv.org/abs/2412.15141}, 
}

@misc{yang2026dynamicalgcdproblemsvariant,
      title={Dynamical GCD Problems and a Variant of the Dynamical Mordell-Lang Conjecture}, 
      author={She Yang and Xiao Zhong},
      year={2026},
      eprint={2602.18302},
      archivePrefix={arXiv},
      primaryClass={math.NT},
      url={https://arxiv.org/abs/2602.18302}, 
}

@misc{abboud2026uniformboundcommonperiodic,
      title={Uniform bound on common periodic points for families of regular plane polynomial automorphisms}, 
      author={Marc Abboud and Yugang Zhang},
      year={2026},
      eprint={2602.10310},
      archivePrefix={arXiv},
      primaryClass={math.DS},
      url={https://arxiv.org/abs/2602.10310}, 
}

@incollection{DSbook,
 author = {Dinh, Tien-Cuong and Sibony, Nessim},
 title = {Dynamics in several complex variables: endomorphisms of projective spaces and polynomial-like mappings},
 booktitle = {Holomorphic dynamical systems. Lectures given at the C.I.M.E. summer school, Cetraro, Italy, July 7--12, 2008},
 isbn = {978-3-642-13170-7; 978-3-642-13171-4},
 pages = {165--294},
 year = {2010},
 publisher = {Berlin: Springer},
 language = {English},
 zbMATH = {5879453},
 Zbl = {1218.37055}
}

@incollection{sibonybook,
 author = {Sibony, Nessim},
 title = {Dynamics of rational maps on {{\(\mathbb P^k\)}}},
 booktitle = {Dynamique et g\'eom\'etrie complexes},
 isbn = {2-85629-078-7},
 pages = {97--185},
 year = {1999},
 publisher = {Paris: Soci{\'e}t{\'e} Math{\'e}matique de France; Providence, RI: American Mathematical Society},
 language = {French},
 zbMATH = {1908328},
 Zbl = {1020.37026}
}

@article{briendduval2,
 author = {Briend, Jean-Yves and Duval, Julien},
 title = {Deux caract{\'e}risations de la mesure d'{\'e}quilibre d'un endomorphisme de {{\(P^k(\mathbb{C})\)}}},
 fjournal = {Publications Math{\'e}matiques},
 journal = {Publ. Math., Inst. Hautes {\'E}tud. Sci.},
 issn = {0073-8301},
 volume = {93},
 pages = {145--159},
 year = {2001},
 language = {French},
 doi = {10.1007/s10240-001-8190-4}
}

@article{DinhCommuting,
 author = {Dinh, Tien-Cuong},
 title = {Sur les endomorphismes polynomiaux permutables de {{\({\mathbb C}^2\)}}. ({On} the commuting polynomial endomorphisms of {{\({\mathbb C}^2\)}})},
 fjournal = {Annales de l'Institut Fourier},
 journal = {Ann. Inst. Fourier},
 issn = {0373-0956},
 volume = {51},
 number = {2},
 pages = {431--459},
 year = {2001},
 language = {French},
 doi = {10.5802/aif.1828},
 url = {https://eudml.org/doc/115921},
 zbMATH = {1584199},
 Zbl = {0977.30016}
}

@article{BCZ03,
 author = {Bugeaud, Yann and Corvaja, Pietro and Zannier, Umberto},
 title = {An upper bound for the g.c.d. of {{\(a^n-1\)}} and {{\(b^n -1\)}}},
 fjournal = {Mathematische Zeitschrift},
 journal = {Math. Z.},
 issn = {0025-5874},
 volume = {243},
 number = {1},
 pages = {79--84},
 year = {2003},
 language = {English},
 doi = {10.1007/s00209-002-0449-z},
 zbMATH = {1922486},
 Zbl = {1021.11001}
}

@article{DKY2,
 author = {DeMarco, Laura and Krieger, Holly and Ye, Hexi},
 title = {Common preperiodic points for quadratic polynomials},
 fjournal = {Journal of Modern Dynamics},
 journal = {J. Mod. Dyn.},
 issn = {1930-5311},
 volume = {18},
 pages = {363--413},
 year = {2022},
 language = {English},
 doi = {10.3934/jmd.2022012},
 zbMATH = {7594475},
 Zbl = {1503.37101}
}

@misc{dujardin2023dynamicalmaninmumfordconjectureplane,
      title={On the dynamical {Manin--Mumford} conjecture for plane polynomial maps}, 
      author={Romain Dujardin and Charles Favre and Matteo Ruggiero},
      year={2023},
      eprint={2312.14817},
      archivePrefix={arXiv},
      primaryClass={math.DS},
      url={https://arxiv.org/abs/2312.14817}, 
}

@article{zbMATH00825311,
 author = {Okni{\'n}ski, J. and Salwa, A.},
 title = {Generalised {Tits} alternative for linear semigroups},
 fjournal = {Journal of Pure and Applied Algebra},
 journal = {J. Pure Appl. Algebra},
 issn = {0022-4049},
 volume = {103},
 number = {2},
 pages = {211--220},
 year = {1995},
 language = {English},
 doi = {10.1016/0022-4049(95)00104-5},
 zbMATH = {825311},
 Zbl = {0846.20067}
}

@misc{pakovich2026periodiccurvesgeneralendomorphisms,
      title={Periodic curves for general endomorphisms of {$\mathbb{CP}^1\times \mathbb{CP}^1$}}, 
      author={Fedor Pakovich},
      year={2026},
      eprint={2506.09948},
      archivePrefix={arXiv},
      primaryClass={math.DS},
      url={https://arxiv.org/abs/2506.09948}, 
}

@article{pakovich-deg4,
 author = {Pakovich, Fedor},
 title = {On iterates of rational functions with maximal number of critical values},
 fjournal = {Journal d'Analyse Math{\'e}matique},
 journal = {J. Anal. Math.},
 issn = {0021-7670},
 volume = {156},
 number = {1},
 pages = {213--251},
 year = {2025},
 language = {English},
 doi = {10.1007/s11854-025-0386-z},
 zbMATH = {8103492},
 Zbl = {1573.30064}
}

@article{DinhSibony2002,
  author  = {Dinh, Tien-Cuong and Sibony, Nessim},
  title   = {Sur les endomorphismes holomorphes permutables de $\mathbb{P}^k$},
  journal = {Mathematische Annalen},
  volume  = {324},
  number  = {1},
  year    = {2002},
  pages   = {33--70},
  note    = {Preprint version: arXiv:math/0007017 (2000)},
  url     = {https://arxiv.org/abs/math/0007017}
}

@article{Kaufmann2018,
  author  = {Kaufmann, Lucas},
  title   = {Commuting pairs of endomorphisms of $\mathbb{P}^2$},
  journal = {Ergodic Theory and Dynamical Systems},
  volume  = {38},
  number  = {3},
  year    = {2018},
  pages   = {1025--1047},
  doi     = {10.1017/etds.2016.54},
  note    = {Published online 19 September 2016; preprint arXiv:1509.06534}
}

@article{Fatou1923,
  author  = {Fatou, Pierre},
  title   = {Sur l'it{\'e}ration analytique et les substitutions permutables},
  journal = {Journal de Math{\'e}matiques Pures et Appliqu{\'e}es},
  series  = {9},
  volume  = {2},
  year    = {1923},
  pages   = {343--384},
  note    = {Continued in vol.~3 (1924), 1--50},
  url     = {http://eudml.org/doc/234679}
}

@article{Julia1922,
  author  = {Julia, Gaston},
  title   = {M{\'e}moire sur la permutabilit{\'e} des fractions rationnelles},
  journal = {Annales scientifiques de l'{\'E}cole Normale Sup{\'e}rieure},
  series  = {3},
  volume  = {39},
  year    = {1922},
  pages   = {131--215},
  doi     = {10.24033/asens.740}
}

@article{Eremenko1990,
  author  = {Eremenko, Alexandre},
  title   = {Some functional equations connected with the iteration of rational functions},
  journal = {Leningrad Mathematical Journal},
  volume  = {1},
  number  = {4},
  year    = {1990},
  pages   = {905--919},
  note    = {Translated from Algebra i Analiz \textbf{1} (1989), no.~4, 102--116}
}

@article{DNS10JDG,
 author = {Dinh, Tien-Cuong and Nguy{\^e}n, Vi{\^e}t-Anh and Sibony, Nessim},
 title = {Exponential estimates for plurisubharmonic functions},
 fjournal = {Journal of Differential Geometry},
 journal = {J. Differ. Geom.},
 issn = {0022-040X},
 volume = {84},
 number = {3},
 pages = {465--488},
 year = {2010},
 language = {English},
 doi = {10.4310/jdg/1279114298},
 zbMATH = {5774388},
 Zbl = {1211.32021}
}

@article{DS03PolynomialLike,
 author = {Dinh, Tien-Cuong and Sibony, Nessim},
 title = {Dynamics of polynomial-like mappings},
 fjournal = {Journal de Math{\'e}matiques Pures et Appliqu{\'e}es. Neuvi{\`e}me S{\'e}rie},
 journal = {J. Math. Pures Appl. (9)},
 issn = {0021-7824},
 volume = {82},
 number = {4},
 pages = {367--423},
 year = {2003},
 language = {French},
 doi = {10.1016/S0021-7824(03)00026-6},
 zbMATH = {1985982},
 Zbl = {1033.37023}
}

@book{Humphreys1975,
 author = {Humphreys, James E.},
 title = {Linear algebraic groups},
 fseries = {Graduate Texts in Mathematics},
 series = {Grad. Texts Math.},
 issn = {0072-5285},
 volume = {21},
 year = {1975},
 publisher = {Springer, Cham},
 language = {English},
 zbMATH = {3508744},
 Zbl = {0325.20039}
}

@book{Hartshorne1977,
  author    = {Hartshorne, Robin},
  title     = {Algebraic Geometry},
  series    = {Graduate Texts in Mathematics},
  volume    = {52},
  publisher = {Springer-Verlag},
  address   = {New York},
  year      = {1977},
  doi       = {10.1007/978-1-4757-3849-0}
}

@article{GuralnickLawther2024,
  author    = {Guralnick, Robert M. and Lawther, Ross},
  title     = {Generic stabilizers in actions of simple algebraic groups},
  journal   = {Mem. Amer. Math. Soc.},
  volume    = {300},
  number    = {1502},
  year      = {2024}
}

@article{SzpiroUllmoZhang1997,
  author  = {Szpiro, Lucien and Ullmo, Emmanuel and Zhang, Shou-Wu},
  title   = {{\'{E}}quir\'{e}partition des petits points},
  journal = {Invent. Math.},
  volume  = {127},
  number  = {2},
  pages   = {337--347},
  year    = {1997},
  doi     = {10.1007/s002220050123}
}

@article{Ullmo1998,
  author  = {Ullmo, Emmanuel},
  title   = {Positivit\'{e} et discr\'{e}tion des points alg\'{e}briques des courbes},
  journal = {Ann. of Math. (2)},
  volume  = {147},
  number  = {1},
  pages   = {167--179},
  year    = {1998},
  doi     = {10.2307/120987}
}

@article{Zhang1998,
  author  = {Zhang, Shou-Wu},
  title   = {Equidistribution of small points on abelian varieties},
  journal = {Ann. of Math. (2)},
  volume  = {147},
  number  = {1},
  pages   = {159--165},
  year    = {1998},
  doi     = {10.2307/120986}
}

@incollection{DujardinICM2022,
  author    = {Dujardin, Romain},
  title     = {Geometric methods in holomorphic dynamics},
  booktitle = {Proceedings of the International Congress of Mathematicians 2022},
  volume    = {5},
  pages     = {3460--3482},
  publisher = {EMS Press},
  address   = {Berlin},
  year      = {2023},
  doi       = {10.4171/ICM2022/60}
}

@misc{zhang2024arithmeticpropertiesfamiliesplane,
  title         = {Arithmetic properties of families of plane polynomial automorphisms},
  author        = {Yugang Zhang},
  year          = {2024},
  eprint        = {2407.15952},
  archivePrefix = {arXiv},
  primaryClass  = {math.DS},
  url           = {https://arxiv.org/abs/2407.15952}
}

@article{DinhSameJulia2000,
  author   = {Dinh, Tien-Cuong},
  title    = {Remarque sur les fonctions ayant le m{\^e}me ensemble de {Julia}},
  journal  = {Ann. Fac. Sci. Toulouse Math. (6)},
  volume   = {9},
  number   = {1},
  pages    = {55--70},
  year     = {2000},
  mrnumber = {1815940},
  zbl      = {1022.37034},
  url      = {https://www.numdam.org/item/AFST_2000_6_9_1_55_0/}
}

@misc{gong2026nonarchimedeanrigidityuniformitycommon,
  author        = {Gong, Chen and Yap, Jit Wu},
  title         = {Non-{A}rchimedean Rigidity and Uniformity for Common Preperiodic Points},
  year          = {2026},
  eprint        = {2607.19252},
  archivePrefix = {arXiv},
  primaryClass  = {math.DS},
  url           = {https://arxiv.org/abs/2607.19252}
}

@article{DujardinFavre2017,
  author  = {Dujardin, Romain and Favre, Charles},
  title   = {The dynamical {Manin--Mumford} problem for plane polynomial automorphisms},
  journal = {J. Eur. Math. Soc. (JEMS)},
  volume  = {19},
  number  = {11},
  pages   = {3421--3465},
  year    = {2017},
  doi     = {10.4171/JEMS/743}
}

@article{GauthierTaflinVigny2026,
  author  = {Gauthier, Thomas and Taflin, Johan and Vigny, Gabriel},
  title   = {Sparsity of postcritically finite maps of $\mathbb{P}^k$ and beyond: {A} complex analytic approach},
  journal = {Publ. Math. Inst. Hautes \'{E}tudes Sci.},
  volume  = {143},
  pages   = {1--96},
  year    = {2026},
  doi     = {10.5802/pmihes.1}
}

@article{FavreRiveraLetelier2006,
  author  = {Favre, Charles and Rivera-Letelier, Juan},
  title   = {{\'E}quidistribution quantitative des points de petite hauteur
             sur la droite projective},
  journal = {Math. Ann.},
  volume  = {335},
  number  = {2},
  pages   = {311--361},
  year    = {2006},
  doi     = {10.1007/s00208-006-0751-x}
}

@article{BakerRumely2006,
  author  = {Baker, Matthew H. and Rumely, Robert},
  title   = {Equidistribution of small points, rational dynamics,
             and potential theory},
  journal = {Ann. Inst. Fourier (Grenoble)},
  volume  = {56},
  number  = {3},
  pages   = {625--688},
  year    = {2006},
  doi     = {10.5802/aif.2196}
}

@article{ChambertLoir2006,
  author  = {Chambert-Loir, Antoine},
  title   = {Mesures et {\'e}quidistribution sur les espaces de {Berkovich}},
  journal = {J. Reine Angew. Math.},
  volume  = {595},
  pages   = {215--235},
  year    = {2006},
  doi     = {10.1515/CRELLE.2006.049}
}

@article{Yuan2008,
  author  = {Yuan, Xinyi},
  title   = {Big line bundles over arithmetic varieties},
  journal = {Invent. Math.},
  volume  = {173},
  number  = {3},
  pages   = {603--649},
  year    = {2008},
  doi     = {10.1007/s00222-008-0127-9}
}

@article{ChambertLoirThuillier2009,
  author  = {Chambert-Loir, Antoine and Thuillier, Amaury},
  title   = {Mesures de {Mahler} et {\'e}quidistribution logarithmique},
  journal = {Ann. Inst. Fourier (Grenoble)},
  volume  = {59},
  number  = {3},
  pages   = {977--1014},
  year    = {2009},
  doi     = {10.5802/aif.2454}
}

@article{GauthierGoodHeights2026,
  author  = {Gauthier, Thomas},
  title   = {Good height functions on quasi-projective varieties:
             equidistribution and applications in dynamics},
  journal = {Ann. Fac. Sci. Toulouse Math. (6)},
  volume  = {35},
  number  = {1},
  pages   = {57--94},
  year    = {2026},
  doi     = {10.5802/afst.1841}
}

@article{Bilu1997,
  author  = {Bilu, Yuri},
  title   = {Limit distribution of small points on algebraic tori},
  journal = {Duke Math. J.},
  volume  = {89},
  number  = {3},
  pages   = {465--476},
  year    = {1997},
  doi     = {10.1215/S0012-7094-97-08921-3}
}

@article{MavrakiYe2023,
  author  = {Mavraki, Niki Myrto and Ye, Hexi},
  title   = {Quasi-adelic measures and equidistribution on
             {$\mathbb{P}^1$}},
  journal = {Ergodic Theory Dynam. Systems},
  volume  = {43},
  number  = {8},
  pages   = {2732--2779},
  year    = {2023},
  doi     = {10.1017/etds.2022.49}
}

@misc{KuhneEquidistribution2021,
  author        = {K{\"u}hne, Lars},
  title         = {Equidistribution in families of abelian varieties
                   and uniformity},
  year          = {2021},
  eprint        = {2101.10272},
  archivePrefix = {arXiv},
  primaryClass  = {math.NT}
}
\end{document}